\newcommand{\pred}[1]{#1_\downarrow}
\newcommand{\cone}[1]{{#1}^\uparrow}
\newcommand{\s}{\subseteq}
\newcommand \sq{\sqsubseteq}
\newcommand{\sqleft}[1]{\mathrel{_{#1}{\sqsubseteq}}}
\newcommand{\sqx}{\sqleft{\chi}}
\newcommand{\sql}{\sqleft{\lambda}}
\newcommand{\sqn}{\sqleft{\nu}}
\newcommand\defaultaction{\textsf{extend}}
\newcommand\sealantichain{\textsf{anti}}
\newcommand\free{\textsf{free}}
\renewcommand{\restriction}{\mathbin\upharpoonright}    
\newcommand\sd{\framebox[3mm][l]{$\diamondsuit$}\hspace{0.5mm}{}}
\DeclareMathOperator{\acc}{acc}
\DeclareMathOperator{\cf}{cf}
\DeclareMathOperator{\dom}{dom}
\DeclareMathOperator{\cof}{cof}
\newcommand\bd{\textup{bd}}
\newcommand\fin{\textup{fin}}
\DeclareMathOperator{\height}{ht}
\DeclareMathOperator{\nacc}{nacc}
\DeclareMathOperator{\otp}{otp}
\DeclareMathOperator{\range}{Im}
\DeclareMathOperator{\suc}{succ}
\newcommand\ch{\textup{CH}}
\DeclareMathOperator{\p}{P}
\newcommand\axiomfont[1]{\textsf{\textup{#1}}}
\newcommand\zfc{\axiomfont{ZFC}}
\newcommand\gch{\axiomfont{GCH}}
\newcommand\mm{\axiomfont{MM}}
\newcommand\pfa{\axiomfont{PFA}}
\newtheorem{theorem}{Theorem}[section]
\newtheorem{lemma}[theorem]{Lemma}
\newtheorem{corollary}[theorem]{Corollary}
\newtheorem{fact}[theorem]{Fact}
\newtheorem*{claim-template}{Coherence Claim Template}
\newtheorem*{extension-lemma}{Extension Lemma}
\newtheorem{claim}{Claim}[theorem]
\newtheorem{subclaim}{Subclaim}[claim]
\theoremstyle{definition}
\newtheorem{definition}[theorem]{Definition}
\theoremstyle{remark}
\newtheorem*{Q}{Question}
\title{Reduced powers of Souslin trees}
\author{Ari Meir Brodsky}
\author{Assaf Rinot}
\address{Department of Mathematics\\
    Bar-Ilan University\\
    Ramat-Gan 5290002\\
    Israel}
\urladdr{http://www.assafrinot.com}
\subjclass[2010]{Primary 03E05; Secondary 03E65, 03E35, 05C05}
    \keywords{Microscopic approach,  proxy principle,  free Souslin tree, almost Souslin, respecting tree, Kurepa tree, ascent path, non-specializable trees, reduced power, selective ultrafilter.}
\begin{document}
\begin{abstract}
We study the relationship between a $\kappa$-Souslin tree $T$ and its reduced powers $T^\theta/\mathcal U$.

Previous works addressed this problem from the viewpoint of a single power $\theta$, whereas here, tools are developed for controlling different powers simultaneously.
As a sample corollary, we obtain the consistency of an $\aleph_6$-Souslin tree $T$ and a sequence of uniform ultrafilters $\langle \mathcal U_n\mid n<6\rangle$ such that $ T^{\aleph_n}/\mathcal U_n$ is $\aleph_6$-Aronszajn iff $n<6$ is not a prime number.
\end{abstract}
\maketitle

\section{Introduction}

A \emph{tree} is a partially ordered set $(T,<_T)$ with the property that for every $x\in T$,
the downward cone $x_\downarrow=\{ y\in T\mid y<_T x\}$ is well-ordered by $<_T$. The \emph{height} of $x\in T$, denoted $\height(x)$,
is the order-type of $(x_\downarrow,<_T)$. Then, the $\alpha^{\text{th}}$ level of $(T,<_T)$ is the set $T_\alpha=\{x\in T\mid \height(x)=\alpha\}$.
We also write $T \restriction X = \{t \in T \mid \height(t) \in X \}$.
A tree  $(T,<_T)$ is said to be \emph{$\chi$-complete} if any $<_T$-increasing sequence of elements from $T$,
and of length ${<\chi}$, has an upper bound in $T$. On the other extreme,
the tree  $(T,<_T)$ is said to be \emph{slim} if
$\left|T_\alpha\right| \leq \max \{ \left|\alpha\right|, \aleph_0 \}$ for every ordinal $\alpha$.
Throughout, let $\kappa$ denote a regular uncountable cardinal.
A tree $(T,<_T)$ is a \emph{$\kappa$-tree} whenever $\{ \alpha\mid T_\alpha\neq\emptyset\}=\kappa$, and $|T_\alpha|<\kappa$ for all $\alpha<\kappa$.

A subset $B\s T$ is a \emph{cofinal branch} if $(B,<_T)$ is linearly ordered and  $\{ \height(t)\mid t\in B\}=\{\height(t)\mid t\in T\}$.
A \emph{$\kappa$-Aronszajn tree} is a $\kappa$-tree with no cofinal branches.
On the other extreme, there is the concept of a \emph{$\kappa$-Kurepa tree},
which is a $\kappa$-tree with at least $\kappa^+$ many cofinal branches.

A $\kappa$-Aronszajn tree $(T,<_T)$ is a \emph{$\kappa$-Souslin tree} if it has no antichains of size $\kappa$. Equivalently,
if for every antichain $A\s T$, the set $\{ \height(x)\mid x\in A\}$ has size $<\kappa$.
A $\lambda^+$-tree $(T,<_T)$ is said to be \emph{almost Souslin}
if for every antichain $A\s T$, the set $\{ \height(x)\mid x\in A\}\cap E^{\lambda^+}_{\cf(\lambda)}$ is nonstationary.\footnote{Denote $E^\kappa_{\chi} = \{\alpha <\kappa \mid \cf(\alpha) = \chi \}$,
and similarly define $E^\kappa_{<\chi}$, $E^\kappa_{\geq\chi}$, and $E^\kappa_{>\chi}$.}

Given a $\kappa$-Souslin tree $(T,<_T)$ and a set $I$, let $T^I=\{ f:I\rightarrow T\mid \height\circ f\text{ is constant}\}$
denote the collection of all \emph{level sequences} indexed by $I$.
For any uniform ultrafilter $\mathcal U$ over $I$, we then consider the \emph{reduced $I$-power  tree} $\check T=T^I/\mathcal U$, as follows.\footnote{Recall that $\mathcal U$ is said to be \emph{uniform}
if $|X|=|Y|$ for all $X,Y\in\mathcal U$.}
The elements of $\check T$ are equivalence classes $[f]_\mathcal U$, where $f=_{\mathcal U}g$ iff $\{ i\in I\mid f(i)=g(i)\}\in\mathcal U$.
The ordering $<_{\check T}$ of $\check T$ is defined by letting $[f]_{\mathcal U}<_{\check T}[g]_{\mathcal U}$ iff $\{ i\in I\mid f(i)<_T g(i)\}\in\mathcal U$.

Suppose now that $I=\theta$ is an infinite cardinal such that $\lambda^{\theta}<\kappa$ for all $\lambda<\kappa$.
Then, $T^\theta/\mathcal U$  is again a   $\kappa$-tree.
How do the original $\kappa$-Souslin tree and its reduced $\theta$-power compare?

By an argument essentially due to Kurepa \cite{MR0049973}, the reduced $\theta$-power of a $\kappa$-Souslin tree is never $\kappa$-Souslin. Can it at least remain almost Souslin? Aronszajn?

Devlin gave a consistent example of an $\aleph_2$-Souslin tree whose reduced $\omega$-power is $\aleph_2$-Aronszajn \cite{MR732661},
and another consistent example of an $\aleph_2$-Souslin tree whose reduced $\omega$-power is $\aleph_2$-Kurepa \cite{MR645905}.
In this paper, we give an example of an $\aleph_2$-Souslin tree whose reduced $\omega$-power is almost Souslin,
and another example whose reduced $\omega$-power is not almost Souslin.
In fact, more is true,  and is better seen at the level of $\aleph_3$:
\begin{theorem}\label{thm11} Assume $V=L$.

Then there exist trees $T_0,T_1,T_2,T_3$, and uniform ultrafilters $\mathcal U_0$ over $\omega$, $\mathcal U_1$ over $\omega_1$,
such that:
$$\begin{array}{c|c|c|c}
&T& T^\omega/\mathcal U_0&T^{\omega_1}/\mathcal U_1  \\ \hline
T_0&\aleph_3\text{-Souslin}&\aleph_3\text{-Aronszajn + almost Souslin}&\aleph_3\text{-Aronszajn + almost Souslin}\\
T_1&\aleph_3\text{-Souslin}&\aleph_3\text{-Kurepa + $\neg$ almost Souslin}&\aleph_3\text{-Kurepa + $\neg$ almost Souslin}\\
T_2&\aleph_3\text{-Souslin}&\aleph_3\text{-Aronszajn + almost Souslin}&\aleph_3\text{-Kurepa + $\neg$ almost Souslin}\\
T_3&\aleph_3\text{-Souslin}&\neg\aleph_3\text{-Aronszajn}&\aleph_3\text{-Aronszajn + almost Souslin}
\end{array}$$
\end{theorem}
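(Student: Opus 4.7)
The plan is to exploit the microscopic/proxy-principle machinery for which this paper develops the tools, building all four $\aleph_3$-Souslin trees by a single uniform diamond-guided construction that varies only in the prescribed \emph{action} assigned to limit levels of each cofinality $\chi\in\{\omega,\omega_1,\omega_2\}$. Under $V=L$ we have $\gch$ and strong diamond at each stationary subset of $\aleph_3$, so the strongest proxy principle on $\aleph_3$ that separately targets the stationary sets $E^{\aleph_3}_\omega$, $E^{\aleph_3}_{\omega_1}$, and $E^{\aleph_3}_{\omega_2}$ is available. Using this, I would invoke (an instance of) the Extension Lemma to recursively construct each $T_i$, so that at every $\alpha<\aleph_3$ with $\cf(\alpha)=\chi$, level $\alpha$ is determined from $T\restriction\alpha$ by the action $a_\chi^{(i)}$ prescribed in advance. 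The $\aleph_3$-Souslinness of each $T_i$ comes uniformly from $\sealantichain$ being the default action on $E^{\aleph_3}_{\omega_2}$ and from $\diamondsuit(\aleph_3)$-catching of potential maximal antichains.

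The key correspondences I would establish first are the dictionary between tree actions and ultrapower behaviour. A cofinal branch in $T^\theta/\mathcal U$ corresponds to a $\theta$-ascent path through $T$ that is coherent modulo $\mathcal U$; such ascent paths are \emph{produced} by the $\free$ action applied at levels of cofinality that match $\theta$, and are \emph{destroyed} by the $\sealantichain$ action. Almost-Souslinness of $T^\theta/\mathcal U$, being an $E^{\aleph_3}_{\omega_2}$-statement, is governed by the action used on levels of cofinality $\omega_2$: as long as that action seals antichains relative to the ultrafilter-induced equivalence, almost-Souslinness lifts from $T$ to $T^\theta/\mathcal U$. Conversely, a $\free$ choice at $\cf=\omega_2$ will create an antichain of the power tree whose heights are stationary in $E^{\aleph_3}_{\omega_2}$, violating almost-Souslinness.

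With this dictionary, the four trees are obtained by prescribing:
\begin{align*}
T_0&:\ a_\omega=a_{\omega_1}=a_{\omega_2}=\sealantichain;\\
T_1&:\ a_\omega=a_{\omega_1}=a_{\omega_2}=\free;\\
T_2&:\ a_\omega=\sealantichain,\ a_{\omega_1}=a_{\omega_2}=\free;\\
T_3&:\ a_\omega=\free,\ a_{\omega_1}=a_{\omega_2}=\sealantichain.
\end{align*}
The ultrafilters $\mathcal U_0$ over $\omega$ and $\mathcal U_1$ over $\omega_1$ will be chosen to be selective (or Ramsey on $\omega_1$ via the $\gch$-respecting construction in $L$), since the ascent-path analysis requires ultrafilters that are compatible with the coherence produced by the construction. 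For $T_1$ and the second column of $T_3$, the Kurepa conclusion follows from the abundance of $\theta$-ascent paths generated by $\free$ at $\cf=\theta$, each of which yields $\aleph_3^+$-many branches in $T^\theta/\mathcal U$ via the slimness of $T$ and a counting argument modeled on Devlin's.

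The principal obstacle is $T_3$, which requires the asymmetric situation in which the $\omega$-power collapses (fails to be Aronszajn) while the $\omega_1$-power remains Aronszajn and even almost Souslin. The delicate point is to ensure that the $\free$ action at $\cf=\omega$ really does create a $\mathcal U_0$-coherent ascent path (forcing a cofinal branch in $T^\omega/\mathcal U_0$) without inadvertently also supplying an ascent path of length $\aleph_3$ of level sequences indexed by $\omega_1$ that is $\mathcal U_1$-coherent; conversely, the $\sealantichain$ action at $\cf=\omega_1$ must be verified to kill all such $\omega_1$-indexed candidates. This is the place where the paper's tools for controlling \emph{different} powers simultaneously — rather than the single-power techniques of Devlin — are essential, and where I expect the bulk of the technical work to lie. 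The remaining trees $T_0, T_1, T_2$ should then fall out of essentially the same lemma by straightforward variation of the action assignment.
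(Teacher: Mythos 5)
Your proposal has a fundamental miscalibration: the dictionary between the two microscopic actions and the behaviour of the reduced power is essentially backwards. In the paper, the $\free$ action is what produces \emph{freeness} of the tree (absence of $\kappa$-size antichains in derived trees), and freeness $\chi$ is precisely what \emph{kills} $\theta$-width ascent paths for $\theta<\chi$ (Lemma~\ref{lemma26}); that in turn is what makes the reduced $\theta$-power Aronszajn and almost Souslin (Lemma~\ref{lemma28}), via a selective ultrafilter. By contrast, setting $\free$ everywhere does not create a Kurepa power or destroy almost-Souslinness — it does the opposite. Cofinal branches (and copies of a Kurepa tree) in $T^\theta/\mathcal U$ come from \emph{explicitly constructing} an $(\mathcal F,X)$-ascent path $\langle f_u\mid u\in X\rangle$ into the tree during the recursion; they are not a byproduct of the action chosen at a cofinality. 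So your assignments for $T_0$ and $T_1$ are reversed, and even once reversed they would not suffice.

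There is also a structural mismatch. The paper's construction does not choose one action per limit level based on $\cf(\alpha)$. Rather, at each limit level $\alpha$ it climbs a branch $b^\alpha_x$ along $C_\alpha$, and the action at each \emph{non-accumulation} step $\beta\in\nacc(C_\alpha)$ is chosen by the diamond oracle $\psi(\beta)$; moreover, the two actions coexist within the same $b^\alpha_x$. Whether a level $\alpha$ is open to more than the $\mathbf b^\alpha_x$'s (e.g.\ to all limits of branches, giving completeness) is controlled by whether $\cf(\alpha)\ge\chi$, but this is an orthogonal parameter, not an action assignment.

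The actual route taken in the paper is: for $T_0$ take an $\aleph_2$-free $\aleph_3$-Souslin tree (from $\p_{14}(\aleph_3,\sq,\aleph_3,\{E^{\aleph_3}_{\aleph_2}\})$, Theorem~\ref{theta-ascent-free-thm2}) and use selective ultrafilters over $\omega$, $\omega_1$ with Lemma~\ref{lemma28}. For $T_1$ and $T_2$ the Kurepa + non-almost-Souslin behaviour comes from a $\p_{14}^-(\aleph_3,\sq,\aleph_3,\{E^{\aleph_3}_{\aleph_2}\})$-respecting tree $X$ that is the disjoint union of a Kurepa tree and a special tree (from Rinot--Schindler $\sd^+_{\aleph_2}$), fed into Theorem~\ref{free-with-u-ascent}: the reduced power then contains a copy of $X$. $T_2$ versus $T_1$ differs in the freeness degree of the outcome tree so that the $\omega$-power is still controlled. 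For $T_3$, the resolution of your ``principal obstacle'' is arithmetic: a tree with an $\mathcal F^{\fin}_{\aleph_0}$-ascent path but which is $(\aleph_2,\aleph_1)$-free (Theorem~\ref{theta-ascent-free-thm1}). The two-cardinal freeness parameter $\eta=\theta^+$ is exactly how the explicit ascent-path nodes are excused in the freeness analysis (the set $\tau\setminus I$ has size $\le\theta<\theta^+$ in Claim~\ref{claim612}); it is not a question of ``ultrafilter compatibility.'' Your proposal misses both the two-cardinal freeness concept and the role of $\p^-_{14}$-respecting trees, which are the two genuinely new devices needed here.
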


Let us introduce the concepts and tools that will be used in proving the theorems of this paper.

\begin{definition} Suppose that $X\s{}^{<\kappa}\kappa$ is a downward-closed family such that $(X,\subset)$ is a $\kappa$-tree,
and  $\mathcal F$ is a collection of sets.

An \emph{$(\mathcal F,X)$-ascent path} through a $\kappa$-tree $(T,<_T)$ is
a sequence $\vec f =\langle f_x\mid x\in X\rangle$
such that:
\begin{enumerate}
\item $f_x : \bigcup \mathcal F\to T_{\dom(x)}$ is a function for each $x \in X$;
\item $\{ i\in\bigcup \mathcal F \mid f_x (i) <_T f_y (i) \}\in \mathcal F$ for all $x\subset y$ from $X$;
\item $\{ i\in\bigcup \mathcal F \mid f_x (i) \neq f_y (i) \}\in \mathcal F$
for any two distinct elements $x,y$ of $X_\alpha$ for some $\alpha<\kappa$.
\end{enumerate}

If $(X,\subset)$ is isomorphic to $(\kappa,\in)$ (e.g., $X=\bigcup_{\alpha<\kappa}{}^\alpha1$), then $\vec f\ $ is simply said to be an \emph{$\mathcal F$-ascent path}.
\end{definition}

Denote
$\mathcal F^{\bd}_\theta=\{ Z\s\theta\mid \sup(\theta\setminus Z)<\theta\}$,
and $\mathcal F_\theta=\mathcal P(\theta)\setminus\{\emptyset\}$.
It is easy to see that if  $(T,<_T)$ admits an $(\mathcal F^{\bd}_\theta,X)$-ascent path,
then the reduced $\theta$-power tree (by any uniform ultrafilter  over $\theta$)
contains a copy of the tree $(X,\subset)$.
Likewise, if $(T,<_T)$ admits no $\mathcal F_\theta$-ascent path, then
the reduced $\theta$-power (using any uniform ultrafilter) is Aronszajn.

We remark that the concept of an ascent path makes sense also in the absence of $\gch$, that is, regardless of the associated power trees.
It was discovered by Laver while working on the problem of obtaining a model in which all $\aleph_2$-Aronszajn trees are special \cite{MR603771}.\footnote{Recall that a $\lambda^+$-tree is said to be \emph{special} if it may be covered by $\lambda$-many antichains.
Note that a special $\lambda^+$-tree is never almost Souslin.}
By \cite{MR964870}, if $(T,<_T)$ is a special $\lambda^+$-tree
that admits an $\mathcal F^{\bd}_\theta$-ascent path, then $\cf(\lambda)=\cf(\theta)$.\footnote{The statement of Lemma 3 from \cite{MR964870} is slightly weaker,
but its proof establishes the statement given here. For $\lambda$ regular, an even stronger statement holds true, and we refer the reader to Proposition 2.3 of \cite{MR2965421}.}
This provides an approach to constructions of $\lambda^+$-trees that are impossible to specialize without changing cofinalities.
In this paper, among other things, we shall construct $\kappa$-Souslin trees with $\mathcal F$-ascent path,
where $\mathcal F$ is $\mathcal F^{\fin}_\theta=\{ Z\s\theta\mid |\theta\setminus Z|<\aleph_0\}$.
Such trees are even harder to specialize, since $\mathcal F^{\fin}_\theta$ projects to $\mathcal F^{\bd}_\mu$ for all infinite cardinals $\mu\le\theta$,
and so a model in which such a  $\lambda^+$-tree becomes special would have to satisfy $\cf(\lambda)=\cf(\mu)$ for all infinite cardinals $\mu\le\theta$.
On a dual front, L\"ucke \cite{lucke} proved that assuming $\lambda^{<\lambda}=\lambda$, any $\lambda^+$-tree with the property that for all infinite $\theta<\lambda$, the tree admits no $\mathcal F_\theta$-ascent path,
can be made special via a cofinality-preserving notion of forcing. That is, the tree is \emph{specializable}. A systematic study of specializable, nearly special, and such, Souslin trees is carried in a forthcoming paper.

\medskip

Thus, we have an approach  for introducing objects into the reduced $\theta$-power tree. Recalling Theorem \ref{thm11},
we shall also need an approach for preventing objects from appearing there.
\begin{definition}
A $\kappa$-Souslin tree $(T,<_T)$ is said to be \emph{$\chi$-free} if for every nonzero $\tau<\chi$,
any $\beta < \kappa$, and any sequence of distinct nodes
$\langle w_i \mid i < \tau \rangle \in {}^\tau T_\beta$, the derived tree
$\bigotimes_{i < \tau}  \cone{w_i}$
is again a  $\kappa$-Souslin tree.
\end{definition}

Here, the derived tree $\bigotimes_{i < \tau}  \cone{w_i}$ stands for the tree $(\hat T,<_{\hat T})$, as follows:
\begin{itemize}
\item $\hat T=\{ f\in T^\tau \mid \forall i<\tau(f(i)\text{ is $<_T$-compatible with } w_i)\}$;
\item $f<_{\hat T}\ g$ iff $f(i)<_{T} g(i)$ for all $i<\tau$.
\end{itemize}

\medskip

An $\aleph_0$-free Souslin tree is simply called \emph{free}.
By Lemma \ref{lemma28} below, if $(T,<_T)$ is a $\theta^+$-free Souslin tree, then for an appropriate uniform ultrafilter $\mathcal U$ over $\theta$,
$T^\theta/\mathcal U$ is Aronszajn and almost Souslin. This gives a promising approach for pulling the other side of the blanket with respect to the reduced $\theta$-power,
but raises the very problem of constructing free Souslin trees.

Jensen constructed a free $\aleph_1$-Souslin tree from $\diamondsuit(\omega_1)$ (see \cite[Theorem V.1]{MR384542}). The argument generalizes to show that whenever $\lambda^{<\lambda}=\lambda$,
$\diamondsuit(E^{\lambda^+}_\lambda)$ entails a $\lambda$-complete $\lambda$-free $\lambda^+$-Souslin tree.
The $\lambda$-completeness of this tree is not a bonus but a necessity,\footnote{The level $\alpha\in E^{\lambda^+}_\lambda$ of Jensen's tree is derived from a generic over some poset that lives in a model of size $\lambda$.
The freeness comes from genericity. The very existence of a generic comes from the $\lambda$-completeness of the tree.}
and indeed the consistency of existence of a free $\lambda^+$-Souslin tree for $\lambda$ singular was unknown.

Motivated by the above, in this paper, an alternative construction of a free $\kappa$-Souslin tree is given,
covering the case that $\kappa$ is a successor of a singular cardinal
(as well as inaccessible).  To exemplify:
\begin{theorem} Suppose that $\lambda$ is a singular cardinal. Let $\chi$ denote the least cardinal to satisfy $\lambda^\chi>\lambda$.

If $\square_\lambda+\ch_\lambda$ holds, then there exists a (slim!) $\chi$-free $\lambda^+$-Souslin tree.\footnote{Here, $\ch_\lambda$ stands for the assertion that $2^\lambda=\lambda^+$. The choice of $\chi$ is sharp,
as by Lemma \ref{lemma85} below, the existence of a $\chi$-free $\lambda^+$-Souslin tree entails that $\lambda^{<\chi}=\lambda$.}
\end{theorem}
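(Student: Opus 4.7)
The plan is to package $\square_\lambda+\ch_\lambda$ into an instance of the proxy principle around which this paper is organized, with its width parameter tuned to $\chi$, and then apply the general microscopic construction to obtain the tree.

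First, by the minimality of $\chi$ we have $\lambda^{<\chi}=\lambda$, and since $\lambda$ is singular, König's theorem gives $\chi\leq\cf(\lambda)<\lambda$. Combined with $\ch_\lambda$, this yields $2^{<\chi}\cdot\lambda^+=\lambda^+$, and in particular a $\diamondsuit(\lambda^+)$-sequence rich enough to guess all triples $(\tau,\vec w,A)$ where $\tau<\chi$, $\vec w\in{}^\tau T_\beta$ is a sequence of distinct nodes for some $\beta<\lambda^+$, and $A$ is a maximal antichain of the derived tree $\bigotimes_{i<\tau}\cone{w_i}$: there are at most $\lambda^+$ such configurations, since each derived tree has at most $\lambda^{<\chi}=\lambda$ nodes per level. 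Meanwhile, $\square_\lambda$ supplies a coherent club sequence $\langle C_\alpha\mid\alpha<\lambda^+\rangle$ which, by the usual thinning, may be assumed to satisfy $\otp(C_\alpha)<\chi$ whenever $\cf(\alpha)<\chi$ (using $\chi\leq\cf(\lambda)$). Packaged together, these furnish the required proxy principle.

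Second, I would run the microscopic recursion, building $T\s{}^{<\lambda^+}\lambda$ by recursion on $\alpha<\lambda^+$. Successor levels are routine. At a limit $\alpha$, each node below $\alpha$ is extended along the coherent branch prescribed by $C_\alpha$, using coherence to match choices already made at points of $\acc(C_\alpha)$. Whenever the diamond predicts a triple $(\tau,\vec w,A)$ at stage $\alpha$, one uses the residual freedom in selecting the $\tau$ cofinal branches through $\vec w$ to thread them above some element of $A$, thereby sealing $A$ in the final derived tree. Slimness is immediate from $|T_\alpha|\leq\lambda^{\otp(C_\alpha)}\leq\lambda^{<\chi}=\lambda$; Souslinness follows from the standard pressing-down argument using the diamond guesses at $\tau=1$; and $\chi$-freeness follows because every maximal antichain in every derived tree of width $<\chi$ is guessed stationarily often and so sealed.

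The main obstacle is the joint extension step at limits $\alpha$ with $\cf(\alpha)<\chi$: one must construct $\tau$-many cofinal branches through $\vec w$ in $T\restriction C_\alpha$ that simultaneously (i) respect the $\square_\lambda$-coherence inherited from the $\acc(C_\alpha)$ stages, (ii) remain pairwise distinct coordinatewise (so as to stay inside the derived tree), and (iii) pass through the predicted antichain $A$. In Jensen's regular construction this is handled by appealing to $\lambda$-completeness; for singular $\lambda$ no such completeness is available, and this is what makes the theorem nontrivial. The key observation making the argument go through anyway is that $\otp(C_\alpha)<\chi$, so the joint extension problem has size $<\chi$ rather than $\lambda$, and the canonicity provided by the $\square_\lambda$-coherence reduces pairwise distinctness and antichain-realization to elementary combinatorics on ${}^{<\chi}\lambda$. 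Verifying this uniformly across all $\alpha\in E^{\lambda^+}_{<\chi}$, and checking that the sealing survives passage to levels of cofinality $\geq\chi$, is the heart of the argument.
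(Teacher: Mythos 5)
Your high-level route is the correct one and in fact exactly the paper's: for singular $\lambda$, $\square_\lambda+\ch_\lambda$ is equivalent to $\sd_\lambda$, which entails $\p_{14}(\lambda^+,{\sq},\lambda^+,\{E^{\lambda^+}_{\cf(\lambda)}\})$ and hence (since $\chi\leq\cf(\lambda)$) $\p_{14}(\lambda^+,{\sq},\lambda^+,\{E^{\lambda^+}_{\geq\chi}\})$; since also $\lambda^{<\chi}=\lambda$ by minimality of $\chi$, Theorem~\ref{theta-ascent-free-thm2} applies directly with $\kappa=\lambda^+$. Your arithmetic observations ($\lambda^{<\chi}=\lambda$, and $\chi\leq\cf(\lambda)<\lambda$ by K\"onig) are correct, and you rightly identify that the singular case cannot be handled by Jensen-style genericity since $\lambda$-completeness is unavailable.

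However, the construction you sketch contains several genuine errors. First, the proposed normalization ``$\otp(C_\alpha)<\chi$ whenever $\cf(\alpha)<\chi$'' is incompatible with full coherence when $\chi>\aleph_0$: take $\alpha$ with $\cf(\alpha)=\chi^+$ (which exists since $\chi^+<\lambda$); then $\otp(C_\alpha)>\chi+\omega$, so the element $\bar\alpha\in\acc(C_\alpha)$ with $\otp(C_\alpha\cap\bar\alpha)=\chi+\omega$ has $\cf(\bar\alpha)=\omega<\chi$, yet coherence forces $\otp(C_{\bar\alpha})=\chi+\omega\geq\chi$. The paper's Theorem~\ref{theta-ascent-free-thm2} places no order-type constraint whatsoever on its clubs. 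Second, the slimness estimate $|T_\alpha|\leq\lambda^{\otp(C_\alpha)}\leq\lambda$ is not slimness — slim means $|T_\alpha|\leq\max\{|\alpha|,\aleph_0\}$, which is far smaller than $\lambda$ for most $\alpha$. Your bound is what you would get by taking all limits of cofinal branches, i.e.\ a complete tree, which is exactly what is \emph{not} done here; the slim construction puts only $T_\alpha=\{\mathbf b^\alpha_x\mid x\in T\restriction C_\alpha\}$, of size $\leq|T\restriction\alpha|\leq|\alpha|$. Third, and most substantively, your ``key observation'' misidentifies the mechanism for freeness. There is no joint-extension step at the top of a limit level $\alpha$ where you exercise ``residual freedom'' — coherence already commits the branch values at every point of $\acc(C_\alpha)$, and all freedom is consumed at the successor steps during the climb. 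What actually happens in the paper is that the branches $\mathbf b^\alpha_x$ are built \emph{simultaneously} for all $x\in T\restriction C_\alpha$ by recursion along $C_\alpha$, and at a single microscopic successor step $\beta\in\nacc(C_\alpha)$ where the oracle $\psi(\beta)$ codes a $\tau$-tuple of starting points, the $\free$ function seals the whole $\tau$-tuple above the guessed antichain of the derived tree at once; this depends only on canonical local data at $\beta$, so coherence is automatic. The proxy principle's third clause then guarantees, for each maximal antichain $A$ of each derived tree, stationarily many $\alpha\in E^{\lambda^+}_{\geq\chi}$ — large cofinality, not small — whose $C_\alpha$ densely hits the encoding of $A$ at its successor points, and it is below such an $\alpha$ that $A$ is trapped. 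There is no ``elementary combinatorics on ${}^{<\chi}\lambda$'' argument; the real work is in Claim~\ref{claim612}.
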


A reader who is familiar with previous constructions of $\kappa$-Souslin trees with an ascent path (due to Baumgartner, Cummings, Devlin, and Laver)
is probably wondering how is it possible to construct the $\kappa$-tree without taking into consideration whether $\kappa$
is inaccessible, or successor of regular, or a successor of singular of countable cofinality, or of uncountable cofinality.
The answer is that all constructions in this paper will go through the parameterized proxy principle $\p(\kappa, \mu, \mathcal R, \theta, \mathcal S,  \nu,\sigma,\mathcal E)$ from \cite{axioms},\cite{rinot23}.
This allows a uniform construction that is indifferent of the identity of $\kappa$,
and was one of the motivations for the introduction of this principle.
For the purpose of this paper, we shall only be concerned with the special case $\mu=\nu=2$, $\sigma=\omega$, and $\mathcal E=\mathcal P(\kappa)^2$,
and hence we choose to define only this simpler instance, which we denote by $\p_{14}(\kappa,\mathcal R,\theta,\mathcal S)$.\footnote{
Since $\p_{14}(\kappa,\cdot,\cdot,\cdot)\equiv \p(\kappa,2,\cdot,\cdot,\cdot,2,\omega,(\mathcal P(\kappa))^2)$,
and $14$ is the decimal interpretation of the flip of the binary string $0111000$.}
For a complete account, the reader is referred to \cite{axioms} and \cite{rinot23}.

Before giving the definition of the proxy principle, let us agree to denote for a set of ordinals $D$,
$\acc(D) = \{\alpha \in D \mid \sup (D\cap\alpha) = \alpha>0 \}$,
$\nacc(D)=D\setminus\acc(D)$,
and  $\suc_\omega(D)=\{\delta\in D\mid 0<\otp(D\cap\delta)<\omega\}$.

\begin{definition}[Proxy principle]
Suppose that:
\begin{itemize}
\item
$\kappa$ is a regular uncountable cardinal;
\item
$\mathcal R$ is a binary relation over $[\kappa]^{<\kappa}$;
\item
$\theta$ is a cardinal such that $1 \leq \theta \leq \kappa$; and
\item
$\mathcal S$ is a nonempty collection of stationary subsets of $\kappa$.
\end{itemize}
The principle
$\p_{14}^-(\kappa, \mathcal R, \theta, \mathcal S)$
asserts the existence of a sequence
$\langle C_\alpha \mid \alpha < \kappa \rangle$
such that:
\begin{itemize}
\item for every limit ordinal $\alpha < \kappa$, $C_\alpha$ is a
club subset of $\alpha$;
\item for every ordinal $\alpha < \kappa$, if $\bar\alpha\in\acc(C_\alpha)$, then $C_{\bar\alpha} \mathrel{\mathcal R} C_\alpha$;
\item for every sequence $\langle A_i \mid i < \theta \rangle$ of cofinal subsets of $\kappa$,
and every $S \in \mathcal S$,
there exist stationarily many $\alpha \in S$ such that for every  $i < \min \{\alpha, \theta\}$, we have
\[
\sup \{\beta \in C_\alpha \mid \suc_\omega (C_\alpha \setminus \beta) \subseteq A_i \} = \alpha.
\]
\end{itemize}
\end{definition}

As for the relation $\mathcal R$, in this paper, we shall only be concerned with the relations $\sqsubseteq$, $\sqn$, $\sqsubseteq_\nu$, where:
\begin{itemize}
\item
$D \sqsubseteq C$ iff
there exists some ordinal $\beta$ such that $D = C \cap \beta$, that is, $C$ \emph{end-extends} $D$;
\item $D \sqn C$ iff (($D \sqsubseteq C$) or ($\cf(\sup(D))<\nu$));
\item $D \sqsubseteq_\nu C$ iff (($D \sqsubseteq C$) or ($\otp(C)<\nu$ and $\nacc(C)$ consists only of successor ordinals)).
\end{itemize}

It is easy to see that  $\p_{14}^-(\kappa,{\sq},\theta,\mathcal S)\Rightarrow \p_{14}^-(\kappa,{\sqn},\theta,\mathcal S)\Rightarrow \p_{14}^-(\kappa,{\sql},\theta,\mathcal S)$ for all $\nu<\lambda<\kappa$.
Likewise, $\p_{14}^-(\kappa,{\sq},\theta,\mathcal S)\Rightarrow \p_{14}^-(\kappa,{\sq_\nu},\theta,\mathcal S)\Rightarrow \p_{14}^-(\kappa,{\sq_\lambda},\theta,\mathcal S)$ for all $\nu<\lambda<\kappa$.

\begin{definition}\label{def16} $\p_{14}(\kappa, \mathcal R, \theta, \mathcal S)$
asserts that both $\p_{14}^-(\kappa,  \mathcal R, \theta, \mathcal S)$ and $\diamondsuit(\kappa)$ hold.
\end{definition}

The consistency of the preceding principle is extensively studied in \cite{axioms},\cite{rinot23}. We mention two extremes from \cite{axioms}.
If $V=L$, then $\p_{14}(\kappa,{\sq},\kappa,\{E^\kappa_{\ge\chi}\mid \chi<\kappa\ \&\ \forall\lambda<\kappa(\lambda^{<\chi}<\kappa)\})$ holds for every regular uncountable cardinal $\kappa$ that is not weakly compact.
Assuming the existence of a supercompact cardinal, it is consistent that for some infinite cardinals $\nu<\lambda$,
$\p_{14}(\lambda^+,{\sq_\nu},\lambda^+,\{\lambda^+\})$  holds, while $\square^*_\lambda$ fails,
and the same is true replacing $\sq_\nu$ with $\sql$.

\medskip

So far, we have described a strategy for constructing  $\kappa$-Souslin trees whose $\theta_0$-power contains a prescribed tree,
and another strategy for constructing $\kappa$-Souslin trees whose $\theta_1$-power omits prescribed objects.
Could these strategies live side by side? The answer is clearly negative if $\theta_0=\theta_1$.
But even if $\theta_0\neq\theta_1$, there are further obstructions. These obstructions lead us to introducing the following concept.

\begin{definition}\label{respecting}
We say that a $\kappa$-tree $X\s{}^{<\kappa}\kappa$
is \emph{$\p_{14}^-(\kappa,  \mathcal R, \theta, \mathcal S)$-respecting}
if there exists a subset $\S\s\kappa$ and a sequence of mappings
 $\langle \mathbf b^\alpha:(X\restriction C_\alpha)\rightarrow {}^\alpha\kappa\cup\{\emptyset\}\mid\alpha<\kappa\rangle$ such that:
\begin{enumerate}
\item\label{respectingonto} $X_\alpha\s \range({\mathbf b}^\alpha)$ for every $\alpha\in\S$;
\item $\langle C_\alpha\mid\alpha<\kappa\rangle$ witnesses $\p_{14}^-(\kappa, \mathcal R, \theta, \{S\cap\S\mid S\in \mathcal S\})$;
\item \label{respectcohere} if $x\in X\restriction C_{\bar\alpha}$ and $C_\alpha\cap[\height(x),{\bar\alpha})=C_{\bar\alpha}\cap[\height(x),{\bar\alpha})$, then ${\mathbf b}^{\bar\alpha}(x)={\mathbf b}^\alpha(x)\restriction{\bar\alpha}$.
\end{enumerate}
\end{definition}

It is not hard to show that $\p_{14}(\kappa,{\sq},1,\{\kappa\})$ entails a $\kappa$-Souslin tree that is  $\p^-_{14}(\kappa,{\sq},1,\{\kappa\})$-respecting,
and with witnessing mappings ${\mathbf b}^\alpha$ having the property that ${\mathbf b}^\alpha(x)$ is always compatible with $x$.
Arguably, the proof was already given in \cite[Theorem IV.2.4]{MR750828}.

Clearly, if $(X,<_X)$ is isomorphic to $(\kappa,\in)$, then it is $\p^-_{14}(\kappa,\dots)$-respecting (provided that $\p^-_{14}(\kappa,\dots)$ holds).
What is unclear is whether trees that are not built in a bottom-up fashion can be $\p^-_{14}$-respecting.
In \cite{rinotschindler}, Rinot and Schindler gave consistent examples of $\p^-_{14}$-respecting trees whose natural description is indeed  top-down.
This includes Kurepa trees, and the special tree $\mathcal T(\rho_0)$ from \cite{MR908147}, which fully encodes the process of \emph{walking} down from one ordinal to another.

Therefore, we feel that one of the most interesting theorems of this paper is the following.
\begin{theorem} Suppose that $\theta<\kappa$ are regular infinite cardinals, and
\begin{itemize}
\item $X\s{}^{<\kappa}\kappa$ is a downward-closed $\kappa$-tree that is $\p_{14}^-(\kappa,  {\sqsubseteq}_\theta, \kappa, \{E^\kappa_{\ge\eta}\})$-respecting;
\item  $\eta$ is an infinite cardinal satisfying $\lambda^{<\eta} < \kappa$ for all $\lambda < \kappa$;
\item $\chi=\min\{\eta,\theta\}$;
\item $\diamondsuit(\kappa)$ holds.
\end{itemize}

Then there exists a $\chi$-free $\eta$-complete $\kappa$-Souslin tree that admits an $(\mathcal F^{\bd}_\theta,X)$-ascent path.
\end{theorem}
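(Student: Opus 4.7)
My plan is a microscopic-approach construction of $T\s{}^{<\kappa}\kappa$, built recursively on levels, with the proxy sequence $\langle C_\alpha\mid\alpha<\kappa\rangle$ from the respecting witness steering the limit stages and a $\diamondsuit(\kappa)$-sequence $\langle Z_\alpha\rangle$ sealing potential maximal antichains of $T$ as well as of every derived tree $\bigotimes_{i<\tau}\cone{w_i}$ with $\tau<\chi$. Successor levels take all one-step extensions. At a limit $\alpha$, for each $\beta\in C_\alpha$ and each $t\in T_\beta$, recursively build a canonical thread $b(t,\alpha)\in{}^\alpha\kappa$ by walking up $\nacc(C_\alpha\setminus\beta)$: at a successor point $\beta'$ of $C_\alpha$ apply the \defaultaction{} action, unless $Z_{\beta'}$ names an antichain not yet met by $t$ (in which case redirect through it); at $\bar\alpha\in\acc(C_\alpha)$ invoke $C_{\bar\alpha}\sq_\theta C_\alpha$, either inheriting $b(t,\bar\alpha)$ as a true initial segment (first disjunct) or allowing a $\theta$-bounded reset (second disjunct, where $\otp(C_\alpha\cap\bar\alpha)<\theta$). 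Place $b(t,\alpha)$ into $T_\alpha$, and if $\cf(\alpha)<\eta$ also close $T_\alpha$ under every cofinal branch of $T\restriction\alpha$; this is possible because $\lambda^{<\eta}<\kappa$, and it secures $\eta$-completeness.

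The ascent path is woven in by demanding, at each $\alpha\in\S\cap E^\kappa_{\ge\eta}$, that $\mathbf b^\alpha$ take values in $T_\alpha\cup\{\emptyset\}$; this becomes a constraint on the indexing of $T_\alpha$ as a subset of ${}^\alpha\kappa$ that can be enforced level-by-level, since $|X\restriction C_\alpha|<\kappa$. For $z\in X_\beta$ and $i<\theta$, pick a fixed enumeration $\langle y^z_i\mid i<\theta\rangle$ of liftings of $z$ obtained from clause~(\ref{respectingonto}) of Definition~\ref{respecting} at a large $\alpha\in\S$ with $z\in X\restriction C_\alpha$, and set $f_z(i)$ to be the level-$\beta$ predecessor in $T$ of $\mathbf b^\alpha(y^z_i)$. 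Clause~(\ref{respectcohere}) and the $\sq_\theta$-coherence then force $\{i<\theta:f_x(i)<_T f_z(i)\}\in\mathcal F^{\bd}_\theta$ whenever $x\subset z$ in $X$: predecessors of $\mathbf b^\alpha(y^\cdot_i)$ can disagree only at walk-up points where the short-club disjunct of $\sq_\theta$ activates, contributing a bounded set in $i<\theta$. The distinctness clause~(3) of the ascent-path definition is arranged at the first level where two chosen $x,z\in X_\alpha$ split, using the freedom in ordering $\langle y^z_i\rangle$.

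For Souslinness and $\chi$-freeness: given $\tau<\chi$, distinct $w_0,\dots,w_{\tau-1}\in T_{\beta_0}$, and a candidate maximal antichain $A$ of $\bigotimes_{i<\tau}\cone{w_i}$ predicted by $Z_\alpha$, apply clause~(3) of $\p_{14}^-(\kappa,\sq_\theta,\kappa,\{\S\cap E^\kappa_{\ge\eta}\})$ to the $\tau<\theta$ cofinal subsets of $\kappa$ arising from the coordinate-projections of $A$. This produces stationarily many $\alpha$ admitting a $\beta'\in C_\alpha$ at which the extend-and-seal step captures all $\tau$ coordinates simultaneously and enters $A$; since $\tau<\eta$, the step is also compatible with the $\eta$-completeness closure at $\alpha$.

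The main obstacle will be coordinating the ascent-path definition with the $\chi$-freeness sealing at the \emph{same} limit levels: the $\mathbf b^\alpha$-prescribed threads are rigid, so at a given $\alpha$ the sealing cannot freely redirect every thread. The crux is that $\tau<\chi\le\theta$ always leaves a coordinate $i_0<\theta$ outside the $\tau$-sized pattern governing the freeness obstruction, so the sealing action can operate on $f_z(i_0)$ without violating the $\mathbf b^\alpha$-coherence demanded elsewhere; the complementary inequality $\chi\le\eta$ is then what makes this sealing compatible with $\eta$-completeness at small-cofinality limits.
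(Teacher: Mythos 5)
Your high-level architecture (proxy-driven threading at limit levels, $\diamondsuit$-sealing of antichains in derived trees, full branch closure at small-cofinality limits for $\eta$-completeness) matches the paper's, but there are two genuine gaps that break the argument. First, the ascent path cannot be extracted post hoc from the given maps $\mathbf b^\alpha$ as you describe. The $\mathbf b^\alpha : (X\restriction C_\alpha)\to{}^\alpha\kappa\cup\{\emptyset\}$ are part of the \emph{hypothesis}, their nonempty values enumerate the level $X_\alpha$ of the \emph{given} tree $X$, and they are not something you get to constrain or re-index during the construction of $T$; in particular ``the level-$\beta$ predecessor in $T$ of $\mathbf b^\alpha(y^z_i)$'' is ill-defined, since $\mathbf b^\alpha(y^z_i)$ lives in ${}^\alpha\kappa$ as a lift of a node of $X$, not in $T$. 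The ascent-path functions $f_{x,u}$ (indexed by a tree node $x\in T$ and a node $u\in X$) must instead be built by recursion simultaneously with the levels of $T$, and the respecting maps are used in a far more targeted way: at a $\nacc$ point $\beta$ of $C_\alpha$, the guessed sealing datum encodes triples $(\iota,y,z)$ with $z\in X\restriction (C_\alpha\cap\beta^-)$, and one uses $u=\mathbf b^\alpha(z)$ --- guaranteed by clause~(\ref{respectingonto}) of Definition~\ref{respecting} to hit $X_\alpha$ when $\alpha\in\S$, and coherent down the club by clause~(\ref{respectcohere}) --- to anticipate which ascent-path function $f_{y,u}$ the thread will eventually belong to. Without that anticipation device, the sealing steps cannot act on the $f$-type nodes of $T_\alpha$, and the Souslin-tree argument (Subclaim-\ref{get-b}-style) fails at those nodes.

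Second, your plan for $\chi$-freeness --- find an index $i_0<\theta$ ``outside the $\tau$-sized pattern'' and redirect $f_z(i_0)$ --- conflates the $\tau$ coordinates of the derived tree $\bigotimes_{\xi<\tau}\cone{w_\xi}$ with the $\theta$ coordinates of the ascent path, and even ignoring that, it cannot succeed: to show $A\s\hat T$ is bounded, you must exhibit, for each $\vec v\in\hat T_\alpha$, a single $\vec y\in A$ with $\vec y(\xi)<_T \vec v(\xi)$ for \emph{every} $\xi<\tau$ simultaneously. Redirecting some unrelated ascent-path coordinate produces nothing above an element of $A$. This is precisely what the paper's $\free$ function is for: at a $\nacc$ point it ingests the whole $\tau$-vector $\vec b$ of thread-values, finds one $\vec y\in S_\beta$ majorizing $\vec b$ coordinatewise, and returns the extension $\vec z$ so that all $\tau$ threads are pushed through $\vec y$ together. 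Your proposal has no analogue of this joint extension step, and working coordinate-by-coordinate (or applying the third clause of the proxy principle only to ``$\tau<\theta$ cofinal subsets arising from coordinate-projections'' rather than to $\kappa$ many sealing-pattern sets $A_i$ indexed by all of $H_\kappa$ via $\phi$) does not recover it.
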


Using the above theorem, we for instance infer that assuming $V=L$, there exists an $\aleph_0$-free $\aleph_1$-complete  $\aleph_2$-Souslin tree
whose reduced $\omega$-power tree (by any uniform ultrafilter) is $\aleph_2$-Kurepa.\footnote{This is sharp, as the results of Section \ref{section2} entails that such a tree cannot be $\aleph_1$-free.}

Previously, Cummings \cite{MR1376756} gave a consistent construction of an $\aleph_1$-complete $\aleph_2$-Souslin tree with an $\mathcal F^{\bd}_{\aleph_0}$-ascent path.
Roughly speaking, the idea was to construct the levels of the tree $T_\alpha$ together with the portion of the ascent path $f_\alpha:\omega\rightarrow T_\alpha$
by recursion over $\alpha<\kappa$,\footnote{It is customary to identify an $\mathcal F$-ascent path with a sequence of the form $\langle f_\alpha\mid\alpha<\kappa\rangle$.}
making sure that any $t\in T_\alpha$ for $\alpha\in E^{\aleph_2}_{\aleph_1}$
extends some node from a guessed antichain in $\bigcup_{\beta<\alpha}T_\beta$. Now, as here we want the tree to be moreover $\aleph_0$-free,
we must construct the nodes of $T_\alpha$ in such a way that an analogous statement holds for \emph{a sequence of nodes} from $T_\alpha$, rather than just a single node.
This requires the construction of branches through $\bigcup_{\beta<\alpha}T_\beta$ to be aware of all other branches that are expected to be constructed and put inside $T_\alpha$,
including those that are there to insure the extensibility of the ascent path.

And there is one more obstruction. To make the corresponding reduced $\omega$-power an $\aleph_2$-Kurepa tree, instead of constructing a single ascent path,
we shall need to construct $\aleph_3$ many (sincerely distinct) $\mathcal F^{\bd}_{\aleph_0}$-ascent paths.
The latter makes the previous task (of anticipating future branches) even more challenging, and is resolved by assuming that the tree to-be-embedded is  $\p^-_{14}$-respecting.
The existence of a $\p^-_{14}(\aleph_2,{\sq},\aleph_2,\{E^{\aleph_2}_{\aleph_1}\})$-respecting $\aleph_2$-Kurepa tree was shown to follow from $V=L$
by  Rinot and Schindler in \cite{rinotschindler}.

\subsection{Organization of this paper} In Section~\ref{section2}, we provide the necessary notions and preliminaries concerning trees that are needed to understand the results of this paper.
In Section~\ref{section3}, we discuss the so-called \emph{microscopic approach to Souslin-tree constructions} that was developed in \cite{axioms},\cite{rinot23}
and serves as the framework for the tree constructions in this paper.
The reader is not expected to be familiar with~\cite{axioms},\cite{rinot23};
the relevant results from those papers will be stated where needed.

Sections~\ref{sectioncountablewidth} through~\ref{section:free} contain the heart of the paper:
the theorems involving the construction of Souslin trees with various properties.
This material is organized as a sequence of theorems of increasing complexity.
We start with Theorem \ref{ctbl-ascent-thm}, which is merely a rendition of a well-known construction from \cite{MR732661}.
Nevertheless, the proof of Theorem \ref{ctbl-ascent-thm}
will be given in great detail, as we shall return to components of this construction repeatedly throughout the whole paper.
Moreover, the breakdown is such that each new theorem builds on ideas already established in the previous theorems in this sequence,
and adds one or more new ideas in order to obtain a stronger result.
Therefore, when proceeding through any proof in these sections, the reader is expected to accept
the techniques used in the preceding proofs.

The next table exemplifies various types of $\kappa$-Souslin trees with an $(\mathcal F,X)$-ascent path constructed in this paper.
As one can see, the third parameter used in $\p_{14}(\kappa,\dots)$
increases in value from $1$ in Section~\ref{sectioncountablewidth}
(where we construct trees with countable ascent paths),
to an infinite cardinal $\theta < \kappa$ in Section~\ref{sectionwiderwidth}
(where we construct trees with ascent paths of width $\theta$),
to $\kappa$ in Section~\ref{section:free} (where we construct free trees).\footnote{The two-cardinal version of freeness is defined on page \pageref{deftwofree}.}

$$\begin{array}{c|l|c|c|c|c|c|c}
\text{Theorem} &\text{2nd}&\text{3rd}&\text{4th}&\text{Growth}&\mathcal F&(X,\subset)\text{ is}&\text{Freeness degree}\\ \hline
\ref{tree-ctbl-ascent}&\sqsubseteq&1&\{\kappa\}&\text{slim}&\mathcal F^{\fin}_{\aleph_0}&\text{slim}&\text{none}\\
\ref{tree-ctbl-ascent-complete-thm}&\sqsubseteq&1&\{E^{\kappa}_{\ge\chi}\}&\chi\text{-complete}&\mathcal F^{\fin}_{\aleph_0}&\text{arbitrary}&\text{none}\\
\ref{NEW-theta-ascent-thm}&\sqsubseteq&\theta&\{\kappa\}&\text{slim}&\mathcal F^{\fin}_{\theta}&\text{slim}&\text{none}\\
\ref{tree-theta-ascent-complete-thm}&\sqsubseteq_{\cf(\theta)}&\theta&\{E^\kappa_{\ge\chi}\}&\chi\text{-complete}&\mathcal F^{\bd}_{\theta}&\text{arbitrary}&\text{none}\\
\ref{theta-ascent-free-thm1}&\sqsubseteq&\kappa&\{E^{\kappa}_{\ge\chi}\}&\text{slim}&\mathcal F^{\fin}_{\theta}&\cong(\kappa,\in)&(\chi,\theta^+)\\
\ref{theta-ascent-free-thm2}&\sqsubseteq&\kappa&\{E^{\kappa}_{\ge\chi}\}&\text{slim}&\text{---}&\text{---}&\chi\\
\ref{free-with-u-ascent}&\sqsubseteq_{\cf(\theta)}&\kappa&\{E^{\kappa}_{\ge\chi}\}&\chi\text{-complete}&\mathcal F^{\bd}_\theta&\text{respecting}&\chi\\
\ref{thm65}&\sqsubseteq_{\cf(\theta)}&\kappa&\{E^{\kappa}_{\ge\chi}\}&\chi\text{-complete}&\mathcal F^{\bd}_{\theta}&\cong(\kappa,\in)&\cf(\theta)\text{ and }(\chi,\theta^+)
\end{array}$$
\smallskip

The paper is concluded with an Appendix section, where we inspect a natural process that produces,
for any given $\kappa$-tree,
a corresponding downward-closed subtree of ${}^{<\kappa} 2$
sharing many properties of the original one.
This allows us to focus on binary Hausdorff trees when proving various theorems,
without losing any generality.

\subsection{Sample corollaries} To give an idea of the flavor of consequences the results of this paper entail, we state here a few sample corollaries.
While the constructions of $\kappa$-Souslin in this paper apply to arbitrary regular uncountable cardinals $\kappa$,
in the following list, we shall concentrate only on $\lambda^+$-Souslin trees, mostly because we are unaware of a reasonable definition of an almost Souslin $\kappa$-tree for inaccessible $\kappa$ (let alone Mahlo).

All undefined terms may be found in Sections~\ref{section2} and~\ref{section3} below.

\begin{corollary} Suppose  $\square_\lambda+\ch_\lambda+\lambda^{<\lambda}=\lambda$ holds for a given regular uncountable cardinal $\lambda$.

Then after forcing to add a single Cohen subset to $\lambda$,
there exists a $\lambda$-complete $\lambda^+$-Souslin tree with an $\mathcal F^{\fin}_{\lambda}$-ascent path.
\end{corollary}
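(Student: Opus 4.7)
The plan is to work in $V[G]$, where $G$ is $\text{Add}(\lambda,1)$-generic over $V$, to establish a sufficiently strong instance of $\p_{14}$ there, and then to invoke one of the constructions of this paper. Since $\lambda^{<\lambda}=\lambda$, the poset $\mathbb P := \text{Add}(\lambda,1)$ is $\lambda$-closed, of cardinality $\lambda^+$, and has the $\lambda^+$-cc; consequently it preserves cardinals, cofinalities, $\square_\lambda$, $\ch_\lambda$, and $\lambda^{<\lambda}=\lambda$.

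By Shelah's theorem, $\ch_\lambda$ with $\lambda$ uncountable regular yields $\diamondsuit(E^{\lambda^+}_\mu)$ in $V$ for every regular $\mu<\lambda$, but $\diamondsuit(E^{\lambda^+}_\lambda)$ is not available from the ground-model hypotheses. The role of the Cohen forcing is precisely to supply this missing piece: enumerating in $V$ the nice $\mathbb P$-names for subsets of $\lambda^+$ as $\langle \dot X_\alpha\mid \alpha<\lambda^+\rangle$, fixing for each $\alpha\in E^{\lambda^+}_\lambda$ a cofinal sequence of order type $\lambda$, and decoding along it, via the generic Cohen subset $c\subseteq\lambda$, conditions forcing values of $\dot X_\alpha\cap\alpha$, one reads off in $V[G]$ a $\diamondsuit(E^{\lambda^+}_\lambda)$-sequence; the $\lambda$-closure of $\mathbb P$ ensures that every $V[G]$-subset of $\lambda^+$ has such a nice name, so the guessing covers all of $V[G]$. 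Combining the preserved $\square_\lambda$-sequence with the now-complete diamond via the standard recipe of \cite{axioms,rinot23} then yields $\p_{14}(\lambda^+,\sqsubseteq,\lambda,\{E^{\lambda^+}_\lambda\})$ in $V[G]$.

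With this proxy principle in hand, I would invoke the $\chi$-complete strengthening of Theorem~\ref{NEW-theta-ascent-thm} with $\theta=\chi=\lambda$ and $\kappa=\lambda^+$. Concentrating the fourth parameter on $E^{\lambda^+}_\lambda$ confines all nontrivial microscopic action to levels of cofinality $\lambda$, so that levels of smaller cofinality are extended by the default \defaultaction\ action and automatically inherit $\lambda$-completeness; meanwhile, because the third parameter matches the ascent-path width and each microscopic sealing step commits to agreement on all but finitely many coordinates at once, the resulting ascent path lies in $\mathcal F^{\fin}_\lambda$ rather than the weaker $\mathcal F^{\bd}_\lambda$. The main obstacle, beyond the forcing-theoretic extraction of $\diamondsuit(E^{\lambda^+}_\lambda)$ from the single Cohen subset, is arranging the microscopic sealing so that the finitely-many-exception property holds simultaneously with $\lambda$-completeness and Souslinity; the subsequent tree construction is then a direct appeal to the paper's framework.
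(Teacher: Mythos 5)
Your final step is fine: from $\p_{14}(\lambda^+,\sqsubseteq,\lambda,\{E^{\lambda^+}_\lambda\})$ one invokes Theorem~\ref{tree-theta-ascent-complete-thm} with $U=\bigcup_{\alpha<\lambda^+}{}^\alpha 1$, $\nu=\aleph_0$, $\theta=\chi=\lambda$, and $\kappa=\lambda^+$ (note the arithmetic hypothesis $\mu^{<\lambda}\le\lambda$ for all $\mu\le\lambda$ holds since $\lambda^{<\lambda}=\lambda$), obtaining a prolific $\lambda$-complete $\lambda^+$-Souslin tree with an injective $(\mathcal F^{\aleph_0}_\lambda,U)$-ascent path, i.e.\ with an $\mathcal F^{\fin}_\lambda$-ascent path. (The paper cites Corollary~\ref{cor52}, which produces only a slim tree; the complete counterpart is Theorem~\ref{tree-theta-ascent-complete-thm} or Corollary~\ref{theta-ascent-complete-thm} with $\nu=\aleph_0$, which is what you, correctly, reach for.)

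The genuine gap is in your middle step, where you claim that ``combining the preserved $\square_\lambda$-sequence with the now-complete diamond via the standard recipe \ldots yields $\p_{14}(\lambda^+,\sqsubseteq,\lambda,\{E^{\lambda^+}_\lambda\})$ in $V[G]$.'' No such recipe is available. The two separate facts you would have available in $V[G]$ are: (i)~$\square_\lambda+\ch_\lambda$ entails $\p_{14}(\lambda^+,{\sq},\theta,\{E^{\lambda^+}_\chi\mid\chi<\lambda\})$ for $\theta<\lambda$ (hitting only at cofinalities $<\lambda$), and (ii)~$\diamondsuit(E^{\lambda^+}_\lambda)$ entails $\p_{14}(\lambda^+,{\sql},\lambda^+,\{E^{\lambda^+}_\lambda\})$ (as used in the proof of Corollary~\ref{cor62}) --- with the weakened relation $\sql$, which relaxes $\sq$-coherence at accumulation points of cofinality $<\lambda$. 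Neither gives $\sq$-coherence together with hitting on $E^{\lambda^+}_\lambda$. This distinction is not cosmetic: if you only have $\sql$ and feed it to Theorem~\ref{tree-theta-ascent-complete-thm}, you must take $\nu=\lambda$, and the resulting ascent path lives in $\mathcal F^\lambda_\lambda=\mathcal F^{\bd}_\lambda$, not $\mathcal F^{\fin}_\lambda$. It is precisely the upgrade from $\sql$ to $\sq$ at cofinality-$\lambda$ levels --- arranging that $C_\alpha$ end-extends all $C_{\bar\alpha}$ for $\bar\alpha\in\acc(C_\alpha)$ of small cofinality while simultaneously hitting the prescribed $\theta$-many colored sets via $\suc_\omega$ --- that the Cohen forcing is doing. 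The result invoked by the paper (from~\cite{axioms}) is a direct forcing theorem: from the $V$-square sequence and the generic Cohen subset $c\subseteq\lambda$ one builds, in $V[G]$, a witness to $\p_{14}^-(\lambda^+,{\sq},\lambda^+,\{E^{\lambda^+}_\lambda\})$. It is not a composition of ``Cohen adds diamond'' with a ZFC implication $\square_\lambda+\diamondsuit(E^{\lambda^+}_\lambda)\Rightarrow\p_{14}(\lambda^+,\sq,\ldots)$; indeed, if the latter implication held, the Cohen forcing could be replaced by any way of obtaining $\diamondsuit(E^{\lambda^+}_\lambda)$, and the corollary would have been stated without any forcing at all.

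Two smaller points. First, $\left|\text{Add}(\lambda,1)\right|=\lambda^{<\lambda}=\lambda$, not $\lambda^+$. Second, your account of how the diamond sequence is read off the generic --- enumerating ``the nice $\mathbb P$-names for subsets of $\lambda^+$'' in type $\lambda^+$ --- does not work as stated, since under $\ch_\lambda$ there are $2^{\lambda^+}\ge\lambda^{++}$ such names; the correct folklore argument guesses initial segments $X\cap\alpha$ via nice names for subsets of $\alpha$ of the appropriate size. But since you correctly identify this as a known theorem of Shelah, the imprecision there is not the real issue; the unjustified leap to $\p_{14}(\lambda^+,\sq,\lambda,\{E^{\lambda^+}_\lambda\})$ is.
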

\begin{proof} By \cite{axioms},
after forcing to add a single Cohen subset  to an uncountable cardinal $\lambda=\lambda^{<\lambda}$ over a model of $\square_\lambda+\ch_\lambda$,
$\p_{14}(\lambda^+,{\sq},\lambda^+,\{E^{\lambda^+}_{\lambda}\})$ holds.
The conclusion now follows from Corollary \ref{cor52} below.
\end{proof}
By \cite{MR0314621}, every $\aleph_1$-Aronszajn tree is specializable.
The next example is of a $\lambda^+$-Souslin tree that cannot be specialized without reducing it to the scenario of \cite{MR0314621}.

\begin{corollary} Suppose $\square_\lambda+\ch_\lambda$ holds for a given singular cardinal $\lambda$ of countable cofinality.

Then there exists a $\lambda^+$-Souslin tree $(T,<_T)$ satisfying the following.
If $W$ is a ZFC extension of the universe  in which $(T,<_T)$ is a special $|\lambda|^+$-tree, then $W\models|\lambda|=\aleph_0$.
\end{corollary}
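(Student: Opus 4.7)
\emph{Proof proposal.} The plan is to build a $\lambda^+$-Souslin tree $T$ admitting an $\mathcal F^{\fin}_\lambda$-ascent path, and then to combine the projection property of $\mathcal F^{\fin}$ with the lemma quoted after \cite{MR964870} to block any ZFC-extension from specializing $T$ without first collapsing $\lambda$ to $\aleph_0$.

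For the construction, the starting point is $\square_\lambda+\ch_\lambda$ with $\lambda$ singular of countable cofinality. Here $\ch_\lambda$ gives $\diamondsuit(\lambda^+)$ by Shelah's theorem, while the $\square_\lambda$-sequence supplies the $\sq$-coherent $C$-sequence needed for the proxy principle; together, these realize $\p_{14}(\lambda^+,\sq,\lambda,\{\lambda^+\})$ via the constructions of \cite{axioms} and \cite{rinot23}. Feeding this instance into Theorem~\ref{NEW-theta-ascent-thm} (the third row of the summary table) with $\kappa=\lambda^+$ and $\theta=\lambda$ then yields a slim $\lambda^+$-Souslin tree $T$ equipped with an $\mathcal F^{\fin}_\lambda$-ascent path $\vec f=\langle f_\alpha:\lambda\to T_\alpha\mid\alpha<\lambda^+\rangle$.

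For the conclusion, let $W\supseteq V$ be any ZFC-extension in which $(T,<_T)$ is a special $|\lambda|^{+W}$-tree. The ascent-path property is absolute, and for every infinite $W$-cardinal $\mu\le\lambda$ the restricted sequence $\langle f_\alpha\restriction\mu\mid\alpha<\lambda^+\rangle$ is an $\mathcal F^{\bd}_\mu$-ascent path through $T$ in $W$: cofiniteness in $\lambda$ of $\{i<\lambda\mid f_\alpha(i)<_T f_\beta(i)\}$ at once entails cofiniteness in $\mu$ of its intersection with $\mu$. Applying inside $W$ the quoted lemma of \cite{MR964870} to the special tree $T$ together with each such ascent path then yields $\cf^W(|\lambda|^W)=\cf^W(\mu)$ for every infinite $W$-cardinal $\mu\le\lambda$. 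Taking $\mu=\aleph_0$ forces $\cf^W(|\lambda|^W)=\aleph_0$; if we had $|\lambda|^W>\aleph_0$, then $\aleph_1^W\le|\lambda|^W\le\lambda$ as ordinals, so choosing $\mu=\aleph_1^W$ would yield $\cf^W(|\lambda|^W)=\aleph_1^W>\aleph_0$, a contradiction. Hence $W\models|\lambda|=\aleph_0$. The main obstacle is the very first step---verifying that Theorem~\ref{NEW-theta-ascent-thm} is applicable from $\square_\lambda+\ch_\lambda$ with the ascent-path width already set to $\theta=\lambda$; once $T$ carries such a wide ascent path, the passage to $W$ and the cardinal-collapse conclusion are routine absoluteness manipulations.
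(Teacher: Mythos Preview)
Your proposal is correct. The construction step is identical to the paper's: both derive $\p_{14}(\lambda^+,{\sq},\lambda,\{\lambda^+\})$ from $\square_\lambda+\ch_\lambda$ via \cite{axioms} and then invoke Corollary~\ref{cor52} (equivalently Theorem~\ref{NEW-theta-ascent-thm} with $\nu=\aleph_0$) to obtain a $\lambda^+$-Souslin tree with an $\mathcal F^{\fin}_\lambda$-ascent path.

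The derivation of the contradiction in $W$ differs, and your route is in fact cleaner. The paper applies the lemma from \cite{MR964870} only once, to $\mu=\aleph_1^W$, obtaining $\cf^W(|\lambda|)=\aleph_1^W$; it then needs an additional ingredient---the result from \cite[p.~440]{MR675955} that $\square_\lambda$ together with preservation of $\lambda^+$ forces $\cf^W(|\lambda|)=\cf^V(\lambda)=\aleph_0$---to reach the contradiction. You instead apply the \cite{MR964870} lemma \emph{twice}, once with $\mu=\aleph_0$ and once with $\mu=\aleph_1^W$, reading off $\aleph_0=\cf^W(|\lambda|)=\aleph_1^W$ directly. This avoids the appeal to \cite{MR675955} altogether, so your argument uses strictly less machinery. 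The paper's route, on the other hand, makes explicit the role of $\square_\lambda$ in controlling the cofinality of $\lambda$ in outer models, which is of independent interest but not needed for the bare corollary.
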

\begin{proof} By \cite{axioms}, for every singular cardinal $\lambda$, $\square_\lambda+\ch_\lambda$ entails $\p_{14}(\lambda^+,{\sq},\lambda,\{\lambda^+\})$.
Then, by Corollary \ref{cor52} below, there exists a $\lambda^+$-Souslin tree $(T,<_T)$ with an $\mathcal F^{\fin}_\lambda$-ascent path.

Towards a contradiction, suppose that $W$ is a ZFC extension of our universe $V$, satisfying:
\begin{itemize}
\item[(i)]  $W\models (T,<_T)\text{ is a special }|\lambda|^+\text{-tree}$;
\item[(ii)] $W\models|\lambda|>\aleph_0$.
\end{itemize}
By (i), $\lambda^+$ was not collapsed.
By (ii), pick a cardinal $\mu\le\lambda$ in $V$ such that $W\models \mu=\aleph_1$.
Since $\mathcal F^{\fin}_{|\lambda|}$ projects to $\mathcal F^{\bd}_\mu$, we know that
\begin{itemize}
\item[(iii)] $W\models (T,<_T)\text{ admits an }\mathcal F^{\bd}_\mu\text{-ascent path}$.
\end{itemize}
By (i),(iii) and \cite{MR964870}, then, we must have $W\models \cf(\mu)=\cf(|\lambda|)$.
As $\lambda^+$ was not collapsed, and $V\models\square_\lambda$, we get from \cite[Page 440]{MR675955} that $W\models\cf(|\lambda|)=\cf(\lambda)$. But $V\models\cf(\lambda)=\aleph_0$, and so
$$W\models\aleph_1=\cf(\mu)=\cf(|\lambda|)=\cf(\lambda)=\aleph_0.$$ This is a contradiction.
\end{proof}

\begin{corollary}\label{cor62} Suppose  $\diamondsuit(E^{\lambda^+}_{\lambda})+\gch$ holds for a given regular uncountable cardinal $\lambda$.

Then there exists a $\lambda$-complete $\lambda^+$-Souslin tree $(T,<_T)$,
satisfying the following. For every infinite $\mu<\lambda$, there exists a uniform ultrafilter $\mathcal U$ over $\mu$ such that  $T^\mu/\mathcal U$ is $\lambda^+$-Aronszajn and almost Souslin.
\end{corollary}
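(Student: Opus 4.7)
The plan is to split the work into one construction and one reduction: produce a $\lambda$-complete, $\lambda$-free $\lambda^+$-Souslin tree $T$ from the hypotheses, and then apply Lemma~\ref{lemma28} once per infinite $\mu<\lambda$. All the combinatorial weight sits in these two ingredients; the corollary itself is a two-line composition once they are in hand.

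For the construction step, observe that $\gch$ implies $\lambda^{<\lambda}=\lambda$ because $\lambda$ is regular. By results of \cite{axioms}, $\diamondsuit(E^{\lambda^+}_{\lambda})+\gch$ then suffices to secure an appropriate instance of the proxy principle, e.g.\ $\p_{14}(\lambda^+,{\sq},\lambda^+,\{E^{\lambda^+}_{\lambda}\})$, which in particular implies the weaker $\p_{14}(\lambda^+,{\sq_\lambda},\lambda^+,\{E^{\lambda^+}_{\geq\lambda}\})$ (and note $E^{\lambda^+}_{\geq\lambda}=E^{\lambda^+}_\lambda$ since $\lambda$ is regular). Feeding this into Theorem~\ref{thm65} with $\kappa=\lambda^+$, $\chi=\lambda$, and $\theta=\lambda$, and taking $X$ to be the canonical copy of $(\kappa,\in)$, yields a $\lambda$-complete $\lambda^+$-Souslin tree $T$ of freeness degree $\cf(\theta)=\lambda$; in particular, $T$ is $\tau$-free for every nonzero $\tau<\lambda$. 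This step is simply the proxy-principle rendition, within the present framework, of the generalization of Jensen's $\diamondsuit(E^{\lambda^+}_{\lambda})$-construction mentioned in the introduction.

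For the reduction step, fix any infinite cardinal $\mu<\lambda$. Since $\mu^+\leq\lambda$ and $T$ is $\lambda$-free, $T$ is in particular $\mu^+$-free, so Lemma~\ref{lemma28} supplies a uniform ultrafilter $\mathcal U$ over $\mu$ such that $T^\mu/\mathcal U$ is Aronszajn and almost Souslin. Under $\gch$ we have $\lambda^\mu=\lambda$, hence $T^\mu/\mathcal U$ is genuinely a $\lambda^+$-tree, so ``Aronszajn'' here means $\lambda^+$-Aronszajn, as required. Since $\mu$ was arbitrary, the corollary follows.

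There is no serious obstacle to overcome: both the construction of a $\lambda$-complete, $\lambda$-free $\lambda^+$-Souslin tree and the passage from freeness to almost-Souslinness of the reduced power are carried out elsewhere in the paper. The only decision points are (i) choosing a construction theorem from Section~\ref{section:free}---one could equally invoke Theorem~\ref{free-with-u-ascent} with $X\cong(\kappa,\in)$, which is automatically respecting---and (ii) verifying that $\diamondsuit(E^{\lambda^+}_{\lambda})+\gch$ entails the proxy axiom consumed by that theorem, which is a standard citation to \cite{axioms}.
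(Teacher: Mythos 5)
Your high-level decomposition is the right one and matches the paper's: build a Hausdorff $\lambda$-complete $\lambda$-free $\lambda^+$-Souslin tree, then for each infinite $\mu<\lambda$ apply Lemmas~\ref{lemma25} and~\ref{lemma28} with a selective ultrafilter over $\mu$ supplied by $\gch$. The reduction step is fine. But the construction step, as you have written it, does not go through.

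First, your claim that $\diamondsuit(E^{\lambda^+}_{\lambda})+\gch$ yields $\p_{14}(\lambda^+,{\sq},\lambda^+,\{E^{\lambda^+}_{\lambda}\})$ overstates what \cite{axioms} is cited for: the paper only invokes the weaker $\p_{14}(\lambda^+,{\sql},\lambda^+,\{E^{\lambda^+}_{\lambda}\})$, where $\sql$ (that is, $\sqn$ with $\nu=\lambda$) is genuinely weaker than $\sq$; in fact $\diamondsuit(E^{\lambda^+}_\lambda)$ alone carries no $\square_\lambda$-like coherence, so there is no reason to expect the $\sq$ version. Second, the invocation of Theorem~\ref{thm65} with $\kappa=\lambda^+$, $\chi=\theta=\lambda$ violates that theorem's side condition $\theta^+<\chi$, and even if one relaxes to $\theta<\lambda$ the output is only $\nu$-free (with $\nu<\theta^+$) together with $(\lambda,\theta^+)$-free, which does \emph{not} yield $(\mu^+,\mu)$-freeness at $\mu=\theta$, so Lemma~\ref{lemma28} would not apply there. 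Third, the alternative via Theorem~\ref{free-with-u-ascent} with $U\cong(\kappa,\in)$ requires $\p_{14}^-(\kappa,{\sq_\nu},\kappa,\cdot)$, and $\sq_\nu$ is a \emph{different} relation from $\sqn$; neither implies the other, so the available $\sql$ instance does not feed it. The paper sidesteps all of this by taking the slim $\chi$-free construction of Theorem~\ref{theta-ascent-free-thm2} and, via the remark opening Subsection~\ref{subsectioncomplete}, converting it into a $\chi$-complete construction that runs off the $\sqleft{\chi}$-relation rather than $\sq$ or $\sq_\nu$ --- that conversion is the step your proposal is missing.
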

\begin{proof} By Theorem \ref{theta-ascent-free-thm2} below, $\p_{14}(\lambda^+,{\sq},\lambda^+,\{E^{\lambda^+}_{\lambda}\})+\lambda^{<\lambda}=\lambda$ entails
a \emph{slim}, Hausdorff,  $\lambda$-free $\lambda^+$-Souslin tree.
By the explanation opening subsection \ref{subsectioncomplete}, the same proof shows that $\p_{14}(\lambda^+,{\sql},\lambda^+,\{E^{\lambda^+}_{\lambda}\})+\lambda^{<\lambda}=\lambda$
entails a \emph{$\lambda$-complete},  Hausdorff,   $\lambda$-free, $\lambda^+$-Souslin tree.
By \cite{axioms}, for $\lambda$ uncountable, $\diamondsuit(E^{\lambda^+}_\lambda)$ entails $\p_{14}(\lambda^+,{\sql},\lambda^+,\{E^{\lambda^+}_{\lambda}\})$.
Thus, let $(T,<_T)$ be a Hausdorff  $\lambda$-complete  $\lambda$-free  $\lambda^+$-Souslin tree.
Now, given an infinite $\mu<\lambda$,
we utilize GCH to pick a selective ultrafilter $\mathcal U$ over $\mu$.
Then, by Lemmas \ref{lemma25} and \ref{lemma28} below,  $T^\mu/\mathcal U$ is $\lambda^+$-Aronszajn and almost Souslin.
\end{proof}

We remark that assuming $\sd_{\lambda}$ (a strong combination of the principles $\square_\lambda$ and $\diamondsuit(\lambda^+)$), one can construct a \emph{slim} $\lambda^+$-Souslin tree
whose reduced $\mu$-power by \emph{any} uniform ultrafilter over any cardinal $\mu$ such that $\lambda^\mu=\lambda$ is $\lambda^+$-Aronszajn and almost Souslin.
The construction requires additional ideas, which we feel are out of the scope of this already-lengthy paper.

\begin{corollary}
Suppose $\square_\lambda + \diamondsuit^*(\lambda^+) + \lambda^\theta=\lambda$ holds for given infinite cardinals $\cf(\theta)=\theta<\lambda$.

Then there exists a $\lambda^+$-Souslin tree whose reduced $\mu$-power (by any uniform ultrafilter) is $\lambda^+$-Kurepa, for all infinite $\mu\le\theta$.
\end{corollary}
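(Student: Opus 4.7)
The plan is to equip a slim $\lambda^+$-Kurepa tree $X$ supplied by $\diamondsuit^*(\lambda^+)$ with an $(\mathcal F^{\fin}_\theta, X)$-ascent path through a carefully constructed $\lambda^+$-Souslin tree $T$; the key observation is that such an ascent path automatically restricts to an $(\mathcal F^{\bd}_\mu, X)$-ascent path for each infinite $\mu \le \theta$, forcing every reduced $\mu$-power of $T$ to contain a copy of $X$ and hence to be $\lambda^+$-Kurepa.

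For the first step, I would verify that the hypotheses $\square_\lambda + \diamondsuit^*(\lambda^+) + \lambda^\theta=\lambda$ assemble into $\p_{14}(\lambda^+, {\sq}, \theta, \{\lambda^+\})$ and supply a slim downward-closed $\lambda^+$-Kurepa tree. The $\square_\lambda$-sequence provides the coherent clubs witnessing the $\sq$-relation; $\diamondsuit^*(\lambda^+)$ implies $\diamondsuit(\lambda^+)$ (so Definition~\ref{def16} can be satisfied) and is strong enough for the stationary guessing required; and $\lambda^\theta=\lambda$ delivers the cardinal arithmetic needed to guess $\theta$-many cofinal subsets of $\lambda^+$ simultaneously. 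In parallel, $\diamondsuit^*(\lambda^+)$ yields a slim $\lambda^+$-Kurepa tree $X$ by the classical Jensen-style construction, and by the Appendix we may take $X\s{}^{<\lambda^+}\lambda^+$ downward closed under $\subset$.

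For the core construction, I would apply Theorem~\ref{NEW-theta-ascent-thm}, whose inputs (read off from the table above) are precisely $\p_{14}(\kappa,{\sq},\theta,\{\kappa\})$ together with a slim downward-closed $\kappa$-tree $X$; the conclusion is a slim $\kappa$-Souslin tree $T$ admitting an $(\mathcal F^{\fin}_\theta, X)$-ascent path $\vec f = \langle f_x \mid x \in X\rangle$. With this $T$ and $\vec f$ in hand, fix infinite $\mu \le \theta$ and a uniform ultrafilter $\mathcal U$ on $\mu$. Since any cofinite subset of $\theta$ meets $\mu$ in a cofinite (hence co-bounded) subset of $\mu$, the restriction $\vec f \restriction \mu := \langle f_x \restriction \mu \mid x \in X\rangle$ is an $(\mathcal F^{\bd}_\mu, X)$-ascent path through $T$. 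By the observation immediately following Definition~\ref{respecting}'s predecessor in the introduction, $T^\mu/\mathcal U$ contains a copy of $(X, \subset)$; since $\mathcal U$ extends the Fr\'echet filter on $\mu$, distinct nodes of $X$ have distinct images in $T^\mu/\mathcal U$, so the $\lambda^{++}$ cofinal branches of $X$ yield $\lambda^{++}$ cofinal branches in $T^\mu/\mathcal U$, making it $\lambda^+$-Kurepa.

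The main obstacle I anticipate lies in the first step: pinpointing the exact calibration of $\p_{14}$ — in particular, the stationary parameter $\mathcal S = \{\lambda^+\}$ and the third parameter $\theta$ — that follows from $\square_\lambda + \diamondsuit^*(\lambda^+) + \lambda^\theta=\lambda$, especially in the singular-$\lambda$ case, which requires tracing through the relevant consistency results in~\cite{axioms},\cite{rinot23}. A subtler point in the core construction is that Theorem~\ref{NEW-theta-ascent-thm} must deliver an $(\mathcal F^{\fin}_\theta, X)$-ascent path rather than a merely $(\mathcal F^{\bd}_\theta, X)$-one, since only the finite-complement version projects uniformly to all $\mu \le \theta$; fortunately, the table in the introduction confirms that $\mathcal F^{\fin}_\theta$ is what is produced.
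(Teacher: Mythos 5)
Your overall strategy is the same as the paper's: build a $\lambda^+$-Souslin tree admitting an injective $(\mathcal{F}^{\fin}_\theta, X)$-ascent path through a downward-closed $\lambda^+$-Kurepa tree $X$, then project $\mathcal{F}^{\fin}_\theta$ to $\mathcal{F}^{\bd}_\mu$ for each infinite $\mu\le\theta$. Where you diverge is in the choice of construction theorem: you invoke Theorem~\ref{NEW-theta-ascent-thm} (the slim version), which requires $X$ itself to be \emph{slim}, while the paper instead invokes Theorem~\ref{tree-theta-ascent-complete-thm} (the complete version), which accepts an arbitrary downward-closed $\kappa$-tree at the cost of requiring $\lambda^{<\chi}<\kappa$ for all $\lambda<\kappa$ --- a hypothesis supplied here precisely by $\lambda^\theta=\lambda$ with $\chi=\theta$. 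Your route is viable because $\diamondsuit^+(\lambda^+)$ does yield a \emph{slim} Kurepa tree (a Kurepa family $\mathcal F$ with $|\{A\cap\alpha : A\in\mathcal F\}|\le|\alpha|$, whose tree of characteristic-function restrictions is slim and downward-closed), but this is an extra verification that the paper sidesteps by using the complete variant. Two finer points you should tighten: (i) you attribute the usage of $\lambda^\theta=\lambda$ to ``guessing $\theta$-many cofinal subsets simultaneously,'' but the third parameter $\theta$ of the proxy comes from $\square_\lambda+\ch_\lambda$ alone; the arithmetic is actually needed to pass from $\diamondsuit^*(\lambda^+)$ to $\diamondsuit^+(\lambda^+)$ (via \cite{MR2781096}, using $\lambda^{\aleph_0}=\lambda$), to feed the $\lambda^{<\chi}<\kappa$ hypothesis of Theorem~\ref{tree-theta-ascent-complete-thm}, and to ensure each $T^\mu/\mathcal U$ is again a $\lambda^+$-tree; and (ii) $\diamondsuit^*(\lambda^+)$ alone is not known to give a $\lambda^+$-Kurepa tree --- the intermediate $\diamondsuit^+(\lambda^+)$ (which is what actually supplies the Kurepa tree) should be made explicit rather than appealed to as ``the classical Jensen-style construction.''
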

\begin{proof} By \cite{MR2781096}, $\diamondsuit^*(\lambda^+)$ together with $\lambda^{\aleph_0}=\lambda$ entails $\diamondsuit^+(\lambda^+)$,
which in turn entails a $\lambda^+$-Kurepa tree, $(U,\subset)$, where $U\s{}^{<\lambda^+}2$ is downward-closed.
By \cite{axioms}, $\square_\lambda+\diamondsuit(\lambda^+)$ entails $\p_{14}(\lambda^+,{\sq},\theta,\{E^{\lambda^+}_{\ge\theta}\})$.
Now, appealing to Theorem~\ref{tree-theta-ascent-complete-thm} below, with $(\nu,\theta,\chi,\kappa)=(\aleph_0,\theta,\theta,\lambda^+)$,
we obtain a $\theta$-complete  $\lambda^+$-Souslin tree $(T,<_T)$
with an injective $(\mathcal F^{\fin}_{\theta},U)$-ascent path. In particular, for every infinite cardinal $\mu\le\theta$,
since $\mathcal F^{\fin}_{\theta}$ projects to $\mathcal F^{\bd}_{\mu}$,
the reduced $\mu$-power of $T$ (by any uniform ultrafilter) would contain a copy of $(U,\subset)$,
hence, is  $\lambda^+$-Kurepa.
\end{proof}

\begin{corollary} If $\sd_{\lambda}^++\gch$ holds, then for every regular cardinal $\theta<\cf(\lambda)$,
there exists a $\lambda^+$-Souslin tree $(T,<_T)$, satisfying the following.
\begin{itemize}
\item If $\aleph_0\le\mu<\theta$, then there exists a uniform ultrafilter $\mathcal U$ over $\mu$ for which $T^\mu/\mathcal U$ is $\lambda^+$-Aronszajn and almost Souslin;
\item $T^{\theta}/\mathcal U$ is $\lambda^+$-Kurepa and not almost Souslin  for every uniform ultrafilter $\mathcal U$ over $\theta$.
\end{itemize}
\end{corollary}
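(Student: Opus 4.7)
The strategy is to apply Theorem~\ref{free-with-u-ascent} to a carefully chosen $\p^-_{14}$-respecting $\lambda^+$-Kurepa tree, and then read off both bullets from the structural inheritance under the embedding of $X$ into the reduced $\theta$-power.

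First, using $\sd_\lambda^+$, I would produce a downward-closed subtree $X\subseteq{}^{<\lambda^+}\lambda^+$ that is simultaneously (i) $\lambda^+$-Kurepa, (ii) $\p_{14}^-(\lambda^+,\sqsubseteq,\lambda^+,\{E^{\lambda^+}_{\ge\theta}\})$-respecting, and (iii) contains an antichain $A\subseteq X$ whose height-set is stationary in $E^{\lambda^+}_{\cf(\lambda)}$. Property (ii) is obtained by adapting the top-down construction of Rinot and Schindler from \cite{rinotschindler}, whose ambient proxy principle $\p_{14}(\lambda^+,\sqsubseteq,\lambda^+,\{E^{\lambda^+}_{\ge\theta}\})$ is supplied by $\sd_\lambda$ via \cite{axioms}; property (iii) is the new content requiring the ``$+$'' strengthening, which provides enough diamond to diagonalize, at levels in $E^{\lambda^+}_{\cf(\lambda)}$, against the possibility that all antichain height-sets concentrate off of $E^{\lambda^+}_{\cf(\lambda)}$. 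Note that $\theta<\cf(\lambda)$ yields $E^{\lambda^+}_{\cf(\lambda)}\subseteq E^{\lambda^+}_{\ge\theta}$, so (iii) is compatible with the stationary set supporting $\mathcal{S}$ in (ii).

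Next, since $\gch$ holds and $\theta<\cf(\lambda)\le\lambda$, we have $\lambda^{<\theta}=\lambda<\lambda^+$. Setting $\eta:=\theta$ (so $\chi=\min\{\eta,\theta\}=\theta$), Theorem~\ref{free-with-u-ascent} applied to $X$ yields a $\theta$-free, $\theta$-complete $\lambda^+$-Souslin tree $T$ carrying an $(\mathcal F^{\bd}_\theta,X)$-ascent path $\vec f=\langle f_x\mid x\in X\rangle$. For the first bullet, fix $\aleph_0\le\mu<\theta$; then $T$ is $\mu^+$-free. Using $\gch$ to pick a selective uniform ultrafilter $\mathcal U$ over $\mu$, Lemmas~\ref{lemma25} and~\ref{lemma28} yield that $T^\mu/\mathcal U$ is $\lambda^+$-Aronszajn and almost Souslin, exactly as in Corollary~\ref{cor62}.

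For the second bullet, fix any uniform ultrafilter $\mathcal U$ over the regular cardinal $\theta$; uniformity forces $\mathcal F^{\bd}_\theta\subseteq\mathcal U$. Then $\varphi:x\mapsto[f_x]_\mathcal U$ is a level- and order-preserving embedding of $(X,\subset)$ into $T^\theta/\mathcal U$, so $T^\theta/\mathcal U$ inherits the $\ge\lambda^{++}$ cofinal branches of $X$ and is $\lambda^+$-Kurepa. Moreover, $\varphi[A]$ is an antichain of $T^\theta/\mathcal U$ with the same height-set as $A$, which is stationary in $E^{\lambda^+}_{\cf(\lambda)}$, so $T^\theta/\mathcal U$ is not almost Souslin. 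The hard part will be the first step: interleaving the respecting requirement, the Kurepa cardinality count, and the stationary antichain inside a single top-down recursion. This is precisely what the ``$+$'' component of $\sd_\lambda^+$ is tailored to supply, in the spirit of the remark following Corollary~\ref{cor62} regarding $\sd_\lambda$-based slim constructions.
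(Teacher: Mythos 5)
Your proposal matches the paper's proof in all the substantive steps: obtain a $\p^-_{14}$-respecting tree $X$ from $\sd_\lambda^+$, apply Theorem~\ref{free-with-u-ascent} with $\nu=\theta$, and read off the two bullets from the $\theta$-freeness (via Lemmas~\ref{lemma25} and~\ref{lemma28} with a selective ultrafilter on $\mu$) and from the transfer of a copy of $(X,\subset)$ into $T^\theta/\mathcal U$ through the $(\mathcal F^{\bd}_\theta,X)$-ascent path. (Your choice $\eta=\theta$ gives only $\theta$-completeness, whereas the paper takes $\eta=\cf(\lambda)$; that difference is immaterial for the corollary.) The one place you overcomplicate is step one. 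The Rinot--Schindler result cited in \cite{rinotschindler} already produces from $\sd_\lambda^+$ a $\p^-_{14}(\lambda^+,{\sq},\lambda^+,\{E^{\lambda^+}_{\cf(\lambda)}\})$-respecting downward-closed tree $X\subseteq{}^{<\lambda^+}\lambda$ that is the disjoint union of a $\lambda^+$-Kurepa tree and a \emph{special} $\lambda^+$-tree. A special $\lambda^+$-tree is never almost Souslin --- covering it by $\lambda$ antichains and using $\lambda^+$-completeness of the nonstationary ideal, one antichain must meet $E^{\lambda^+}_{\cf(\lambda)}$ stationarily --- so your property~(iii) comes for free from the special summand, and no separate ``diagonalization against antichains'' is needed. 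Correspondingly, your account of the role of the ``$+$'' is off: $\diamondsuit^+(\lambda^+)$ inside $\sd_\lambda^+$ is what supplies the Kurepa-plus-respecting structure, while the stationary antichain is a free byproduct of the specialness of the second summand.
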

\begin{proof} By \cite{rinotschindler}, $\sd_\lambda^+$ entails a $\p^-_{14}(\lambda^+,{\sq},\lambda^+,\{E^{\lambda^+}_{\cf(\lambda)}\})$-respecting downward-closed tree $X\s{}^{<\lambda^+}\lambda$
that is the disjoint union of a $\lambda^+$-Kurepa tree and a special $\lambda^+$-tree. In particular, $(X,\subset)$ is $\lambda^+$-Kurepa and not almost Souslin.
By Theorem \ref{free-with-u-ascent} below, then, there exists a Hausdorff  $\theta$-free,  $\cf(\lambda)$-complete,  $\lambda^+$-Souslin tree, $(T,<_T)$
that admits an injective $(\mathcal F^{\bd}_{\theta},X)$-ascent path.
In particular, for any uniform ultrafilter $\mathcal U$ over $\theta$,
$T^{\theta}/\mathcal U$ contains a copy of $(X,\subset)$ and hence is $\lambda^+$-Kurepa and not almost Souslin.

Finally, given an infinite $\mu<\theta$, we utilize GCH to pick a selective ultrafilter $\mathcal U$ over $\mu$.
Then, by Lemmas \ref{lemma25} and \ref{lemma28} below,  $T^\mu/\mathcal U$ is $\lambda^+$-Aronszajn and almost Souslin.
\end{proof}

Here, the principle  $\sd_\lambda^+$ stands for a certain strong combination of the principles $\square_\lambda$ and $\diamondsuit^+(\lambda^+)$.
It was introduced in \cite{rinotschindler}, where it was proven to hold in $L$ for every infinite cardinal $\lambda$.

\begin{corollary} If $\sd_\lambda+\gch$ holds,
then for every infinite cardinal $\theta<\cf(\lambda)$,
there exists a  $\lambda^+$-Souslin tree $(T,<_T)$,
satisfying the following.
\begin{itemize}
\item If $\aleph_0\le\mu\le\theta$, then $T^\mu/\mathcal U$ is not $\lambda^+$-Aronszajn for every  uniform ultrafilter $\mathcal U$ over $\mu$;
\item If $\theta<\mu<\cf(\lambda)$, then there exist a uniform ultrafilter $\mathcal U$ over $\mu$ such that $T^\mu/\mathcal U$ is $\lambda^+$-Aronszajn and almost Souslin.
\end{itemize}
\end{corollary}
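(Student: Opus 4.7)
The plan is to apply Theorem~\ref{theta-ascent-free-thm1} with $\kappa:=\lambda^+$, the given $\theta$, $\chi:=\cf(\lambda)$, and $(X,\subset):=(\kappa,\in)$, to obtain a slim $\lambda^+$-Souslin tree $T$ that carries an $(\mathcal F^{\fin}_\theta,(\kappa,\in))$-ascent path $\langle f_\alpha\mid\alpha<\kappa\rangle$ and enjoys two-cardinal $(\cf(\lambda),\theta^+)$-freeness. The prerequisite $\lambda^{<\chi}=\lambda$ follows from $\gch$ together with $\chi=\cf(\lambda)\le\lambda$. The required proxy principle $\p_{14}(\lambda^+,{\sq},\lambda^+,\{E^{\lambda^+}_{\ge\cf(\lambda)}\})$ is delivered by $\sd_\lambda$: by \cite{rinotschindler}, $\sd_\lambda$ entails $\square_\lambda+\diamondsuit(\lambda^+)$, from which the stronger instance $\p_{14}(\lambda^+,{\sq},\lambda^+,\{E^{\lambda^+}_{\cf(\lambda)}\})$ follows; the version we need is obtained by enlarging the stationary target from $E^{\lambda^+}_{\cf(\lambda)}$ to $E^{\lambda^+}_{\ge\cf(\lambda)}$.

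For the first bullet, fix an infinite $\mu\le\theta$ and an arbitrary uniform ultrafilter $\mathcal U$ over $\mu$. Since cofinite subsets of $\theta$ intersect $\mu$ co-finitely, the map $Z\mapsto Z\cap\mu$ sends $\mathcal F^{\fin}_\theta$ into $\mathcal F^{\fin}_\mu$, so the restriction $\langle f_\alpha\restriction\mu\mid\alpha<\kappa\rangle$ is an $\mathcal F^{\fin}_\mu$-ascent path through $T$. Because $\mathcal F^{\fin}_\mu$ is contained in every uniform ultrafilter over $\mu$, the equivalence classes $\langle[f_\alpha\restriction\mu]_{\mathcal U}\mid\alpha<\kappa\rangle$ form a strictly increasing $\kappa$-sequence in $T^\mu/\mathcal U$, i.e., a cofinal branch. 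Consequently, $T^\mu/\mathcal U$ is not $\lambda^+$-Aronszajn.

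For the second bullet, fix $\theta<\mu<\cf(\lambda)$ and invoke $\gch$ to pick a selective ultrafilter $\mathcal U$ over $\mu$. The $(\cf(\lambda),\theta^+)$-freeness of $T$ yields in particular $\mu^+$-freeness, since $\theta<\mu<\cf(\lambda)$. Lemmas~\ref{lemma25} and~\ref{lemma28} now combine, exactly as in the proof of Corollary~\ref{cor62}, to yield that $T^\mu/\mathcal U$ is $\lambda^+$-Aronszajn and almost Souslin.

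The main obstacle is not the verification above, but rather the availability of a single tree $T$ exhibiting ascent-path behaviour strong enough to force cofinal branches into $T^\mu/\mathcal U$ for all $\mu\le\theta$, and simultaneously freeness strong enough to rule out such branches for $\mu\in(\theta,\cf(\lambda))$. This delicate coordination is precisely the content of Theorem~\ref{theta-ascent-free-thm1}, and the corollary is obtained simply by matching its parameters to the two cardinals $\theta$ and $\cf(\lambda)$.
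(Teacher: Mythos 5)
Your overall strategy matches the paper's: obtain a $(\cf(\lambda),\theta^+)$-free $\lambda^+$-Souslin tree carrying an $\mathcal F^{\fin}_\theta$-ascent path from Theorem~\ref{theta-ascent-free-thm1}, then use projection to $\mathcal F^{\bd}_\mu$ for the first bullet and Lemmas~\ref{lemma25},~\ref{lemma28} for the second. However, there is a genuine gap in the application of Theorem~\ref{theta-ascent-free-thm1}: that theorem requires $\theta^+<\chi$, whereas from the hypothesis $\theta<\cf(\lambda)$ you only get $\theta^+\le\cf(\lambda)$. When $\theta^+=\cf(\lambda)$, the theorem simply does not apply with $\chi=\cf(\lambda)$. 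The paper splits into two cases: if $\theta^+<\cf(\lambda)$, it proceeds as you do; but if $\theta^+=\cf(\lambda)$, it abandons the freeness route entirely (the second bullet is vacuous in that case) and instead invokes Corollary~\ref{cor52} to get a $\lambda^+$-Souslin tree with an $\mathcal F^{\fin}_\lambda$-ascent path, which already projects to $\mathcal F^{\bd}_\mu$ for \emph{every} infinite $\mu\le\lambda$, establishing the first bullet. Your write-up treats $\theta^+<\cf(\lambda)$ as if it were automatic, so the edge case is missing.

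Two smaller inaccuracies. First, $(\cf(\lambda),\theta^+)$-freeness does \emph{not} yield $\mu^+$-freeness (i.e.\ $(\mu^+,1)$-freeness): by Lemma~\ref{lemma25}(2) the second freeness parameter can only increase, so what you get is $(\mu^+,\mu)$-freeness. That weaker statement is still exactly what Lemma~\ref{lemma28} needs, so the conclusion stands, but as stated the inference is false. Second, rather than deriving the required proxy principle from $\square_\lambda+\diamondsuit(\lambda^+)$, the paper cites directly that $\sd_\lambda$ entails $\p_{14}(\lambda^+,{\sq},\lambda^+,\{E^{\lambda^+}_{\cf(\lambda)}\})$; the paper nowhere claims that $\square_\lambda+\diamondsuit(\lambda^+)$ alone suffices for the $\theta=\lambda^+$ instance (its $\square_\lambda+\ch_\lambda$ result is only stated for $\theta<\lambda$), so your intermediate step is unjustified by the paper's cited facts, even if the end point is the same.
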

\begin{proof}
 By \cite{axioms}, $\sd_\lambda$, entails $\p_{14}(\lambda^+,{\sq},\lambda^+,\{E^{\lambda^+}_{\cf(\lambda)}\})$.

$\blacktriangleright$ If $\theta^+=\cf(\lambda)$, then by Corollary \ref{cor52} below,
there exists a $\lambda^+$-Souslin tree with an $\mathcal F^{\fin}_\lambda$-ascent path.
As $\mathcal F^{\fin}_\lambda$ projects to $\mathcal F^{\bd}_\mu$ for every infinite $\mu\le\lambda$, we have established the first bullet,
and the second bullet is vacuous.

$\blacktriangleright$  If $\theta^+<\cf(\lambda)$,
then by Theorem \ref{theta-ascent-free-thm1} below, let us pick a Hausdorff $(\cf(\lambda),\theta^+)$-free $\lambda^+$-Souslin tree $(T,<_T)$ with an $\mathcal F^{\fin}_\theta$-ascent path.
As $\mathcal F^{\fin}_\theta$ projects to $\mathcal F^{\bd}_\mu$ for every infinite $\mu\le\theta$, we have established the first bullet.

Next, suppose that $\theta<\mu<\cf(\lambda)$. By $\gch$, let $\mathcal U$ be a selective ultrafilter over $\mu$.
Then, by Lemmas \ref{lemma25} and \ref{lemma28} below,  $T^\mu/\mathcal U$ is $\lambda^+$-Aronszajn and almost Souslin.
\end{proof}

We now give an even more informative corollary than the one stated in the abstract.

\begin{corollary}\label{cor115} If $\sd_{\aleph_6}+\gch$ holds,
then there exists an  $\aleph_7$-Souslin tree $(T,<_T)$,
and a sequence of uniform ultrafilters $\langle \mathcal U_n\mid n<7\rangle$ such that:
\begin{itemize}
\item If $n\in\{0,1,4,5\}$, then $T^{\aleph_n}/\mathcal U_n$ is $\aleph_7$-Aronszajn and almost Souslin;
\item If $n\in\{2,3,6\}$, then $T^{\aleph_n}/\mathcal U_n$ is not an $\aleph_7$-Aronszajn tree.
\end{itemize}
\end{corollary}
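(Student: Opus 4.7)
The plan is to first extract $\p_{14}(\aleph_7,{\sq},\aleph_7,\{E^{\aleph_7}_{\aleph_6}\})$ from $\sd_{\aleph_6}$ by the result from \cite{axioms} invoked already in the proof of the preceding corollary, and then to construct a single $\aleph_7$-Souslin tree $(T,<_T)$ that simultaneously carries three independent $\mathcal F^{\bd}_\theta$-ascent paths at widths $\theta\in\{\aleph_2,\aleph_3,\aleph_6\}$, each indexed by $(\aleph_7,\in)$, while being $\aleph_2$-free and $(\aleph_6,\aleph_4)$-free in the two-cardinal sense. This amounts to a simultaneous enhancement of Theorems \ref{tree-theta-ascent-complete-thm}, \ref{theta-ascent-free-thm1}, and \ref{thm65}: at each limit level $\alpha\in E^{\aleph_7}_{\aleph_6}$, the proxy club $C_\alpha$ will be used to extend three partial ascent-path sequences $\langle g^{(\theta)}_\beta\mid\beta<\alpha\rangle$ in parallel, while the standard microscopic sealing is deployed to preclude cofinal branches through the derived trees of widths $\tau<\aleph_2$ (securing the $\aleph_2$-freeness) and of widths $\tau\in[\aleph_4,\aleph_6)$ (securing the two-cardinal freeness).

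With such a $T$ in hand, the ultrafilters split into two families. For $n\in\{2,3,6\}$, let $\mathcal U_n$ be any uniform ultrafilter on $\aleph_n$; since $T$ carries an $(\mathcal F^{\bd}_{\aleph_n},(\aleph_7,\in))$-ascent path, $T^{\aleph_n}/\mathcal U_n$ contains a cofinal branch and is therefore not $\aleph_7$-Aronszajn. For $n\in\{0,1,4,5\}$, use $\gch$ to fix a selective ultrafilter $\mathcal U_n$ on $\aleph_n$; then Lemmas \ref{lemma25} and \ref{lemma28}, invoked via the single-cardinal freeness when $n<2$ and via the two-cardinal freeness exactly in the style of the preceding corollary when $n\in\{4,5\}$, yield that $T^{\aleph_n}/\mathcal U_n$ is $\aleph_7$-Aronszajn and almost Souslin.

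The main obstacle is the first step: producing a single tree carrying three ascent paths that are jointly compatible with both freeness properties. What makes it feasible is the fact that $\mathcal F^{\bd}_\theta$ does \emph{not} project to $\mathcal F^{\bd}_\mu$ for $\mu<\theta$, so the ascent paths at $\aleph_2$ and $\aleph_3$ do not create cofinal branches in $T^{\aleph_0}/\mathcal U_0$ or $T^{\aleph_1}/\mathcal U_1$, leaving room for the $\aleph_2$-freeness, and they do not force cofinal branches through derived trees of widths in $[\aleph_4,\aleph_6)$, leaving room for the two-cardinal freeness in that range. The bookkeeping required to maintain three ascent-path skeletons together with the two freeness demands across the recursion is a routine, if lengthy, adaptation of the microscopic techniques developed in Sections \ref{sectionwiderwidth} and \ref{section:free}.
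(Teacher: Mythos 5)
Your approach differs from the paper's in two substantial ways, and the second of the two is a genuine gap.

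For $n\in\{2,3\}$, you propose two separate $\mathcal F^{\bd}_\theta$-ascent paths. The paper instead invokes Theorem~\ref{thm65} with $(\nu,\theta,\chi,\kappa)=(\aleph_2,\aleph_3,\aleph_6,\aleph_7)$ to get a single $\mathcal F^{\aleph_2}_{\aleph_3}$-ascent path. The point of the filter $\mathcal F^\nu_\theta$ is exactly that it is cobounded-below-$\nu$: it sits inside $\mathcal F^{\bd}_{\aleph_3}$ and projects to $\mathcal F^{\bd}_{\aleph_2}$ (via any injection $\aleph_2\to\aleph_3$), but does \emph{not} project to $\mathcal F^{\bd}_{\aleph_0}$ or $\mathcal F^{\bd}_{\aleph_1}$, which is what leaves room for $\aleph_2$-freeness. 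So one ascent path of width $\aleph_3$ and degree $\aleph_2$ simultaneously handles $n=2$ and $n=3$, and the phenomenon you are trying to engineer directly is already baked into the choice of filter.

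The $n=6$ case is where your proposal fails. Theorem~\ref{thm65}'s hypotheses require $\cf(\nu)=\nu<\theta^+<\chi<\cf(\kappa)=\kappa$. With $\kappa=\aleph_7$ and $\theta=\aleph_6$ this reads $\aleph_7=\theta^+<\chi<\aleph_7$, which is impossible; there is simply no theorem in the paper that hands you an $\mathcal F^{\bd}_{\aleph_6}$-ascent path on an $\aleph_7$-Souslin tree while maintaining any nontrivial freeness. In fact such an ascent path is not the right tool at all, because the hypothesis enabling Lemmas~\ref{lemma25} and~\ref{lemma28} at lower widths, namely $\lambda^\theta<\kappa$ for all $\lambda<\kappa$, fails for $\theta=\aleph_6$ under $\gch$: $\aleph_6^{\aleph_6}=\aleph_7$. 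The paper exploits exactly this failure: since the tree is prolific, its $\aleph_6^{\text{th}}$ level has size $\aleph_6$; picking $\mathcal U_6$ to be an $\aleph_6$-regular ultrafilter over $\aleph_6$ and applying \cite[Propositions 4.3.5 and 4.3.7]{MR1059055} inflates the $\aleph_6^{\text{th}}$ level of $T^{\aleph_6}/\mathcal U_6$ to size $\aleph_7$, so the reduced power is not even an $\aleph_7$-tree, hence not $\aleph_7$-Aronszajn. No ascent path of width $\aleph_6$ is needed, and the ``routine, if lengthy'' simultaneous construction you propose would not, in any case, be deliverable from the tools developed in Sections~\ref{sectionwiderwidth}--\ref{section:free} for this width.
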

\begin{proof}  By \cite{axioms}, $\sd_{\aleph_6}$, entails $\p_{14}(\aleph_7,{\sq},\aleph_7,\{E^{\aleph_7}_{\aleph_6}\})$.
Thus, appealing to Theorem \ref{thm65} below with $(\nu,\theta,\chi,\kappa)=(\aleph_2,\aleph_3,\aleph_6,\aleph_7)$,
we then obtain a prolific Hausdorff $\aleph_2$-free $(\aleph_6,\aleph_4)$-free $\aleph_7$-Souslin tree $(T,<_T)$ with an $\mathcal F_{\aleph_3}^{\aleph_2}$-ascent path
(the filter $\mathcal F_\theta^\mu$ is defined on page \pageref{defnfnutheta}).

$\blacktriangleright$ As $(T,<_T)$ is $\aleph_2$-free, for any selective ultrafilters $\mathcal U_{0},\mathcal U_{1}$ over $\aleph_0, \aleph_1$, respectively,
we get from Lemmas \ref{lemma25} and \ref{lemma28} below,  that $T^{\aleph_0}/\mathcal U_{0}$ and $T^{\aleph_1}/\mathcal U_{1}$ are $\aleph_7$-Aronszajn and almost Souslin.

$\blacktriangleright$ As $\mathcal F_{\aleph_3}^{\aleph_2}$ projects to $\mathcal F_{\aleph_2}^{\bd}$, the reduced power $T^{\aleph_2}/\mathcal U_2$ by any uniform ultrafilter $\mathcal U_2$ over $\aleph_2$ contains a cofinal branch.

$\blacktriangleright$ As $\mathcal F_{\aleph_3}^{\aleph_2}\s \mathcal F_{\aleph_3}^{\bd}$,
the reduced power $T^{\aleph_3}/\mathcal U_3$ by any uniform ultrafilter $\mathcal U_3$ over $\aleph_3$ contains a cofinal branch.

$\blacktriangleright$ As $(T,<_T)$ is $(\aleph_6,\aleph_4)$-free, for any selective ultrafilters $\mathcal U_{4},\mathcal U_{5}$ over $\aleph_4, \aleph_5$, respectively,
we get from Lemmas \ref{lemma25} and \ref{lemma28} below,  that $T^{\aleph_4}/\mathcal U_{\aleph_4}$ and $T^{5}/\mathcal U_{5}$ are $\aleph_7$-Aronszajn and almost Souslin.

$\blacktriangleright$ By \cite[Proposition 4.3.5]{MR1059055}, let $\mathcal U_{6}$ be an $\aleph_6$-regular ultrafilter over $\aleph_6$.
As $(T,<_T)$ is prolific, we get that the $\aleph_6^{\text{th}}$ level of $T$ has size $\aleph_6$,
and then by  \cite[Proposition 4.3.7]{MR1059055}, the  $\aleph_6^{\text{th}}$ level of $T^{\aleph_6}/\mathcal U_{6}$ has size $\aleph_7$. In particular, $T^{\aleph_6}/\mathcal U_{6}$  it is not an $\aleph_7$-Aronszajn tree.
\end{proof}

We conclude with a non-trivial improvement of Theorem 2 from \cite{MR836425}.
In particular, demonstrating the consistency of:  \emph{all $\lambda^+$-Aronszajn trees are nonspecial
for every singular cardinal $\lambda$ of countable cofinality}.

\begin{corollary}\label{cor116} If $\zfc+\exists$ supercompact cardinal is consistent, then so is $\zfc + \text{Martin's Maximum}+\text{ the following}$:
\begin{enumerate}
\item There exists a $\cf(\lambda)$-free $\cf(\lambda)$-complete $\lambda^+$-Souslin tree for every cardinal $\lambda\ge\aleph_2$;
\item There exists no special $\lambda^+$-Aronszajn trees, for every singular cardinal $\lambda$ of countable cofinality.
\end{enumerate}
\end{corollary}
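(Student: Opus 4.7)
My plan is to start from a model $V$ of ZFC equipped with a supercompact cardinal $\kappa_0$, perform a Laver-prepared RCS iteration of semi-proper forcing of length $\kappa_0$ to force Martin's Maximum while collapsing $\kappa_0$ to $\aleph_2$, and then verify items~(1) and~(2) in the resulting model $V_1$. Additional bookkeeping, combined if necessary with a subsequent cofinality-preserving class-length forcing in the style of iterated Cummings--Magidor threading, will be grafted onto the main iteration so as to install the combinatorial principles that the two items demand.

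For~(1), fix $\lambda\ge\aleph_2$ in $V_1$. Since MM entails SCH and $2^{\aleph_1}=\aleph_2$, we have $\lambda^{<\lambda}=\lambda$ and $2^\lambda=\lambda^+$, whence Shelah's diamond theorem delivers $\diamondsuit(\lambda^+)$. By amalgamating the supercompact-based technology cited in the introduction (which produces $\p_{14}(\lambda^+,{\sqsubseteq}_\nu,\lambda^+,\{\lambda^+\})$ compatibly with $\neg\square^*_\lambda$) with the class-length preparation, one arranges that $V_1$ satisfies $\p^-_{14}(\lambda^+,{\sqsubseteq}_{\cf(\lambda)},\lambda^+,\{E^{\lambda^+}_{\ge\cf(\lambda)}\})$ at every $\lambda\ge\aleph_2$. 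Taking $X=(\lambda^+,\in)\subseteq{}^{<\lambda^+}\lambda^+$, which is trivially $\p^-_{14}$-respecting by the remark following Definition~\ref{respecting}, and setting $\theta=\eta=\cf(\lambda)$ so that $\chi=\cf(\lambda)$, an application of Theorem~\ref{free-with-u-ascent} yields the desired $\cf(\lambda)$-free $\cf(\lambda)$-complete $\lambda^+$-Souslin tree.

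For~(2), the plan exploits the equivalence, valid under $\lambda^{<\lambda}=\lambda$ (which holds in $V_1$ for every uncountable $\lambda$), between $\square^*_\lambda$ and the existence of a special $\lambda^+$-Aronszajn tree. Hence~(2) reduces to $\neg\square^*_\lambda$ at every singular $\lambda$ of countable cofinality. MM already forces $\neg\square^*_\lambda$ when $\cf(\lambda)>\omega$ via the Magidor--Shelah-type analysis, leaving only the countable-cofinality case, which I would handle by iterated club-shooting through $E^{\lambda^+}_{>\omega}$ interleaved with the MM iteration, choosing the posets so that semi-properness, MM, and the coherent-club sequences witnessing the proxy principles of~(1) are all preserved.

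The main obstacle is reconciling the two items: the proxy principle used in~(1) at a singular $\lambda$ of countable cofinality is a coherent-club device that superficially resembles $\square^*_\lambda$, whereas~(2) demands that $\square^*_\lambda$ fail at that very $\lambda$. The key technical point will be that the relation ${\sqsubseteq}_{\cf(\lambda)}$ is sufficiently weaker than ${\sqsubseteq}$ that $\p^-_{14}(\lambda^+,{\sqsubseteq}_{\cf(\lambda)},\ldots)$ does not yield a threadable $\square^*$-sequence and is therefore mutually consistent with $\neg\square^*_\lambda$; and one must verify that the square-killing step of the forcing preserves this weaker coherent-club sequence. This compatibility analysis---rather than the Souslin-tree construction itself, which is already delivered by Theorem~\ref{free-with-u-ascent}---is where I expect the real work to lie.
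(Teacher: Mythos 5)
Your proposal has a genuine gap in the choice of the second parameter of the proxy principle, and this gap is precisely the tension you yourself flagged in the closing paragraph. For a singular $\lambda$ of countable cofinality you propose to use $\p^-_{14}(\lambda^+,{\sqsubseteq}_{\cf(\lambda)},\ldots)$ and to rescue consistency with $\neg\square^*_\lambda$ by arguing that ${\sqsubseteq}_{\cf(\lambda)}$ is "sufficiently weaker" than ${\sqsubseteq}$. But look at the definition of ${\sqsubseteq}_\nu$: the escape clause requires $\otp(C)<\nu$, and since $C$ is a club in a limit ordinal we always have $\otp(C)\ge\omega$. Hence for $\cf(\lambda)=\aleph_0$ one has ${\sqsubseteq}_{\cf(\lambda)}={\sqsubseteq}_\omega={\sqsubseteq}$ exactly, so your "weaker relation" is not weaker at all. $\p_{14}(\lambda^+,{\sqsubseteq},\lambda^+,\ldots)$ supplies a fully coherent $C$-sequence on $\lambda^+$, which yields $\square(\lambda^+)$ and is therefore refuted by $\mm$ (Todor\v{c}evi\'c/Cummings--Magidor). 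The paper's consistency result from \cite{axioms} provides $\p_{14}(\lambda^+,{\sqsubseteq}_{\aleph_2},\lambda^+,\{E^{\lambda^+}_{\cf(\lambda)}\})$ for singular $\lambda$ --- the index is $\aleph_2$, not $\cf(\lambda)$ --- and then Theorem~\ref{free-with-u-ascent} is invoked with $(\chi,\eta,\nu,\theta,\kappa)=(\cf(\lambda),\cf(\lambda),\mu,\mu,\lambda^+)$ where $\mu=\max\{\cf(\lambda),\aleph_2\}$. The $\chi=\min\{\eta,\nu\}=\cf(\lambda)$ you want is still delivered even though $\nu\ge\aleph_2$, because $\eta=\cf(\lambda)\le\nu$.

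Two further points. First, the cardinal-arithmetic portion of your argument is incorrect as stated: $\mm$ does not by itself give $\lambda^{<\lambda}=\lambda$ (false anyway for singular $\lambda$) nor $2^\lambda=\lambda^+$ for arbitrary $\lambda\ge\aleph_2$. The paper extracts $\ch_\lambda$ (and hence $\diamondsuit(\lambda^+)$) from the \emph{proxy principles} available in the given model, not from $\mm$. Second, you propose to build the requisite model via a bespoke iteration and then verify preservation of the proxies, which is a far heavier burden than what the paper does: it simply cites \cite{axioms} for a model of $\mm$ together with $\p_{14}(\lambda^+,{\sqsubseteq}_{\aleph_2},\lambda^+,\{E^{\lambda^+}_{\cf(\lambda)}\})$ at all singular $\lambda$ and $\p_{14}(\lambda^+,\sql,\lambda^+,\{E^{\lambda^+}_\lambda\})$ at all regular uncountable $\lambda$, then splits item~(1) into the regular case (handled as in Corollary~\ref{cor62} via Theorem~\ref{theta-ascent-free-thm2}) and the singular case (Theorem~\ref{free-with-u-ascent} with the parameters above). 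Item~(2) is then immediate from \cite{MR2811288} and the equivalence of $\square^*_\lambda$ with a special $\lambda^+$-Aronszajn tree, with no further forcing needed.
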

\begin{proof} By \cite{axioms}, it is consistent, relative to ZFC + $\exists$ supercompact cardinal,
that all of the following hold together:
\begin{itemize}
\item ($\zfc$ and) Martin's Maximum ($\mm$);
\item $\p_{14}(\lambda^+,{\sq_{\aleph_2}},\lambda^+,\{E^{\lambda^+}_{\cf(\lambda)}\})$ for every singular cardinal $\lambda$;
\item $\p_{14}(\lambda^+,{\sql},\lambda^+,\{E^{\lambda^+}_\lambda\})$ for every regular uncountable cardinal $\lambda$.
\end{itemize}
Work in this model. By the second and the third bullets, $\ch_\lambda$ holds for every uncountable cardinal $\lambda$.

(1) Let $\lambda$ denote a regular cardinal $\ge\aleph_2$. Then $\lambda^{<\lambda}=\lambda$.
As explained in the proof of Corollary~\ref{cor62},
$\p_{14}(\lambda^+,{\sql},\lambda^+,\{E^{\lambda^+}_{\lambda}\})+\lambda^{<\lambda}=\lambda$
entails a $\lambda$-free $\lambda$-complete $\lambda^+$-Souslin tree.

Let $\lambda$ denote a singular cardinal. Write $\mu=\max\{\cf(\lambda),\aleph_2\}$. By $\p_{14}(\lambda^+,{\sq_{\mu}},\lambda^+,\{E^{\lambda^+}_{\cf(\lambda)}\})$ and Theorem \ref{free-with-u-ascent} below,
taking $(\chi,\eta,\nu,\theta,\kappa)=(\cf(\lambda),\cf(\lambda),\mu,\mu,\lambda^+)$
and $U=\bigcup_{\alpha<\lambda^+}{}^\alpha1$,
we infer the existence of a $\cf(\lambda)$-free $\cf(\lambda)$-complete $\lambda^+$-Souslin tree.

(2) By \cite{MR2811288}, $\mm$ refutes $\square_{\lambda}^*$ for every singular cardinal $\lambda$ of countable cofinality. As $\square_{\lambda}^*$ is equivalent to the existence of a special $\lambda^+$-Aronszajn tree, we are done.
\end{proof}

Looking at the versatile list of hypotheses of the above corollaries (Cohen forcing, $\square_\lambda+\ch_\lambda,\diamondsuit(E^{\lambda^+}_\lambda)+\gch$, a model of $\mm$)
demonstrates well the utility of the proxy principle $\p(\kappa,\dots)$
as a device that provides a disconnection between the tree constructions and the study of the combinatorial hypotheses.

\section{Some theory of trees}\label{section2}

\begin{definition} A tree $(T,<_T)$ is said to be \emph{Hausdorff} if for all $x,y\in T$,
$x_\downarrow=y_\downarrow$ entails that $x=y$.
\end{definition}

\begin{definition}
A tree $(T,<_T)$ is said to be \emph{normal} if for every $\alpha < \beta$
and $x \in T_\alpha$, if $T_\beta\neq\emptyset$,
then there exists some $y \in T_\beta$ such that $x <_T y$.
\end{definition}

\begin{definition}
A tree $(T,<_T)$ is said to be \emph{splitting} if any node in $T$ admits at least two immediate successors.
\end{definition}

The following is a basic, yet very useful, fact.

\begin{lemma}\label{normalandsplitting} Suppose that $(T,<_T)$ is a $\kappa$-Souslin tree for some regular uncountable cardinal $\kappa$.

Then there exists a club $E$ in $\kappa$ such that $(T\restriction E,<_T)$ is normal and splitting.
\end{lemma}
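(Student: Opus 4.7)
The plan is to separately secure normality and splitting.  For normality, I would identify a cutoff $\alpha_0 < \kappa$ below which all ``bad'' behavior is confined, and insist that $E \subseteq [\alpha_0,\kappa)$.
Let $B = \{x \in T \mid x^\uparrow \text{ is bounded in height}\}$.
The key observation is that $B$ is upward-closed in $(T,<_T)$: if $x \in B$ and $y >_T x$, then $y^\uparrow \subseteq x^\uparrow$, so $y \in B$ as well.
Hence the minimal elements of $B$ (which exist below every $x \in B$ via the well-ordered chain $x_\downarrow \cup \{x\}$) form an antichain $R$ in $T$.
Since $T$ is $\kappa$-Souslin, $|R| < \kappa$ and $\sup\{\height(r) \mid r \in R\} < \kappa$; and for each $r \in R$, the bounded cone $\{r\}\cup r^\uparrow$ has height and size $<\kappa$.
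By regularity of $\kappa$, $B = \bigcup_{r \in R}(\{r\}\cup r^\uparrow)$ has heights bounded by some $\alpha_0 < \kappa$.

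Next, I would verify the standard fact that every $x \in T \setminus B$ satisfies $x^\uparrow \cap T_\beta \neq \emptyset$ for all $\beta > \height(x)$: pick $y \in x^\uparrow$ with $\height(y) > \beta$ (possible since $x \notin B$), and note that the unique $<_T$-predecessor of $y$ at level $\beta$ must be $<_T$-comparable with $x$, hence above $x$.  Consequently, for any cofinal $E \subseteq [\alpha_0,\kappa)$, the tree $T \restriction E$ is automatically normal.

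For splitting, I would argue that for each $x \in T \setminus B$, the cone $x^\uparrow$ cannot be eventually $1$-thin --- otherwise it would contain a cofinal branch of $T$, contradicting Aronszajn-ness. Setting
\[
\delta(x) = \min\{\beta \geq \alpha_0 \mid |x^\uparrow \cap T_\beta| \geq 2\},
\]
this is well-defined and $<\kappa$, and since the two incomparable nodes appearing at level $\delta(x)$ lie in $T \setminus B$ (because $\delta(x) \geq \alpha_0$), each extends to every higher level, yielding $|x^\uparrow \cap T_\beta| \geq 2$ for all $\beta \geq \delta(x)$.
For $\alpha \geq \alpha_0$ put $\Delta(\alpha) = \sup\{\delta(x) \mid x \in T_\alpha\}$, which is $<\kappa$ by regularity and $|T_\alpha|<\kappa$.
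Recursively define $E = \{e_\gamma \mid \gamma < \kappa\}$ by $e_0 = \alpha_0$, $e_{\gamma+1} = \Delta(e_\gamma) + 1$, and $e_\gamma = \sup_{\gamma'<\gamma} e_{\gamma'}$ at limit $\gamma$.
Then $E$ is a club in $\kappa$, and between any successive pair $e_\gamma < e_{\gamma+1}$, every node in $T_{e_\gamma}$ has at least two extensions in $T_{e_{\gamma+1}}$, giving splitting.

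The main obstacle is the structural claim that $B$ has heights bounded in $\kappa$; it fails for arbitrary $\kappa$-trees and genuinely uses the Souslin hypothesis (via the antichain of minimal elements of $B$).  Once that is in place, normality follows from the tree-predecessor trick and splitting from a routine recursive thinning.
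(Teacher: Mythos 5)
Your proof is correct and follows essentially the same route as the paper: the paper's set $H=\{x\in T\mid\exists\beta\,\forall y\in T_\beta\,(y\text{ incompatible with }x)\}$ is precisely your $B$, the paper's observation that $G=\{x\mid x^\uparrow\text{ is linearly ordered}\}\subseteq H$ (using that $T$ has no cofinal branch) is your ``cannot be eventually $1$-thin'' step, and the bound $H\subseteq T\restriction\alpha$ is obtained in the paper from the Souslin property in exactly the way you spell out via the antichain of minimal elements of $B$. The only cosmetic difference is that you build the club $E$ by recursive thinning through $\Delta$, whereas the paper defines it in one stroke as $\{\beta\mid(\forall x\in T\restriction[\alpha,\beta))(\exists y_0,y_1\in T\restriction\beta)[\ldots]\}\setminus\alpha$; these are equivalent.
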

\begin{proof}
Let $H=\{ x\in T\mid \exists \beta<\kappa\forall y\in T_\beta(y\text{ is incompatible with }x)\}$.
Since $(T,<_T)$ has no antichains of size $\kappa$,  we may pick a large enough $\alpha<\kappa$ such that $H\s T\restriction\alpha$.
Let $G=\{ x\in T\mid \left(x^\uparrow,<_T\right)\text{ is linearly ordered}\}$. Since $(T,<_T)$ has no cofinal branches, we know that $G\s H$.
Consequently,
$$E=\{ \beta<\kappa\mid (\forall x\in T\restriction[\alpha,\beta))(\exists y_0,y_1\in T\restriction \beta)[y_0,y_1\text{ are incompatible extensions of $x$}]\}\setminus\alpha$$
is a club in $\kappa$. Evidently, $(T\restriction E,<_T)$ is normal and splitting.
\end{proof}

\begin{definition}
A subtree $T$ of ${}^{<\kappa}\kappa$ is said to be \emph{prolific} if for every $\alpha < \kappa$ and every $x\in T\cap{}^\alpha\kappa$,
we have $\{ x^\smallfrown \langle i\rangle\mid  i<\max \{\omega, \alpha\} \}\s T$.
\end{definition}

Notice that a prolific tree is always splitting. On the opposite extreme from prolific, we have the following.

\begin{definition}
A $\kappa$-tree is said to be \emph{binary} if it is a downward-closed
subtree of the complete binary tree ${}^{<\kappa}2$.
\end{definition}

As mentioned earlier, in the Appendix section below, we analyze a natural process that produces,
for any given $\kappa$-tree, a corresponding binary $\kappa$-tree sharing many properties of the original one.

\begin{definition} A filter $\mathcal F$ over a cardinal $\theta$ is said to be \emph{selective} if it is uniform,
and for every function $f$ with $\dom(f)\in\mathcal F$, one of the following holds:
\begin{itemize}
\item there exists some $C\in\mathcal F^+$ such that $f\restriction C$ is constant, or
\item there exists some $I\in\mathcal F^+$ such that $f\restriction I$ is injective.
\end{itemize}
\end{definition}

Note that for every infinite cardinal $\theta$, $\mathcal F^{\bd}_\theta$ is a selective filter.
As for ultrafilters, note that
assuming $\gch$, for any infinite regular cardinal $\theta$, one can enumerate all functions from $\theta$ to $\theta$ in order-type $\theta^+$,
and then use this enumeration to recursively construct a tower that generates a selective ultrafilter over $\theta$.

\begin{definition}\label{tree-indexed-ascent-def}
An $(\mathcal F,X)$-ascent path $\vec f =\langle f_x\mid x\in X\rangle$ through a $\kappa$-tree $(T,<_T)$ is said to be \emph{injective}
if for every $b : \kappa \to \kappa$ such that $\{ b \restriction \alpha \mid \alpha < \kappa \} \subseteq X$,
there exist some $\alpha < \kappa$ and some $I \in \mathcal F$ such that
$f_{b \restriction \alpha} \restriction I$ is injective.
\end{definition}

Recall that if the downward-closed tree $(X,\subset)$ is isomorphic to $(\kappa,\in)$, then an $(\mathcal F, X)$-ascent path $\vec f$ is said to be an $\mathcal F$-ascent path.
In this special case, it is customary to identify $\vec f$ with a sequence $\langle f_\alpha\mid\alpha<\kappa\rangle$.
So, $\vec f$ is injective iff there exist $\alpha < \kappa$ and $I\in \mathcal F$ such that $f_\alpha \restriction I$ is injective.
Note that if $\mathcal F$ is a filter, then $\vec f$ is injective iff for co-boundedly many
$\alpha < \kappa$, there exist  $I_\alpha \in \mathcal F$ such that $f_\alpha \restriction I_\alpha$ is injective.

\begin{lemma}\label{lemma26}
Suppose that $\kappa$ is a regular uncountable cardinal, and
$(T,<_T)$ is a $\theta^+$-free $\kappa$-Souslin tree.
Then none of the following can occur:
\begin{enumerate}
\item $(T,<_T)$ admits an $\mathcal F$-ascent path for some selective filter $\mathcal F$ over $\theta$;
\item $(T,<_T)$ admits an injective $\mathcal F$-ascent path for some (proper) filter $\mathcal F$ over $\theta$.
\end{enumerate}
\end{lemma}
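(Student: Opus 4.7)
The plan for part (1) is to reduce to part (2) using the selectivity of $\mathcal F$. Applying selectivity to each $f_\alpha:\theta\to T_\alpha$, for every $\alpha<\kappa$ either $f_\alpha$ is constant on some $C_\alpha\in\mathcal F^+$ with value $t_\alpha\in T_\alpha$, or it is injective on some $I_\alpha\in\mathcal F^+$. If the constant alternative holds cofinally, then, using that an $\mathcal F^+$-set meets every $\mathcal F$-set, one extracts a chain of length $\kappa$ among the $t_\alpha$'s (with some bookkeeping to handle intersections of $\mathcal F^+$-sets), giving a cofinal branch of $T$ and contradicting its Souslinity. Otherwise the injective alternative holds cofinally, and (after refining from $\mathcal F^+$ to $\mathcal F$) we are reduced to part (2).

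For part (2), fix witnesses $\alpha_0<\kappa$ and $I\in\mathcal F$ with $f_{\alpha_0}\restriction I$ injective. Set $\tau:=|I|\le\theta$, enumerate $I=\{i_j\mid j<\tau\}$, and let $w_j:=f_{\alpha_0}(i_j)$, so the $w_j$'s are pairwise distinct nodes of $T_{\alpha_0}$. Since $\tau<\theta^+$, the $\theta^+$-freeness of $T$ guarantees that the derived tree $\hat T:=\bigotimes_{j<\tau}\cone{w_j}$ is itself $\kappa$-Souslin. By Lemma~\ref{normalandsplitting} pass to a club $E\subseteq\kappa$ on which $T$ is normal, and for each $\beta\in E$ with $\beta>\alpha_0$ construct an element $\hat h_\beta\in\hat T$ all of whose coordinates sit at level $\beta$: letting $J_\beta:=\{i\in I\mid f_{\alpha_0}(i)<_T f_\beta(i)\}\in\mathcal F$, set $\hat h_\beta(j):=f_\beta(i_j)$ whenever $i_j\in J_\beta$, and otherwise let $\hat h_\beta(j)$ be any fixed extension of $w_j$ at level $\beta$ (available by normality). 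Injectivity of $f_{\alpha_0}\restriction I$ propagates: for distinct $i,i'\in J_\beta\cap I$, the values $f_\beta(i)$ and $f_\beta(i')$ extend distinct ancestors at level $\alpha_0$ and so are themselves incomparable, whence $f_\beta\restriction(J_\beta\cap I)$ is injective.

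The principal obstacle is converting the $\kappa$-sequence $\langle\hat h_\beta\mid\beta\in E,\beta>\alpha_0\rangle$ into a concrete contradiction. Comparability $\hat h_\beta<_{\hat T}\hat h_{\beta'}$ demands coordinate-wise dominance at \emph{every} $j<\tau$, whereas the ascent-path coherence delivers it only on the $\mathcal F$-large subset $\{j<\tau\mid i_j\in J_\beta\cap J_{\beta'}\cap\{i\mid f_\beta(i)<_T f_{\beta'}(i)\}\}$; on the remaining coordinates the arbitrary choices may spoil comparability. The resolution is to coordinate these choices---using the cardinality bound $|T_\beta|<\kappa$, the filter properties of $\mathcal F$, and the Souslinity of $\hat T$ itself (which bars cofinal branches \emph{and} $\kappa$-antichains)---so that the sequence $\langle\hat h_\beta\rangle$ is forced to exhibit one of these forbidden configurations of $\hat T$, yielding the contradiction.
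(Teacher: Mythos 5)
Your reduction of case~(b) of part~(1) to part~(2) runs the wrong way: selectivity hands you a witnessing set $I\in\mathcal F^+$, and in general a positive set cannot be refined to a filter set, so you do not land in the hypothesis of part~(2). The paper's logic is the reverse --- it proves part~(1) working directly with $I\in\mathcal F^+$, and part~(2) then follows for free since $\mathcal F\subseteq\mathcal F^+$. Your sketch of case~(a) also quietly needs $2^\theta<\kappa$ (which the paper obtains from Lemma~\ref{lemma85}) to pigeonhole a single constancy set $C$ for stationarily many $\alpha$; ``bookkeeping with intersections of $\mathcal F^+$-sets'' will not substitute, since $\mathcal F^+$ is not closed under intersection.

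The more serious gap is in part~(2), and you diagnose it yourself without repairing it: from your $\hat h_\beta$, comparability of two of them yields no contradiction, because the arbitrary extensions on $\tau\setminus J_\beta$ carry no usable information, and invoking the Souslinity of $\hat T$ only gives you such a comparable pair. The missing idea is to build the product-tree nodes so that each one \emph{diverges from the next ascent-path level at every coordinate}. Concretely (with $\varepsilon$ playing the role of your $\alpha_0$): set $Z_\alpha=\{i<\theta\mid f_\varepsilon(i)\subseteq f_\alpha(i)\}\in\mathcal F$ for $\alpha\in E$, use $2^\theta<\kappa$ to pigeonhole a fixed $Z\in\mathcal F$ with $Z_\alpha=Z$ on a stationary $S\subseteq E$, put $I'=I\cap Z\in\mathcal F^+$, and form $\hat T=\bigotimes_{i\in I'}\cone{f_\varepsilon(i)}$. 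For $\alpha\in S$ with successor $\alpha^+$ in $S$, use normality and splitting to choose $g_{\alpha^+}\in\hat T\cap (T_{\alpha^+})^{I'}$ with $g_{\alpha^+}(i)\restriction\alpha=f_\alpha(i)$ \emph{and} $g_{\alpha^+}(i)\neq f_{\alpha^+}(i)$ for every $i\in I'$. Souslinity of $\hat T$ then gives $\alpha<\beta$ in $S$ with $g_{\alpha^+}(i)\subseteq g_{\beta^+}(i)$ for all $i\in I'$ (so $\alpha^+\le\beta$), and since $A_{\alpha^+,\beta}=\{i\mid f_{\alpha^+}(i)\subseteq f_\beta(i)\}\in\mathcal F$ must meet $I'\in\mathcal F^+$, picking $i$ in the intersection forces $g_{\alpha^+}(i)=g_{\beta^+}(i)\restriction\alpha^+=f_\beta(i)\restriction\alpha^+=f_{\alpha^+}(i)$, contradicting the engineered divergence. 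Your construction has no analogue of this divergence, and without it the Souslinity of $\hat T$ gives you nothing to contradict.
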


\begin{proof} (1) Suppose not.
Then by the results of the Appendix section below, we may assume that $\left(T,<_T\right)=\left(T,\subset\right)$ is a binary  $\theta^+$-free $\kappa$-Souslin tree
that admits an $\mathcal F$-ascent path, for some selective filter $\mathcal F$ over $\theta$.
Let  $\vec f=\langle f_\alpha:\theta\rightarrow T_\alpha\mid \alpha<\kappa\rangle$
denote an  $\mathcal F$-ascent path through $(T,{\subset})$.

By Lemma \ref{lemma85} below, $2^\theta<\kappa$.
As $\mathcal F$ is a selective filter over $\theta$, and $2^\theta<\kappa$, one of the following must hold:
\begin{enumerate}
\item[(a)] There exists a stationary $S_0\s\kappa$ and $C\in\mathcal F^+$ such that $f_\alpha\restriction C$ is constant for all $\alpha\in S_0$;
\item[(b)] There exists a stationary $S_1\s\kappa$ and $I\in\mathcal F^+$ such that $f_\alpha\restriction I$ is injective for all $\alpha\in S_1$.
\end{enumerate}

In case (a), we then get that $\{ f_\alpha(\min(C))\mid \alpha\in S_0\}$ generates a cofinal branch through the $\kappa$-Souslin tree $(T,{\subset})$.
This is a contradiction.

In case (b), we do the following. Pick $\varepsilon<\kappa$ such that $f_\varepsilon\restriction I$ is injective.
By Lemma \ref{normalandsplitting}, let $E\s\kappa$ be a club such that $(T\restriction E,\subset)$ is normal and splitting.
By discarding an initial segment of $E$, we may assume that $\min(E)>\varepsilon$.

For every $\alpha\in E$, the set $Z_\alpha=\{ i<\theta\mid f_{\varepsilon}(i) \subseteq f_\alpha(i)\}$ is in $\mathcal F$.
By $2^\theta<\kappa$, we can then pick $Z\in\mathcal F$ and some stationary subset $S\s E$ such that $Z_\alpha=Z$ for all $\alpha\in S$.
As $I\in\mathcal F^+$ and $Z\in\mathcal F$, the set $I'=I\cap Z$ is nonempty,
hence we consider the derived tree $\hat T=\bigotimes_{i\in I'}  \cone{f_{\varepsilon}(i)}$.

Let $\alpha\in S$ be arbitrary. Denote $\alpha^+=\min(S\setminus(\alpha+1))$.
Since $\alpha<\alpha^+$ are elements of $E$, we may find $g_{\alpha^+}\in (T_{\alpha^+})^{I'}$ such that for all $i\in I'$:
$$g_{\alpha^+}(i)\restriction\alpha=f_\alpha(i)\text{ and }g_{\alpha^+}(i)\neq f_{\alpha^+}(i).$$
In particular, $f_\varepsilon(i)\s g_{\alpha^+}(i)$ for all $i\in I'\s Z_{\alpha}$, and hence $g_{\alpha^+}\in\hat T$.

Since $|I'|<\theta^+$ and $(T,{\subset})$ is $\theta^+$-free,
 $(\hat T,<_{\hat T})$ is $\kappa$-Souslin,
thus, let us pick $\alpha<\beta$ in $S$ such that $g_{\alpha^+}(i)\s g_{\beta^+}(i)$ for all $i\in I'$.
In particular, $\alpha^+\le\beta$.
Since $\vec f$ is an $\mathcal F$-ascent path, the following set
$$A_{\alpha^+,\beta}=\{ i<\theta\mid f_{\alpha^+}(i)\s f_\beta(i)\}$$
is in $\mathcal F$. In particular, $I'\cap A_{\alpha^+,\beta}$ is nonempty. Pick $i\in I'\cap A_{\alpha^+,\beta}$.
Then  $f_{\alpha^+}(i)=f_\beta(i)\restriction\alpha^+=(g_{\beta^+}(i)\restriction\beta)\restriction\alpha^+$.
Recalling that $g_{\alpha^+}(i)\neq f_{\alpha^+}(i)$, we conclude that $g_{\alpha^+}(i)\not\s g_{\beta^+}(i)$, contradicting the choice of $\alpha<\beta$.

(2) Compared to Clause (1), instead of assuming that $\mathcal F$ is selective, the hypothesis readily asserts the existence of $\varepsilon<\kappa$ and $I\in\mathcal F$ such that $f_\varepsilon\restriction I$ is injective.
In particular, $I\in\mathcal F^+$, and the rest of the proof is identical.
\end{proof}

\begin{corollary} If $\theta<\kappa=\cf(\kappa)$ are infinite cardinals,
then no $\theta^+$-free $\kappa$-Souslin tree admits an  $\mathcal F_\theta^{\bd}$-ascent path.
\end{corollary}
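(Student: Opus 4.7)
The plan is to derive this corollary as an immediate special case of Lemma~\ref{lemma26}(1). The essential observation is already recorded in the discussion preceding Definition~\ref{tree-indexed-ascent-def}: for every infinite cardinal $\theta$, the cobounded filter $\mathcal F^{\bd}_\theta$ is a selective filter over $\theta$. Once this is in hand, the argument collapses to a single step: any $\mathcal F^{\bd}_\theta$-ascent path through a $\theta^+$-free $\kappa$-Souslin tree is, in particular, an $\mathcal F$-ascent path for the selective filter $\mathcal F = \mathcal F^{\bd}_\theta$, contradicting Lemma~\ref{lemma26}(1). The hypothesis $\kappa = \cf(\kappa) > \theta \geq \aleph_0$ ensures that the ambient hypothesis of Lemma~\ref{lemma26} (namely that $\kappa$ is a regular uncountable cardinal) is satisfied.

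Since all the heavy lifting has already been packaged into Lemma~\ref{lemma26}, there is essentially no obstacle. The only point that could conceivably merit a line of justification, if one is not content to cite the selectivity of $\mathcal F^{\bd}_\theta$ from the preceding text, is the verification of selectivity itself: given any $f$ with $\dom(f) \in \mathcal F^{\bd}_\theta$, either some fibre $f^{-1}\{v\}$ is unbounded in $\theta$ --- in which case that fibre lies in $(\mathcal F^{\bd}_\theta)^+$ and $f$ is constant on it --- or every fibre of $f$ is bounded, in which case a recursion of length $\cf(\theta)$ produces an unbounded $I \subseteq \dom(f)$ on which $f$ is injective, by at stage $\alpha$ choosing $i_\alpha \in \dom(f)$ strictly above both $\sup\{i_\beta \mid \beta < \alpha\}$ and $\sup\bigl(\bigcup_{\beta < \alpha} f^{-1}\{f(i_\beta)\}\bigr)$, which is legitimate precisely because a union of fewer than $\cf(\theta)$ many bounded subsets of $\theta$ remains bounded.
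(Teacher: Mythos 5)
Your proposal is correct and follows exactly the route the paper intends: the corollary is placed immediately after Lemma~\ref{lemma26} precisely as a specialization of clause~(1), using the observation already recorded in the text that $\mathcal F^{\bd}_\theta$ is a selective filter over $\theta$. Your optional verification of the selectivity of $\mathcal F^{\bd}_\theta$ is also sound.
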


We now introduce a two-cardinal version of freeness. As Corollary \ref{cor115} demonstrates, this is a fruitful concept.

\begin{definition}\label{deftwofree}
A $\kappa$-tree $(T,<_T)$ is said to be \emph{$(\chi,\eta)$-free} if for every nonzero $\tau<\chi$,
any $\beta < \kappa$, and any sequence of distinct nodes
$\langle w_i \mid i < \tau \rangle \in {}^\tau T_\beta$, for all $A\s \bigotimes_{i<\tau}  \cone{w_i}$ of size $\kappa$,
there exist  $\vec x$ and $\vec y$ in $A$ such that $|\{ i<\tau\mid \neg( \vec x(i)<_T\vec  y(i))\}|<\eta$.
\end{definition}

\begin{lemma}\label{lemma25} \begin{enumerate}
\item A $\kappa$-Souslin tree is $\chi$-free iff it is  $(\chi,1)$-free;
\item If $\chi_0\ge\chi_1$ and $\eta_0\le\eta_1$, then $(\chi_0,\eta_0)$-free implies $(\chi_1,\eta_1)$-free.
\end{enumerate}
\end{lemma}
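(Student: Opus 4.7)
My approach divides along the two items. Item (2) is a direct unpacking: given $\tau<\chi_1$ (hence $\tau<\chi_0$ since $\chi_1\le\chi_0$), distinct $\langle w_i\mid i<\tau\rangle\in{}^\tau T_\beta$, and $A\subseteq\bigotimes_{i<\tau}\cone{w_i}$ of size $\kappa$, the $(\chi_0,\eta_0)$-freeness hypothesis delivers $\vec x,\vec y\in A$ with $|\{i<\tau\mid\neg(\vec x(i)<_T\vec y(i))\}|<\eta_0\le\eta_1$, which is precisely the $(\chi_1,\eta_1)$-freeness conclusion.

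For item (1), I first rewrite the $(\chi,1)$-freeness condition. The cardinality bound ``$<1$'' forces the set $\{i<\tau\mid\neg(\vec x(i)<_T\vec y(i))\}$ to be empty, so $(\chi,1)$-freeness asserts that for each derived tree $\hat T=\bigotimes_{i<\tau}\cone{w_i}$ (with $\tau<\chi$ and $\langle w_i\mid i<\tau\rangle\in{}^\tau T_\beta$ distinct), every $\kappa$-sized $A\subseteq\hat T$ contains $\vec x,\vec y$ with $\vec x<_{\hat T}\vec y$; equivalently, $\hat T$ admits no antichain of size $\kappa$. The forward implication is then immediate: $\chi$-freeness forces each $\hat T$ to be $\kappa$-Souslin and in particular antichain-free at size $\kappa$.

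For the converse, fix $\tau<\chi$, distinct $\langle w_i\mid i<\tau\rangle\in{}^\tau T_\beta$, and set $\hat T=\bigotimes_{i<\tau}\cone{w_i}$; I must verify that $\hat T$ is $\kappa$-Souslin. The no-antichain-of-size-$\kappa$ condition is the $(\chi,1)$-freeness hypothesis itself. To bound the level sizes: any $\kappa$-sized $A\subseteq\hat T_\alpha$ would be an antichain in $\hat T$ (all nodes share height $\alpha$), contradicting the hypothesis; hence $|\hat T_\alpha|<\kappa$. To rule out cofinal branches: a cofinal branch $B\subseteq\hat T$ would project via any coordinate $i_0<\tau$ to a cofinal branch $\{\vec b(i_0)\mid \vec b\in B\}$ of $T$, contradicting that $T$ is $\kappa$-Souslin.

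The one step that requires care is guaranteeing that $\hat T$ has nonempty levels cofinally in $\kappa$, so that it genuinely qualifies as a $\kappa$-tree. For this I invoke Lemma~\ref{normalandsplitting} to pass to a club $E\subseteq\kappa$ on which $T\restriction E$ is normal; normality then ensures every $w_i$ has extensions at every higher level, forcing $\hat T$ to meet cofinally many levels. Since passing to $T\restriction E$ visibly preserves both the $\kappa$-Souslin property and the $(\chi,1)$-freeness hypothesis, nothing is lost. This normalization is the only delicate point; the rest is bookkeeping.
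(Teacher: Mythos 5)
Your decomposition is correct and, for item~(2) and the ``easy'' direction of item~(1), matches what the paper's ``Obvious'' is meant to convey: $(\chi,1)$-freeness simply says that every $\kappa$-sized subset of a derived tree contains a comparable pair, i.e.\ that the derived tree has no antichain of size~$\kappa$, and that is one of the three clauses of Souslinness. The observation that a $\kappa$-sized level is an antichain, and that a cofinal branch of $\bigotimes_{i<\tau}\cone{w_i}$ projects coordinatewise to a cofinal branch of~$T$, is exactly the right bookkeeping for the other two clauses.

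The place where your argument has a genuine gap is the one you yourself flag: nonemptiness of the levels of the derived tree. The move ``pass to the club~$E$ of Lemma~\ref{normalandsplitting}, since this loses nothing'' does not actually establish what you need, because $\chi$-freeness of $T$ is a statement about derived trees computed \emph{inside $T$}, not inside $T\restriction E$; knowing that $T\restriction E$ is normal tells you that nodes of $T\restriction E$ have unbounded cones there, but says nothing about a node $w_i\in T_\beta$ with $\beta\notin E$ whose cone in $T$ is bounded. And such nodes can exist: the definitions in the paper allow a $\kappa$-Souslin tree to have (boundedly many) nodes at small height with no extensions past some fixed level. For such a node $w$, the derived tree $\cone{w}$ is bounded, hence not a $\kappa$-tree and not $\kappa$-Souslin, so $\chi$-freeness fails already at $\tau=1$; yet $\cone{w}$ has size $<\kappa$, so the $(\chi,1)$-freeness clause is vacuous for it. In other words, the converse of item~(1), read literally for arbitrary $\kappa$-Souslin trees, is false, and no amount of passing to $T\restriction E$ fixes it. What rescues the lemma in practice is that every tree constructed or considered in the paper is normal (indeed prolific and downward closed), in which case each $\cone{w_i}$ is automatically unbounded and the derived tree is automatically a $\kappa$-tree with small levels; under that standing hypothesis the converse is genuinely immediate. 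Note also that the paper never actually uses the converse of item~(1) — all invocations of Lemma~\ref{lemma25} go from $\chi$-free (a property the constructions guarantee) to $(\chi',\eta')$-free via item~(2) — so the ``iff'' is there for symmetry of language rather than for logical work. Were I to tighten your write-up, I would either add the explicit hypothesis that $T$ is normal, or downgrade the converse to the observation that for the normal trees arising in this paper it holds trivially.
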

\begin{proof} Obvious.\end{proof}

\begin{lemma}\label{lemma28} Suppose that $(X,<_X)$ is a Hausdorff $(\theta^+,\theta)$-free $\kappa$-Souslin tree,
$\mathcal U$ is a selective ultrafilter over  $\theta$, and $\lambda^{<\theta}<\kappa$ for all $\lambda<\kappa$. Then:
\begin{enumerate}
\item If $A\s X^\theta/\mathcal U$ is an antichain, then $\{\height(x)\mid x\in A\}\cap E^{\kappa}_{>\theta}$ is nonstationary;
\item $X^\theta/\mathcal U$ is $\kappa$-Aronszajn.
\end{enumerate}
\end{lemma}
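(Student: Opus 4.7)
The plan is to prove both parts by contradiction in parallel, mirroring the selectivity-dichotomy argument of Lemma~\ref{lemma26} but exploiting only $(\theta^+,\theta)$-freeness instead of full $\theta^+$-freeness and using the uniformity of $\mathcal U$ to replace ``Souslinity of the derived tree''. For (1), I would suppose the set $S:=\{\height(x)\mid x\in A\}\cap E^\kappa_{>\theta}$ is stationary and choose representatives $f_\alpha:\theta\to X_\alpha$ with $[f_\alpha]_{\mathcal U}\in A$ for $\alpha\in S$; for (2), I would suppose $\langle[f_\alpha]_{\mathcal U}\mid\alpha\in C\rangle$ is a cofinal branch in $\check X:=X^\theta/\mathcal U$ with $C\s\kappa$ a club. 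By Lemma~\ref{normalandsplitting} we may thin the relevant height set so that $X$ is normal and splitting everywhere below.

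Next I would apply selectivity of $\mathcal U$ to each $f_\alpha$, obtaining $Z_\alpha\in\mathcal U$ on which $f_\alpha$ is either constant (with value $w_\alpha\in X_\alpha$) or injective, and pigeonhole to a stationary $S'$ falling entirely in one case. In the constant case, $[f_\alpha]_{\mathcal U}=[c_{w_\alpha}]_{\mathcal U}$, so $\check X$-comparabilities translate directly into $X$-comparabilities of the $w_\alpha$'s, producing inside $X$ either a $\kappa$-sized antichain (for (1)) or a cofinal chain (for (2)), contradicting that $X$ is $\kappa$-Souslin. In the injective case, I would fix $\varepsilon\in S'$, set $Z=Z_\varepsilon$ (so $|Z|=\theta$ by uniformity and the nodes $\{f_\varepsilon(i)\mid i\in Z\}$ are distinct), and form the derived tree $\hat X:=\bigotimes_{i\in Z}\cone{f_\varepsilon(i)}$; since $|Z|=\theta<\theta^+$, the $(\theta^+,\theta)$-freeness of $X$ applies to $\hat X$. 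After modifying each $f_\alpha$ (for $\alpha\in S',\alpha>\varepsilon$) on the $\mathcal U$-null set $Z\setminus A_{\varepsilon,\alpha}$ so that $f_\alpha(i)>_X f_\varepsilon(i)$ for every $i\in Z$ (possible by normality and preserving $[f_\alpha]_{\mathcal U}$), the vector $\vec f_\alpha:=f_\alpha\restriction Z$ becomes an element of $\hat X$ at level $\alpha$.

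For (1) I would apply freeness to the size-$\kappa$ set $\{\vec f_\alpha\}$ to obtain $\alpha<\beta$ in $S'$ with $|\{i\in Z\mid f_\alpha(i)\not<_X f_\beta(i)\}|<\theta$; since $Z\in\mathcal U$ and $\mathcal U$ is uniform, a subset of $Z$ of size ${<\theta}$ cannot be in $\mathcal U$, so $\{i\mid f_\alpha(i)<_X f_\beta(i)\}\in\mathcal U$, i.e.\ $[f_\alpha]_{\mathcal U}<_{\check X}[f_\beta]_{\mathcal U}$, contradicting the antichain hypothesis. For (2), the direct analog is too weak, so I would borrow the splitting trick of Lemma~\ref{lemma26}: setting $\alpha^+:=\min(S'\setminus(\alpha+1))$, use splitting to define $g_\alpha(i)\in X_{\alpha^+}$ as an extension of $f_\alpha(i)$ distinct from $f_{\alpha^+}(i)$, so that $g_\alpha\in\hat X$. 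Freeness applied to $\{g_\alpha\}$ yields $\alpha<\beta$ with $|\{i\in Z\mid g_\alpha(i)\not<_X g_\beta(i)\}|<\theta$, but on the $\mathcal U$-set $W:=Z\cap A_{\alpha^+,\beta}\cap A_{\beta,\beta^+}$ (or $Z\cap A_{\beta,\beta^+}$ if $\alpha^+=\beta$) one checks $g_\beta(i)\restriction\alpha^+=f_{\alpha^+}(i)\ne g_\alpha(i)$, forcing $g_\alpha(i)\not<_X g_\beta(i)$ for all $i\in W$; since $|W|=\theta$, this contradicts the freeness bound. The main technical obstacle throughout is the careful $\mathcal U$-null bookkeeping (modifying $f_\alpha$, choosing $g_\alpha(i)\ne f_{\alpha^+}(i)$ via splitting) that lands $\vec f_\alpha$ and $g_\alpha$ in the derived tree while preserving equivalence classes in $\check X$; this is precisely what allows the weaker $(\theta^+,\theta)$-freeness to play the role that full $\theta^+$-freeness (via Souslinity of $\hat X$) played in Lemma~\ref{lemma26}.
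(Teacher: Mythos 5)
Your plan for clause~(2) is essentially the paper's argument, and the constant case and the use of splitting there are fine. But your plan for clause~(1) has a genuine gap, precisely at the ``modify $f_\alpha$ on the $\mathcal U$-null set $Z\setminus A_{\varepsilon,\alpha}$'' step. In clause~(1), the classes $[f_\varepsilon]_{\mathcal U}$ and $[f_\alpha]_{\mathcal U}$ are \emph{both} elements of the antichain $A$, hence incomparable in $\check X$; therefore $A_{\varepsilon,\alpha}=\{i<\theta\mid f_\varepsilon(i)<_X f_\alpha(i)\}\notin\mathcal U$, and since $\mathcal U$ is an ultrafilter this forces $Z\setminus A_{\varepsilon,\alpha}\in\mathcal U$. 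So the set on which you propose to redefine $f_\alpha$ is $\mathcal U$-\emph{large}, not $\mathcal U$-null. The modified function $\tilde f_\alpha$ therefore has $[\tilde f_\alpha]_{\mathcal U}\neq[f_\alpha]_{\mathcal U}$, and the conclusion that freeness hands you --- namely $[\tilde f_\alpha]_{\mathcal U}<_{\check X}[\tilde f_\beta]_{\mathcal U}$ --- no longer says anything about the original classes and does not contradict the assumption that $A$ is an antichain. (In clause~(2) this problem disappears because there $[f_\varepsilon]_{\mathcal U}<_{\check X}[f_\alpha]_{\mathcal U}$, so $A_{\varepsilon,\alpha}\in\mathcal U$ and the complement really is $\mathcal U$-null, which is why your argument for (2) goes through.)

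The paper avoids this by choosing the basis of the derived tree \emph{below} the levels $\alpha$, not at a fixed $\varepsilon$, so that no modification is needed. This is where the hypotheses $\cf(\alpha)>\theta$ (i.e.\ restricting to $E^\kappa_{>\theta}$) and Hausdorffness earn their keep: for each $\alpha\in S_1$, since the $\theta$ nodes $x_\alpha(i)$, $i\in I$, are distinct and $\cf(\alpha)>\theta$, Hausdorffness supplies a single $\beta_\alpha<\alpha$ below which they already separate pairwise; the trace $y_\alpha\colon I\to X_{\beta_\alpha}$, sending $i$ to the unique predecessor of $x_\alpha(i)$ at level $\beta_\alpha$, is then injective. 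Pressing $\beta_\alpha$ down to a fixed $\beta$ on a stationary set and using $\lambda^\theta<\kappa$ (Lemma~\ref{lemma85}) one pigeonholes to a common trace $y$, and then $\{x_\alpha\restriction I\mid\alpha\in S_2\}$ lies \emph{verbatim} inside $\hat T=\bigotimes_{i\in I}\cone{y(i)}$ --- no representatives are altered. Applying $(\theta^+,\theta)$-freeness to this set produces $\alpha<\beta$ with $\{i\mid x_\alpha(i)<_X x_\beta(i)\}\in\mathcal U$ for the \emph{original} $x_\alpha,x_\beta$, giving the contradiction. If you want to retain your ``anchor and modify'' strategy, you would need an anchor that every $[f_\alpha]_{\mathcal U}$ in the stationary family already extends, and in the antichain setting no member of that family can play this role; pushing the anchor strictly below all the relevant levels, as the paper does, is the way out.
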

\begin{proof}
By Lemma \ref{lemma85} below, we infer that $\lambda^\theta<\kappa$ for all $\lambda<\kappa$.

For every $x\in X^\theta$, let $\height(x)$ denote the height of $x(0)$ in $(X,<_X)$. Note that $\height(x)$ coincides with the height of $[x]_{\mathcal U}$ in $X^\theta/\mathcal U$.

(1) Suppose we are given $A\s X^\theta$, for which $S=\{\height(x)\mid x\in A\}\cap E^{\kappa}_{>\theta}$ is stationary. We shall prove that $\{ [x]_{\mathcal U}\mid x\in A\}$ is not an antichain.

For every $\alpha\in S$, pick $x_\alpha\in A$ with $\height(x_\alpha)=\alpha$.
As $\mathcal U$ is a selective ultrafilter over $\theta$, and $2^\theta<\kappa$, one of the following must hold:
\begin{enumerate}
\item[(a)] There exists a stationary $S_0\s S$ and $C\in\mathcal U$ such that $x_\alpha\restriction C$ is constant for all $\alpha\in S_0$;
\item[(b)] There exists a stationary $S_1\s S$ and $I\in\mathcal U$ such that $x_\alpha\restriction I$ is injective for all $\alpha\in S_1$.
\end{enumerate}

In case (a), we use the fact $(X,<_X)$ is $\kappa$-Souslin, to find $\alpha<\beta$ in $S_0$ such that $x_\alpha(\min(C))<_X x_\beta(\min(C))$.
Consequently, $[x_\alpha]_{\mathcal U}$ and $[x_\beta]_{\mathcal U}$ are two  compatible elements. Thereby, $\{ [x]_{\mathcal U}\mid x\in A\}$ is not an antichain.

In case (b), since $|I|=\theta<\cf(\alpha)$ and $(X,<_X)$ is Hausdorff,
we may find some large enough ordinal $\beta_\alpha<\alpha$ such that $x_\alpha(i)_\downarrow\cap (X\restriction\beta_\alpha)\neq x_\alpha(j)_\downarrow\cap (X\restriction\beta_\alpha)$ for all two distinct $i,j\in I$.
For  $\alpha\in S_1$, let $y_\alpha:I\rightarrow X_{\beta_\alpha}$ be such that
$y_\alpha(i)$ is the unique element of $X_{\beta_\alpha}$ that is $<_X$ below $x_\alpha(i)$, for all $i\in I$.
Of course, for all $\alpha\in S_1$, $y_\alpha$ is an injection.

So, fix a stationary $S_2\s S_1$ such that $\{y_\alpha\mid \alpha\in S_2\}$ is a singleton, say $\{y\}$.
Then $\{ x_\alpha\mid \alpha\in S_2\}$ is a $\kappa$-sized set in the derived tree $\hat T=\bigotimes_{i\in I}  {y(i)}^\uparrow$.
Since $(X,<_X)$ is $(\theta^+,\theta)$-free, there exist $\alpha<\beta$ in $S_2$ such that $|\{ i\in I\mid \neg(x_\alpha(i)<_X x_\beta(i))\}|<\theta$.
Since $I\in \mathcal U$ and latter is uniform, we altogether get that $\{ i<\theta\mid x_\alpha(i)<_X x_\beta(i))\}\in\mathcal U$.
Consequently, $[x_\alpha]_{\mathcal U}$ and $[x_\beta]_{\mathcal U}$ are two  compatible elements. Thereby, $\{[x]_{\mathcal U}\mid x\in A\}$ is not an antichain.

\smallskip

(2) Since $\lambda^\theta<\kappa$ for all $\lambda<\kappa$,  $X^\theta/\mathcal U$ is a $\kappa$-tree.
Towards a contradiction, suppose that we may find $x_\alpha\in  X^\theta$ for each $\alpha<\kappa$, in such a way that $\{ [x_\alpha]_\mathcal U\mid \alpha<\kappa\}$ is a cofinal branch through $X^\theta/\mathcal U$.
As $\mathcal U$ is a selective ultrafilter, and $(X,<_X)$ is $\kappa$-Aronszajn,
an analysis similar to the above entails a stationary $S_1\s\kappa$ and  $I\in\mathcal U$ such that $x_\alpha\restriction I$ is injective for all $\alpha\in S_1$.
Pick an arbitrary $\varepsilon\in S_1$.

By Lemma \ref{normalandsplitting}, let $E\s\kappa$ be a club such that $(X\restriction E,<_X)$ is normal and splitting.
Without loss of generality, $\min(E)>\varepsilon$.
Find a stationary subset $S\s E$, and some  $Z\in\mathcal U$ such that
$Z=\{ i<\theta\mid x_{\varepsilon}(i) <_X x_\alpha(i)\}$ for all $\alpha\in S$.
Put $I'=Z\cap I$, and consider the derived tree  $\hat T=\bigotimes_{i\in I'}  x_{\varepsilon}(i)^\uparrow$.

As in the proof of Lemma \ref{lemma26},
for all $\alpha\in S$, we let $\alpha^+=\min(S\setminus(\alpha+1))$,
and pick $g_{\alpha^+}\in\hat T\cap(T_{\alpha^+})^{I'}$
such that for all $i\in I'$:
\begin{itemize}
\item $g_{\alpha^+}(i)_\downarrow\cap(T\restriction\alpha)=x_\alpha(i)_\downarrow$;
\item $g_{\alpha^+}(i)\neq x_{\alpha^+}(i)$.
\end{itemize}

Since $|I'|<\theta^+$ and $(T,{\subset})$ is $(\theta^+,\theta)$-free,
we may pick $\alpha<\beta$ in $S$ such that
$$B=\{i\in I'\mid \neg(g_{\alpha^+}(i)<_X g_{\beta^+}(i))\}$$ is of cardinality $<\theta$.
Since  $I\in \mathcal U$, and the latter is a uniform ultrafilter over $\theta$, $I'\setminus B\in\mathcal U$.
By the choice of $x_\alpha$ and $x_\beta$, also
$$A_{\alpha^+,\beta}=\{ i<\theta\mid x_{\alpha^+}(i)<_X x_\beta(i)\}$$ is in $\mathcal U$.
Then  $(I'\setminus B)\cap A_{\alpha^+,\beta}$ is nonempty, contradicting the choice of $\alpha<\beta$.
\end{proof}

\section{The microscopic approach to Souslin-tree constructions}\label{section3}

All of the trees that are constructed in this paper will be normal prolific subtrees of $^{<\kappa}\kappa$
for some regular uncountable cardinal $\kappa$.
Each node of such a tree $T$ is a function $t : \alpha \to \kappa$ for some ordinal $\alpha < \kappa$;
the tree order $<_T$ is simply extension of functions $\subset$;
and we require that if $t: \alpha \to \kappa$ is in $T$, then $t \restriction \beta \in T$ for every $\beta < \alpha$.
For any node $t \in T$, the height of $t$ in $T$ is just its domain, that is, $\height(t) = \dom(t)$,
and the set of its predecessors is $t_\downarrow = \{ t \restriction \beta \mid \beta < \dom(t) \}$.
For any $\alpha < \kappa$, the level $T_\alpha$ of the tree $T$ will be the set of all elements of $T$
that have domain $\alpha$, that is, $T_\alpha = T \cap {}^\alpha \kappa$.
Any function $f : \kappa \to \kappa$ determines a cofinal branch through $^{<\kappa} \kappa$, namely
$\{ f \restriction \alpha \mid \alpha < \kappa \}$, which may or may not be a subset of a given tree $T$.

The main advantage of this notational approach is the ease of completing a branch at a limit level.
Suppose that, during the process of constructing a tree $T$, we have already inserted into $T$
a $\subseteq$-increasing sequence of nodes $\langle t_\alpha \mid \alpha < \beta\rangle$ for some $\beta<\kappa$.
The (unique) limit of this sequence, which may or may not be a member of $T$, is simply $\bigcup_{\alpha < \beta} t_\alpha.$

It is clear that any downward-closed tree $T \subseteq {}^{<\kappa}\kappa$ is Hausdorff,
and conversely that any Hausdorff tree is isomorphic to a downward-closed subtree of $^{<\kappa}\kappa$
for some cardinal $\kappa$.

\medskip

While classical constructions of $\kappa$-Souslin trees typically involve
a recursive process of determining a partial order $<_T$ over $\kappa$
by advising with a $\diamondsuit(\kappa)$-sequence,
here, the order is already known (being $\subset$),
and the recursive process involves the determination of a subset of ${}^{<\kappa}\kappa$.
For this reason, it is more convenient to work with the following variation of $\diamondsuit(\kappa)$:

\begin{definition}[\cite{axioms}]\label{defDiamondHkappa}
$\diamondsuit(H_\kappa)$ asserts the existence of a partition $\langle R_i \mid  i < \kappa \rangle$ of $\kappa$
and a sequence $\langle S_\beta \mid \beta < \kappa \rangle$ of subsets of $H_\kappa$
such that for every $p\in H_{\kappa^{+}}$,  $i<\kappa$, and $\Omega \subseteq H_\kappa$,
there exists an elementary submodel $\mathcal M\prec H_{\kappa^{+}}$ such that:
\begin{itemize}
\item $p\in\mathcal M$;
\item $\mathcal M\cap\kappa\in  R_i$;
\item $\mathcal M\cap \Omega = S_{\mathcal M\cap\kappa}$.
\end{itemize}
\end{definition}

Notice that if we let $Z_\beta=S_\beta$ whenever $S_\beta\s\beta$, and $Z_\beta=\emptyset$ otherwise,
then $\langle Z_\beta\mid \beta<\kappa\rangle$ forms a $\diamondsuit(\kappa)$-sequence. A converse is also available:

\begin{fact}[\cite{axioms}]\label{fact32} $\diamondsuit(\kappa)$ is equivalent to $\diamondsuit(H_\kappa)$ for any regular uncountable cardinal $\kappa$.
\end{fact}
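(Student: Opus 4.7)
The easier direction, $\diamondsuit(H_\kappa) \Rightarrow \diamondsuit(\kappa)$, I would handle essentially as flagged in the paragraph preceding the fact: set $Z_\beta := S_\beta$ when $S_\beta \subseteq \beta$ and $Z_\beta := \emptyset$ otherwise; then given any $Z \subseteq \kappa$ and any club $C \subseteq \kappa$, apply $\diamondsuit(H_\kappa)$ with $p = (C, Z)$, arbitrary $i < \kappa$, and $\Omega = Z$ to produce $\mathcal M \prec H_{\kappa^+}$ whose intersection with $\kappa$ is an ordinal $\beta \in C$ satisfying $\mathcal M \cap Z = S_\beta$. Since $Z \cap \beta = \mathcal M \cap Z = S_\beta \subseteq \beta$, this yields $Z \cap \beta = Z_\beta$.

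For the converse, $\diamondsuit(\kappa) \Rightarrow \diamondsuit(H_\kappa)$, the plan is to repackage a $\diamondsuit(\kappa)$-sequence into one guessing \emph{pairs} of subsets of $\kappa$, and then use one coordinate to address the partition cells $R_i$ and the other to code subsets of $H_\kappa$ via a fixed bijection $\pi : \kappa \leftrightarrow H_\kappa$. Such a $\pi$ is available because $\diamondsuit(\kappa)$ implies $2^{<\kappa} = \kappa$, hence $|H_\kappa| = \kappa$. The pair-version of the principle follows routinely by coding subsets of $\kappa$ through any bijection $\kappa \times \kappa \leftrightarrow \kappa$, yielding $\langle (X_\beta, Y_\beta) \mid \beta < \kappa \rangle$ such that for every $X, Y \subseteq \kappa$, the set $\{\beta < \kappa \mid X \cap \beta = X_\beta \text{ and } Y \cap \beta = Y_\beta\}$ is stationary.

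Armed with this, I would set $i_\beta := \min(X_\beta)$ when $X_\beta$ is a nonempty bounded subset of $\kappa$ and $i_\beta := 0$ otherwise, define $R_i := \{\beta < \kappa \mid i_\beta = i\}$, and let $S_\beta := \pi[Y_\beta]$. To verify $\diamondsuit(H_\kappa)$, fix $p \in H_{\kappa^+}$, $i < \kappa$, and $\Omega \subseteq H_\kappa$, and form a continuous elementary chain $\langle \mathcal M_\alpha \mid \alpha < \kappa \rangle$ of submodels of $H_{\kappa^+}$ of size ${<}\kappa$, each containing $\{p, i, \pi, \Omega\}$ and satisfying $\mathcal M_\alpha \cap \kappa \in \kappa$. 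Applying the pair-diamond to $X := \{i\}$ and $Y := \pi^{-1}[\Omega]$ along the resulting club, I can pick $\alpha$ with $\beta := \mathcal M_\alpha \cap \kappa > i$, $X_\beta = \{i\}$ (so $\beta \in R_i$), and $Y_\beta = \pi^{-1}[\Omega] \cap \beta$. Since $\pi \in \mathcal M_\alpha$, elementarity yields $\mathcal M_\alpha \cap H_\kappa = \pi[\mathcal M_\alpha \cap \kappa] = \pi[\beta]$, whence $\mathcal M_\alpha \cap \Omega = \pi[\pi^{-1}[\Omega] \cap \beta] = \pi[Y_\beta] = S_\beta$, as required.

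I do not anticipate any real obstacle: the crux is simply the upgrade to a pair-diamond, a soft coding argument valid at any regular uncountable $\kappa$, and $\pi$ thereafter serves purely as bookkeeping. The one point worth verifying with care is that $\pi \in \mathcal M_\alpha$ truly forces $\mathcal M_\alpha \cap H_\kappa = \pi[\mathcal M_\alpha \cap \kappa]$, but this is immediate from the closure of $\mathcal M_\alpha$ under $\pi$ and $\pi^{-1}$ together with $\mathcal M_\alpha \cap \kappa$ being an ordinal.
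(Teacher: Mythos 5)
The paper itself does not prove this Fact --- it is cited as a black box from \cite{axioms} --- so there is no in-paper argument for me to compare against; I am therefore evaluating your proposal on its own merits. Your argument is correct and is essentially the standard derivation: the forward implication is exactly the remark made in the text immediately preceding the Fact, and the converse goes through a pair-version of $\diamondsuit(\kappa)$ together with a bijection $\pi:\kappa\leftrightarrow H_\kappa$, which is available because $\diamondsuit(\kappa)\Rightarrow 2^{<\kappa}=\kappa\Rightarrow |H_\kappa|=\kappa$ for regular $\kappa$; the key observation that for $\mathcal M\prec H_{\kappa^+}$ with $\pi\in\mathcal M$ and $\mathcal M\cap\kappa=\beta$ an ordinal one has $\mathcal M\cap H_\kappa=\pi[\beta]$ is exactly right.

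The one thing to tidy in a writeup: you cannot literally arrange $\mathcal M_\alpha\cap\kappa\in\kappa$ at \emph{every} $\alpha$ of a continuous chain by a naive successor step. Either close off each successor via an $\omega$-chain of hulls absorbing its own bounded trace on $\kappa$, or --- more economically --- build an arbitrary continuous $\subseteq$-increasing chain of elementary submodels of $H_{\kappa^+}$ of size $<\kappa$ containing $\{p,i,\pi,\Omega\}$ with $\mathcal M_\gamma\in\mathcal M_{\gamma+1}$ and $\sup(\mathcal M_\gamma\cap\kappa)\subseteq\mathcal M_{\gamma+1}$, so that $D=\{\alpha<\kappa\mid\mathcal M_\alpha\cap\kappa=\alpha\}$ is a club, and then intersect $D$ with the stationary set supplied by the pair-diamond (above $i$) to get the desired $\beta=\mathcal M_\alpha\cap\kappa$ and model $\mathcal M_\alpha$. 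With that repair your coding via $i_\beta=\min(X_\beta)$, $R_i=\{\beta\mid i_\beta=i\}$, and $S_\beta=\pi[Y_\beta]$ goes through exactly as you describe.
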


\label{section:basicsetup}

Let us uncover the setup that $\diamondsuit(H_\kappa)$ entails.
The functions, sequences, and notation defined here do not rely on anything but diamond, and
will be used in all of the tree constructions in this paper.

Fix a partition $\langle R_i \mid  i < \kappa \rangle$ of $\kappa$,
and a sequence $\langle S_\beta \mid \beta < \kappa \rangle$ of subsets of $H_\kappa$
as in Definition  $\ref{defDiamondHkappa}$.
Fix a bijection $\phi:\kappa\leftrightarrow H_\kappa$.
Let $<_\phi$ denote the well-ordering that $\phi$ induces on  $H_\kappa$.
That is,  $x<_\phi y$ iff $\phi^{-1}(x) < \phi^{-1}(y)$.
Define $\psi:\kappa\rightarrow H_\kappa$ by letting $\psi(\beta)=\phi(i)$ for the unique $i<\kappa$ such that $\beta\in R_i$.

\smallskip

For every $T\in H_\kappa$, denote $\beta(T)=0$ unless there exists $\beta<\kappa$
such that $T\s{}^{\le\beta}\kappa$ and $T \nsubseteq {}^{<\beta}\kappa$.
Of course, in the latter case, $\beta$ is uniquely determined, so we denote $\beta(T)=\beta$ for this unique $\beta$.

We now define three functions:
\begin{enumerate}
\item The default extension function,
 $\defaultaction:H_\kappa\times H_\kappa\rightarrow H_\kappa$, is defined as follows.

Let $\defaultaction(x,T)=x$, unless $\bar Q=\{ z\in T\cap{}^{\beta(T)}\kappa\mid x\s z\}$ is non-empty,
in which case we let $\defaultaction(x,T)=\min(\bar Q,<_\phi)$.

\item The function for sealing antichains, $\sealantichain:H_\kappa\times H_\kappa\rightarrow H_\kappa$,
is defined as follows.

Let $\sealantichain(x,T)=\defaultaction(x,T)$,
unless $Q=\{ z\in T\cap{}^{\beta(T)}\kappa\mid \exists y\in S_{\beta(T)}( x\cup y\s z)\}$ is non-empty,
in which case we let $\sealantichain(x,T)=\min(Q,<_\phi)$.

\item
The function for sealing antichains in the product tree,
$\free:H_\kappa\times H_\kappa\times H_\kappa\rightarrow H_\kappa$,
is defined as follows.

Given $(x,T,\vec{b})$: if there exists $\tau<\kappa$ such that $\vec b\in{}^\tau T$
and
\[
Q=\left\{ \vec z\in {}^\tau(T\cap{}^{\beta(T)}\kappa) \mid
    \exists \vec y\in S_{\beta(T)} \cap {}^\tau T
        \left( \forall \xi<\tau\  \vec b(\xi)\cup \vec y(\xi)\s \vec z(\xi)\right) \right\}
\]
 is non-empty,
let $\vec z=\min(Q,<_\phi)$. If there exists a unique $\xi<\tau$ such that $x = \vec b(\xi)$,
let $\free (x,T,\vec b)=\vec z(\xi)$. Otherwise, Let $\free(x,T,\vec b)=\defaultaction(x,T)$.%
\footnote{Notice that $\free(x,T,\langle x\rangle)=\sealantichain(x,T)$ whenever $x \in T$.}

\end{enumerate}
The following is obvious.

\begin{extension-lemma}\label{extendfact}
If $x\in T\in H_\kappa$ and $T$ is a normal subtree of ${}^{\le\beta(T)}\kappa$,
then $\defaultaction(x,T)$, $\sealantichain(x,T)$, and $\free(x,T,\vec b)$ are
elements of $T\cap{}^{\beta(T)}\kappa$ extending $x$.
\end{extension-lemma}

A core component of the uniform construction of $\kappa$-Souslin trees in this paper
is the following. Given $\langle C_\alpha\mid\alpha<\kappa\rangle$, we first derive some stationary subset $\Gamma$ of $\kappa$.
Then, for every $\alpha\in \Gamma$ for which the initial tree $T\restriction\alpha$ has already been defined, and every $x \in T \restriction C_\alpha$,
we identify a branch $b^\alpha_x$ through $T \restriction \alpha$, containing $x$.
The recursive process of identification of such a branch consists of ``microscopic'' steps, where each step involves invoking one of the three functions mentioned in the Extension Lemma,
with parameters that are suggested by the oracle $\psi$.
As usual, to be able to keep climbing up, it is necessary that at a limit step $\bar\alpha$ of the recursion,
the limit of the portion of the branch identified so far, $b^\alpha_x \restriction \bar\alpha$,
is an element of $T_{\bar\alpha}$.
This will be accomplished by ensuring that such a portion was in fact already identified as $b^{\bar\alpha}_x$
at an earlier stage of the recursive construction of the tree, when constructing $T_{\bar\alpha}$.
That is:
\begin{claim-template}\label{coherence-template}
Suppose ${\bar\alpha} < \alpha$ are elements of $\Gamma$,
and $T \restriction \alpha$ has been constructed to satisfy the relevant properties.
Suppose also that $b^\alpha_y \restriction {\bar\alpha}$ has already been constructed for all $y\in T\restriction C_{\bar\alpha}$.

If  $C_{\bar\alpha} = C_\alpha \cap {\bar\alpha}$, then for all $x\in T\restriction C_{\bar\alpha}$:
\[
b^{\bar\alpha}_x = b^\alpha_x \restriction{\bar\alpha}.
\]
\end{claim-template}

As a by-product, some of the $\kappa$-Souslin trees $(T,{\subset})$ constructed in this paper will happen to be $\p^-_{14}(\kappa,\mathcal R,1,\{\Gamma\})$-respecting,
by simply letting for all $\alpha<\kappa$ and $x\in T\restriction C_\alpha$: $${\mathbf b^\alpha}(x)=\begin{cases}\bigcup\{b^\alpha_x(\beta)\mid \beta\in\dom(b^\alpha_x)\},&\alpha\in\Gamma\\
\emptyset,&\text{otherwise}\end{cases}.$$
Recalling Clause~(\ref{respectcohere}) of Definition \ref{respecting}, the reader can now probably guess the definition of the set  $\Gamma$.

\section{Souslin Tree with a Countable Ascent Path}\label{sectioncountablewidth}

In this section, we shall present constructions of $\kappa$-Souslin trees from $\p_{14}(\kappa,\mathcal R,\theta,\mathcal S)$,
where $\mathcal R= {\sq}$ and $\theta=1$. We remark that by \cite{axioms}, in $L$, this principle holds for every regular uncountable cardinal $\kappa$ that is not weakly compact.
As for $\kappa=\lambda^+$, where $\lambda$ is an uncountable cardinal, we have that $\square_\lambda+\ch_\lambda$ entails $\p_{14}(\lambda^+,{\sq},\theta,\{ E^{\lambda^+}_{\chi}\mid \aleph_0\le\cf(\chi)=\chi<\lambda\})$
for any ordinal $\theta<\lambda$.

\subsection{Slim Trees}

\begin{theorem}\label{ctbl-ascent-thm}
Suppose that $\kappa$ is a regular uncountable cardinal, and $\p_{14}(\kappa,  {\sqsubseteq}, 1, \{\kappa\})$ holds.

Then there exists a prolific slim $\kappa$-Souslin tree with an injective $\mathcal F^{\fin}_{\aleph_0}$-ascent path.
\end{theorem}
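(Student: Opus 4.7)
The plan is to carry out a microscopic construction in the style of Section~\ref{section3}, producing a normal prolific subtree $T\s{}^{<\kappa}\kappa$ together with an ascent path built in parallel. First, I would invoke Fact~\ref{fact32} to extract the $\diamondsuit(H_\kappa)$ data (the partition $\langle R_i\mid i<\kappa\rangle$, the guessing sequence $\langle S_\beta\mid\beta<\kappa\rangle$, and the oracle $\psi:\kappa\to H_\kappa$), and fix a sequence $\langle C_\alpha\mid\alpha<\kappa\rangle$ witnessing $\p_{14}^-(\kappa,{\sq},1,\{\kappa\})$. The tree $T$ is built level-by-level: at a successor level, every prolific extension $t^\smallfrown\langle i\rangle$ with $i<\max\{\omega,\dom(t)\}$ is added; at a limit $\alpha$, $T_\alpha$ is taken to be the set of limits of certain distinguished cofinal branches through $T\restriction\alpha$.

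To specify those branches at a limit $\alpha$, I would, for each $x\in T\restriction C_\alpha$, recursively identify a cofinal branch $b^\alpha_x$ in $T\restriction\alpha$ passing through $x$, along the natural enumeration $\langle\alpha_i\mid i<\otp(C_\alpha)\rangle$ of $C_\alpha$: at a successor stage $i+1$, extend the current node at level $\alpha_i$ to a node at level $\alpha_{i+1}$ by invoking $\sealantichain$ with parameters prescribed by $\psi(\alpha_i)$ (falling back to $\defaultaction$ when the oracle is uninformative); at a limit stage, take the pointwise union. The Coherence Claim Template guarantees that this union lands in $T$: the $\sq$-coherence of $\langle C_\alpha\rangle$ forces $C_{\bar\alpha}=C_\alpha\cap\bar\alpha$ whenever $\bar\alpha\in\acc(C_\alpha)$, so the recursion along $C_{\bar\alpha}$ (executed earlier during the construction of level $\bar\alpha$) reproduces exactly the partial branch assembled so far. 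Setting $T_\alpha=\{\bigcup b^\alpha_x\mid x\in T\restriction C_\alpha\}$ preserves slimness because $|T\restriction C_\alpha|\le|\alpha|\cdot\aleph_0$.

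The Souslin property follows from the familiar sealing argument: given a maximal antichain $A\s T$, the set $\bar A=\{\height(t)\mid t\in A\}$ must be cofinal in $\kappa$, so the third clause of $\p_{14}^-$ applied to $\bar A$ yields stationarily many $\alpha$ at which cofinally many $\beta\in C_\alpha$ satisfy $\suc_\omega(C_\alpha\setminus\beta)\s\bar A$. Among these, stationarily many $\alpha$ have $S_\alpha$ encoding $A$, and at each such $\beta$ the $\sealantichain$ step forces the extension of the branch to pass through an element of $A$; hence every member of $T_\alpha$ sits above $A$, so $A\s T\restriction\alpha$ and $|A|<\kappa$. For the ascent path, I would designate $r_n=\langle n\rangle\in T_1$ for $n<\omega$, and at every limit $\alpha$ set $f_\alpha(n)=\bigcup b^\alpha_{x^\alpha_n}$, where $x^\alpha_n\in T\restriction C_\alpha$ is the canonical extension of $r_n$ produced by the recursion; at successor levels, extend $f_\alpha(n)$ in a fixed canonical way. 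Distinctness of the $r_n$'s propagates to distinctness of the corresponding branches, delivering injectivity in the sense of Definition~\ref{tree-indexed-ascent-def}.

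The hardest part will be verifying the $\mathcal F^{\fin}_{\aleph_0}$-ascent condition: for every $\alpha<\beta$, the set $\{n<\omega\mid f_\alpha(n)\s f_\beta(n)\}$ must be cofinite. When $\alpha\in\acc(C_\beta)$, this drops out for \emph{all} $n$ from the Coherence Claim Template; the delicate case is when $\alpha$ lies in the interior of a $\nacc$-gap $(\gamma,\alpha^*)$ of $C_\beta$, where the single straddling step of $b^\beta_{x^\beta_n}$ from level $\gamma$ to level $\alpha^*$ must be arranged to pass through $f_\alpha(n)$ for cofinitely many $n$ simultaneously. The intended device is to treat the whole $\omega$-indexed collection of tracks as a single planned object, folding the instruction ``extend $f_\alpha(n)$'' at the straddling step into a coordinated microscopic call, so that only finitely many $n$ (namely those whose tracks had not yet been activated by the time $\alpha$ was reached) can fail the ascent. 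This keeps the failure set finite, as required by the cofinite filter $\mathcal F^{\fin}_{\aleph_0}$.
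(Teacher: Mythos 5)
Your setup is fine, but the heart of the proof---the construction of the ascent path so that $\{n<\omega\mid f_\alpha(n)<_T f_\beta(n)\}$ is cofinite for all $\alpha<\beta$---is left as a gesture in the final paragraph, and the gesture does not actually close the gap. If you define $f_\alpha(n)=\mathbf b^\alpha_{x^\alpha_n}$ for a fixed family of roots $r_n\in T_1$, then whenever $\alpha\notin\acc(C_\beta)$ the clubs $C_\alpha$ and $C_\beta\cap\alpha$ bear no relation to one another, and the Coherence Claim Template buys you nothing: the branches $b^\alpha_{r_n}$ and $b^\beta_{r_n}$ may diverge at the first step for \emph{every} $n$. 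Your proposed remedy---``folding the instruction `extend $f_\alpha(n)$' into a coordinated microscopic call''---cannot work as stated, because a single microscopic step $\sealantichain$ at a point $\beta_1\in\nacc(C_\beta)$ has only access to the parameters $(b^\beta_{x}(\beta_0),T\restriction(\beta_1+1),\psi(\beta_1))$; it has no way to see the potentially $\kappa$-many intermediate levels $\alpha\in(\beta_0,\beta_1)$ nor their locally constructed ascent-path values $f_\alpha(n)$, nor can a single value of the diamond oracle reliably encode all of them. You have correctly identified the critical case, but you have not supplied a mechanism that handles it.

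What the argument actually requires is a two-case, $n$-dependent definition of $f_\alpha(n)$ that does not follow a branch from a fixed root. When $\sup(\acc(C_\alpha))=\alpha$ one sets $f_\alpha(n)=\bigcup\{f_\beta(n)\mid\beta\in\acc(C_\alpha)\}$ (using $\sq$-coherence plus the induction hypothesis on $\acc$-points to see this is an increasing cofinal chain); note that this node need not equal any $\mathbf b^\alpha_x$, so your set $T_\alpha=\{\bigcup b^\alpha_x\mid x\in T\restriction C_\alpha\}$ may actually omit it -- the paper must and does put the $f_\alpha(n)$'s into $T_\alpha$ explicitly. When instead $C_\alpha$ has an $\omega$-type tail $\langle\alpha_m\mid m<\omega\rangle$, one lets $m_n=\max\{m\le n\mid\langle f_{\alpha_i}(n)\mid i\le m\rangle\text{ is }{<_T}\text{-increasing}\}$ and sets $f_\alpha(n)=\mathbf b^\alpha_{f_{\alpha_{m_n}}(n)}$. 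The point of capping $m_n$ at $n$ is exactly what makes the failure set finite: given $\beta<\alpha$, fix $m$ with $\beta<\alpha_m$; then for each $n>m$ lying in the finite intersection of the cofinite witnessing sets $F_i$ $(i<m)$ and $G$, one has $m\le m_n$ and hence $f_\beta(n)<_T f_{\alpha_m}(n)\le_T f_{\alpha_{m_n}}(n)<_T f_\alpha(n)$. This $m_n$-device, together with the separate handling of the $\acc$-cofinal case, is the missing content; without it, the ascent condition at the ``nacc-gaps'' cannot be established.
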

\begin{proof} Recalling Definition \ref{def16} and Fact \ref{fact32},
we know that $\p_{14}^-(\kappa,  {\sqsubseteq}, 1, \{\kappa\})$ and $\diamondsuit(H_\kappa)$ hold.

Let  $\langle C_\alpha \mid \alpha < \kappa \rangle$ be a witness to $\p_{14}^-(\kappa,  {\sqsubseteq}, 1, \{\kappa\})$.
Next, by $\diamondsuit(H_\kappa)$, we fix
the function $\phi:\kappa\leftrightarrow H_\kappa$,
sequences $\langle S_\beta \mid \beta<\kappa\rangle$, $\langle R_i  \mid i < \kappa \rangle$,
well-ordering $<_\phi$ of $H_\kappa$,
notation $\beta(T)$,
and the function $\sealantichain:H_\kappa\times H_\kappa\rightarrow H_\kappa$
as described in Section~\ref{section:basicsetup}.

The constructed tree $T$ will be a downward-closed subset of $^{<\kappa} \kappa$,
so that each level $T_\alpha$ will be a subset of $^\alpha \kappa$,
and the tree relation $\leq_T$ will simply be extension of sequences.
We will construct,
simultaneously by recursion over $\alpha < \kappa$,
the levels $\langle T_\alpha \mid \alpha < \kappa \rangle$ of the tree $T$,
as well as the functions $\langle f_{\alpha} \mid \alpha<\kappa \rangle$
and the nodes
$\langle\langle \mathbf b^\alpha_x \mid x \in T \restriction C_\alpha \rangle \mid \alpha \in \acc(\kappa)\rangle$,
so that after each stage $\alpha$ of the construction the following properties are satisfied:

\begin{enumerate}
\item \label{right-level}
$T_\alpha \subseteq {}^\alpha \kappa$;

\item \label{downward-closed}
The tree constructed so far is a \emph{downward-closed} subset of $^{\leq\alpha}\kappa$,
that is, for each $t \in T_\alpha$
we have $\{ t \restriction \beta \mid \beta < \alpha \} \subseteq T \restriction \alpha$.

\item \label{normal}
The tree is \emph{normal}, that is, for each $s \in T \restriction \alpha$,
there is $t \in T_\alpha$ with $s <_T t$;

\item \label{prolific}
The tree is \emph{prolific}, that is, for each $s \in T \restriction \alpha$,
we have
\[
\{ s^\smallfrown \langle i\rangle \mid i<\max \{\omega, \height(s) \} \} \subseteq T \restriction (\alpha+1).
\]

\item \label{slim}
The tree is \emph{slim}, that is, $\left| T_\alpha \right| \leq \max \{ \left|\alpha\right|, \aleph_0 \}$;

\item \label{b-simple}
If $\alpha$ is a limit ordinal, then
for every $x \in T \restriction C_\alpha$,
$\mathbf b^\alpha_x \in T_\alpha$ is the limit of the
increasing, continuous, cofinal sequence $b^\alpha_x$
in $( T \restriction \alpha, \subseteq )$,
satisfying the following properties:

\begin{enumerate}
\item
$\dom (b^\alpha_x) = C_\alpha \setminus \height(x)$;
\item
$b^\alpha_x (\height(x)) =x$;
\item
For all $\beta \in \dom(b^\alpha_x)$, $b^\alpha_x (\beta) \in T_\beta$;
\item
If $\beta_0 < \beta_1$ are two consecutive points in $\dom(b^\alpha_x)$,
then
\[
b^\alpha_x (\beta_1) =     \sealantichain ( b_x^\alpha(\beta_0), T\restriction (\beta_1+1) );
\]
\end{enumerate}

\item \label{ascent-is-function}\label{ascent-injection}
$f_\alpha : \omega \to T_\alpha$ is a function;
moreover, if $\alpha > 0$, then $f_\alpha$ is injective;

\item \label{cofinite-ascent}
For every $\beta < \alpha$,
\[
\{n < \omega \mid f_{\beta}(n) <_T f_{\alpha}(n) \} \in \mathcal F^{\fin}_{\aleph_0};
\]

\item \label{ascent-on-acc}
If $\alpha$ is a limit ordinal and $\beta \in \acc (C_\alpha)$ then
\[
\{ n < \omega \mid f_{\beta}(n) <_T f_{\alpha}(n) \} = \omega;
\]

\item \label{order2-nacc}
If $\alpha$ is  a limit ordinal such that $\sup (\acc(C_\alpha)) < \alpha$, then
for every $n < \omega$ there is some $x \in T \restriction C_\alpha$ such that
\[
f_\alpha (n) = \mathbf b^\alpha_x;
\]

\item \label{limit-ctbl-ascent}
If $\alpha$ is a limit ordinal, then
\[
T_\alpha = \{ \mathbf b^\alpha_x \mid x \in T \restriction C_\alpha \} \cup \{ f_\alpha (n) \mid n < \omega \}.
\]

\end{enumerate}

The following instance of the Coherence Claim Template from page \pageref{coherence-template}
gives a hint as to how we will ensure that the sequences described in property~(\ref{b-simple})
can always be constructed:

\begin{claim}\label{coherence-simple}
Fix limit ordinals ${\bar\alpha} < \alpha < \kappa$, and suppose $T \restriction \alpha$ has been constructed
to satisfy the above properties.
If $C_{\bar\alpha} = C_\alpha \cap {\bar\alpha}$,  $x \in T \restriction C_{\bar\alpha}$,
and $b^\alpha_x \restriction (C_{\bar\alpha} \setminus \height(x))$ has already been constructed,
then
\[
b^{\bar\alpha}_x = b^\alpha_x \restriction \left( C_{\bar\alpha} \setminus \height(x) \right).
\]
\end{claim}

\begin{proof}
Property~(\ref{b-simple})(d) says that,
for two consecutive points $\beta_0 < \beta_1$ in $\dom(b^\alpha_x)$,
the value of $b^\alpha_x({\bar\alpha}_1)$ depends only on:
\begin{itemize}
\item the function $\sealantichain$;
\item the value $b^\alpha_x (\beta_0)$; and
\item the tree $T \restriction (\beta_1+1)$.

\end{itemize}
In particular, there is no further dependency on the initial point $x$ or on $\alpha = \sup (C_\alpha)$.
This is the idea being captured by this claim.
Formally, we prove the claim by induction over the common domain:

On each side of the equation, the sequence has domain $C_{\bar\alpha} \setminus \height(x)$.
We prove that the two sequences $b^{\bar\alpha}_x$ and $b^\alpha_x$
have equal values on their common domain,
by induction over $\beta \in C_{\bar\alpha} \setminus \height(x)$:
\begin{description}
\item [$\blacktriangleright \beta = \height(x)$]
Using property~(\ref{b-simple})(b), we have
\[
b^{\bar\alpha}_x (\height(x)) = x = b^\alpha_x (\height(x)).
\]
\item[$\blacktriangleright \beta > \height(x), \beta \in \nacc({C_{\bar\alpha}})$]
Since $C_{\bar\alpha}$ is a club, $\beta$ must have an immediate predecessor in $C_{\bar\alpha} \setminus \height(x)$,
which we call $\beta^-$.
Then using property~(\ref{b-simple})(d) and the induction hypothesis, we have
\[
b^{\bar\alpha}_x (\beta) = \sealantichain ( b^{\bar\alpha}_x (\beta^-), T \restriction (\beta+1))
                = \sealantichain    ( b^\alpha_x (\beta^-), T \restriction (\beta+1) )
                = b^\alpha_x (\beta).
\]
\item[$\blacktriangleright \beta > \height(x), \beta \in \acc({C_{\bar\alpha}})$]
Then also $\beta \in \acc (C_\alpha)$,
and by continuity of each sequence and application of the induction hypothesis,
we have
\[
b^{\bar\alpha}_x(\beta)
    = \bigcup \{ b^{\bar\alpha}_x (\delta)  \mid \delta \in C_{\bar\alpha} \cap \beta \setminus \height(x) \}
    = \bigcup \{ b^\alpha_x (\delta) \mid \delta \in C_{\bar\alpha} \cap \beta \setminus \height(x) \}
    = b^\alpha_x (\beta).
\qedhere
\]
\end{description}
\end{proof}

The recursive construction proceeds as follows:

\begin{description}
\item[Base case, $\alpha = 0$]
Let
$T_0 = \{ \emptyset \}$,
and define $f_0 : \omega \to \{\emptyset\}$ by setting $f_0(n) = \emptyset$ for all $n < \omega$.
The required properties are automatically satisfied as there is nothing to check.

\item[Successor ordinal, $\alpha = \beta+1$]
Define
\[
T_\alpha = \{ s^\smallfrown \langle i \rangle\mid s \in T_\beta, i<\max \{\omega, \alpha\} \}.
\]
In addition, define $f_\alpha : \omega \to T_\alpha$ by setting, for $n < \omega$,
$f_\alpha(n) = f_\beta(n)^\smallfrown\langle n\rangle$.
The required properties are easy to verify.

\item[Limit level, $\alpha = \sup \alpha > 0$]
We begin by constructing $\mathbf b^\alpha_x \in {}^\alpha \kappa$ for each $x \in T \restriction C_\alpha$.

Recall that $C_\alpha$ is a club subset of $\alpha$.
For each $x \in T \restriction C_\alpha$,
we will use $C_\alpha$ to determine a cofinal branch through $(T \restriction \alpha,\s)$, containing $x$,
by defining an increasing, continuous sequence $b^\alpha_x$
of nodes.
The domain of the sequence $b^\alpha_x$ will be
${C_\alpha} \setminus \height(x)$.
Notice that $\dom(b^\alpha_x)$ is a club subset of $\alpha$, since ${C_\alpha}$ is club.
Also, we have
\begin{gather*}
\acc \left(\dom(b^\alpha_x)\right) = \acc ( {C_\alpha}) \setminus (\height(x)+1) ;  \\
\nacc \left(\dom(b^\alpha_x)\right) \setminus \{\height(x)\} = \nacc ( {C_\alpha} ) \setminus (\height(x)+1) .
\end{gather*}

We define the values $b^\alpha_x (\beta)$ of the sequence
by recursion over $\beta \in \dom(b^\alpha_x)$,
where for every
$\beta$ we will have $b^\alpha_x(\beta) \in T_\beta$:

\begin{description}
\item[$\blacktriangleright \beta  = \height(x)$]
This is where the sequence begins.
Let $b^\alpha_x(\height(x)) = x$.

\item[$\blacktriangleright \beta > \height(x),    \beta \in \nacc({C_\alpha})$]
In this case, we denote the predecessor of $\beta$ in ${C_\alpha}$ by $\beta^-$.
That is, we define $\beta^- = \max ( {C_\alpha} \cap \beta)$.
This maximum necessarily exists, and it is in $\dom(b^\alpha_x)$,
because ${C_\alpha}$ is club and $\beta \in \nacc({C_\alpha}) \setminus  (\height(x)+1)$.
Let
\[
b_x^\alpha(\beta)=\sealantichain ( b_x^\alpha(\beta^-), T\restriction (\beta+1) ).
\]

Since $b_x^\alpha(\beta^-)$ belongs to the normal tree $T\restriction (\beta+1)$,
we get from the Extension Lemma (page \pageref{extendfact}) that
$b_x^\alpha(\beta)$ is an element of $T_\beta$ extending $b_x^\alpha(\beta^-)$.

\item[$\blacktriangleright \beta > \height(x), \beta \in \acc({C_\alpha})$]
In this case
we define
\[
b^\alpha_x(\beta) = \bigcup
\left\{ b^\alpha_x(\gamma) \mid \gamma \in \dom(b^\alpha_x) \cap \beta \right\}.
\]

It is clear that $b^\alpha_x(\beta) \in {}^\beta \kappa$,
but we need to show
that in fact we have $b^\alpha_x(\beta) \in T_\beta$ as well.

From our choice of the sequence $\langle C_\alpha \mid \alpha < \kappa \rangle$ satisfying the proxy principle,
since $\alpha$ is a limit ordinal
and $\beta \in \acc({C_\alpha})$,
we must have
$C_\beta \sqsubseteq C_\alpha$, so that $C_\beta = C_\alpha \cap \beta$.
Then, applying Claim~\ref{coherence-simple},
we have
\[
b^\alpha_x(\beta)
    = \bigcup \{ b^\alpha_x(\gamma) \mid \gamma \in \dom(b^\alpha_x) \cap \beta \}
    = \bigcup \{ b^\beta_x(\gamma) \mid \gamma \in \dom(b^\beta_x) \}
    = \mathbf b^\beta_x.
\]
Since $\beta < \alpha$, by induction hypothesis the level $T_\beta$ has already been constructed, and
the construction guarantees that we have included
the limit $\mathbf b^\beta_{x}$ of the sequence $b^\beta_{x}$ into $T_\beta$.
But we have just shown that this is exactly $b^\alpha_x (\beta)$,
so that $b^\alpha_x(\beta) \in T_\beta$, as required.

\end{description}

Having defined $b^\alpha_x(\beta)$ for all $\beta$ in the required domain,
it is clear that the sequence $b^\alpha_x$ defines a cofinal branch through $(T \restriction \alpha,\s)$,
since ${C_\alpha}$ is club in $\alpha$, and that this branch contains $x$.
We now let
\[
\mathbf b^\alpha_x = \bigcup \left\{ b^\alpha_x (\beta) \mid \beta \in \dom(b^\alpha_x) \right\},
\]
and it is clear that $\mathbf b^\alpha_x \in {}^{\alpha}\kappa$.

Next, we fix $n < \omega$, and we must prescribe a function value $f_\alpha (n) \in {}^\alpha \kappa$.

\begin{claim} \label{full-domination-acc}
The sequence $\langle f_\beta(n) \mid \beta \in \acc(C_\alpha) \rangle$ is increasing
in $\left( T \restriction \alpha, \subseteq\right)$.
\end{claim}
\begin{proof}
Consider any $\beta_1, \beta_2 \in \acc(C_\alpha)$ with $\beta_1 < \beta_2$.
Since $\beta_2 \in \acc(C_\alpha)$, our proxy principle gives $C_{\beta_2} = C_\alpha \cap \beta_2$
(since we have used $\sqsubseteq$ as the second parameter).
Since $\beta_1 < \beta_2$ and $\beta_1 \in \acc(C_\alpha)$, we must then have $\beta_1 \in \acc (C_{\beta_2})$.
By property~(\ref{ascent-on-acc}) of the induction hypothesis applied to $\beta_2$,
it follows that $f_{\beta_1}(n) <_T f_{\beta_2}(n)$, as required.
\end{proof}

Now let
\[
\alpha_0 = \sup \left( \acc(C_\alpha) \cup \{ \min \left( C_\alpha \setminus \{ 0 \} \right) \} \right).
\]
It is clear from the definition that $0 < \alpha_0 \leq \alpha$,
and that $\alpha_0 = \sup(\acc(C_\alpha))$ iff $\acc(C_\alpha) \neq\emptyset$.
The definition of $f_\alpha(n)$ splits into two possibilities:

\begin{description}
\item [$\blacktriangleright \alpha_0 = \alpha$]
In particular, $\sup(\acc(C_\alpha)) = \alpha$.
By Claim~\ref{full-domination-acc},
the sequence $\langle f_\beta(n) \mid \beta \in \acc(C_\alpha) \rangle$ is
increasing, and in this case it is cofinal in $(T \restriction \alpha, \subseteq)$.
Let
\[
f_\alpha (n) = \bigcup\{ f_\beta (n)\mid \beta \in \acc (C_\alpha)\}.
\]
It is clear that $f_\alpha (n) \in {}^\alpha \kappa$.

\item [$\blacktriangleright 0 < \alpha_0 < \alpha$]
From the definition of $\alpha_0$,
any point in $C_\alpha \setminus (\alpha_0+1)$ must be in $\nacc (C_\alpha)$.
Since $C_\alpha$ is club in $\alpha > \alpha_0$,
it follows that
$C_\alpha \setminus \alpha_0$ is an $\omega$-type cofinal subset of $\alpha$.
Enumerate $C_\alpha  \setminus \alpha_0$ as an increasing sequence
$\langle \alpha_m \mid m < \omega \rangle$ cofinal in $\alpha$,
and let
\[
m_n = \max \left\{ m \leq n  \mid \langle f_{\alpha_i} (n) \mid i \leq m \rangle \text{ is $<_T$-increasing} \right\}.
\]
The maximum necessarily exists, because the set is nonempty
 (as $m=0$ satisfies the defining condition vacuously) and finite.
Then
let
\[
f_\alpha(n) = \mathbf b^\alpha_{f_{\alpha_{m_n}} (n)}.
\]

By $\alpha_{m_n} \in C_\alpha$,
and property~(\ref{ascent-is-function}) of the induction hypothesis (applied to the level $\alpha_{m_n}$),
we have
\[
f_{\alpha_{m_n}} (n) \in T_{\alpha_{m_n}} \subseteq T \restriction C_\alpha,
\]
and hence
$f_\alpha(n) \in {}^\alpha \kappa$ is well-defined.

\end{description}

Having constructed $f_\alpha(n)$, we now claim:

\begin{claim}\label{a0-goes-right}
If $\alpha_0 < \alpha$ then
$f_{\alpha_0} (n) <_T f_\alpha(n)$.
\end{claim}

\begin{proof}
Referring back to the construction of $f_\alpha(n)$,
we have
\begin{align*}
f_{\alpha_0} (n)
        &\leq_T f_{\alpha_{m_n}}(n)
            &&\text{by choice of $m_n$}             \\
        &<_T \mathbf b^\alpha_{f_{\alpha_{m_n}} (n)}  = f_\alpha (n)
            &&\text{by construction},
\end{align*}
as required.
\end{proof}

Finally, as promised, we set
\[
T_\alpha = \left\{ \mathbf b^\alpha_x \mid  x \in T \restriction C_\alpha \right\} \cup \left\{f_\alpha (n) \mid n < \omega \right\}.
\]

To verify some of the required properties:

\begin{itemize}
\item [(\ref{right-level})]
Each sequence $b^\alpha_x$ defined a cofinal branch through $(T \restriction \alpha,\s)$, so that
its limit $\mathbf b^\alpha_x \in {}^\alpha \kappa$.

In case $\alpha_0 < \alpha$, we have (for each $n<\omega$)
$f_\alpha(n) = \mathbf b^\alpha_{f_{\alpha_{m_n}} (n)} \in {}^\alpha \kappa$.

If $\alpha_0 = \alpha$, then $f_\alpha(n)$ is also the limit of a cofinal branch in $(T \restriction \alpha,\s)$,
so it is in ${}^\alpha \kappa$.

\item [(\ref{normal})]
For every $s \in T \restriction \alpha$, since $C_\alpha$ is club in $\alpha$, we can find some $\delta \in C_\alpha$
such that $\height(s) < \delta$.
Then, applying the induction hypothesis at the level $\delta$, there is some $x \in T_\delta$ such that $s <_T x$.
Since $x \in T \restriction C_\alpha$,
we have constructed a branch $b^\alpha_x$ through $x$ and
placed its limit $\mathbf b^\alpha_x$ into $T_\alpha$.
We then have $s <_T x <_T \mathbf b^\alpha_x$,
so that this property is satisfied.

\item [(\ref{slim})]
Since $\alpha$ is a limit ordinal, we have $\left| \alpha \right| \geq \aleph_0$.
Applying the induction hypothesis, for each $\beta < \alpha$ we have
$\left|T_{\beta}\right| \leq \max \{ \left|\beta\right|, \aleph_0 \} \leq \left|\alpha\right|$.
Thus
\[
\left|T \restriction C_\alpha \right|
    \leq \left|T \restriction \alpha \right|
    = \sum_{\beta < \alpha} \left|T_\beta\right|
    \leq \left|\alpha\right| \cdot \left|\alpha\right|
    = \left|\alpha\right|.
\]
Since every node of the form $\mathbf b^\alpha_x$
is produced from some node $x \in T \restriction C_\alpha$,
and every node of the form $f_\alpha (n)$ comes from some $n < \omega$,
it follows that
\[
\left|T_\alpha \right| \leq
    \left|T \restriction C_\alpha \right| + \aleph_0
    \leq \left|\alpha\right| + \aleph_0
    = \left|\alpha\right|,
\]
as required.

\item [(\ref{b-simple})]
This condition essentially summarizes how our construction of $\mathbf b^\alpha_x$ was carried out.

\item [(\ref{ascent-on-acc})]
Fix $\beta \in \acc (C_\alpha)$ and $n<\omega$.
We must show that $f_\beta (n) <_T f_\alpha (n)$.
Referring back to the construction of $f_\alpha(n)$, there are two cases to check:

\begin{description}
\item[$\blacktriangleright \alpha_0 = \alpha$]
In this case, $f_\alpha (n)$ was constructed to be above $f_\beta(n)$.

\item[$\blacktriangleright \alpha_0 < \alpha$]
Since $\beta \in \acc(C_\alpha)$, in particular $\acc(C_\alpha) \neq \emptyset$, so that
$\alpha_0 = \sup(\acc(C_\alpha))$,
and it follows that $\beta \leq \alpha_0$ and (since $C_\alpha$ is club in $\alpha > \alpha_0$) $\alpha_0 \in \acc (C_\alpha)$.
We then have
\begin{align*}
f_\beta(n)  &\leq_T f_{\alpha_0} (n)
            &&\text{from Claim~\ref{full-domination-acc}}   \\
    &<_T f_\alpha (n)
            &&\text{from Claim~\ref{a0-goes-right}},
\end{align*}
as required.

\end{description}

\item [(\ref{ascent-injection})]
Fix $n_1 < n_2 < \omega$, and we must show that $f_\alpha(n_1) \neq f_\alpha (n_2)$.
Again,
there are two cases to check:

\begin{description}
\item[$\blacktriangleright \alpha_0 = \alpha$]
Since $\sup(\acc(C_\alpha)) = \alpha$,
we find $\beta \in \acc(C_\alpha)$ such that $0 < \beta < \alpha$.
Applying the induction hypothesis to $\beta$,
we get $f_\beta(n_1) \neq f_\beta (n_2)$.
Since $\beta \in \acc(C_\alpha)$,
property~(\ref{ascent-on-acc}) gives us $f_{\beta} (n_1) <_T f_\alpha (n_1)$
and $f_{\beta} (n_2) <_T f_\alpha (n_2)$,
and the result follows.

\item[$\blacktriangleright \alpha_0 < \alpha$]
Applying the induction hypothesis to $\alpha_0$,
we get $f_{\alpha_0}(n_1) \neq f_{\alpha_0} (n_2)$.
Then, Claim~\ref{a0-goes-right}
gives us $f_{\alpha_0} (n_1) <_T f_\alpha (n_1)$
and $f_{\alpha_0} (n_2) <_T f_\alpha (n_2)$,
and the result follows.

\end{description}

\item [(\ref{cofinite-ascent})]
Fix $\beta < \alpha$.
Referring back to the construction of $f_\alpha(n)$, there are two cases to check:

\begin{description}
\item[$\blacktriangleright \alpha_0 = \alpha$]
Since $\sup(\acc(C_\alpha)) = \alpha$,
we find $\beta' \in \acc(C_\alpha)$ such that $\beta < \beta' < \alpha$.
Let
\[
F = \{ n < \omega \mid f_\beta (n) <_T f_{\beta'} (n) \},
\]
so that applying the induction hypothesis to $\beta'$ gives us $F \in \mathcal F^{\fin}_{\aleph_0}$.
Since $\beta' \in \acc(C_\alpha)$,
property~(\ref{ascent-on-acc}) gives us $f_{\beta'} (n) <_T f_\alpha (n)$ for all $n< \omega$.
In particular, for $n \in F$, we have
\[
f_\beta (n) <_T f_{\beta'} (n) <_T f_\alpha (n),
\]
as required.

\item[$\blacktriangleright \alpha_0 < \alpha$]
In this case we have identified a sequence $\langle \alpha_m \mid m < \omega \rangle$ cofinal in $\alpha$,
so we fix some $m < \omega$ such that $\beta < \alpha_m$.
For each natural number $i < m$, let
\[
F_i = \{ n < \omega \mid f_{\alpha_i}(n) <_T f_{\alpha_m}(n) \}.
\]
Also let
\[
G = \{ n < \omega \mid f_{\beta}(n) <_T f_{\alpha_m}(n) \} \text{ and } H = \{ n < \omega \mid n > m \}.
\]
Applying the induction hypothesis to $\alpha_m$, we have $F_i \in \mathcal F^{\fin}_{\aleph_0}$ for all $i < m$,
as well as $G, H \in \mathcal F^{\fin}_{\aleph_0}$.
Define
\[
F = G \cap H \cap \bigcap_{i < m} F_i .
\]
Clearly, $F \in \mathcal F^{\fin}_{\aleph_0}$ as it is the intersection of finitely many sets from that filter.
Now, fix any $n \in F$, and we will show that $f_\beta (n) <_T f_\alpha (n)$:
For every $i<m$ we have $n \in F \subseteq F_i$, so that
$f_{\alpha_i} (n) <_T f_{\alpha_m} (n)$.
As $\pred{f_{\alpha_m}(n)}$ is  a linear order and $f_{\alpha_i}(n) \in T_{\alpha_i}$ for each $i$,
it follows that $\langle f_{\alpha_i}(n) \mid  i \leq m\rangle$ is $<_T$-increasing.
Furthermore, since $n \in H$,
we have $n > m$.
Thus $m$ satisfies the defining properties for membership in the set whose maximum is $m_n$, so
it follows that $m \leq m_n$.
In particular,
we then have
$f_{\alpha_m} (n) \leq_T f_{\alpha_{m_n}} (n)$.
We also have $n \in G$, so that $f_\beta (n) <_T f_{\alpha_m} (n)$.
Putting everything together, we have
\[
f_\beta (n) <_T f_{\alpha_m} (n) \leq_T f_{\alpha_{m_n}} (n)
<_T \mathbf b^\alpha_{f_{\alpha_{m_n}} (n)} = f_\alpha (n),
\]
as required.

\end{description}

\item [(\ref{order2-nacc})]
In this case,
$\alpha_0 < \alpha$,
so that every $f_\alpha (n)$ was defined to be equal to some $\mathbf b^\alpha_{f_{\alpha_{m_n}} (n)}$.

\end{itemize}

\end{description}

Now we let
\[
T = \bigcup_{\alpha < \kappa} T_\alpha.
\]
Having built the tree,
we now claim:

\begin{claim}\label{Souslin-claim-ctbl-ascent}
The tree $(T,{\subset})$ is a $\kappa$-Souslin tree.
\end{claim}

\begin{proof} It is clear that $(T,{\subset})$ is a $\kappa$-tree.
Let $A \subseteq T$ be a maximal antichain.
We will show that $\left|A\right| < \kappa$,
by showing that $A \subseteq T \restriction \alpha$ for some $\alpha < \kappa$.

\begin{subclaim}\label{sc621}
For every $i<\kappa$, the following set is stationary:
\[
A_i = \{\beta\in R_i\mid  A\cap(T\restriction\beta)=S_\beta\text{ is a maximal antichain in }T\restriction\beta \}.
\]
\end{subclaim}

\begin{proof}
Let $i<\kappa$ be an arbitrary ordinal, and $D\s\kappa$ be an arbitrary club.
We must show that $D \cap A_i \neq \emptyset$.
Put $p=\{A,T,D\}$.
Using the fact that the sequences $\langle S_\beta \mid \beta<\kappa\rangle$ and $\langle R_i  \mid i < \kappa \rangle$
satisfy $\diamondsuit(H_\kappa)$,
pick $\mathcal M\prec H_{\kappa^{+}}$
with $p\in\mathcal M$ such that $\beta=\mathcal M\cap\kappa$ is in $R_i$, and $S_\beta=A\cap\mathcal M$.
Since $D\in\mathcal M$ and $D$ is club in $\kappa$, we have $\beta\in D$. We claim that $\beta\in A_i$.

For all $\alpha<\beta$, by $\alpha,T\in \mathcal M$, we have $T_\alpha\in \mathcal M$,
and by $\mathcal M\models |T_\alpha|<\kappa$,
we have $T_\alpha\s \mathcal M$. So $T\restriction\beta\s \mathcal M$.
As $\dom(z)\in \mathcal M$ for all $z \in T\cap \mathcal M$,
we conclude that $T\cap \mathcal M=T\restriction\beta$.
So, $S_\beta=A\cap(T\restriction\beta)$.
As $H_{\kappa^{+}}\models A\text{ is a maximal antichain in }T$ and $T\cap \mathcal M=T\restriction \beta$,
we get that $A\cap(T\restriction\beta)$ is maximal in $T\restriction\beta$.
\end{proof}

In particular, the set $A_0$ is a cofinal subset of $\kappa$,
so we apply the last part of the proxy principle to obtain a limit ordinal $\alpha < \kappa$ such that
\[
\sup \{ \beta \in C_\alpha  \mid \suc_\omega (C_\alpha \setminus \beta) \subseteq A_0 \} = \alpha.
\]

To see that $A \subseteq T \restriction \alpha$,
consider any $v \in T \restriction (\kappa \setminus \alpha)$, and we will show that $v \notin A$.
We have $\height(v) \geq \alpha$.
Thus we can let
$v' = v \restriction \alpha \in T_\alpha$,
so that $v' \leq_T v$.

\begin{subclaim}\label{get-b-ctbl}
There are some ${\bar\alpha} \in \acc(C_\alpha) \cup \{ \alpha \}$ and ${x} \in T \restriction C_{{\bar\alpha}}$ such that
\[
\mathbf b^{{{\bar\alpha}}}_{x} \leq_T v' \text{ and }
\sup \left(\nacc (C_{{\bar\alpha}}) \cap A_0 \right) = {\bar\alpha}.
\]
\end{subclaim}

\begin{proof}
Recall that $v' \in T_\alpha$.
Since $\alpha$ is a limit ordinal,
by property~(\ref{limit-ctbl-ascent}) there are now two possibilities to consider:
\begin{description}
\item[$\blacktriangleright v' = \mathbf b^\alpha_{x}$ for some $x \in T \restriction C_\alpha$]
In this case, fix such an $x$, and let ${\bar\alpha} = \alpha$,
and the subclaim is satisfied.

\item[$\blacktriangleright v' = f_\alpha (n)$ for some $n < \omega$]
Fix such an $n$.
By our choice of $\alpha$, we can choose $\epsilon \in C_\alpha$ such that
$\suc_\omega (C_\alpha \setminus \epsilon) \subseteq A_0$.
Define
\[
{\bar\alpha} = \sup \left( \suc_\omega (C_\alpha \setminus \epsilon) \right).
\]
It is clear that ${\bar\alpha}$ is a limit ordinal, $\epsilon < {\bar\alpha} \leq \alpha$,
and ${\bar\alpha} \in \acc (C_\alpha) \cup \{ \alpha \}$.
Thus by property~(\ref{ascent-on-acc}) we have $f_{{\bar\alpha}}(n) \leq_T f_\alpha (n)$.
Since ${\bar\alpha} \in \acc (C_\alpha) \cup \{ \alpha \}$,
the proxy principle gives us $C_{{\bar\alpha}} = C_\alpha \cap {\bar\alpha}$,
so that
\[
\sup(\acc(C_{{\bar\alpha}})) = \sup(\acc(C_\alpha \cap {\bar\alpha})) \leq \epsilon < {\bar\alpha}.
\]
Thus by applying property~(\ref{order2-nacc}) to ${\bar\alpha}$,
we must have $f_{{\bar\alpha}}(n) = \mathbf b^{{{\bar\alpha}}}_{x}$ for some $x \in T \restriction C_{{\bar\alpha}}$.
Fix such an $x$.
It follows that
\[
\mathbf b^{{{\bar\alpha}}}_{x} = f_{{\bar\alpha}}(n) \leq_T f_\alpha (n) = v'.
\]
Notice that
\[
{\bar\alpha} = \sup \left( \suc_\omega (C_\alpha \setminus \epsilon) \cap A_0 \right)
    = \sup \left( \nacc (C_{{\bar\alpha}}) \cap A_0 \right),
\]
giving the required conclusion.
\qedhere
\end{description}
\end{proof}

We now fix ${\bar\alpha}$ and $x$ as in Subclaim~\ref{get-b-ctbl}.

\begin{subclaim} \label{point-in-antichain}
There is some $y \in A$ such that $y <_T \mathbf b^{{{\bar\alpha}}}_{x}$.
\end{subclaim}

\begin{proof}
Fix $\beta \in \nacc(C_{{\bar\alpha}}) \cap A_0$ with $\height(x) < \beta < {\bar\alpha}$.

Of course, $\beta \in \dom (b^{{{\bar\alpha}}}_{x})$, and by construction of $\mathbf b^{{{\bar\alpha}}}_{x}$,
we know that $b^{{{\bar\alpha}}}_{x}(\beta) <_T \mathbf b^{{{\bar\alpha}}}_{x}$.

Since $\beta \in A_0$,
we know that $S_\beta = A \cap (T \restriction \beta)$ is a maximal antichain in $T \restriction \beta$.
Since $\beta \in \nacc( {C_{{\bar\alpha}}}) \setminus (\height(x) +1)$,
we refer back to the construction of $b^{{{\bar\alpha}}}_{x}(\beta)$.
We have
\[
b_{x}^{{{\bar\alpha}}}(\beta)=\sealantichain ( b_{x}^{{{\bar\alpha}}}(\beta^-), T\restriction (\beta+1) ).
\]

It is clear that $\beta(T\restriction (\beta+1))=\beta$.
Since $T\restriction (\beta+1)$ is normal, and since $S_\beta$ is a maximal antichain in $T \restriction \beta$,
the set $Q=\{ z\in T\cap{}^{\beta}\kappa\mid \exists y\in S_{\beta}( b_{x}^{{{\bar\alpha}}}(\beta^-) \cup y\s z)\}$
is non-empty,
so $b_{x}^{{{\bar\alpha}}}(\beta)=\min(Q,<_\phi)$.
Pick $y\in S_\beta$ such that $y\s b_{x}^{{{\bar\alpha}}} (\beta)$.
Then $y\in S_\beta=A \cap (T \restriction \beta)$, and
\[
y <_T b^{{{\bar\alpha}}}_{x}(\beta) <_T \mathbf b^{{{\bar\alpha}}}_{x},
\]
as required.
\end{proof}

Altogether, we have
\[
y <_T \mathbf b^{{{\bar\alpha}}}_{x} \leq_T  v' \leq_T v.
\]
Since $y$ is an element of the antichain $A$,
the fact that $v$ extends $y$ implies that $v \notin A$.
Since $v \in T \restriction (\kappa \setminus \alpha)$ was arbitrary,
we have shown that $A \subseteq T \restriction \alpha$.

To see that $\left|A\right| < \kappa$:
For each $\beta < \alpha$ we have $\left|T_{\beta}\right|<\kappa$.
Since $A \subseteq T \restriction \alpha$ and
$\alpha < \kappa = \cf(\kappa)$, it follows that
we have
\[
\left| A \right| \leq \left|T \restriction \alpha \right|
                    = \sum_{\beta < \alpha} \left|T_\beta\right| < \kappa,
\]
as required.

Since $A$ was an arbitrary maximal antichain in $(T,{\subset})$,
we infer that our tree has no antichains of size $\kappa$.

Any splitting tree with no antichains of size $\kappa$ also has no chains of size $\kappa$.
This completes the proof that $(T,{\subset})$ is a $\kappa$-Souslin tree.
\end{proof}

Property~(\ref{prolific}) guarantees that $T$ is prolific,
while property~(\ref{slim}) guarantees that $T$ is slim.

Properties~(\ref{ascent-injection}) and~(\ref{cofinite-ascent}) guarantee that
$\langle f_\alpha \mid \alpha < \kappa \rangle$ is an injective $\mathcal F^{\fin}_{\aleph_0}$-ascent path through $T$.
\end{proof}

\begin{theorem} \label{tree-ctbl-ascent}
Suppose that  $\kappa$ is a regular uncountable cardinal,
$U \subseteq {}^{<\kappa} \kappa$ is a slim downward-closed $\kappa$-tree,
and $\p_{14}(\kappa,  {\sqsubseteq}, 1, \{\kappa\})$ holds.

Then there exists a prolific slim $\kappa$-Souslin tree with an injective $(\mathcal F^{\fin}_{\aleph_0},U)$-ascent path.
\end{theorem}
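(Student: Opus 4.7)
The plan is to lift the construction from the proof of Theorem \ref{ctbl-ascent-thm} by replacing, at every level $\alpha<\kappa$, the single function $f_\alpha:\omega\to T_\alpha$ with a whole family $\langle f_x:\omega\to T_\alpha\mid x\in U_\alpha\rangle$. The ambient recursive scheme, the $\diamondsuit(H_\kappa)$ setup, the $\p_{14}^-$-sequence $\langle C_\alpha\mid\alpha<\kappa\rangle$, the branch construction $b^\alpha_x\mapsto\mathbf b^\alpha_x$, and the Souslin argument will all be carried over essentially verbatim. Each inductive property on $f_\alpha$ is now reformulated as the corresponding property for every $f_x$ with $x\in U_\alpha$: property (\ref{cofinite-ascent}) becomes ``for all $x\subset y$ in $U$, $\{n<\omega\mid f_x(n)<_T f_y(n)\}$ is cofinite''; property (\ref{ascent-on-acc}) becomes ``for limit $\alpha$, $\beta\in\acc(C_\alpha)$, and $y\in U_\alpha$, $f_{y\restriction\beta}(n)<_T f_y(n)$ for every $n<\omega$''; and we introduce a new distinctness condition stating that for two distinct $x,x'\in U_\alpha$, $\{n<\omega\mid f_x(n)\neq f_{x'}(n)\}$ is cofinite (precisely clause (3) of the definition of an $(\mathcal F,X)$-ascent path). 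Slimness of $T$ is preserved thanks to the assumption that $U$ is slim: the new additions to level $\alpha$ total at most $|U_\alpha|\cdot\aleph_0\leq\max\{|\alpha|,\aleph_0\}$.

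At the successor stage $\alpha=\beta+1$, for each $x\in U_{\beta+1}$ I would set $f_x(n)= f_{x\restriction\beta}(n)^\smallfrown\langle \pi(n,x(\beta))\rangle$, where $\pi:\omega\times\kappa\to\max\{\omega,\alpha\}$ is a fixed injection. Prolificness continues to hold since $T_{\beta+1}$ still contains all prolific successors $s^\smallfrown\langle i\rangle$ for $s\in T_\beta$. This choice simultaneously forces the monotonicity $f_{x\restriction\beta}(n)<_T f_x(n)$ for every $n$, injectivity of each $f_x$ on $\omega$ (since either $f_{x\restriction\beta}$ is already injective by induction, or $\beta=0$ and $\pi$ separates distinct $n$'s), and sibling distinctness (two siblings sharing a predecessor $y$ differ in the last coordinate via the second argument of $\pi$, while siblings with distinct predecessors inherit distinctness from the induction hypothesis). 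At a limit stage $\alpha$, for each $x\in U_\alpha$ I would separately run the original limit-construction of $f_\alpha(n)$ with the sequence $\langle f_{x\restriction\beta}(n)\mid \beta\in C_\alpha\rangle$ in place of $\langle f_\beta(n)\mid\beta\in C_\alpha\rangle$; downward-closure of $U$ ensures every $x\restriction\beta\in U_\beta$ is available. The analogues of Claims \ref{full-domination-acc} and \ref{a0-goes-right} are unchanged, the sole ingredient being the coherence $C_{\beta_2}=C_\alpha\cap\beta_2$ provided by $\mathcal R={\sqsubseteq}$.

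The Souslin argument is a direct rerun of the one in Theorem \ref{ctbl-ascent-thm}: for a candidate maximal antichain $A$, pick $\alpha$ so that $\{\beta\in C_\alpha\mid \suc_\omega(C_\alpha\setminus\beta)\subseteq A_0\}$ is cofinal in $\alpha$, with $A_0$ the stationary set of $\beta\in R_0$ where $S_\beta=A\cap T\restriction\beta$ is maximal in $T\restriction\beta$. Any $v\in T\restriction(\kappa\setminus\alpha)$ restricts to $v'=v\restriction\alpha\in T_\alpha$, and $v'$ is either of the form $\mathbf b^\alpha_z$ for some $z\in T\restriction C_\alpha$ or of the form $f_x(n)$ for some $x\in U_\alpha$ and $n<\omega$; in the latter case, exactly as before, the nonaccumulation analysis of $C_\alpha$ produces $\bar\alpha\in\acc(C_\alpha)\cup\{\alpha\}$ and a node in $T\restriction C_{\bar\alpha}$ beneath $v'$, and the \sealantichain{}-step at some $\beta\in\nacc(C_{\bar\alpha})\cap A_0$ forces $v'$ to extend a node of $A$. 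The main obstacle, in my view, is verifying that the new sibling-distinctness condition survives the limit stage; but it does, because two distinct $x,x'\in U_\alpha$ first diverge at some $\gamma+1<\alpha$ where the successor step ensures $f_{x\restriction(\gamma+1)}(n)\neq f_{x'\restriction(\gamma+1)}(n)$ for all $n$, and cofinite monotonicity upward then propagates that disagreement to $f_x$ and $f_{x'}$. Once all inductive properties are in place, the injectivity of $\vec f=\langle f_x\mid x\in U\rangle$ in the sense of Definition \ref{tree-indexed-ascent-def} is immediate, since every $f_x$ with $\dom(x)\geq 1$ is injective on all of $\omega$.
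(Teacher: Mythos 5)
Your proposal follows exactly the same route as the paper: keep the branches $\mathbf b^\alpha_x$ and the Souslin argument from Theorem~\ref{ctbl-ascent-thm} unchanged, replace the single $f_\alpha$ with a $U_\alpha$-indexed family, add the sibling-distinctness clause, and use slimness of $U$ to preserve slimness of $T$. The one slip is in your successor step: as written, an injection $\pi:\omega\times\kappa\to\max\{\omega,\alpha\}$ does not exist, since the domain has size $\kappa$ while the codomain has size $\max\{|\alpha|,\aleph_0\}<\kappa$. What you need — and this is what the paper arranges — is first to pass to an isomorphic slim tree so that $u(\beta)<|U_{\beta+1}|$ for all $u\in U$ and $\beta<\dom(u)$; then the second coordinate of $\pi$ ranges over a set of size $\leq\max\{|\alpha|,\aleph_0\}$ and a suitable injection (the paper uses $(n,\xi)\mapsto\omega\cdot\xi+n$) lands inside the prolific successor level $T_{\beta+1}$. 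With that normalization inserted, the rest of your argument (sibling distinctness propagating from the first point of divergence via cofinite monotonicity, injectivity of each $f_x$ above level $0$, the Souslin rerun) matches the paper's proof.
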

\begin{proof}
We assume that the reader is comfortable with the proof of Theorem~\ref{ctbl-ascent-thm}.

By passing to an isomorphic slim tree if necessary,
we may assume that for all $u \in U$ and all $\beta < \dom(u)$,
$u(\beta) < \left| U_{\beta+1} \right|$.

We will construct, simultaneously by recursion over $\alpha < \kappa$,
the levels $\langle T_\alpha \mid \alpha < \kappa \rangle$ of the tree $T$
as well as the functions $\langle f_u \mid u \in U \rangle$
and the nodes $\langle\langle \mathbf b^\alpha_x \mid x \in T \restriction C_\alpha \rangle\mid \alpha \in \acc(\kappa)\rangle$
so that after each stage $\alpha$,
properties~(1)--(\ref{b-simple}) of the construction in Theorem~\ref{ctbl-ascent-thm} are satisfied,
in addition to the following:

\begin{enumerate}
\setcounterref{enumi}{ascent-is-function}
\addtocounter{enumi}{-1}
\item \label{tree-ascent-is-function}\label{tree-ascent-injection}
For every $u \in U_\alpha$,
$f_u : \omega \to T_\alpha$ is a function;
moreover, if $\alpha > 0$, then $f_u$ is injective;

\item \label{tree-cofinite-ascent}
For every $u \in U_\alpha$ and every
 $\beta < \alpha$,
\[
\{n < \omega \mid f_{u \restriction \beta}(n) <_T f_{u}(n) \} \in \mathcal F^{\fin}_{\aleph_0};
\]

\item \label{tree-ascent-on-acc}
If $\alpha$ is a limit ordinal and $\beta \in \acc (C_\alpha)$, then for every $u \in U_\alpha$,
\[
\{ n < \omega \mid f_{u \restriction \beta}(n) <_T f_{u}(n) \} = \omega;
\]

\item \label{tree-order2-nacc}
If $\alpha$ is  a limit ordinal such that $\sup (\acc(C_\alpha)) < \alpha$, then
for every $n < \omega$ and every $u \in U_\alpha$ there is some $x \in T \restriction C_\alpha$ such that
\[
f_u (n) = \mathbf b^\alpha_x;
\]

\item \label{tree-limit-ctbl-ascent}
If $\alpha$ is a limit ordinal, then
\[
T_\alpha = \{ \mathbf b^\alpha_x \mid x \in T \restriction C_\alpha \} \cup
    \{ f_u (n) \mid u \in U_\alpha, n < \omega\};
\]

\item \label{distinct-ascent}
For any two distinct nodes $u,v$ from $U_\alpha$,
\[
\{ n < \omega \mid f_{u}(n) \neq f_{v}(n) \} \in \mathcal F^{\fin}_{\aleph_0}.
\]
\end{enumerate}

We have the following instance of the Coherence Claim Template from page \pageref{coherence-template}:

\begin{claim}
Fix limit ordinals ${\bar\alpha} < \alpha < \kappa$, and suppose $T \restriction \alpha$ has been constructed
to satisfy the above properties.
If $C_{\bar\alpha} = C_\alpha \cap {\bar\alpha}$,  $x \in T \restriction C_{\bar\alpha}$,
and $b^\alpha_x \restriction (C_{\bar\alpha} \setminus \height(x))$ has already been constructed,
then
\[
b^{\bar\alpha}_x = b^\alpha_x \restriction \left( C_{\bar\alpha} \setminus \height(x) \right).
\]
\end{claim}
\begin{proof}
Follows from property~(\ref{b-simple}),
as in the proof of Claim~\ref{coherence-simple}.
\end{proof}

The recursive construction proceeds as follows:

\begin{description}
\item[Base case, $\alpha = 0$]
As always, let
$T_0 = \{ \emptyset \}$.
The only element of $U_0$ is the root $\emptyset$,
and we define $f_\emptyset : \omega \to \{\emptyset\}$ by setting
$f_\emptyset(n) = \emptyset$ for all $n < \omega$.
The required properties are automatically satisfied as there is nothing to check.

\item[Successor ordinal, $\alpha = \beta+1$]
In this case, define
\[
T_\alpha = \{ t^\smallfrown \langle i \rangle \mid t \in T_\beta, i<\max \{\omega \cdot \left| U_\alpha \right|, \alpha\} \}.
\]

Then, for every $u \in U_\alpha$,
we define
$f_u : \omega \to T_\alpha$ by setting, for all $n < \omega$,
\[
f_u (n) = f_{u \restriction \beta} (n) ^\smallfrown \langle \omega \cdot (u(\beta)) + n \rangle.
\]
The required properties are easy to verify (for~(\ref{slim}) we use the fact that $U$ is slim).

\item[Limit level, $\alpha = \sup \alpha > 0$]
We construct $\mathbf b^\alpha_x \in {}^\alpha \kappa$ for each $x \in T \restriction C_\alpha$,
just as in Theorem~\ref{ctbl-ascent-thm}.
Then, we fix $u \in U_\alpha$, and we must construct the function $f_u : \omega \to {}^\alpha \kappa$.

\begin{claim} \label{full-domination-acc-tree} For all $n<\omega$,
the sequence $\langle f_{u \restriction \beta} (n) \mid \beta \in \acc(C_\alpha) \rangle$ is
increasing in $\left(T \restriction \alpha, \subseteq\right)$.
\end{claim}
\begin{proof}
Follows from property~(\ref{tree-ascent-on-acc}),
as in the proof of Claim~\ref{full-domination-acc}.
\end{proof}

Set
\[
\alpha_0 = \sup \left( \acc(C_\alpha) \cup \{ \min \left( C_\alpha \setminus \{ 0 \} \right) \} \right).
\]

Let $n < \omega$ be arbitrary. We shall prescribe a function value $f_u (n) \in {}^\alpha \kappa$,
by considering two possibilities:

\begin{description}
\item [$\blacktriangleright \alpha_0 = \alpha$]
By Claim~\ref{full-domination-acc-tree},
the sequence $\langle f_{u \restriction \beta}(n) \mid \beta \in \acc(C_\alpha) \rangle$ is increasing,
and in this case it is cofinal in $(T \restriction \alpha, \subseteq)$.
Let
\[
f_u (n) = \bigcup\{ f_{u \restriction \beta} (n)\mid \beta \in \acc (C_\alpha)\}.
\]
It is clear that $f_u (n) \in {}^\alpha \kappa$.

\item [$\blacktriangleright 0 < \alpha_0 < \alpha$]
Enumerate $C_\alpha  \setminus \alpha_0$ as an increasing sequence
$\langle \alpha_m \mid m < \omega \rangle$ cofinal in $\alpha$, and
let
\[
m_n = \max \left\{ m \leq n  \mid \langle f_{u \restriction \alpha_i} (n) \mid i \leq m \rangle \text{ is $<_T$-increasing} \right\}.
\]
Then,
define
\[
f_{u} (n) = \mathbf b^\alpha_{f_{u \restriction \alpha_{m_n}} (n)}.
\]
As $ f_{u \restriction \alpha_{m_n}} (n) \in  T \restriction C_\alpha$,
we have that $f_u (n)$ is a well-defined element of ${}^\alpha \kappa$.

\end{description}

Having constructed $f_u (n)$, we now have the following variant of Claim~\ref{a0-goes-right}:

\begin{claim}\label{a0-goes-right-tree}
If $\alpha_0 < \alpha$ then
$f_{u \restriction \alpha_0} (n) <_T f_u (n)$.
\end{claim}

\begin{proof}
Referring back to the construction of $f_u (n)$,
we have
\begin{align*}
f_{u \restriction \alpha_0} (n)
        &\leq_T f_{u \restriction \alpha_{m_n}}(n)
            &&\text{by choice of $m_n$}             \\
        &<_T \mathbf b^\alpha_{f_{u \restriction \alpha_{m_n}} (n)}  = f_u (n)
            &&\text{by construction},
\end{align*}
as required.
\end{proof}

Finally, as promised, we set
\[
T_\alpha = \{ \mathbf b^\alpha_x \mid  x \in T \restriction C_\alpha \} \cup
\{ f_u (n) \mid u \in U_\alpha, n < \omega \}.
\]

The required properties are verified just as in the
proof of Theorem~\ref{ctbl-ascent-thm},
replacing for $\beta < \alpha$, the pair $(f_\alpha,f_\beta)$ with $(f_u,f_{u \restriction \beta})$,
modulo the following exceptions:

\begin{itemize}

\item [(\ref{slim})]
As in the proof of Theorem~\ref{ctbl-ascent-thm}, we know that
$\left|T \restriction C_\alpha \right| \leq \left|\alpha\right|$.

Since $U$ is slim and $\left|\alpha\right| \geq \aleph_0$, we have $\left|U_\alpha\right| \leq \left|\alpha\right|$.
Since every node of the form $\mathbf b^\alpha_x$
is produced from some node $x \in T \restriction C_\alpha$,
and every node of the form $f_u (n)$ comes from some $u \in U_\alpha$ and some $n < \omega$,
it follows that
\[
\left|T_\alpha \right| \leq
    \left|T \restriction C_\alpha \right| + \left|U_\alpha\right| \cdot \aleph_0
    \leq \left|\alpha\right| + \left|\alpha\right| \cdot \aleph_0
    = \left|\alpha\right| + \left|\alpha\right|
    = \left|\alpha\right|,
\]
as required.

\item [(\ref{distinct-ascent})]
Fix two distinct points $u,v \in U_\alpha$.
Since $\alpha$ is a limit ordinal and $u, v \in {}^\alpha \kappa$,
we find some $\beta < \alpha$
such that $u \restriction \beta \neq v \restriction \beta$.
Define the following sets:
\begin{align*}
F_1 &= \{ n < \omega \mid f_{u \restriction \beta}(n) \neq f_{v \restriction \beta}(n) \}    \\
F_2 &= \{ n < \omega \mid f_{u \restriction \beta}(n) <_T f_{u}(n) \}    \\
F_3 &= \{ n < \omega \mid f_{v \restriction \beta}(n) <_T f_{v}(n) \}
\end{align*}
Applying the induction hypothesis to $\beta$, we have $F_1 \in \mathcal F^{\fin}_{\aleph_0}$.
Property~(\ref{tree-cofinite-ascent}) gives $F_2, F_3 \in \mathcal F^{\fin}_{\aleph_0}$.
Define $F = F_1 \cap F_2 \cap F_3$.
Clearly, $F \in \mathcal F^{\fin}_{\aleph_0}$, as it is the intersection of finitely many sets from the filter.
Consider any $n \in F$.
Since $n \in F_1$,
$f_{u\restriction \beta}(n)$ and $f_{v \restriction \beta}(n)$ are
distinct (thus incompatible) elements of $T_\beta$.
Since $n \in F_2 \cap F_3$, we have
$f_{u \restriction \beta}(n) <_T f_u(n)$ and
$f_{v \restriction \beta}(n) <_T f_v(n)$.
It follows that $f_u(n) \neq f_v(n)$, as required.
\end{itemize}
\end{description}

\begin{claim}\label{Souslin-claim-tree-ctbl-ascent}
The tree $T = \bigcup_{\alpha < \kappa} T_\alpha$ is a $\kappa$-Souslin tree.
\end{claim}

\begin{proof}
Just as in Claim~\ref{Souslin-claim-ctbl-ascent},
replacing, in the proof of Subclaim \ref{get-b-ctbl}, the pair $(f_\alpha,f_{\bar\alpha})$ with $(f_u,f_{u \restriction\bar\alpha})$.
\end{proof}

Properties~(\ref{tree-ascent-injection}),
(\ref{tree-cofinite-ascent}), and~(\ref{distinct-ascent})
guarantee that $\langle f_u \mid u \in U \rangle$ is
an injective $(\mathcal F^{\fin}_{\aleph_0}, U)$-ascent path
through $T$.
\end{proof}

\subsection{Complete Trees}\label{subsectioncomplete}
By strengthening the fourth parameter of the principle $\p^-_{14}$, from $\{\kappa\}$ to $\{E^\kappa_{\geq\chi}\}$,
for some cardinal $\chi < \kappa$,
we can ensure that the ordinal $\alpha$ on which the ``hitting'' action takes place
in the course of proving that the tree is Souslin
has large cofinality.
Thus, the careful limitation determining which nodes of limit height $\alpha$ are placed into the tree
(as given by property~(\ref{limit-ctbl-ascent}) of the recursive construction in Theorems~\ref{ctbl-ascent-thm}
and~\ref{tree-ctbl-ascent})
needs to be observed only for ordinals $\alpha \in E^\kappa_{\geq\chi}$.
This gives us the flexibility to add as many nodes as we like at any height $\alpha$ of small cofinality,
subject only to the constraint that the tree remain a $\kappa$-tree, that is, $\left|T_\alpha\right| < \kappa$.
In particular, if for every cardinal $\lambda < \kappa$ we have $\lambda^{\cf(\alpha)} < \kappa$,
then we can add a limit of every branch at level $\alpha$ into $T$,
and if we can do this for every limit ordinal $\alpha \in E^\kappa_{< \chi}$ then we can ensure that our tree is $\chi$-complete.
Of course we must forgo the slimness of the tree obtained in Theorems~\ref{ctbl-ascent-thm}
and~\ref{tree-ctbl-ascent}, but this is obvious, as these are contradictory concepts.

Since, for height $\alpha \in E^\kappa_{<\chi}$, we will not need the nodes $\mathbf b^\alpha_x$
to determine the contents of $T_\alpha$,
it is tempting to avoid constructing the $\mathbf b^\alpha_x$ for such $\alpha$ altogether.
It this a good idea?

This idea would actually be fine if the goal were only to construct a $\chi$-complete $\kappa$-Souslin tree,
without requiring an ascent path,
because in that case we could adjust the construction so that in the proof of Subclaim~\ref{get-b-ctbl-complete} below,
the first option always holds. Moreover,
this $\p_{14}(\kappa,{\sq},\theta,\{\kappa\})$-based construction would go through as a $\p_{14}(\kappa,{\sqx},\theta,\{E^\kappa_{\ge\chi}\})$-based construction.

However, in the presence of the ascent path functions, the proof of Subclaim~\ref{get-b-ctbl-complete}
requires us to have $\mathbf b^\alpha_x$ defined at levels $\bar\alpha$ of countable cofinality,
in particular, for $\bar\alpha\in\nacc(\acc(C_\alpha))$ on which the ``hitting'' action takes place.
Therefore, we shall maintain the construction of $\mathbf b^\alpha_x$ even at limit levels of  cofinality $<\chi$.

\smallskip

The next proof will demonstrate that there is a transparent way of transforming any proxy-based construction of a slim tree,
into a construction of a complete tree. To compare,
more than ten years after Devlin's paper \cite{MR732661} with a construction of an $\aleph_2$-Souslin tree admitting an $\mathcal F_{\aleph_0}^{\fin}$-ascent path,
Cummings \cite{MR1376756} gave a construction of such a tree which is moreover $\aleph_1$-complete.

\begin{theorem} \label{tree-ctbl-ascent-complete-thm}
Suppose that $\kappa$ is any regular uncountable cardinal, $\chi < \kappa$ is an infinite cardinal,
$U \subseteq {}^{<\kappa} \kappa$ is a downward-closed $\kappa$-tree,
and $\p_{14}(\kappa,  {\sqsubseteq}, 1, \{E^{\kappa}_{\ge\chi}\})$ holds.

If $\lambda^{<\chi} < \kappa$ for all $\lambda < \kappa$,
then there exists a prolific $\chi$-complete $\kappa$-Souslin tree
that admits an injective $(\mathcal F^{\fin}_{\aleph_0},U)$-ascent path.
\end{theorem}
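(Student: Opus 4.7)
The plan is to follow the blueprint of Theorem~\ref{tree-ctbl-ascent}, leaving the successor step and the recursive definitions of $\mathbf b^\alpha_x$ (for $x\in T\restriction C_\alpha$) and of the ascent-path functions $f_u$ (for $u\in U$) untouched, and modifying only the prescription of which nodes populate $T_\alpha$ at limit $\alpha$. Throughout, the slimness clause of the earlier construction is replaced by the inductive requirement that $|T_\beta|<\kappa$ for every $\beta<\alpha$, and the proxy sequence $\langle C_\alpha\mid\alpha<\kappa\rangle$ is now taken to witness the stronger instance $\p_{14}^-(\kappa,{\sq},1,\{E^\kappa_{\ge\chi}\})$.

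At a limit ordinal $\alpha$, after constructing $\mathbf b^\alpha_x$ and $f_u(n)$ exactly as in the proof of Theorem~\ref{tree-ctbl-ascent}, I split into two cases. When $\cf(\alpha)\ge\chi$, I set
\[
T_\alpha=\{\mathbf b^\alpha_x\mid x\in T\restriction C_\alpha\}\cup\{f_u(n)\mid u\in U_\alpha,\ n<\omega\},
\]
as in the slim construction. When $\cf(\alpha)<\chi$, I instead put into $T_\alpha$ the limits of \emph{all} cofinal branches of $(T\restriction\alpha,{\s})$. In this second case, every $\mathbf b^\alpha_x$ and every $f_u(n)$ lies in $T_\alpha$ automatically, so the Coherence Claim Template and the cross-level coherence of the ascent path carry over verbatim. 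To bound $|T_\alpha|$ in this case, fix $\lambda<\kappa$ with $|T\restriction\alpha|\le\lambda$; then the set of cofinal branches has cardinality at most $\lambda^{\cf(\alpha)}\le\lambda^{<\chi}<\kappa$, which is the only place the arithmetic hypothesis enters.

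The Souslin verification mirrors Claim~\ref{Souslin-claim-tree-ctbl-ascent} and Subclaim~\ref{get-b-ctbl} (with the $U$-indexed modifications of Theorem~\ref{tree-ctbl-ascent}). The crucial point is that the third parameter of the proxy principle is now $\{E^\kappa_{\ge\chi}\}$, so the hitting ordinal $\alpha$ extracted from the stationary set $A_0$ automatically satisfies $\cf(\alpha)\ge\chi$; consequently $T_\alpha$ is of the original ``thin'' form admitting the decomposition into $\mathbf b^\alpha_x$'s and $f_u(n)$'s, which is exactly what the two-case analysis of $v'\in T_\alpha$ and the sealing-antichain step require. Injectivity and the $\mathcal F^{\fin}_{\aleph_0}$-property of the ascent path $\langle f_u\mid u\in U\rangle$ are inherited from the unchanged recursive definition of the $f_u$'s, and $\chi$-completeness is immediate: a $\subset$-increasing sequence of length $\tau<\chi$ has its supremum at a limit level of cofinality at most $\tau<\chi$, where every cofinal branch of $T\restriction\alpha$ was capped by construction.

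The principal subtlety, and the reason for retaining the construction of $\mathbf b^\alpha_x$ at \emph{all} limits rather than only at those in $E^\kappa_{\ge\chi}$, is the following. In the Souslin verification, when $v'\in T_\alpha$ is realised as some $f_u(n)$, one must back off from the hitting ordinal $\alpha\in E^\kappa_{\ge\chi}$ to an intermediate $\bar\alpha\in\nacc(\acc(C_\alpha))\cup\{\alpha\}$ which typically has countable cofinality, and at $\bar\alpha$ one invokes the analog of property~(\ref{tree-order2-nacc}) to write $f_{u\restriction\bar\alpha}(n)=\mathbf b^{\bar\alpha}_x$ for a suitable $x\in T\restriction C_{\bar\alpha}$. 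Dropping the $\mathbf b^{\bar\alpha}_x$'s at small-cofinality levels would sever this identity; maintaining them (as above) is precisely what makes the passage from the hitting level down to a $\nacc$-point of $C_\alpha$ possible, and once this is observed the remaining bookkeeping is cosmetic.
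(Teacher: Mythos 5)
Your proposal is correct and takes essentially the same route as the paper: the same two-case split at limit levels according to whether $\cf(\alpha)\ge\chi$, the same use of the arithmetic hypothesis $\lambda^{<\chi}<\kappa$ to bound the number of cofinal branches when $\cf(\alpha)<\chi$, the same observation that the fourth proxy parameter $\{E^\kappa_{\ge\chi}\}$ guarantees the hitting ordinal has the ``thin'' form of $T_\alpha$, and the same (explicitly discussed in the paper's preamble to this subsection) decision to retain the $\mathbf b^\alpha_x$-construction at all limit levels rather than only those of cofinality $\ge\chi$, precisely so that Subclaim~\ref{get-b-ctbl-complete} can step down to an $\bar\alpha\in\nacc(\acc(C_\alpha))$ of small cofinality.
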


\begin{proof}
Most of the proof is the same as the proof of Theorem~\ref{tree-ctbl-ascent}.

Let $\langle C_\alpha \mid \alpha < \kappa \rangle$ be a witness to
$\p^-_{14}(\kappa, {\sqsubseteq}, 1, \{ E^\kappa_{\geq\chi} \})$.
Similarly to  the proof of Theorem~\ref{tree-ctbl-ascent}, we will construct,
simultaneously by recursion over $\alpha < \kappa$,
the levels $\langle T_\alpha \mid \alpha < \kappa \rangle$ of the tree $T$
as well as the functions $\langle f_u \mid u \in U \rangle$
and the nodes
$\langle\langle \mathbf b^\alpha_x \mid x \in T \restriction C_\alpha \rangle\mid \alpha \in \acc(\kappa)\rangle$
so that after each stage $\alpha$ of the construction,
properties~(1)--(\ref{prolific}),
(\ref{b-simple})--(\ref{tree-order2-nacc}), and~(\ref{distinct-ascent})
of the construction in Theorem~\ref{tree-ctbl-ascent}
are satisfied, as well as the following:

\begin{enumerate}
\setcounterref{enumi}{slim}
\addtocounter{enumi}{-1}
\item \label{width<kappa}
$\left| T_\alpha \right| < \kappa$;

\setcounterref{enumi}{limit-ctbl-ascent}
\addtocounter{enumi}{-1}
\item \label{tree-lim-ctbl-ascent-complete}
\begin{enumerate}
\item
If $\alpha \in E^\kappa_{\geq\chi}$, then
\[
T_\alpha = \left\{ \mathbf b^\alpha_x \mid x \in T \restriction C_\alpha \right\} \cup
    \left\{ f_u (n) \mid u \in U_\alpha, n < \omega \right\};
\]

\item
If $\alpha \in \acc(\kappa) \cap E^\kappa_{<\chi}$, then
the limit of every branch through $T \restriction \alpha$ is a node in $T_\alpha$.
\end{enumerate}
\end{enumerate}

The recursive construction proceeds just as in the proof of Theorem~\ref{tree-ctbl-ascent},
with the following crucial difference:  At a limit level $\alpha$,
after constructing $\mathbf b^\alpha_x \in {}^\alpha\kappa$ for each $x \in T \restriction C_\alpha$
as well as the function $f_u : \omega \to {}^\alpha \kappa$ for each $u \in U_\alpha$,
the decision as to which elements of $^\alpha \kappa$ are included in $T_\alpha$
depends further on the nature of $\alpha$, as follows:

\begin{description}
\item[$\blacktriangleright \cf(\alpha) \geq \chi$]
In this case, we set
\[
T_\alpha = \left\{ \mathbf b^\alpha_x \mid x \in T \restriction C_\alpha \right\} \cup
    \left\{ f_u (n) \mid u \in U_\alpha, n < \omega \right\}.
\]

\item[$\blacktriangleright \cf(\alpha) < \chi$]
In this case,
let $T_\alpha$ consist of the limits of all branches
through $T \restriction \alpha$.
Notice that each $f_{u}(n)$ and each $\mathbf b^\alpha_x$ is constructed as the limit of a cofinal branch through $(T \restriction \alpha,\s)$,
and hence
\[
T_\alpha \supseteq \left\{ \mathbf b^\alpha_x \mid x \in T \restriction C_\alpha \right\} \cup
    \left\{ f_u (n) \mid u \in U_\alpha, n < \omega \right\}.
\]

\end{description}

The required properties are verified just as in the
proof of Theorem~\ref{tree-ctbl-ascent},
with the exception of:

\begin{itemize}

\item [(\ref{width<kappa})]
\begin{description}
\item[$\blacktriangleright \cf(\alpha) \geq \chi$]

Applying the induction hypothesis, for each $\beta < \alpha$ we have $\left|T_{\beta}\right|<\kappa$.
Since $\alpha < \kappa = \cf(\kappa)$, it follows that
\[
\left|T \restriction C_\alpha \right| \leq
\left|T \restriction \alpha \right|
                    = \sum_{\beta < \alpha} \left|T_\beta\right| < \kappa.
\]
Since $U$ is a $\kappa$-tree, $\left|U_\alpha\right| < \kappa$.
Then, since every node of the form $\mathbf b^\alpha_x$
is produced from some node $x \in T \restriction C_\alpha$,
and every node of the form $f_u (n)$ comes from some $u \in U_\alpha$ and some $n < \omega$,
it follows that
\[
\left|T_\alpha \right| \leq
    \left|T \restriction C_\alpha \right| + \left| U_\alpha \right| \cdot \aleph_0
    < \kappa,
\]
as required.

\item[$\blacktriangleright \cf(\alpha) < \chi$]

To bound the number of nodes in $T_\alpha$,
we need a bound on the number of branches through $T \restriction \alpha$.
Choose a sequence $\langle \alpha_\iota \mid \iota < \cf(\alpha) \rangle$ cofinal in $\alpha$.
Every branch $b$ through $T \restriction \alpha$ determines a distinct sequence
$\langle b \restriction \alpha_\iota \mid \iota < \cf(\alpha) \rangle$ of nodes,
where each $b \restriction \alpha_\iota \in T_{\alpha_\iota}$.
So the number of branches through $T \restriction \alpha$ is bounded by the number of such sequences,
which is
\[
\prod_{\iota < \cf(\alpha) } \left| T_{\alpha_\iota} \right|.
\]
Define
\[
\lambda = \sup_{\iota < \cf(\alpha)} \left| T_{\alpha_\iota} \right|.
\]
Applying the induction hypothesis, for each $\iota < \cf(\alpha)$ we have $\left|T_{\alpha_\iota}\right|<\kappa$.
Since $\cf(\alpha) \leq \alpha < \kappa = \cf(\kappa)$,
it follows that $\lambda < \kappa$.
Since $\cf(\alpha) < \chi$, we then have
\[
\prod_{\iota < \cf(\alpha) } \left| T_{\alpha_\iota} \right| \leq \prod_{\iota < \cf(\alpha) } \lambda =
    \lambda^{\cf(\alpha)} \leq \lambda^{<\chi} < \kappa,
\]
where  the last inequality comes from the arithmetic hypothesis in the statement of the theorem.
Thus the number of branches through $T \restriction \alpha$ is $<\kappa$, so that $\left|T_\alpha \right| < \kappa$,
as required.

\end{description}
\end{itemize}

The fact that $T=\bigcup_{\alpha<\kappa}T_\alpha$ is $\chi$-complete is exactly what is provided by
property~(\ref{tree-lim-ctbl-ascent-complete})(b) of the recursion.

\begin{claim}\label{claim431}
The tree $(T,{\subset})$ is a $\kappa$-Souslin tree.
\end{claim}

\begin{proof}
Let $A \subseteq T$ be a maximal antichain.
By Subclaim \ref{sc621}, the following set is stationary:
\[
A_0 = \{\beta\in R_0\mid  A\cap(T\restriction\beta)=S_\beta\text{ is a maximal antichain in }T\restriction\beta \}.
\]

So we apply the last part of the proxy principle to obtain an ordinal $\alpha\in E^\kappa_{\ge\chi}$
such that
\[
\sup \{ \beta \in C_\alpha  \mid \suc_\omega (C_\alpha \setminus \beta) \subseteq A_0 \} = \alpha.
\]

Let $v'$ be an arbitrary element of $T_\alpha$.

\begin{subclaim}\label{get-b-ctbl-complete}
There are some ${\bar\alpha} \in \acc(C_\alpha) \cup \{ \alpha \}$ and ${x} \in T \restriction C_{{\bar\alpha}}$ such that
\[
\mathbf b^{{{\bar\alpha}}}_{x} \leq_T v' \text{ and }
\sup \left(\nacc (C_{{\bar\alpha}}) \cap A_0 \right) = {\bar\alpha}.
\]
\end{subclaim}

\begin{proof}
Since $\alpha\in E^\kappa_{\ge\chi}$,
by property~(\ref{tree-lim-ctbl-ascent-complete})(a) there are now two possibilities to consider:
\begin{description}
\item[$\blacktriangleright v' = \mathbf b^\alpha_{x}$ for some $x \in T \restriction C_\alpha$]
In this case, fix such an $x$, and let ${\bar\alpha} = \alpha$,
and the subclaim is satisfied.

\item[$\blacktriangleright v' = f_u(n)$ for some $u\in U_\alpha$ and $n < \omega$]
Fix such  $u$ and $n$.
By our choice of $\alpha$, we can choose $\epsilon \in C_\alpha$ such that
$\suc_\omega (C_\alpha \setminus \epsilon) \subseteq A_0$.
Define
\[
{\bar\alpha} = \sup \left( \suc_\omega (C_\alpha \setminus \epsilon) \right).
\]
It is clear that ${\bar\alpha}$ is a limit ordinal, $\epsilon < {\bar\alpha} \leq \alpha$,
and ${\bar\alpha} \in \acc (C_\alpha) \cup \{ \alpha \}$.
Thus by property~(\ref{tree-ascent-on-acc}) we have $f_{{u\restriction \bar\alpha}}(n) \leq_T f_u (n)$.
By  $C_{{\bar\alpha}} = C_\alpha \cap {\bar\alpha}$, we have
$\sup(\acc(C_{{\bar\alpha}})) = \sup(\acc(C_\alpha \cap {\bar\alpha})) \leq \epsilon < {\bar\alpha}.$
Then, by applying property~(\ref{tree-order2-nacc}) to ${\bar\alpha}$,
we must have $f_{u\restriction{\bar\alpha}}(n) = \mathbf b^{{{\bar\alpha}}}_{x}$ for some $x \in T \restriction C_{{\bar\alpha}}$.
Fix such an $x$.
It follows that
\[
\mathbf b^{{{\bar\alpha}}}_{x} = f_{u\restriction {\bar\alpha}}(n) \leq_T f_u (n) = v'.
\qedhere
\]
\end{description}
\end{proof}
As in the proof of Claim \ref{Souslin-claim-ctbl-ascent}, it then follows that $v'$ extends some element $y$ from the antichain $A$.
As $v'$ was an arbitrary element of $T_\alpha$, this shows that  $A\s T\restriction\alpha$. Of course, $A$ was an arbitrary antichain, and hence the splitting $\kappa$-tree $(T,{\subset})$ is $\kappa$-Souslin.
\end{proof}
This completes the proof.
\end{proof}

Applying Theorem~\ref{tree-ctbl-ascent-complete-thm} to the special case $U=\bigcup_{\alpha<\kappa}{}^\alpha1$,
we obtain:

\begin{corollary} \label{ctbl-ascent-complete-thm}
Suppose that $\kappa$ is any regular uncountable cardinal, $\chi < \kappa$ is an infinite cardinal,
and $\p_{14}(\kappa,  {\sqsubseteq}, 1, \{E^\kappa_{\geq\chi}\})$ holds.

If $\lambda^{<\chi} < \kappa$ for all $\lambda < \kappa$,
then there exists a prolific $\chi$-complete $\kappa$-Souslin tree with an injective $\mathcal F^{\fin}_{\aleph_0}$-ascent path.
\end{corollary}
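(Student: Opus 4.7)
The plan is to invoke Theorem~\ref{tree-ctbl-ascent-complete-thm} with the special choice $U = \bigcup_{\alpha<\kappa} {}^\alpha 1$, thereby reducing the corollary to a verification that the hypotheses of the theorem are met and that the conclusion specializes correctly.

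First, I would check that $U$ satisfies the tree hypothesis of Theorem~\ref{tree-ctbl-ascent-complete-thm}: for each $\alpha < \kappa$, the level $U_\alpha = {}^\alpha 1$ is a singleton (the constant map of length $\alpha$ taking value $0$), so $\{\alpha < \kappa \mid U_\alpha \neq \emptyset\} = \kappa$ and $|U_\alpha| = 1 < \kappa$, while $U$ is visibly downward-closed in ${}^{<\kappa}\kappa$. The remaining hypotheses --- $\kappa$ regular uncountable, $\p_{14}(\kappa, {\sqsubseteq}, 1, \{E^\kappa_{\geq\chi}\})$, and $\lambda^{<\chi} < \kappa$ for all $\lambda < \kappa$ --- are inherited verbatim from the statement of the corollary.

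Applying Theorem~\ref{tree-ctbl-ascent-complete-thm} then produces a prolific $\chi$-complete $\kappa$-Souslin tree $(T,{\subset})$ together with an injective $(\mathcal F^{\fin}_{\aleph_0}, U)$-ascent path $\vec f = \langle f_u \mid u \in U\rangle$. To finish, I would note that $(U, \subset) \cong (\kappa, \in)$, so by the convention fixed immediately after the definition of an $(\mathcal F,X)$-ascent path, and again after Definition~\ref{tree-indexed-ascent-def}, this object is precisely an injective $\mathcal F^{\fin}_{\aleph_0}$-ascent path: writing $u_\alpha$ for the unique element of $U_\alpha$ and setting $f_\alpha := f_{u_\alpha}$ yields the required sequence $\langle f_\alpha \mid \alpha < \kappa\rangle$. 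There is no genuine obstacle here; all the substantive content lives in Theorem~\ref{tree-ctbl-ascent-complete-thm}, and this corollary amounts to a relabeling.
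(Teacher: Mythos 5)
Your proposal is correct and coincides exactly with the paper's own derivation: the paper introduces the corollary with the sentence ``Applying Theorem~\ref{tree-ctbl-ascent-complete-thm} to the special case $U=\bigcup_{\alpha<\kappa}{}^\alpha1$, we obtain.'' Your verification that $U$ is a downward-closed $\kappa$-tree with singleton levels and that $(U,\subset)\cong(\kappa,\in)$ (so that the $(\mathcal F^{\fin}_{\aleph_0},U)$-ascent path is exactly an $\mathcal F^{\fin}_{\aleph_0}$-ascent path) is precisely the relabeling the authors have in mind.
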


We remark that the techniques of this section can be used to produce a slim/complete $\kappa$-Souslin tree with
an \emph{injective} $(\mathcal F^{\bd}_\theta, U)$-ascent path from $\p_{14}(\kappa,  {\sqsubseteq}, 1, \{\kappa\})$,
not only for $\theta=\aleph_0$, but also for any cardinal $\theta < \kappa$ of countable cofinality.
For the slim tree, one needs to employ the trick of distinguishing small and large successor ordinals,
as is done in the proof of Theorem~\ref{NEW-theta-ascent-thm} below.

\begin{Q} Can a $\kappa$-Souslin tree with an $\mathcal F^\fin_{\aleph_0}$-ascent path be constructed from
$\p_{14}(\kappa, {\sq^*}, 1, \{\kappa\})$?\footnote{Here, $D \sqsubseteq^* C$ iff
$D \setminus \gamma \sqsubseteq C \setminus \gamma$ for some  $\gamma < \sup(D)$.}
If not, how about some simple intermediate relation $\mathcal R$ such that
${\sqsubseteq} \subseteq \mathcal R \subseteq {\sqsubseteq}^*$?
\end{Q}

\section{Wider ascent paths}\label{sectionwiderwidth}

As mentioned in the introduction, a $\lambda^+$-tree that admits an $\mathcal F^{\bd}_\theta$-ascent path is nonspecial (and hard to specialize),
unless $\cf(\theta)=\cf(\lambda)$. So, if $\lambda$ is a singular cardinal of countable cofinality, then one may be interested in $\lambda^+$-trees with an $\mathcal F^{\bd}_\theta$-ascent path for some regular uncountable cardinal $\theta$.
In fact, there are reasons to study $\mathcal F^{\bd}_\theta$-ascent paths for uncountable $\theta$, even for $\kappa$-trees where $\kappa$ is not a successor cardinal.
For instance, a simplified form of a theorem of L\"ucke from \cite{lucke} asserts that the Proper Forcing Axiom ($\pfa$) implies that for every regular cardinal $\kappa\ge\aleph_2$,
no $\kappa$-Aronszajn tree admits an $\mathcal F_{\aleph_0}$-ascent path, let alone an $\mathcal F^{\bd}_{\aleph_0}$-ascent path.

Therefore, in this section, we shall address the task of constructing $\kappa$-Souslin trees that admit $\mathcal F^{\bd}_\theta$-ascent paths for various values of $\theta$.
Of course, whenever possible, we shall want to obtain $\mathcal F^{\fin}_\theta$-ascent paths.
To better understand what is possible and what is not possible,
let $\nu<\kappa$ denote infinite regular cardinals, and $(T,<_T)$ some $\kappa$-Aronszajn tree.
An adaptation of the argument of \cite[Theorem 41H]{MR780933} entails that
if $\nu$ is a supercompact cardinal, then $(T,<_T)$ admits no $\mathcal F^{\bd}_\theta$-ascent path for every infinite cardinal $\theta\in E^\kappa_{<\nu}$.
So, the best one can hope for in this scenario is the existence of an $\mathcal F^{\bd}_\theta$-ascent path for  $\theta\in E^\kappa_{\ge\nu}$.
For this, we define the following $\nu$-complete filter over $\theta$:
\[\label{defnfnutheta}
\mathcal F^\nu_\theta=\{ Z\s \theta\mid |\theta\setminus Z|<\nu\}.
\]

Clearly, $\mathcal F^{\aleph_0}_\theta = \mathcal F^{\fin}_\theta$.
More importantly, $\mathcal F^{\nu}_\theta$ projects to a subfilter of $\mathcal F^{\bd}_\mu$,
for all $\mu\in[\aleph_0,\theta]\cap\cof(\ge\nu)$.
Let us demonstrate how this helps.

By \cite{axioms}, $\p_{14}(\lambda^+,{\sq_\nu},\lambda^+,\{\lambda^+\})$ is consistent together with $\nu$ being supercompact and, say,  $\lambda=\nu^{+\omega}$.
By the upcoming theorem, this entails a $\lambda^+$-Souslin tree $(T,{\subset})$ with an $\mathcal F_\lambda^\nu$-ascent path.
Consequently, for all $\theta\le\lambda$: the tree $(T,{\subset})$ admits an $\mathcal F^{\bd}_\theta$-ascent path iff $\cf(\theta)\ge\nu$.
So the results of this section are sharp.

Coming back to L\"ucke's theorem, we mention that the proof of Corollary \ref{cor116}
demonstrates the consistency of $\pfa$ together with principles of the form $\p_{14}(\kappa,{\sq_{\aleph_2}},\ldots)$.

\medskip

The constructions of $\kappa$-Souslin trees in this section will be from the principle $\p_{14}(\kappa,\mathcal R,\theta,\mathcal S)$,
where $\theta<\kappa$ is the width of the ascent path. In the previous section, we managed to get by, assuming merely $\p_{14}(\kappa,\mathcal R,1,\mathcal S)$.
This was possible, because the ordinal $\bar\alpha$ from Subclaim \ref{get-b-ctbl} that was responsible for sealing antichains is of countable cofinality.
For $\theta$ of uncountable cofinality, we occur into a situation of \emph{mismatch of cofinalities},
that prevents addressing $f_\alpha(\iota)$ for all $\iota<\theta$ at once. This is resolved by increasing the third parameter into $\theta$,
and handling $f_\alpha(\iota)$ for each $\iota<\theta$ separately.

\begin{theorem}\label{NEW-theta-ascent-thm}
Suppose that $\nu < \kappa$ are regular infinite cardinals, $\theta < \kappa$ is any infinite cardinal,
$U \subseteq {}^{<\kappa} \kappa$ is a slim downward-closed $\kappa$-tree,
and $\p_{14}(\kappa,  {\sqsubseteq_{\nu}}, \theta, \{\kappa\})$ holds.

Then there exists a prolific slim $\kappa$-Souslin tree with an injective $(\mathcal F^{\nu}_{\theta}, U)$-ascent path.
\end{theorem}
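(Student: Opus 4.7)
The proof extends that of Theorem~\ref{tree-ctbl-ascent} with three linked modifications: the ascent-path functions $f_u$ take domain $\theta$ in place of $\omega$; coherence between levels is measured by the filter $\mathcal{F}^\nu_\theta$ in place of $\mathcal{F}^{\fin}_{\aleph_0}$; and the weaker proxy relation $\sqsubseteq_\nu$ may violate the usual $\sqsubseteq$-coherence on limits $\alpha$ with $\otp(C_\alpha)<\nu$, a case that demands an adapted analysis. Fix a witness $\langle C_\alpha \mid \alpha<\kappa\rangle$ to $\p^-_{14}(\kappa,{\sqsubseteq_\nu},\theta,\{\kappa\})$ alongside the $\diamondsuit(H_\kappa)$-apparatus (partition $\langle R_i \mid i<\kappa\rangle$, oracle $\langle S_\beta \mid \beta<\kappa\rangle$, well-ordering $<_\phi$, and the function $\sealantichain$) of Section~\ref{section3}. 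Recursively construct $T_\alpha\subseteq{}^\alpha\kappa$, the microscopic-limit nodes $\mathbf{b}^\alpha_x$ for $x\in T\restriction C_\alpha$ (at limit $\alpha$), and the ascent-path functions $f_u:\theta\to T_\alpha$ for $u\in U_\alpha$, maintaining analogues of the inductive properties (1)--(13) of the proof of Theorem~\ref{tree-ctbl-ascent}, where each instance of $\mathcal{F}^{\fin}_{\aleph_0}$ is replaced by $\mathcal{F}^\nu_\theta$ and each $n<\omega$ by $i<\theta$.

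\textbf{Construction at limit levels.} Successor levels are handled as in Theorem~\ref{tree-ctbl-ascent}, by setting $f_u(i)=f_{u\restriction\beta}(i)^\smallfrown\langle \theta\cdot u(\beta)+i\rangle$ to preserve prolificity, slimness (using that $U$ is slim), and distinctness. At a limit level $\alpha$, the nodes $\mathbf{b}^\alpha_x$ are defined via the microscopic recursion along $C_\alpha$ using $\sealantichain$ exactly as in Theorem~\ref{tree-ctbl-ascent}, and the companion coherence claim $b^{\bar\alpha}_x=b^\alpha_x\restriction(C_{\bar\alpha}\setminus\height(x))$ carries over unchanged in the case $C_{\bar\alpha}=C_\alpha\cap\bar\alpha$; for the exceptional $\sqsubseteq_\nu$-alternative $\otp(C_\alpha)<\nu$, the short length forces $|\acc(C_\alpha)|<\nu$, so only $<\nu$-many accumulation steps occur, and these are absorbed by the $\nu$-completeness of $\mathcal{F}^\nu_\theta$. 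The critical new step is the case $\alpha_0:=\sup(\acc(C_\alpha)\cup\{\min(C_\alpha\setminus\{0\})\})<\alpha$: enumerate $C_\alpha\setminus\alpha_0$ as $\langle\alpha_m\mid m<\omega\rangle$, and observe that
\[
J_m=\{i<\theta\mid \langle f_{u\restriction\alpha_j}(i)\mid j\le m\rangle \text{ is }<_T\text{-increasing}\}\in\mathcal{F}^\nu_\theta
\]
holds by induction hypothesis as a finite intersection of $\mathcal{F}^\nu_\theta$-members. Choose for each $i<\theta$ a natural number $m_i$ with $i\in J_{m_i}$ and $|\{i<\theta\mid m_i<m\}|<\nu$ for every $m<\omega$, then set
\[
f_u(i)=\mathbf{b}^\alpha_{f_{u\restriction\alpha_{m_i}}(i)}.
\]

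\textbf{Main obstacle and Souslin verification.} The delicate point is the pointwise choice of the $m_i$'s consistent with the $\mathcal{F}^\nu_\theta$-coherence at every $\beta<\alpha$: for uncountable $\nu$, the $\nu$-completeness of $\mathcal{F}^\nu_\theta$ renders $\bigcap_{m<\omega}J_m\in\mathcal{F}^\nu_\theta$, allowing a monotone choice of $m_i$ along this intersection; for $\nu=\aleph_0$, the ``$m_i\le n$''-device of Theorem~\ref{tree-ctbl-ascent} applies coordinatewise, with $i<\theta$ playing the role of $n<\omega$ via the enumeration-structure of the $U$-tree levels. With the construction in hand, the Souslin verification follows Claim~\ref{Souslin-claim-tree-ctbl-ascent}: given a maximal antichain $A$, extract via $\diamondsuit(H_\kappa)$ the stationary set $A_0=\{\beta<\kappa\mid A\cap(T\restriction\beta)=S_\beta\text{ is maximal in }T\restriction\beta\}$, apply the hitting-property of $\p^-_{14}(\kappa,{\sqsubseteq_\nu},\theta,\{\kappa\})$ with $\langle A_i\mid i<\theta\rangle$ constantly equal to $A_0$ (the third-parameter $\theta$ providing simultaneous hitting across all $\theta$ threads of the ascent path), and, for the obtained $\alpha<\kappa$, reduce each $v'=f_u(i)\in T_\alpha$ via the analogue of property~(10) to $\mathbf{b}^{\bar\alpha}_{x}$ with $\sup(\nacc(C_{\bar\alpha})\cap A_0)=\bar\alpha$, invoking the $\sealantichain$-step as in Subclaims~\ref{get-b-ctbl} and~\ref{point-in-antichain} to produce $y\in A$ with $y<_T v'$. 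Slimness, prolificity, and injectivity of the resulting $(\mathcal{F}^\nu_\theta,U)$-ascent path follow as in Theorem~\ref{tree-ctbl-ascent}.
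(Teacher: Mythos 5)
Your proposal misses the central structural difficulty that the paper's proof is designed to overcome: once the proxy relation is weakened from $\sqsubseteq$ to $\sqsubseteq_\nu$, it is \emph{not possible} to carry out the construction of $\mathbf b^\alpha_x$ at every limit level $\alpha$. The coherence claim $b^{\bar\alpha}_x = b^\alpha_x\restriction (C_{\bar\alpha}\setminus\height(x))$ requires $C_{\bar\alpha}=C_\alpha\cap{\bar\alpha}$, and this is what guarantees $b^\alpha_x(\bar\alpha)\in T_{\bar\alpha}$ at accumulation points $\bar\alpha$ of $C_\alpha$. Under $\sqsubseteq_\nu$, there may be $\bar\alpha\in\acc(C_\alpha)$ with $C_{\bar\alpha}\neq C_\alpha\cap\bar\alpha$, in which case the recursively-built union at stage $\bar\alpha$ has no reason to be a node of $T_{\bar\alpha}$, and the recursion simply halts. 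You claim this ``short length'' situation is ``absorbed by the $\nu$-completeness of $\mathcal F^\nu_\theta$,'' but this conflates a tree-membership obstruction with a filter-coherence issue; the filter's completeness says nothing about whether $b^\alpha_x(\bar\alpha)\in T_{\bar\alpha}$. The paper resolves this by only constructing $\mathbf b^\alpha_x$ on a stationary $\Gamma\subseteq\acc(\kappa)$ where internal $\sqsubseteq$-coherence does hold (Claim 5.3), and --- critically --- by ensuring normality at limit levels \emph{outside} $\Gamma$ through an entirely separate device: building an ascent path $\langle f_{x,u}\mid u\in U\rangle$ for \emph{every} node $x\in T$, not just a single path. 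Your proposal constructs neither $\Gamma$ nor the $x$-indexed family; without them there is no way to make the construction proceed past the first mis-coherent limit.

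A second genuine gap is the treatment of the third parameter $\theta$ in the proxy principle. You feed in the sequence $\langle A_i\mid i<\theta\rangle$ with all $A_i$ equal to $A_0$, which reduces the proxy instance to $\p^-_{14}(\kappa,\sqsubseteq_\nu,1,\{\kappa\})$ and gains nothing. The whole reason the theorem needs third parameter $\theta$ (rather than $1$) is the ``mismatch of cofinalities'' the paper describes in the opening of Section 5: when $\cf(\theta)>\omega$, a single $\omega$-cofinal window $\suc_\omega(C_\alpha\setminus\epsilon)\subseteq A_0$ cannot seal all $\theta$-many threads $f_u(\iota)$ at once. The paper instead defines $\theta$ \emph{distinct} antichain traces $A^\iota$ (one per $\psi$-value $\iota$, built from the partition $\langle R_i\mid i<\kappa\rangle$) and uses the oracle $\psi$ both in the microscopic case $\sup(\acc(C_\alpha))<\alpha$ of the recursion --- deciding which single coordinate $\iota$ gets a $\mathbf b$-value at this level, based on $\psi[C_\alpha\setminus\beta]=\{\iota\}$ --- and in the Souslin verification (Subclaim 5.11), so that the $\iota$ chosen at construction time matches the $\iota$ supplied by the hitting. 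Your construction at $\alpha_0<\alpha$ omits the $\psi$-branch entirely, so the Souslin verification has no handle on which coordinate got a $\mathbf b$-value. Finally, the boundedness requirement you attach to the indices $m_i$ (``$|\{i<\theta\mid m_i<m\}|<\nu$ for every $m<\omega$'') is inconsistent for typical parameters: since $\theta=\bigcup_{m<\omega}\{i\mid m_i<m\}$, if $\theta\geq\nu>\aleph_0$ then some $\{i\mid m_i<m\}$ must have cardinality $\geq\nu$.
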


\begin{proof}
By passing to an isomorphic slim tree if necessary,
we may assume that for all $u \in U$ and all $\beta < \dom(u)$,
$u(\beta) < \left| U_{\beta+1} \right|$.

We commence, using $\diamondsuit(H_\kappa)$, by fixing the
functions $\phi:\kappa\leftrightarrow H_\kappa$, $\psi:\kappa\rightarrow H_\kappa$,
sequences $\langle S_\beta \mid \beta<\kappa\rangle$, $\langle R_i  \mid i < \kappa \rangle$,
well-ordering $<_\phi$,
notation $\beta(T)$,
and the functions $\defaultaction:H_\kappa\times H_\kappa\rightarrow H_\kappa$ and
$\sealantichain:H_\kappa\times H_\kappa\rightarrow H_\kappa$
as described in Section~\ref{section:basicsetup}.
Let $\langle C_\alpha \mid \alpha < \kappa \rangle$ be a witness to $\p^-_{14}(\kappa,  {\sqsubseteq_{\nu}}, \theta, \{\kappa\})$.
Without loss of generality, we may assume that  $C_\alpha=C_\alpha\setminus(\theta+1)$ whenever $\theta<\alpha<\kappa$.

Having weakened the second parameter from $\sqsubseteq$ to $\sqsubseteq_{\nu}$,
there is no way to guarantee that the sequence $b^\alpha_x$ can always be constructed.
Thus, unlike the constructions from the previous section,
we are not going to define $b_x^\alpha$ for every limit $\alpha<\kappa$ and
every $x\in T\restriction C_\alpha$.
Rather, this time, we shall define $b_x^\alpha$ only when $\alpha\in\Gamma$ for a particular stationary subset $\Gamma$ of $\kappa$.

However, the lack of sequences $b^\alpha_x$ for limit ordinals $\alpha \notin \Gamma$
leaves us with another problem:
How do we guarantee normality at these levels?
One of the main uses of the sequences $b^\alpha_x$ was to ensure
the existence of a node at level $\alpha$ above $x$
(see proof of property~(\ref{normal}) in the limit-level construction of Theorem~\ref{ctbl-ascent-thm}),
which in turn was necessary in order to apply the Extension Lemma from page~\pageref{extendfact} during the course of the construction.
In the absence of some of the sequences in this construction,
we shall obtain normality by another means: the ascent path.
Instead of constructing a single ascent path $\langle f_\alpha \mid \alpha < \kappa \rangle$ as in
Theorem \ref{ctbl-ascent-thm},
we shall construct an ascent path $\langle f_{x, \alpha} \mid \alpha < \kappa \rangle$
for every node $x$ of the tree,
where each value $f_{x, \alpha} (\iota)$ of the ascent path will always be compatible with the node $x$.
This way, whenever $\alpha>\height(x)$, we shall have $f_{x,\alpha}(\iota)$ serving as an extension of $x$ to level $\alpha$.

But the above-mentioned ascent paths serve only to ensure the normality of the tree,
and have nothing to do with the task of injecting an $(\mathcal F^\nu_\theta,U)$-ascent path through the tree.
While true, it turned out to be convenient to address these tasks in a uniform way.
Therefore, we shall simply construct $\langle f_{x, u} \mid u\in U\rangle$ for each $x\in T$.

A last remark before hitting the construction. As $f_{x,u}(\iota)$ is required to be compatible with $x$,
this means that for every $u\in U$ with $\height_U(u)\le\height_T(x)$, we must obviously have $f_{x,u}(\iota)=x\restriction\height_U(u)$.
For this reason, we shall only be explicitly specifying $f_{x,u}$ for $u\in U$ with $\height_U(u)>\height_T(x)$.

\begin{claim} \label{Gamma-includes}\label{Gamma-works}
Define $\Gamma = \{\alpha \in \acc(\kappa) \setminus(\theta+1)\mid \left( \forall \beta \in \acc(C_\alpha) \right) C_\beta \sqsubseteq C_\alpha\}$. Then:
\begin{enumerate}
\item If $\alpha\in\Gamma$ and $\bar\alpha\in\acc(C_\alpha)$, then ${\bar\alpha}\in\Gamma$;
\item $\Gamma \supseteq \{ \alpha\in \acc(\kappa) \setminus(\theta+1)\mid \otp(C_\alpha) \geq \nu \text{ or $\nacc(C_\alpha)$ contains a limit ordinal} \}$.
\end{enumerate}

In particular, $\Gamma$ covers the stationary set $E^\kappa_{\ge\nu}\setminus (\theta +1)$.
\end{claim}
\begin{proof}
(1) Fix $\alpha \in \Gamma$ and ${\bar\alpha} \in \acc(C_\alpha)$. We must show that ${\bar\alpha} \in \Gamma$.
Clearly ${\bar\alpha}$ is a limit ordinal $>\theta$, and from the fact that $\alpha \in \Gamma$
it follows that $C_{\bar\alpha} \sqsubseteq C_\alpha$.
Consider any $\beta \in \acc(C_{\bar\alpha})$.
Then also $\beta \in \acc(C_\alpha)$,
and it follows (again from $\alpha \in \Gamma$) that $C_\beta \sqsubseteq C_\alpha$.
Then $C_\beta = C_\alpha \cap \beta = C_\alpha \cap {\bar\alpha} \cap \beta = C_{\bar\alpha} \cap \beta$,
so that $C_\beta \sqsubseteq C_{\bar\alpha}$,
as required to show that ${\bar\alpha} \in \Gamma$.

(2) Suppose $\alpha\in \acc(\kappa) \setminus(\theta+1)$ and $\otp(C_\alpha)\ge\nu$.
Then for all ${\bar\alpha}\in\acc(C_\alpha)$, we have $C_{\bar\alpha} \sqsubseteq_{\nu} C_\alpha$,
which must mean that $C_{\bar\alpha}\sqsubseteq C_\alpha$. Consequently, $\alpha\in\Gamma$.

Suppose $\alpha\in \acc(\kappa) \setminus(\theta+1)$ and $\nacc(C_\alpha)$ contains a limit ordinal.
Then for all ${\bar\alpha}\in\acc(C_\alpha)$, we have $C_{\bar\alpha} \sqsubseteq_{\nu} C_\alpha$,
which must mean that $C_{\bar\alpha}\sqsubseteq C_\alpha$. Consequently, $\alpha\in\Gamma$.
\end{proof}

As always, the tree $T$ will be a downward-closed subset of $^{<\kappa} \kappa$,
so that each level $T_\alpha$ will be a subset of $^\alpha \kappa$,
and the tree relation $\leq_T$ will simply be extension of sequences.
We will construct,
simultaneously by recursion over $\alpha < \kappa$,
the levels $\langle T_\alpha \mid \alpha < \kappa \rangle$ of the tree $T$
as well as the functions $\langle\langle f_{x, u} \mid x\in T\restriction\alpha, u \in U_\alpha \rangle\mid \alpha<\kappa\rangle$
and the nodes $\langle\langle \mathbf b^\alpha_x \mid x \in T \restriction C_\alpha \rangle\mid \alpha \in \Gamma \rangle$,
so that after each stage $\alpha$,
properties~(1)--(\ref{slim}) of the construction in Theorem~\ref{ctbl-ascent-thm}
are satisfied, as well as the following:

\begin{enumerate}
\setcounterref{enumi}{slim}

\item \label{b-theta-ascent}
If $\alpha \in \Gamma$,
then
for every $x \in T \restriction C_\alpha$,
$\mathbf b^\alpha_x \in T_\alpha$ is the limit of the
increasing, continuous, cofinal sequence $b^\alpha_x$
in $(T \restriction \alpha, \subseteq )$,
satisfying the same properties (a)--(d) as in the corresponding
property~(\ref{b-simple}) of Theorem~\ref{ctbl-ascent-thm};

\item \label{tree-theta-ascent-is-function}\label{tree-successor-ascent-injection-theta}
For every $x \in T \restriction \alpha$ and every $u \in U_\alpha$,
$f_{x, u} : \theta \to T_\alpha \cap \cone{x}$ is a function.
Moreover:
\begin{enumerate}
\item
If $\alpha \leq \theta$, then $f_{x,u}$ is a constant function;
\item
If $\alpha > \theta$ is a successor ordinal,
then $f_{x,u}$ is injective;
\end{enumerate}

\item \label{tree-cobounded-ascent-theta}
For every $\beta < \alpha$, every $x \in T \restriction \beta$, and every $u \in U_\alpha$,
\[
\{ \iota < \theta \mid f_{x, u \restriction \beta}(\iota) <_T f_{x, u}(\iota) \} \in \mathcal F^{\nu}_\theta;
\]

\item \label{tree-ascent-on-acc-theta}
\begin{enumerate}
\item
If $\alpha \leq \theta$,
then for every $\beta < \alpha$, every $x \in T \restriction \beta$, and every $u \in U_\alpha$,
\[
\{ \iota < \theta \mid f_{x, u \restriction \beta}(\iota) <_T f_{x, u}(\iota) \} = \theta;
\]
\item
If $\alpha\in\Gamma$ and $\beta \in \acc (C_\alpha)$,
then for every $x \in T \restriction \beta$ and every $u \in U_\alpha$,
\[
\{ \iota < \theta \mid f_{x, u \restriction \beta}(\iota) <_T f_{x, u}(\iota) \} = \theta;
\]
\end{enumerate}

\item \label{order2-nacc-theta-many-tree}
If $\alpha \in \Gamma$ satisfies $\sup (\acc(C_\alpha)) < \alpha$,
and if $\iota^* < \theta$ is such that, for some $\beta < \alpha$, $\psi[C_\alpha \setminus \beta] = \{ \iota^* \}$,
then for every $x \in T \restriction \alpha$ and every $u \in U_\alpha$
there is some $y \in T \restriction C_\alpha$ such that
\[
f_{x, u} (\iota^*) = \mathbf b^\alpha_{y};
\]

\item \label{tree-Gamma}
If $\alpha \in \Gamma$,
then
\[
T_\alpha = \{ \mathbf b^\alpha_x \mid x \in T \restriction C_\alpha \} \cup
                \{ f_{x, u} (\iota) \mid x \in T \restriction \alpha, u \in U_\alpha, \iota < \theta \};
\]

\item \label{tree-not-Gamma-level}
If $\alpha \in \acc(\kappa) \setminus \Gamma$, then
\[
T_\alpha = \{ f_{x, u} (\iota) \mid x \in T \restriction \alpha, u \in U_\alpha, \iota < \theta \};
\]

\item \label{theta-distinct-ascent}
For any two distinct nodes $u,v$ from $U_\alpha$,
\[
\{ \iota < \theta \mid f_{\emptyset, u} (\iota) \neq f_{\emptyset, v} (\iota) \} \in \mathcal F^{\nu}_\theta.
\]
\end{enumerate}

Notice that property~(\ref{tree-successor-ascent-injection-theta})(b) is weaker than in previous theorems,
in that we do not require $f_{x, u}$ to be an injection when $\alpha$ is a limit ordinal.

\smallskip

We leave for the reader to verify that
the following instance of the Coherence Claim Template from page \pageref{coherence-template} holds:
\begin{claim}\label{coherence-Gamma}
Fix limit ordinals ${\bar\alpha} < \alpha$ both in $\Gamma$, and suppose $T \restriction \alpha$ has been constructed
to satisfy the above properties.
If $C_{\bar\alpha} = C_\alpha \cap {\bar\alpha}$,  $x \in T \restriction C_{\bar\alpha}$,
and $b^\alpha_x \restriction (C_{\bar\alpha} \setminus \height(x))$ has already been constructed,
then
\[
b^{\bar\alpha}_x = b^\alpha_x \restriction \left( C_{\bar\alpha} \setminus \height(x) \right).
\]
\end{claim}

The recursive construction proceeds as follows:

\begin{description}
\item[Base case, $\alpha = 0$]
As always, let
$T_0 = \{ \emptyset \}$.
The required properties are automatically satisfied as there is nothing to check.
We do not define any ascent-path function here, since our commitment is to define $f_{x, u}$
only when $x \in T \restriction \alpha$, and of course $T \restriction 0$ is empty.

\item[Small successor ordinal, $\alpha = \beta+1 < \theta$]
In this case, define
\[
T_\alpha = \{ t^\smallfrown \langle \iota \rangle  \mid t \in T_\beta, \iota< \max \{\left| U_\alpha \right|, \omega, \alpha\} \}.
\]
In addition, for every $x \in T \restriction \alpha$ and every $u \in U_\alpha$,
define the constant function $f_{x, u} : \theta \to T_\alpha \cap \cone{x}$ by setting,
for all $\iota < \theta$,
\[
f_{x, u} (\iota) =
    \begin{cases}
        x^\smallfrown \langle u(\beta) \rangle,   &\text{if $x \in T_\beta$;} \\
        f_{x, u \restriction \beta}(\iota)^\smallfrown \langle u(\beta) \rangle,
                                        &\text{if $x \in T \restriction \beta$.}
    \end{cases}
\]
The required properties are easy to verify.

\item[Large successor ordinal, $\alpha = \beta+1 > \theta$]
In this case, define
\[
T_\alpha = \{ t^\smallfrown \langle\iota\rangle\mid
            t \in T_\beta, \iota< \max \{ \theta \cdot \left| U_\alpha \right|, \alpha\} \}.
\]
In addition, for every $x \in T \restriction \alpha$ and every $u \in U_\alpha$,
define the injective function $f_{x, u} : \theta \to T_\alpha \cap \cone{x}$ by setting,
for all $\iota < \theta$,
\[
f_{x, u} (\iota) =
    \begin{cases}
        x^\smallfrown \langle\theta \cdot (u(\beta)) + \iota \rangle,   &\text{if $x \in T_\beta$;} \\
        f_{x, u \restriction \beta}(\iota)^\smallfrown \langle \theta \cdot (u(\beta)) + \iota \rangle,
                                        &\text{if $x \in T \restriction \beta$.}
    \end{cases}
\]
The required properties are easy to verify.

\item[Limit level, $\alpha \notin \Gamma$]
Since $\alpha \notin \Gamma$, we do not define any nodes of the form $\mathbf b^\alpha_x$.

Let $x\in T\restriction\alpha$ and $u \in U_\alpha$ be arbitrary.
We need to define $f_{x, u} : \theta\rightarrow{}^\alpha\kappa$.
For every $\beta_0<\beta_1$ in $C_\alpha\setminus(\height(x)+1)$,
define
\[
F_{x,u}^{\beta_0, \beta_1} =
    \left\{ \iota < \theta \mid \left( f_{x, u \restriction \beta_0}(\iota) <_T f_{x, u \restriction \beta_1}(\iota) \right) \right\},
\]
and then let
\[
F^*_{x, u} = \bigcap \left\{ F^{x,u}_{\beta_0, \beta_1} \mid
    \beta_0<\beta_1\text{ in }C_\alpha \setminus (\height(x) +1) \right\}.
\]

\begin{claim}\label{F-in-filter}
$F^*_{x, u} \in \mathcal F^{\nu}_\theta$.
\end{claim}

\begin{proof}
There are two cases to consider:
\begin{description}
\item[$\blacktriangleright \alpha \leq \theta$]
For every $\beta_0<\beta_1$ in $C_\alpha\setminus(\height(x)+1)$,
applying property~(\ref{tree-ascent-on-acc-theta})(a) to $\beta_1$ gives $F_{x,u}^{\beta_0, \beta_1} = \theta$.
Thus in fact $F^*_{x, u} = \theta \in \mathcal F^{\nu}_\theta$ in this case.

\item[$\blacktriangleright \alpha > \theta$]
By property~(\ref{tree-cobounded-ascent-theta}),
$F_{x,u}^{\beta_0, \beta_1} \in \mathcal F^{\nu}_\theta$
for every $\beta_0<\beta_1$ in $C_\alpha\setminus(\height(x)+1)$.
Since $\alpha \notin \Gamma$,
Claim~\ref{Gamma-includes}(2) gives $\otp(C_\alpha) < \nu$ in this case.
The result now follows from the $\nu$-completeness of
the filter $\mathcal F^{\nu}_\theta$.
\qedhere
\end{description}
\end{proof}

By definition of $F^*_{x, u}$, for all $\iota \in F^*_{x, u}$,
the sequence $\langle f_{x, u \restriction \beta}(\iota) \mid \height(x)\in \beta \in C_\alpha  \rangle$ is increasing
and cofinal in $( T \restriction \alpha, \subseteq )$.
Denote $\iota^*_{x, u} = \min (F^*_{x, u})$.
Then, define $f_{x, u} : \theta \to {}^\alpha \kappa$ by stipulating
\[
f_{x, u} (\iota) =
    \begin{cases}
        \bigcup \{ f_{x, u \restriction \beta} (\iota) \mid \height(x)\in \beta \in C_\alpha \},
                                &\text{if } \iota \in F^*_{x, u};       \\
        \bigcup \{ f_{x, u \restriction \beta} (\iota^*_{x, u}) \mid \height(x)\in \beta \in C_\alpha \},
                        &\text{otherwise.}
    \end{cases}
\]

Clearly, $f_{x, u} (\iota) \in {}^\alpha \kappa$ for each $\iota < \theta$.

Finally, as promised, we set
\[
T_\alpha = \{ f_{x, u} (\iota) \mid x \in T \restriction \alpha, u \in U_\alpha, \iota < \theta \};
\]

To verify some of the required properties:

\begin{itemize}
\item [(\ref{right-level})]
Each $f_{x, u}(\iota)$ is
the limit of some cofinal branch through $(T \restriction \alpha,\s)$, so it is in ${}^\alpha \kappa$.

\item[(\ref{normal})]
Since $(U,\subset)$ is a $\kappa$-tree,
in particular $U_\alpha \neq \emptyset$, so by picking an arbitrary $u \in U_\alpha$,
we see that for every $x \in T \restriction \alpha$,
we have defined some node $f_{x, u}(0) \in T_\alpha$ above $x$.

\item[(\ref{tree-successor-ascent-injection-theta})(a)]
Assuming $\alpha \leq \theta$:
By the induction hypothesis,
each $f_{x, u \restriction \beta}$ for $\beta \in C_\alpha \setminus (\height(x)+1)$ is a constant function,
and by Claim~\ref{F-in-filter}, $F^*_{x,u}$ is nonempty, so
the sequence consisting of their constant values,
$\langle f_{x, u \restriction \beta}(0) \mid \beta \in C_\alpha \setminus (\height(x)+1) \rangle$,
must be increasing and cofinal in $(T \restriction \alpha, \subseteq)$.
Our definition of $f_{x, u}$ in this case then gives, for every $\iota < \theta$,
\[
f_{x, u} (\iota) =
\bigcup \{ f_{x, u \restriction \beta} (0) \mid \height(x)\in \beta \in C_\alpha \},
\]
so that the function $f_{x, u}$ is constant.

\item [(\ref{slim})]
Just as in Theorem~\ref{ctbl-ascent-thm},
we have $\left|T \restriction \alpha \right| \leq \left|\alpha\right|$.
Since $(U,\subset)$ is slim and $\left|\alpha\right| \geq \aleph_0$, we have $\left|U_\alpha\right| \leq \left|\alpha\right|$.
For every pair of nodes $x \in T \restriction \alpha$ and $u \in U_\alpha$,
we show that $\left|\range(f_{x, u})\right| \leq \left|\alpha\right|$,
by considering two cases:

\begin{description}
\item[$\blacktriangleright \alpha \leq \theta$]
In this case, property~(\ref{tree-successor-ascent-injection-theta})(a) tells us that $f_{x,u}$ is a constant function, so that
\[
\left|\range(f_{x, u})\right|  = 1 < \left|\alpha\right|.
\]

\item[$\blacktriangleright \alpha > \theta$]
In this case, we have
\[
\left|\range(f_{x, u})\right| \leq \left|\dom(f_{x, u})\right| = \theta \leq \left|\alpha\right|.
\]
\end{description}

In both cases, we then have
\[
\left|T_\alpha \right| \leq
    \left|T \restriction \alpha \right| \cdot \left| U_\alpha \right| \cdot \sup \{ \left|\range(f_{x, u})\right|  \mid x \in T \restriction \alpha, u \in U_\alpha \}
    \leq \left|\alpha\right| \cdot \left|\alpha\right| \cdot \left|\alpha\right|
    = \left|\alpha\right|,
\]
as required.

\item [(\ref{tree-cobounded-ascent-theta})]
Fix $ \beta < \alpha$, $x \in T \restriction \beta$, and $u \in U_\alpha$.
Since $\sup(C_\alpha) = \alpha$, find some $\beta' \in C_\alpha$ such that $\beta  < \beta' < \alpha$.
Define $F = \{ \iota < \theta \mid f_{x, u \restriction \beta}(\iota) <_T f_{x, u \restriction \beta'}(\iota) \}$.
Applying the induction hypothesis to $u \restriction \beta'$,
we know that $F \in \mathcal F^{\nu}_\theta$.
Then also $F \cap F^*_{x, u} \in \mathcal F^{\nu}_\theta$.

For every $\iota \in F^*_{x, u}$, since $\beta' \in C_\alpha$, we have defined
$f_{x, u} (\iota)$ to be above $f_{x, u \restriction \beta'}(\iota)$.
Thus for any $\iota \in F \cap F^*_{x, u}$, we have
$f_{x, u \restriction \beta}(\iota) <_T f_{x, u \restriction \beta'}(\iota) <_T f_{x, u}(\iota)$,
as required.

\item[(\ref{tree-ascent-on-acc-theta})(a)]
Assuming $\alpha \leq \theta$:
Follow the same proof as for (\ref{tree-cobounded-ascent-theta}),
but this time we have $F = F^*_{x, u} = \theta$.

\item[(\ref{theta-distinct-ascent})]
As in property~(\ref{distinct-ascent}) of Theorem~\ref{tree-ctbl-ascent}.

\end{itemize}

\item[Limit level, $\alpha \in\Gamma$]
We begin by constructing $\mathbf b^\alpha_x \in {}^\alpha \kappa$ for each $x \in T \restriction C_\alpha$,
just as in Theorem~\ref{ctbl-ascent-thm}.
Of course, when defining $b^\alpha_x (\beta)$ for some $\beta \in \acc(C_\alpha)$,
it is crucial that $b^\alpha_x (\beta) \in T_\beta$.
The latter is indeed the case, thanks to
Claims~\ref{Gamma-works} and \ref{coherence-Gamma}
and the fact that $\alpha \in \Gamma$.

Next, we fix $x \in T \restriction \alpha$ and $u \in U_\alpha$, and
we must construct a function $f_{x, u}: \theta\rightarrow{}^\alpha \kappa$.
Fix $\iota < \theta$, and let us prescribe a function value $f_{x, u}(\iota) \in {}^\alpha \kappa$.
We shall need the following variant of Claim~\ref{full-domination-acc}:

\begin{claim} \label{full-domination-acc-theta-tree}
The sequence $\langle f_{x, u \restriction \beta}(\iota) \mid \beta \in \acc(C_\alpha) \setminus (\height(x)+1) \rangle$ is increasing
in $( T \restriction \alpha, \subseteq )$.
\end{claim}
\begin{proof}
Consider any $\beta_1, \beta_2 \in \acc(C_\alpha) \setminus (\height(x)+1)$ with $\beta_1 < \beta_2$.
Since $\alpha \in \Gamma$ and $\beta_2 \in \acc(C_\alpha)$,
it follows that $C_{\beta_2} \sqsubseteq C_\alpha$.
Since $\beta_1 < \beta_2$ and $\beta_1 \in \acc(C_\alpha)$, we must then have $\beta_1 \in \acc (C_{\beta_2})$.
Claim~\ref{Gamma-works} gives $\beta_2 \in \Gamma$.
By property~(\ref{tree-ascent-on-acc-theta})(b) of the induction hypothesis applied to $\beta_2$,
it follows that $f_{x, \beta_1}(\iota) <_T f_{x, \beta_2}(\iota)$, as required.
\end{proof}

Let
\[
\alpha_x = \sup \left( \acc(C_\alpha) \cup \{ \min (C_\alpha \setminus (\height(x)+1)) \} \right).
\]
It is clear from the definition that $\height(x)<\alpha_x\le\alpha$,
that $\alpha_x \in C_\alpha \cup \{ \alpha \}$,
and that $\alpha_x = \sup(\acc (C_\alpha))$ iff $\height(x) < \sup(\acc(C_\alpha))$.
Notice also that $\alpha_\emptyset$ coincides with $\alpha_0$ of Theorem~\ref{ctbl-ascent-thm}.

The definition of $f_{x, \alpha} (\iota)$ splits into two possibilities:

\begin{description}
\item [$\blacktriangleright \alpha_x = \alpha$]
In particular, $\sup(\acc(C_\alpha)) = \alpha$.
By Claim~\ref{full-domination-acc-theta-tree},
the sequence $\langle f_{x, u \restriction \beta}(\iota) \mid \height(x)\in \beta \in \acc(C_\alpha)  \rangle$ is
increasing, and in this case it is cofinal in $(T \restriction \alpha, \subseteq)$,
so we let
\[
f_{x, u} (\iota) = \bigcup \left\{ f_{x, u \restriction \beta} (\iota) \mid \height(x)\in \beta \in \acc (C_\alpha)  \right\}.
\]
Clearly, $f_{x, u} (\iota) \in {}^\alpha \kappa$.

\item [$\blacktriangleright \alpha_x < \alpha$]
In this case,
$C_\alpha \setminus \alpha_x$ is an $\omega$-type cofinal subset of $\alpha$.
Let $\langle \alpha^m_x \mid m<\omega \rangle$ denote the
increasing enumeration of $C_\alpha \setminus \alpha_x$,
so that $\alpha_x = \alpha^0_x$.

Let us  define $f_{x, u}(\iota)$
by considering several possibilities:

\begin{itemize}
\item
If there exists some $m<\omega$ such that
$\psi (\alpha^k_x) =\iota$ whenever $m<k<\omega$,
then let
\[
f_{x, u}(\iota)=\mathbf{b}^\alpha_{f_{x, u\restriction \alpha_x} (\iota)}.
\]

\item
Otherwise, consider the ordinal
\[
m_\iota=\sup \left\{ m<\omega \mid \langle f_{x, u \restriction \alpha^n_x}(\iota) \mid n\le m \rangle\text{ is $<_T$-increasing}\right\},
\]
and let
\[
f_{x, u}(\iota)=
    \begin{cases}
        \mathbf{b}^\alpha_{f_{x, u \restriction \alpha_x^{m_\iota}}(\iota)},    &\text{if } m_\iota<\omega; \\
        \bigcup \{ f_{x, u \restriction \alpha^n_x}(\iota) \mid n<\omega \},        &\text{if } m_\iota=\omega.
    \end{cases}
\]

\end{itemize}

In all cases, it is clear that $f_{x, u} (\iota) \in {}^\alpha \kappa$,
as it is the limit of a cofinal branch through $(T \restriction \alpha,\s)$.

\end{description}

Having constructed $f_{x, u} (\iota)$,
we now have the following variant of Claim~\ref{a0-goes-right}:

\begin{claim} \label{a0-goes-right-theta-many-tree}
If $\alpha_x < \alpha$ then
$f_{x, u \restriction \alpha_x} (\iota) <_T f_{x, u} (\iota)$.
\end{claim}

\begin{proof}
Referring back to the construction of $f_{x, u} (\iota)$,
we see that in all subcases, either
\[
f_{x, u \restriction \alpha_x} (\iota) <_T \mathbf b^\alpha_{f_{x, u \restriction \alpha_x} (\iota)} = f_{x, u}(\iota),
\]
or
\[
f_{x, u \restriction \alpha_x}(\iota) \leq_T f_{x, u \restriction \alpha_x^{m_\iota}}(\iota)
    <_T \mathbf{b}^\alpha_{f_{x, u \restriction \alpha_x^{m_\iota}}(\iota)} = f_{x, u}(\iota),
\]
or
\[
f_{x, u \restriction \alpha_x}(\iota) <_T  \bigcup \{ f_{x, u \restriction \alpha^n_x}(\iota) \mid n<\omega \}
     = f_{x, u}(\iota),
\]
so that
the required condition is satisfied.
\end{proof}

Finally, as promised, we set
\[
T_\alpha = \{ \mathbf b^\alpha_x \mid x \in T \restriction C_\alpha \} \cup
                \{ f_{x, u} (\iota) \mid x \in T \restriction \alpha, u \in U_\alpha, \iota < \theta \}.
\]
To verify some of the required properties:

\begin{itemize}
\item [(\ref{right-level})]
Each sequence $b^\alpha_x$ defines a cofinal branch through $(T \restriction \alpha,\s)$,
so that its limit $\mathbf b^\alpha_x \in {}^\alpha \kappa$.

Each $f_{x, \alpha}(\iota)$ is either equal to some $\mathbf b^\alpha_y$ or
the limit of some other cofinal branch through $(T \restriction \alpha,\s)$, so it is in ${}^\alpha \kappa$.

\item[(\ref{normal})]
As $(U,\subset)$ is a $\kappa$-tree,
by picking an arbitrary  $u \in U_\alpha$,
we see that for every $x \in T \restriction \alpha$,
we have defined some node $f_{x, u}(0) \in T_\alpha$ above $x$.%
\footnote{In this case, $\alpha\in\Gamma$, so we also have normality using the nodes $\mathbf b^\alpha_x$, as in Theorem~\ref{ctbl-ascent-thm}.}

\item [(\ref{slim})]
Following the proof of the same property in the case $\alpha \notin \Gamma$,
we have
\[
\left| T \restriction C_\alpha \right|
    \leq \left| T \restriction \alpha \right|
    \leq \left| \alpha \right|
\]
as well as $\left| U_\alpha \right| \leq \left| \alpha \right|$.
Since $\alpha \in \Gamma$, it follows that $\alpha > \theta$.
In this case, every node of the form $\mathbf b^\alpha_x$
is produced from some node $x \in T \restriction C_\alpha$,
and every node of the form $f_{x, u} (\iota)$ comes from
some pair of nodes $x \in T \restriction \alpha$ and $u \in U_\alpha$
as well as some $\iota < \theta$,
so it follows that
\[
\left| T_\alpha \right|
    \leq \left| T \restriction C_\alpha \right| +
        \left| T \restriction \alpha \right| \cdot \left| U_\alpha \right| \cdot \theta
    \leq \left| \alpha \right| + \left| \alpha \right| \cdot \left| \alpha \right| \cdot \left| \alpha \right|
    = \left| \alpha \right|,
\]
as required.

\item [(\ref{tree-ascent-on-acc-theta})(b)]
Fix $\beta \in \acc(C_\alpha)$, $x \in T \restriction \beta$, $u\in U_\alpha$ and $\iota < \theta$.
We must show that $f_{x, u \restriction \beta} (\iota) <_T f_{x, u} (\iota)$.
Referring back to the construction of $f_{x, u}$, there are two cases to check:

\begin{description}
\item[$\blacktriangleright \alpha_x = \alpha$]
In this case, $f_{x, u} (\iota)$ was constructed to be above $f_{x, u \restriction \beta} (\iota)$.

\item[$\blacktriangleright  \alpha_x < \alpha$]
Since $\height(x) < \beta \in \acc(C_\alpha)$, in particular $\height(x) < \sup (\acc (C_\alpha))$,
so that
$\alpha_x = \sup(\acc(C_\alpha))$.
Thus
$\beta \leq \alpha_x$ and (since $C_\alpha$ is club in $\alpha > \alpha_x$) $\alpha_x \in \acc (C_\alpha)$.
We then have
\begin{align*}
f_{x, u \restriction \beta} (\iota)  &\leq_T f_{x, u \restriction \alpha_x} (\iota)
                &&\text{from Claim~\ref{full-domination-acc-theta-tree}}                 \\
    &<_T f_{x, u} (\iota)      &&\text{by Claim~\ref{a0-goes-right-theta-many-tree}},
\end{align*}
as required.

\end{description}

\item [(\ref{tree-cobounded-ascent-theta})]
Fix $\beta < \alpha$, $x \in T \restriction \beta$, and $u \in U_\alpha$.
Again referring back to the construction of $f_{x, u}$, there are two cases to check:

\begin{description}
\item[$\blacktriangleright \alpha_x = \alpha$]
Just as in the proof of property~(\ref{cofinite-ascent}) of Theorem~\ref{ctbl-ascent-thm}.

\item[$\blacktriangleright  \alpha_x < \alpha$]
In this case we have identified a sequence $\langle \alpha_x^m \mid m < \omega \rangle$ cofinal in $\alpha$,
so we fix some $m < \omega$ such that $\beta < \alpha_x^m$.
For each natural number $n < m$, let
\[
F_n = \{ \iota < \theta \mid f_{x, u \restriction \alpha_x^n}(\iota) <_T f_{x, u \restriction \alpha_x^m}(\iota) \}.
\]
Also let
\[
G = \{ \iota < \theta \mid f_{x, u \restriction \beta}(\iota) <_T f_{x, u \restriction \alpha_x^m}(\iota) \}.
\]
Applying the induction hypothesis to $\alpha_x^m$,
we have $F_n \in \mathcal F^{\nu}_{\theta}$ for all $n < m$,
and also $G \in \mathcal F^{\nu}_{\theta}$.
Define
\[
F = G \cap \bigcap_{n < m} F_n .
\]
If there exists some $m < \omega$ and some $\iota^* < \theta$ such that
$\psi (\alpha_x^k) = \iota^*$ whenever $m < k < \omega$,
then replace $F$ with $F \setminus \{ \iota^* \}$.
(There can be at most one such $\iota^*$.)

Clearly, $F \in \mathcal F^{\nu}_{\theta}$ as it is the intersection of finitely many sets from that filter.
Now, fix any $\iota \in F$, and we will show that $f_{x, u \restriction \beta} (\iota) <_T f_{x, u} (\iota)$:
First, by construction of $F$, it is not the case that there exists some $m<\omega$ such that
$\psi (\alpha_x^k) =\iota$ whenever $m<k<\omega$.

Then, for every $n<m$, we have $\iota \in F \subseteq F_n$, so that
$f_{x, u \restriction \alpha_x^n} (\iota) <_T f_{x, u \restriction \alpha_x^m} (\iota)$.
As $\pred{f_{x, u \restriction \alpha_x^m}(\iota)}$ is  a linear order and
$f_{x, u \restriction \alpha_x^n}(\iota) \in T_{\alpha_n}$ for each $n$,
it follows that $\langle f_{x, u \restriction \alpha_x^n}(\iota) \mid  n \leq m\rangle$ is $<_T$-increasing.
Thus $m$ satisfies the defining property for membership in the set whose supremum is $m_\iota$, so
it follows that $m \leq m_\iota$.
We also have $\iota \in G$, so that $f_{x, u \restriction \beta} (\iota) <_T f_{x, u \restriction \alpha_x^m} (\iota)$.
Putting everything together, we have
\[
f_{x, u \restriction \beta} (\iota) <_T f_{x, u \restriction \alpha_x^m} (\iota) \leq_T
    \begin{cases}
        f_{x, u \restriction \alpha_x^{m_\iota}} (\iota) <_T
                \mathbf b^\alpha_{f_{x, u \restriction \alpha_x^{m_\iota}} (\iota)},
                &\text{if } m_\iota < \omega;           \\
        \bigcup \{ f_{x, u \restriction  \alpha_x^n}(\iota) \mid n<\omega \},
                &\text{otherwise,}
    \end{cases}
\]
and the right-hand side of the above equation is equal to $f_{x, u} (\iota)$. So, we are done.
\end{description}

\item [(\ref{order2-nacc-theta-many-tree})]
Consider any $x \in T \restriction \alpha$.
In this case,
$\alpha_x < \alpha$, and
there exists some $m<\omega$ such that
$\psi (\alpha_x^k) =\iota^*$ whenever $m<k<\omega$,
so that $f_{x, u}(\iota^*)$ was defined to be equal to $\mathbf b^\alpha_{f_{x, u \restriction \alpha_x}(\iota^*)}$.

\end{itemize}
\end{description}

Now we let
\[
T = \bigcup_{\alpha < \kappa} T_\alpha.
\]

\begin{claim}\label{Souslin-claim-theta-ascent}
The tree $(T,{\subset})$ is a $\kappa$-Souslin tree.
\end{claim}
\begin{proof}
Let $A \subseteq T$ be a maximal antichain, and
we will show that $A \subseteq T \restriction \alpha$ for some $\alpha < \kappa$.
By Subclaim \ref{sc621}, for every ordinal $\iota < \theta$, the set
\[
A^\iota = A_{\phi^{-1}(\iota)}=\{ \beta<\kappa \mid \psi(\beta) = \iota \text{ and }
    A\cap(T\restriction\beta)=S_\beta\text{ is a maximal antichain in }T\restriction\beta
\}
\]
is stationary. In particular, $A^\iota \cap \acc(\kappa)$
is cofinal in $\kappa$.
Thus we can apply the last part of the proxy principle to the sequence
$\langle A^\iota \cap \acc(\kappa) \mid \iota < \theta \rangle$
to obtain a limit ordinal $\alpha$ with $\theta < \alpha < \kappa$
such that for every $\iota < \theta$,
\[
\sup \{ \beta \in C_\alpha  \mid \suc_\omega (C_\alpha \setminus \beta) \subseteq A^\iota \cap \acc(\kappa) \}
    = \alpha.
\]

\begin{subclaim}\label{alpha-in-Gamma}
$\alpha \in \Gamma$.
\end{subclaim}

\begin{proof}
By the choice of $\alpha$, we know that $\alpha>\theta$ is a limit ordinal, and we can find $\beta \in C_\alpha$ such that
$\suc_\omega (C_\alpha \setminus \beta) \subseteq A^0\cap\acc(\kappa)$.
As $\suc_\omega (C_\alpha \setminus \beta)\s\nacc(C_\alpha)$, we infer that the latter contains a limit ordinal,
and so Claim~\ref{Gamma-includes}(2) gives $\alpha \in \Gamma$.
\end{proof}

To see that the antichain $A$ is a subset of $T \restriction \alpha$,
consider any $v' \in T_\alpha$,
and we will find some $y \in A\cap(T\restriction \alpha)$ compatible with $v'$.

\begin{subclaim}\label{get-b}
There are some $\iota < \theta$,
${\bar\alpha} \in \acc(C_\alpha) \cup \{ \alpha \} \subseteq \Gamma$,
and ${x} \in T \restriction C_{{\bar\alpha}}$ such that
\[
\mathbf b^{{{\bar\alpha}}}_{x} \leq_T v' \text{ and }
\sup \left(\nacc (C_{{\bar\alpha}}) \cap A^\iota \right) = {\bar\alpha}.
\]
\end{subclaim}

\begin{proof}
By Subclaim~\ref{alpha-in-Gamma} we have $\alpha \in \Gamma$, so that
by property~(\ref{tree-Gamma}) there are two possibilities to consider:

\begin{description}
\item[$\blacktriangleright v' = \mathbf b^\alpha_{x}$ for some ${x} \in T \restriction C_\alpha$]

In this case,
fix such an $x$, set ${\bar\alpha} = \alpha$ and the subclaim is satisfied for any choice of $\iota < \theta$.

\item[$\blacktriangleright v' = f_{x, u} (\iota)$ for some $x \in T \restriction \alpha$, $u \in U_\alpha$, and $\iota < \theta$]
Fix such $x$, $u$, and $\iota$.
By our choice of $\alpha$, let us pick $\epsilon \in C_\alpha$ with $\max\{\theta,\height (x)\} < \epsilon$ such that
$\suc_\omega (C_\alpha \setminus \epsilon) \subseteq A^\iota$.
Let
\[
{\bar\alpha} = \sup \left( \suc_\omega (C_\alpha \setminus \epsilon) \right).
\]
Clearly ${\bar\alpha} \in \acc(C_\alpha) \cup \{ \alpha \}$.
Since $\alpha \in \Gamma$, property~(\ref{tree-ascent-on-acc-theta})(b) gives us $f_{x, u \restriction {\bar\alpha}} (\iota) \leq_T f_{x, u}(\iota)$.
Also,
it follows from Claim~\ref{Gamma-works}(1) that
${\bar\alpha} \in \Gamma$ and $C_{{\bar\alpha}} = C_\alpha \cap {\bar\alpha}$,
so that
\[
C_{{\bar\alpha}} \setminus (\epsilon +1) = (C_\alpha \cap {\bar\alpha}) \setminus (\epsilon+1)
        = \suc_\omega (C_\alpha \setminus \epsilon) \subseteq A^\iota,
\]
and also
\[
\sup(\acc(C_{{\bar\alpha}})) = \sup(\acc(C_\alpha \cap {\bar\alpha})) \leq \epsilon < {\bar\alpha}.
\]
For every $\beta \in A^\iota$ we have $\psi(\beta) = \iota$.
Thus
\[
\psi \left[ C_{{\bar\alpha}} \setminus (\epsilon+1) \right] = \psi \left[ \suc_\omega (C_\alpha \setminus \epsilon) \right]
    = \psi \left[ A^\iota \right] = \{ \iota \},
\]
so that by applying property~(\ref{order2-nacc-theta-many-tree}) to ${\bar\alpha}$,
we must have $f_{x, u \restriction {\bar\alpha}} (\iota) = \mathbf b^{{{\bar\alpha}}}_{x}$
for some ${x} \in T \restriction C_{{\bar\alpha}}$.
Fix such an $x$. It follows that
\[
\mathbf b^{{{\bar\alpha}}}_{x} = f_{x, u \restriction {\bar\alpha}}(\iota) \leq_T f_{x, u} (\iota) = v'.
\]
Notice that
\[
{\bar\alpha} = \sup \left( \suc_\omega (C_\alpha \setminus \epsilon) \cap A^\iota \right)
    = \sup \left( \nacc (C_{{\bar\alpha}}) \cap A^\iota \right),
\]
giving the required conclusion.
\qedhere
\end{description}
\end{proof}

We now fix $\iota$, $\bar\alpha$, and $x$ as in Subclaim~\ref{get-b}.
Then we can find some $y \in A$ such that $y <_T \mathbf b^{\bar\alpha}_{x}$,
just as in the proof of Subclaim~\ref{point-in-antichain},
replacing $A_0$ with $A^\iota$.

Altogether, we have
\[
y <_T \mathbf b^{{\bar\alpha}}_{x} \leq_T  v'.
\]
Thus we have shown that
every $v' \in T_\alpha$ extends some element $y$ of the antichain $A$,
so it follows that
$A \subseteq T \restriction \alpha$.
Consequently,
$\left|A\right| \leq \left| T \restriction \alpha \right| < \kappa$,
and this shows that there are no antichains of size $\kappa$ in the splitting tree  $(T,{\subset})$.
Therefore, it is $\kappa$-Souslin.
\end{proof}

Property~(\ref{prolific}) guarantees that the tree $(T,{\subset})$ is prolific,
while property~(\ref{slim}) guarantees that it is slim.

Let $f_{\emptyset, \emptyset} : \theta \to \{\emptyset\}$ be the constant function.
Then properties~(\ref{tree-successor-ascent-injection-theta})(b),
(\ref{tree-cobounded-ascent-theta}) and~(\ref{theta-distinct-ascent}) guarantee that
$\langle f_{\emptyset, u} \mid u \in U \rangle$ forms an injective
$(\mathcal F^{\nu}_\theta, U)$-ascent path through $(T,{\subset})$.
\end{proof}

Applying Theorem~\ref{NEW-theta-ascent-thm}
to the special case $\nu=\aleph_0$ and $U=\bigcup_{\alpha<\kappa}{}^\alpha1$,
we obtain:

\begin{corollary}\label{cor52}
Suppose that $\theta<\kappa=\cf(\kappa)$ are any infinite cardinals,
and $\p_{14}(\kappa,  {\sq}, \theta, \{\kappa\})$ holds.

Then there exists a prolific slim $\kappa$-Souslin tree with an injective $\mathcal F^{\fin}_{\theta}$-ascent path.
\end{corollary}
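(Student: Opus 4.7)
The plan is to derive this corollary as a direct specialization of Theorem~\ref{NEW-theta-ascent-thm}. Specifically, I would instantiate the parameters of that theorem with $\nu = \aleph_0$ and $U = \bigcup_{\alpha<\kappa}{}^\alpha 1$. With this choice, $U$ is a downward-closed subtree of ${}^{<\kappa}\kappa$ having exactly one node on each level, so it is trivially a slim $\kappa$-tree, and moreover $(U,\subset)$ is isomorphic to $(\kappa,\in)$. The arithmetic prerequisites of Theorem~\ref{NEW-theta-ascent-thm} are then immediate: $\nu=\aleph_0<\kappa$ are regular infinite cardinals, and $\theta<\kappa$ is infinite by hypothesis.

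Next I would translate the proxy-principle hypothesis. The corollary assumes $\p_{14}(\kappa,{\sq},\theta,\{\kappa\})$, while Theorem~\ref{NEW-theta-ascent-thm} requires $\p_{14}(\kappa,{\sq_\nu},\theta,\{\kappa\})=\p_{14}(\kappa,{\sq_{\aleph_0}},\theta,\{\kappa\})$. This weakening is supplied verbatim by the implication noted right after the definition of $\sq_\nu$ in the introduction, namely $\p^-_{14}(\kappa,{\sq},\theta,\mathcal S)\Rightarrow \p^-_{14}(\kappa,{\sq_\nu},\theta,\mathcal S)$, together with the fact that $\diamondsuit(\kappa)$ is included in both instances of $\p_{14}$ by Definition~\ref{def16}.

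With the hypotheses verified, Theorem~\ref{NEW-theta-ascent-thm} delivers a prolific slim $\kappa$-Souslin tree $(T,{\subset})$ together with an injective $(\mathcal F^{\aleph_0}_\theta,U)$-ascent path. To finish, I would unpack the notation: by the very definition of $\mathcal F^\nu_\theta$ recorded on page~\pageref{defnfnutheta}, we have $\mathcal F^{\aleph_0}_\theta=\mathcal F^{\fin}_\theta$; and by the convention laid out right after Definition~1.2 (and reiterated after Definition~\ref{tree-indexed-ascent-def}), an $(\mathcal F,X)$-ascent path for $(X,\subset)\cong(\kappa,\in)$ is precisely an $\mathcal F$-ascent path in the classical sense, with injectivity translating as asserted.

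I do not foresee any genuine obstacle. The only modest care needed is in the selection of $U$: one must pick an actual slim downward-closed tree (rather than, say, literally $\kappa$) so that the ambient result applies, yet the chosen $U$ must be order-isomorphic to $(\kappa,\in)$ so that its associated tree-indexed ascent path collapses to a one-parameter $\mathcal F^{\fin}_\theta$-ascent path. The choice $U=\bigcup_{\alpha<\kappa}{}^\alpha 1$ accomplishes both simultaneously.
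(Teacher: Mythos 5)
Your proposal is correct and matches the paper's approach exactly: the paper derives Corollary~\ref{cor52} by applying Theorem~\ref{NEW-theta-ascent-thm} with $\nu=\aleph_0$ and $U=\bigcup_{\alpha<\kappa}{}^\alpha 1$, using the implication $\p^-_{14}(\kappa,{\sq},\theta,\mathcal S)\Rightarrow\p^-_{14}(\kappa,{\sq_{\aleph_0}},\theta,\mathcal S)$ and the identity $\mathcal F^{\aleph_0}_\theta=\mathcal F^{\fin}_\theta$. Your verification of the side conditions (slimness of $U$, $(U,\subset)\cong(\kappa,\in)$, and the translation of tree-indexed ascent paths to the one-parameter form) is thorough and accurate.
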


We now turn to the $\chi$-complete counterpart of Theorem \ref{NEW-theta-ascent-thm}.

\begin{theorem} \label{tree-theta-ascent-complete-thm}
Suppose that $\nu < \kappa$ are regular infinite cardinals, $\theta,\chi < \kappa$ are infinite cardinals,
and $\p_{14}(\kappa,  {\sqsubseteq_{\nu}}, \theta, \{E^\kappa_{\ge\chi}\})$ holds.

Suppose that $\lambda^{<\chi} < \kappa$ for all $\lambda < \kappa$,
and $U \subseteq {}^{<\kappa}\kappa$ is a given downward-closed $\kappa$-tree.

Then, there exists  a prolific $\chi$-complete $\kappa$-Souslin tree that admits an injective $(\mathcal F^{\nu}_\theta,U)$-ascent path.
\end{theorem}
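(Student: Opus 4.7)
The plan is to fuse Theorem \ref{NEW-theta-ascent-thm} with the completion trick of Theorem \ref{tree-ctbl-ascent-complete-thm}. Let $\langle C_\alpha\mid\alpha<\kappa\rangle$ witness $\p^-_{14}(\kappa,{\sqsubseteq_\nu},\theta,\{E^\kappa_{\ge\chi}\})$, fix the diamond apparatus $\phi,\psi,\langle R_i\rangle,\langle S_\beta\rangle,\sealantichain$ from Section \ref{section:basicsetup}, and define the set $\Gamma$ exactly as in Claim \ref{Gamma-includes}. By simultaneous recursion on $\alpha<\kappa$ I construct the levels $T_\alpha$, the branch nodes $\mathbf{b}^\alpha_x$ (for $\alpha\in\Gamma$ and $x\in T\restriction C_\alpha$), and the ascent-path functions $f_{x,u}$ (for $x\in T\restriction\alpha$ and $u\in U_\alpha$), preserving the analogs of properties (1)--(10) and (13)--(15) of the recursion in Theorem \ref{NEW-theta-ascent-thm}.

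Successor levels and the recursive prescriptions of $\mathbf{b}^\alpha_x$ and $f_{x,u}(\iota)$ at limit ordinals are inherited verbatim from Theorem \ref{NEW-theta-ascent-thm}: at $\alpha\in\Gamma$ the $\sealantichain$-based recursion along $C_\alpha$ gives $\mathbf{b}^\alpha_x$ and the $\alpha_x$-case-split gives $f_{x,u}(\iota)$, while at $\alpha\in\acc(\kappa)\setminus\Gamma$ the $\nu$-completeness of $\mathcal F^\nu_\theta$ (together with $\otp(C_\alpha)<\nu$, which Claim \ref{Gamma-includes} guarantees) yields $F^*_{x,u}\in\mathcal F^\nu_\theta$ along which $f_{x,u}$ is defined. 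What changes is the rule populating $T_\alpha$ at limit $\alpha$: for $\alpha\in E^\kappa_{\ge\chi}$ I include exactly the constructed nodes, namely
\[
\{\mathbf{b}^\alpha_x\mid x\in T\restriction C_\alpha\}\cup\{f_{x,u}(\iota)\mid x\in T\restriction\alpha,\,u\in U_\alpha,\,\iota<\theta\}
\]
when $\alpha\in\Gamma$, or its second summand alone when $\alpha\notin\Gamma$; whereas for $\alpha\in\acc(\kappa)\cap E^\kappa_{<\chi}$ I close $T_\alpha$ under \emph{all} cofinal-branch limits through $T\restriction\alpha$, automatically capturing every $\mathbf{b}^\alpha_x$ and every $f_{x,u}(\iota)$ and delivering $\chi$-completeness. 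The bound $|T_\alpha|<\kappa$ at small-cofinality levels follows, exactly as in Theorem \ref{tree-ctbl-ascent-complete-thm}, by bounding the number of branches by $\lambda^{\cf(\alpha)}\le\lambda^{<\chi}<\kappa$ where $\lambda=\sup_{\beta<\alpha}|T_\beta|$.

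For Souslinness, I mimic Claim \ref{Souslin-claim-theta-ascent}. Given a maximal antichain $A$, Subclaim \ref{sc621} yields, for each $\iota<\theta$, a stationary set $A^\iota$ of ordinals $\beta$ with $\psi(\beta)=\iota$ and $A\cap(T\restriction\beta)=S_\beta$ maximal in $T\restriction\beta$. Applying the hitting clause of $\p^-_{14}(\kappa,{\sqsubseteq_\nu},\theta,\{E^\kappa_{\ge\chi}\})$ to $\langle A^\iota\cap\acc(\kappa)\mid\iota<\theta\rangle$, I obtain $\alpha\in E^\kappa_{\ge\chi}$ witnessing the guessing property for every $\iota<\theta$. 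Since the guessing forces $\suc_\omega(C_\alpha\setminus\beta)$ (for suitable $\beta$) to consist entirely of limit ordinals sitting inside $\nacc(C_\alpha)$, Claim \ref{Gamma-includes}(2) places $\alpha\in\Gamma$, so the $\mathbf{b}^\alpha_x$ are present. Because $\alpha\in E^\kappa_{\ge\chi}\cap\Gamma$, the shape of $T_\alpha$ is governed by the analog of property (\ref{tree-Gamma}), and Subclaims \ref{get-b} and \ref{point-in-antichain} transport directly to give $A\subseteq T\restriction\alpha$ and hence $|A|<\kappa$. The desired injective $(\mathcal F^\nu_\theta,U)$-ascent path is then $\langle f_{\emptyset,u}\mid u\in U\rangle$, exactly as in Theorem \ref{NEW-theta-ascent-thm}.

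The one genuinely subtle point, highlighted already in the preamble to Section \ref{subsectioncomplete}, is that the Coherence Claim Template must be verified for \emph{every} $\bar\alpha\in\Gamma$ with $\bar\alpha<\alpha$, including those of cofinality $<\chi$. Indeed, the ``hitting'' ordinal $\bar\alpha=\sup(\suc_\omega(C_\alpha\setminus\epsilon))$ that arises inside the proof of the analog of Subclaim \ref{get-b} has countable cofinality, and the $\sealantichain$-based recursion defining $b^\alpha_x$ across such an accumulation point $\bar\alpha\in\acc(C_\alpha)$ requires $\mathbf{b}^{\bar\alpha}_x$ to be an actual member of $T_{\bar\alpha}$. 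This is precisely why $\mathbf{b}^{\bar\alpha}_x$ is constructed at every $\bar\alpha\in\Gamma$ irrespective of $\cf(\bar\alpha)$, and why the ``all-branches'' rule for $T_{\bar\alpha}$ when $\bar\alpha\in E^\kappa_{<\chi}$ is indispensable: it automatically absorbs each constructed $\mathbf{b}^{\bar\alpha}_x$ into $T_{\bar\alpha}$. Once this bookkeeping is in place, no ideas beyond those already developed in Theorems \ref{NEW-theta-ascent-thm} and \ref{tree-ctbl-ascent-complete-thm} are needed to close the construction.
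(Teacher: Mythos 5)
Your proposal is correct and takes essentially the same route as the paper: the same set $\Gamma$, the same three-way case split $\Gamma\cap E^\kappa_{\ge\chi}$, $E^\kappa_{\ge\chi}\setminus\Gamma$, and $\acc(\kappa)\cap E^\kappa_{<\chi}$ for populating $T_\alpha$, and the same hybridization of the Souslinness argument of Claim~\ref{Souslin-claim-theta-ascent} with the modification introduced in Claim~\ref{claim431}. Your closing observation about maintaining $\mathbf b^{\bar\alpha}_x$ at all $\bar\alpha\in\Gamma$, even of small cofinality, is exactly the point flagged in the preamble to Subsection~\ref{subsectioncomplete}.
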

\begin{proof}
Let $\langle C_\alpha \mid \alpha < \kappa \rangle$ be a witness to
$\p^-_{14}(\kappa,  {\sqsubseteq}_{\nu}, \theta, \{E^\kappa_{\ge\chi}\})$.
Define $\Gamma = \{\alpha \in\acc(\kappa)\setminus(\theta+1)\mid \left( \forall \beta \in \acc(C_\alpha) \right) C_\beta \sqsubseteq C_\alpha\}$.
By recursion over $\alpha < \kappa$, construct
the levels $\langle T_\alpha \mid \alpha < \kappa \rangle$ of the tree $T$
as well as the functions $\langle\langle f_{x, u} \mid x\in T\restriction\alpha, u \in U_\alpha \rangle\mid \alpha<\kappa\rangle$
and the nodes $\langle\langle \mathbf b^\alpha_x \mid x \in T \restriction C_\alpha \rangle\mid \alpha \in \Gamma \rangle$
so that after each stage $\alpha$,
properties~(1)--(\ref{width<kappa}) of the construction in Theorem~\ref{tree-ctbl-ascent-complete-thm}
together with properties~(\ref{b-theta-ascent})--(\ref{order2-nacc-theta-many-tree}) and~(\ref{theta-distinct-ascent})
of the construction in Theorem~\ref{NEW-theta-ascent-thm}
are satisfied, as well as the following:

\begin{enumerate}

\setcounterref{enumi}{order2-nacc-theta-many-tree}

\item \label{tree-Gamma-and-large}
If $\alpha \in \Gamma \cap E^\kappa_{\geq\chi}$,
then
\[
T_\alpha = \{ \mathbf b^\alpha_x \mid x \in T \restriction C_\alpha \} \cup
                \{ f_{x, u} (\iota) \mid x \in T \restriction \alpha, u \in U_\alpha, \iota < \theta \};
\]

\item \label{tree-large-not-Gamma}
If $\alpha \in E^\kappa_{\geq\chi} \setminus \Gamma$, then
\[
T_\alpha = \{ f_{x, u} (\iota) \mid x \in T \restriction \alpha, u \in U_\alpha, \iota < \theta \};
\]

\setcounterref{enumi}{theta-distinct-ascent}

\item \label{full-small-ascent}
If $\alpha \in \acc(\kappa) \cap E^\kappa_{<\chi}$,
then the limit of every branch through $T \restriction \alpha$ is a node in $T_\alpha$.
\end{enumerate}

\smallskip

By now, it should be clear that the resulting tree $T=\bigcup_{\alpha<\kappa}T_\alpha$ will
admit an injective $(\mathcal F^{\nu}_\theta,U)$-ascent path.
The proof that $(T,{\subset})$ is $\kappa$-Souslin is the outcome of modifying the proof of Claim \ref{Souslin-claim-theta-ascent}
in the same way that the proof of Claim \ref{Souslin-claim-ctbl-ascent} was modified in Claim \ref{claim431}.
Finally, the fact that $(T,{\subset})$ is $\chi$-complete is exactly what is provided by property~(\ref{full-small-ascent}) of the recursion.
\end{proof}

Applying Theorem~\ref{tree-theta-ascent-complete-thm}
to the special case $\nu\le \cf(\theta)$ and $U=\bigcup_{\alpha<\kappa}{}^\alpha1$,
we obtain:

\begin{corollary}\label{theta-ascent-complete-thm}
Suppose that $\theta,\chi < \kappa$ are infinite cardinals,
$\lambda^{<\chi} < \kappa$ for all $\lambda < \kappa$,
 $\nu\le \cf(\theta) < \kappa$ are regular cardinals,
and $\p_{14}(\kappa,  {\sqsubseteq_{\nu}}, \theta, \{E^\kappa_{\ge\chi}\})$ holds.

Then there exists a prolific $\chi$-complete $\kappa$-Souslin tree with an injective $\mathcal F^{\bd}_{\theta}$-ascent path.
\end{corollary}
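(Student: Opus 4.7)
The plan is to derive this corollary as a direct instantiation of Theorem \ref{tree-theta-ascent-complete-thm}. Take $U := \bigcup_{\alpha<\kappa}{}^{\alpha}1$, which is trivially a downward-closed $\kappa$-tree in ${}^{<\kappa}\kappa$ (indeed, $(U,\subset)\cong(\kappa,\in)$). All of the arithmetic hypotheses of the theorem ($\lambda^{<\chi}<\kappa$ for $\lambda<\kappa$, the regularity of $\nu<\kappa$, and the principle $\p_{14}(\kappa,{\sqn},\theta,\{E^\kappa_{\geq\chi}\})$) are exactly what we have assumed. Applying the theorem therefore produces a prolific $\chi$-complete $\kappa$-Souslin tree $(T,\subset)$ together with an injective $(\mathcal F^{\nu}_{\theta},U)$-ascent path $\vec f$.

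Next, I would invoke the standing convention recalled just after Definition~\ref{tree-indexed-ascent-def}: because $(U,\subset)$ is isomorphic to $(\kappa,\in)$, the ascent path $\vec f$ may be reindexed as a sequence $\langle f_\alpha \mid \alpha<\kappa\rangle$, so that injectivity reduces to the existence of some $\alpha<\kappa$ and some $I\in\mathcal F^{\nu}_{\theta}$ for which $f_\alpha\restriction I$ is injective.

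The one remaining step is to promote this from an injective $\mathcal F^{\nu}_{\theta}$-ascent path to an injective $\mathcal F^{\bd}_{\theta}$-ascent path, and here the hypothesis $\nu\le\cf(\theta)$ does all the work. If $Z\in\mathcal F^{\nu}_{\theta}$, then $|\theta\setminus Z|<\nu\le\cf(\theta)$, so $\theta\setminus Z$ is bounded in $\theta$ and hence $Z\in\mathcal F^{\bd}_{\theta}$. Thus $\mathcal F^{\nu}_{\theta}\subseteq\mathcal F^{\bd}_{\theta}$, and this inclusion upgrades each of the defining clauses of the ascent path (the compatibility and distinctness conditions) as well as the witnessing set $I$ for injectivity from $\mathcal F^{\nu}_{\theta}$-membership to $\mathcal F^{\bd}_{\theta}$-membership.

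There is no genuine obstacle to overcome: the entire combinatorial content has been absorbed into Theorem~\ref{tree-theta-ascent-complete-thm}, and the corollary is essentially a translation between two filter formalisms together with the notational collapse afforded by $(U,\subset)\cong(\kappa,\in)$. The only point that warrants a line of writing is the filter inclusion $\mathcal F^{\nu}_{\theta}\subseteq\mathcal F^{\bd}_{\theta}$ under $\nu\le\cf(\theta)$, which is where the extra hypothesis on $\nu$ (absent from the parent theorem) is actually used.
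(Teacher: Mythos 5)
Your proposal is correct and is essentially identical to the paper's own (very terse) proof: the paper simply states ``Applying Theorem~\ref{tree-theta-ascent-complete-thm} to the special case $\nu\le\cf(\theta)$ and $U=\bigcup_{\alpha<\kappa}{}^\alpha 1$, we obtain'' the corollary. You have spelled out the filter inclusion $\mathcal F^\nu_\theta\subseteq\mathcal F^{\bd}_\theta$ (which the paper leaves implicit), and this is precisely where $\nu\le\cf(\theta)$ is used.
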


\section{Free Souslin Trees with Ascent Paths}
\label{section:free}

In this section, we shall present constructions of $(\chi,\eta)$-free $\kappa$-Souslin trees from $\p_{14}(\kappa,\mathcal R,\theta,\mathcal S)$,
where  $\theta=\kappa$.
Therefore, we note that by \cite{axioms}, for every uncountable cardinal $\lambda$,
$\sd_\lambda$ entails $\p_{14}(\lambda^+,{\sq},\lambda^+,\{ E^{\lambda^+}_{\cf(\lambda)}\})$:
\begin{itemize}
\item By \cite{MR3347468}, for $\lambda$ regular, $\p_{14}(\lambda^+,{\sq},\lambda^+,\{ E^{\lambda^+}_{\lambda}\})$ does not imply $\diamondsuit(E^{\lambda^+}_\lambda)$,
let alone $\sd_\lambda$;
\item By \cite{MR3354439}, for $\lambda$ singular, $\sd_\lambda$ is equivalent to
$\square_\lambda+\ch_\lambda$.
\end{itemize}

To motivate the arithmetic hypotheses in the statement of the  theorems of this section,
we point out that by Lemma \ref{lemma85} below, if there exists a $\chi$-free $\kappa$-Souslin tree,
then $\lambda^{<\chi}<\kappa$ for all $\lambda<\kappa$.

\begin{theorem}\label{theta-ascent-free-thm1}
Suppose that $\kappa$ is any regular uncountable cardinal, $\p_{14}(\kappa,  {\sqsubseteq}, \kappa, \{E^\kappa_{\ge\chi}\})$ holds,
and $\lambda^{<\chi}<\kappa$ for all $\lambda<\kappa$.

If $\theta$ is an infinite cardinal and $\theta^+<\chi$,
then there exists  a prolific slim $(\chi,\theta^+)$-free $\kappa$-Souslin tree with an injective $\mathcal F^{\fin}_{\theta}$-ascent path.
\end{theorem}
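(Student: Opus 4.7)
My strategy is to build on the slim-tree construction of Theorem~\ref{NEW-theta-ascent-thm} (taken with $\nu=\aleph_0$ and $U\cong(\kappa,\in)$), augmenting it with $\free$-calls in place of some $\sealantichain$-calls in order to simultaneously obtain $(\chi,\theta^+)$-freeness. The two upgrades relative to Theorem~\ref{NEW-theta-ascent-thm}---raising the third parameter of $\p_{14}$ from $\theta$ to $\kappa$, and the fourth from $\{\kappa\}$ to $\{E^\kappa_{\ge\chi}\}$---are exactly what make this augmentation possible: the third parameter lets $\psi$ guess stationarily many $\tau$-fold configurations for every $\tau<\chi$ at once, while the fourth restricts the hitting level $\alpha$ to cofinality $\ge\chi>\theta^+$, leaving room for a $\tau$-fold product together with the $\theta$-wide ascent-path coordinates inside $T_\alpha$.

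\textbf{Construction.} I would fix a $\p_{14}^-(\kappa,\sq,\kappa,\{E^\kappa_{\ge\chi}\})$-witness $\langle C_\alpha\mid\alpha<\kappa\rangle$ and the $\diamondsuit(H_\kappa)$-data and functions $(\phi,\psi,\sealantichain,\free)$ from Section~\ref{section:basicsetup}. The tree $T\subseteq{}^{<\kappa}\kappa$ is built level-by-level, simultaneously constructing for each limit $\alpha$ and $x\in T\restriction C_\alpha$ a continuous branch $b^\alpha_x$ with domain $C_\alpha\setminus\height(x)$, together with an ascent-path function $f_\alpha:\theta\to T_\alpha$. Successor and accumulation steps are handled as in Theorems~\ref{ctbl-ascent-thm} and~\ref{NEW-theta-ascent-thm}. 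At a non-accumulation step of $b^\alpha_x$, namely at $\beta\in\nacc(C_\alpha)$ with predecessor $\beta^-$, I branch on $\psi(\beta)$: if $\psi(\beta)$ codes a suitable $\tau$-tuple $\vec b\in{}^\tau(T\restriction(\beta+1))$ with $\tau<\chi$ and with $b^\alpha_x(\beta^-)$ appearing as an entry of $\vec b$, set $b^\alpha_x(\beta):=\free(b^\alpha_x(\beta^-),T\restriction(\beta+1),\vec b)$; otherwise default to $\sealantichain(b^\alpha_x(\beta^-),T\restriction(\beta+1))$. The Coherence Claim Template (page~\pageref{coherence-template}) applies because $b^\alpha_x(\beta)$ depends only on $\psi(\beta)$, $b^\alpha_x(\beta^-)$, and $T\restriction(\beta+1)$. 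Finally, declare $T_\alpha:=\{\mathbf{b}^\alpha_x\mid x\in T\restriction C_\alpha\}\cup\{f_\alpha(\iota)\mid\iota<\theta\}$.

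\textbf{Verification.} The properties of being a prolific, slim, normal $\kappa$-tree with an injective $\mathcal F^{\fin}_\theta$-ascent path follow by the bookkeeping used in Theorems~\ref{ctbl-ascent-thm}--\ref{NEW-theta-ascent-thm}. $\kappa$-Souslinity is verified as in Claim~\ref{Souslin-claim-theta-ascent}: reflect a prospective maximal antichain $A\subseteq T$ via $\diamondsuit(H_\kappa)$ into stationary sets $A^\iota$ for $\iota<\theta$, apply the hitting clause of $\p_{14}$ to $\langle A^\iota\mid\iota<\theta\rangle$ to obtain $\alpha\in E^\kappa_{\ge\chi}$, and show that every node of $T_\alpha$ extends some element of $A$. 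For $(\chi,\theta^+)$-freeness, fix $\tau<\chi$, a tuple $\vec w\in T^\tau_{\beta_0}$, and $B\subseteq\bigotimes_{i<\tau}\cone{w_i}$ of size $\kappa$; a product-tree analogue of Subclaim~\ref{sc621} yields, for each $\iota<\theta$, a stationary set $B^\iota$ of $\beta$ on which $\psi(\beta)=\iota$ and $S_\beta$ codes a reflected maximal antichain of the derived tree. Applying $\p_{14}$ to $\langle B^\iota\mid\iota<\theta\rangle$ gives $\alpha\in E^\kappa_{\ge\chi}$ along which the $\free$-guided branches combine to exhibit a cofinal $\omega$-chain of points $\beta_1<\beta_2<\cdots<\alpha$ in $\nacc(C_\alpha)$ and $\tau$-tuples $\vec y_{\beta_k}\in B$ all sitting below a common limit $\vec v\in T^\tau_\alpha$ on the coordinates not diverted into the ascent-path values $f_\alpha(\iota)$; consequently $\vec y_{\beta_1}$ and $\vec y_{\beta_2}$ dominate each other coordinate-wise except on at most $\theta$ coordinates.

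\textbf{Main obstacle.} The delicate point is the joint scheduling by $\psi$ of three competing tasks at $\nacc(C_\alpha)$-steps---sealing Souslin antichains in $T$, sealing derived-tree antichains via $\free$, and guiding the ascent-path functions $f_\alpha$ onto previously placed $\mathbf{b}^\alpha$-nodes in the manner of property~(\ref{order2-nacc-theta-many-tree}) of Theorem~\ref{NEW-theta-ascent-thm}---together with verifying that each kind of task is hit densely enough in each relevant reflection ordinal $\alpha\in E^\kappa_{\ge\chi}$. The numerical match $\theta<\theta^+<\chi$ is exactly what permits the $\theta$-wide ascent path to coexist with $(\chi,\theta^+)$-freeness: at most $\theta$ of the $\tau<\chi$ coordinates of any $\vec v\in T^\tau_\alpha$ ``escape'' into the ascent path, matching the hard lower bound given by Lemma~\ref{lemma26} and pinning the freeness degree at exactly $(\chi,\theta^+)$ rather than the unattainable $\theta^+$.
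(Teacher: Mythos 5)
Your overall plan is the right one and matches the paper's strategy: at $\nacc(C_\alpha)$-steps branch between $\sealantichain$ and $\free$ based on $\psi(\beta)$, restrict the hitting level to $E^\kappa_{\ge\chi}$, and let the $\theta$-many ascent-path coordinates be the ``escape set'' that caps freeness at $(\chi,\theta^+)$. But there are two concrete gaps in the way you execute this.

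First, your freeness verification applies the proxy principle to a $\theta$-indexed family $\langle B^\iota\mid\iota<\theta\rangle$ with $\psi(\beta)=\iota$ on $B^\iota$. That cannot work: if $\psi(\beta)$ is a mere ordinal $\iota<\theta$, then, according to your own construction, $\beta$ is a $\sealantichain$-step, not a $\free$-step, so $\free$ is never triggered along the branches hit at such $\beta$. What the paper actually does is index the reflected maximal antichains of the derived tree by $i<\kappa$, where $A_i\subseteq R_i$ and $\phi(i)$ codes a candidate $\tau$-tuple of nodes below some level of $C_\alpha$; the hitting clause is applied to $\langle A_i\mid i<\kappa\rangle$. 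This is precisely why the third parameter must be $\kappa$ and not $\theta$: there are $\kappa$-many candidate tuples to guess. Your Plan paragraph correctly observes this, but your Verification paragraph reverts to a $\theta$-indexed reflection, which is the wrong family.

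Second, the way you feed $\free$ its third argument is not coherence-safe. You take $\psi(\beta)$ directly as a tuple ``$\vec b\in{}^\tau(T\restriction(\beta+1))$'' and pass it to $\free$. But the Coherence Claim Template requires $b^\alpha_x(\beta)$ to depend only on $\psi(\beta)$, $b^\alpha_x(\beta^-)$, and $T\restriction(\beta+1)$ --- and that dependence must come out identical whether it is computed inside $b^\alpha_x$ or $b^{\bar\alpha}_x$ for $\bar\alpha\in\acc(C_\alpha)$. The paper handles this by having $\psi(\beta)$ code a tuple $\vec x$ of nodes \emph{below} $\beta^-$ on $C_\alpha\cup\{0\}$, and then defining $\sigma^\alpha_\beta(\xi)=b^\alpha_{\vec x(\xi)}(\beta^-)$, i.e.\ by climbing each coded entry along its own branch up to level $\beta^-$. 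This indirection is not cosmetic: it is what makes $\sigma^\alpha_\beta=\sigma^{\bar\alpha}_\beta$ whenever $C_{\bar\alpha}=C_\alpha\cap\bar\alpha$, and hence what makes the branches $b^\alpha_x$ cohere at $\acc(C_\alpha)$. If you instead allow $\psi(\beta)$ to code nodes near level $\beta$, the argument to $\free$ will not be recoverable from $C_{\bar\alpha}$ alone, and the coherence claim fails.

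Once these are fixed, the conclusion of the freeness argument is simpler than the $\omega$-chain you sketch: having chosen a single $\vec v\in A$ of height $\ge\alpha$ and set $\vec v'=\vec v\restriction\alpha$, one $\free$-call produces a single $\vec y\in A\cap{}^\tau(T\restriction\beta)$ with $\vec y(\xi)<_T v'_\xi$ for every $\xi$ in the set $I$ of ``$\mathbf b$-coordinates''. The witness pair is then $(\vec y,\vec v)$ itself; $|\tau\setminus I|\le\theta$ because the non-$\mathbf b$ coordinates of $T_\alpha$ are exactly the $\theta$-many $f_\alpha(\iota)$'s, which is where the $\theta^+<\chi$ hypothesis enters.
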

\begin{proof} We commence, using $\diamondsuit(H_\kappa)$,
by fixing the
functions $\phi:\kappa\leftrightarrow H_\kappa$, $\psi:\kappa\rightarrow H_\kappa$,
sequences $\langle S_\beta \mid \beta<\kappa\rangle$, $\langle R_i  \mid i < \kappa \rangle$,
well-ordering $<_\phi$,
notation $\beta(T)$,
and the functions $\sealantichain:H_\kappa\times H_\kappa\rightarrow H_\kappa$ and
$\free: H_\kappa \times H_\kappa\times H_\kappa\rightarrow H_\kappa$
as described in Section~\ref{section:basicsetup}.
Let $\langle C_\alpha \mid \alpha < \kappa \rangle$ be a witness to
$\p^-_{14}(\kappa,  {\sqsubseteq}, \kappa, \{E^\kappa_{\ge\chi}\})$.
Without loss of generality, $C_\alpha=C_\alpha\setminus\{0\}$ for all $\alpha<\kappa$.

As always, the tree $T$ will be a downward-closed subset of $^{<\kappa} \kappa$,
so that each level $T_\alpha$ will be a subset of $^\alpha \kappa$,
and the tree relation $\leq_T$ will simply be extension of sequences.
We will construct,
simultaneously by recursion over $\alpha < \kappa$,
the levels $\langle T_\alpha \mid \alpha < \kappa \rangle$ of the tree $T$,
as well as the functions $\langle f_{\alpha} \mid \alpha<\kappa \rangle$
and the nodes
$\langle\langle \mathbf b^\alpha_x \mid x \in T \restriction C_\alpha \rangle\mid \alpha \in \acc(\kappa)\rangle$,
so that after each stage $\alpha$ of the construction,
properties~(1)--(\ref{slim}) of the construction in Theorem~\ref{ctbl-ascent-thm}
are satisfied, as well as:

\begin{enumerate}
\setcounterref{enumi}{b-simple}
\addtocounter{enumi}{-1}
\item \label{b-ascent-free}
If $\alpha$ is a limit ordinal, then
for every $x \in T \restriction C_\alpha$,
$\mathbf b^\alpha_x \in T_\alpha$ is the limit of the
increasing, continuous, cofinal sequence $b^\alpha_x$
in $( T \restriction \alpha, \subseteq)$,
satisfying the following properties:

\begin{enumerate}
\item
$\dom (b^\alpha_x) = C_\alpha \setminus \height(x)$;
\item
$b^\alpha_x (\height(x)) =x$;
\item
For all $\beta \in \dom(b^\alpha_x)$, $b^\alpha_x (\beta) \in T_\beta$;
\item
If $\beta^-<\beta$ are two consecutive points in $\dom(b^\alpha_x)$,
then
\[
b_x^\alpha(\beta) = \begin{cases}
    \free ( b_x^\alpha(\beta^-), T\restriction(\beta),\sigma_{\beta}^\alpha),
                &\text{if } \psi(\beta)\in {}^{<\kappa}(T\restriction ((C_\alpha \cup \{0\}) \cap \beta^-))\setminus\{\emptyset\};  \\
    \sealantichain( b_x^\alpha(\beta^-), T\restriction(\beta)),    &\text{otherwise,}
\end{cases}
\]
\end{enumerate}
where $\sigma_{\beta}^\alpha:\dom(\psi(\beta))\rightarrow T\restriction \{0,\beta^-\}$ is defined by stipulating:\label{61sigmabetaalpha}
\[
\sigma_{\beta}^\alpha(\xi)=\begin{cases}
b^\alpha_{\psi(\beta)(\xi)}(\beta^-),&\text{if } \psi(\beta)(\xi)\neq\emptyset;\\
\emptyset,&\text{otherwise.}
\end{cases}
\]

\item \label{simple-theta-ascent-is-function}\label{simple-theta-ascent-injection}
$f_\alpha : \theta \to T_\alpha$ is a function. Moreover:

\begin{enumerate}
\item
If $\alpha \leq \theta$, then $f_\alpha$ is a constant function;
\item
If $\alpha > \theta$,
then $f_\alpha$ is injective;
\end{enumerate}

\item \label{cofinite-ascent-theta}
For every $\beta < \alpha$,
\[
\{ \iota < \theta \mid f_{\beta}(\iota) <_T f_{\alpha}(\iota) \} \in \mathcal F^{\fin}_{\theta};
\]

\item \label{ascent-on-acc-theta-strong}
\begin{enumerate}
\item
If $\alpha \leq \theta$, then for every $\beta < \alpha$,
\[
\{ \iota < \theta \mid f_{\beta}(\iota) <_T f_{\alpha}(\iota) \} = \theta;
\]
\item
If $\alpha$ is a limit ordinal and $\beta \in \acc (C_\alpha)$, then
\[
\{ \iota < \theta \mid f_{\beta}(\iota) <_T f_{\alpha}(\iota) \} = \theta;
\]
\end{enumerate}

\item \label{order2-nacc-theta}
If $\alpha > \theta$ is  a limit ordinal such that $\sup (\acc(C_\alpha)) < \alpha$,
and if $\iota^* < \theta$ is such that, for some $\beta < \alpha$, $\psi[C_\alpha \setminus \beta] = \{ \iota^* \}$,
then
there is some $x \in T \restriction C_\alpha$ such that
\[
f_\alpha (\iota^*) = \mathbf b^\alpha_x;
\]

\item \label{limit-theta-ascent}
If $\alpha$ is a limit ordinal, then
\[
T_\alpha = \{ \mathbf b^\alpha_x \mid x \in T \restriction C_\alpha \} \cup \{ f_\alpha (\iota) \mid \iota < \theta \}.
\]

\end{enumerate}

The following instance of the Coherence Claim from page \pageref{coherence-template} shows how we will ensure
that the sequences described in property~(\ref{b-ascent-free}) can always be constructed:

\begin{claim}\label{coherence-free}
Fix limit ordinals $\bar\alpha < \alpha < \kappa$ such that $C_{\bar\alpha} = C_\alpha \cap\bar\alpha$.
Suppose $T \restriction \alpha$ has been constructed to satisfy the above properties.
Suppose also that, for every $y \in T \restriction C_{\bar\alpha}$,
$b^\alpha_y \restriction (C_{\bar\alpha} \setminus \height(y))$ has already been constructed.
Then for every $x \in T \restriction C_{\bar\alpha}$,
\[
b^{\bar\alpha}_x = b^\alpha_x \restriction \left( C_{\bar\alpha} \setminus \height(x) \right).
\]
\end{claim}
\begin{proof}
First, notice that for every $x \in T \restriction C_{\bar\alpha}$, the sequence on each side of the equation has domain
$C_{\bar\alpha} \setminus \height(x)$.
Next, we will prove, by induction over $\beta \in C_{\bar\alpha}$,
that for every $x \in T \restriction (C_{\bar\alpha} \cap (\beta+1))$, $b^{\bar\alpha}_x(\beta) = b^\alpha_x(\beta)$.
Fix $\beta \in C_{\bar\alpha}$, and suppose that for every $x \in T \restriction (C_{\bar\alpha} \cap \beta)$
we have shown that
$b^{\bar\alpha}_x \restriction (C_{\bar\alpha} \cap \beta \setminus \height(x)) =
b^\alpha_x \restriction (C_{\bar\alpha} \cap \beta \setminus \height(x))$.
We must show that $b^{\bar\alpha}_x(\beta) = b^\alpha_x(\beta)$
for every $x \in T \restriction (C_{\bar\alpha} \cap (\beta+1))$.

Consider the nodes $x \in T_\beta$ separately.
For such $x$, property~(\ref{b-ascent-free})(b) gives
\[
b^{\bar\alpha}_x (\beta) = x = b^\alpha_x(\beta).
\]

It remains to check all nodes $x \in T \restriction (C_{\bar\alpha} \cap \beta)$.
For all such $x$, we have $C_{\bar\alpha} \cap \beta \setminus \height(x) \neq \emptyset$.
We now consider two cases:

\begin{description}
\item[$\blacktriangleright \beta \in \nacc({C_{\bar\alpha}})$]
Since $C_{\bar\alpha}$ is a club, $\beta$ must have an immediate predecessor in $C_{\bar\alpha} \setminus \height(x)$,
which we call $\beta^-$.
Then $\beta^-<\beta$ are two consecutive points in $\dom (b^{\bar\alpha}_x)$, and also in $\dom (b^\alpha_x)$.

\begin{description}
\item [$\blacktriangleright \blacktriangleright$ Suppose
    $\psi(\beta) \in {}^{<\kappa} (T\restriction ((C_\alpha \cup \{0\}) \cap \beta^-)) \setminus\{\emptyset\}$]
Recall that $C_{\bar\alpha} \cap \beta = C_\alpha \cap \beta$ and $C_{\bar\alpha} \cap \beta^- = C_\alpha \cap \beta^-$.
Since the definition of $\sigma_{\beta}^\alpha$ depends only on $\psi(\beta)$ and on values of $b^\alpha_y(\beta^-)$
for $y\in T\restriction(C_\alpha\cap\beta^-)=T\restriction(C_{\bar\alpha}\cap\beta^-)$,
it follows from the induction hypothesis that $\sigma^\alpha_\beta=\sigma^{\bar\alpha}_\beta$.
Consequently, for every $x \in T \restriction (C_{\bar\alpha} \cap \beta)$,
by property~(\ref{b-ascent-free})(d) and the induction hypothesis,
we have:
\[
b^{\bar\alpha}_x (\beta)
    = \free ( b_x^{\bar\alpha}(\beta^-), T\restriction(\beta+1), \sigma_\beta^{\bar\alpha} )
    = \free ( b_x^\alpha(\beta^-), T\restriction(\beta+1), \sigma_\beta^\alpha )
        = b^\alpha_x (\beta).
\]

\item [$\blacktriangleright \blacktriangleright$ Otherwise]
For every $x \in T \restriction (C_{\bar\alpha} \cap \beta)$,
by property~(\ref{b-ascent-free})(d) and the induction hypothesis, we have,
\[
b^{\bar\alpha}_x (\beta) = \sealantichain ( b^{\bar\alpha}_x (\beta^-), T \restriction (\beta+1))
                = \sealantichain ( b^\alpha_x (\beta^-), T \restriction (\beta+1) )
                = b^\alpha_x (\beta).
\]
\end{description}

\item[$\blacktriangleright \beta \in \acc({C_{\bar\alpha}})$]
Then also $\beta \in \acc (C_\alpha)$,
and by continuity of each sequence and application of the induction hypothesis,
we have, for each $x \in T \restriction (C_{\bar\alpha} \cap \beta)$,
\[
b^{\bar\alpha}_x(\beta)
    = \bigcup \{ b^{\bar\alpha}_x (\delta)  \mid \delta \in C_{\bar\alpha} \cap \beta \setminus \height(x) \}
    = \bigcup \{ b^\alpha_x (\delta) \mid \delta \in C_{\bar\alpha} \cap \beta \setminus \height(x) \}
    = b^\alpha_x (\beta).
\qedhere
\]
\end{description}
\end{proof}

The recursive construction proceeds as follows:

\begin{description}
\item[Base case, $\alpha = 0$]
As always, let
$T_0 = \{ \emptyset \}$.
Define $f_0 : \theta \to \{\emptyset\}$ by setting $f_0(\iota) = \emptyset$ for all $\iota < \theta$.
The required properties are automatically satisfied as there is nothing to check.

\item[Successor ordinal, $\alpha = \beta+1$]
As in Theorem~\ref{ctbl-ascent-thm}, define
\[
T_\alpha = \{ s^\smallfrown \langle \iota \rangle \mid s \in T_\beta, \iota< \max \{\omega, \alpha\} \}.
\]
Then, for the sake of slimness, define $f_\alpha : \theta \to T_\alpha$ by setting,
for $\iota < \theta$,
\[
f_\alpha(\iota) =
    \begin{cases}
        f_\beta(\iota)^\smallfrown \langle 0 \rangle,    &\text{if } \alpha < \theta; \\
        f_\beta(\iota)^\smallfrown \langle\iota\rangle, &\text{if } \theta < \alpha < \kappa.
    \end{cases}
\]
The required properties are easy to verify.

\item[Limit level]
We begin by constructing $\mathbf b^\alpha_x$ for each $x \in T \restriction C_\alpha$.

For each $x \in T \restriction C_\alpha$,
we will use $C_\alpha$ to determine a cofinal branch through $(T \restriction \alpha,\s)$, containing $x$,
by defining an increasing, continuous sequence $b^\alpha_x$
of nodes.
The domain of the sequence $b^\alpha_x$ will be
${C_\alpha} \setminus \height(x)$.

We
define the values $b^\alpha_x (\beta)$ of the sequences
by recursion over $\beta \in {C_\alpha}$,
\emph{simultaneously} for all $x \in T \restriction {C_\alpha}$,
where for every
$\beta \in \dom(b^\alpha_x)$ we will have $b^\alpha_x(\beta) \in T_\beta$.
So we fix $\beta \in {C_\alpha}$, and we assume that
for every $x \in T \restriction {C_\alpha}$ we have already defined
$b^\alpha_x \restriction (\dom(b^\alpha_x) \cap \beta)$.
We must define the value of $b^\alpha_x(\beta)$ for all $x$ such that $\beta \in \dom(b^\alpha_x)$,
that is, for all $x \in T \restriction ({C_\alpha} \cap (\beta+1))$:

\begin{description}
\item[$\blacktriangleright x \in T_\beta$]
We take care of these nodes separately, because for these nodes we have $\dom(b^\alpha_x) \cap \beta = \emptyset$,
so that the sequence is just starting here.
Let $b^\alpha_x(\beta) = x$.

It remains to define $b^\alpha_x(\beta)$ for all $x \in T \restriction ({C_\alpha} \cap \beta)$.
For all such $x$ we have $\dom(b^\alpha_x) \cap \beta \neq \emptyset$.

\item[$\blacktriangleright \beta \in \nacc({C_\alpha})$]
Let $\beta^-$ denote the predecessor of $\beta$ in ${{C_\alpha}}$.
For every $x \in T \restriction ({C_\alpha} \cap \beta)$, we have $ \beta^- \in \dom(b^\alpha_x) \cap \beta$,
so that $b^\alpha_x (\beta^-)$ has already been defined.

The definition of $b_x^\alpha(\beta)$ splits into two possibilities:

\begin{description}
\item [$\blacktriangleright \blacktriangleright \psi(\beta) \in
{}^{<\kappa} (T\restriction ((C_\alpha \cup \{0\}) \cap \beta^-)) \setminus \{\emptyset\}$]
Define $\sigma_\beta^\alpha$ as in page \pageref{61sigmabetaalpha}, and then
for every $x \in T \restriction (C_\alpha \cap \beta)$ let
\[
b_x^\alpha (\beta)=\free
    ( b_x^\alpha (\beta^-), T\restriction(\beta+1), \sigma_\beta^\alpha).
\]

Since $b_x^\alpha (\beta^-)$ belongs to the normal tree $T\restriction (\beta+1)$,
we get from the Extension Lemma (page \pageref{extendfact}) that
$b_x^\alpha (\beta)$ is an element of $T_\beta$ extending $b_x^\alpha (\beta^-)$.

\item [$\blacktriangleright \blacktriangleright$ Otherwise]
For every $x \in T \restriction ({C_\alpha} \cap \beta)$, let
\[
b_x^\alpha(\beta)=\sealantichain\left( b_x^\alpha(\beta^-), T\restriction(\beta+1) \right).
\]
In this case, we get from the Extension Lemma that
$b_x^\alpha(\beta)$ is an element of $T_\beta$ extending $b_x^\alpha(\beta^-)$.

\end{description}

\item[$\blacktriangleright \beta \in \acc({C_\alpha})$]
In this case, for every $x \in T \restriction ({C_\alpha} \cap \beta)$
we define as in previous proofs
\[
b^\alpha_x(\beta) = \bigcup \{ b^\alpha_x(\gamma) \mid \gamma \in \dom(b^\alpha_x) \cap \beta \}.
\]

It is clear that $b^\alpha_x(\beta) \in {}^\beta \kappa$,
and in fact we have $b^\alpha_x (\beta) \in T_\beta$
using Claim~\ref{coherence-free},
just as explained in the proof of Theorem~\ref{ctbl-ascent-thm}.

\end{description}

We then let
\[
\mathbf b^\alpha_x = \bigcup \{ b^\alpha_x (\beta) \mid \beta \in \dom(b^\alpha_x) \},
\]
and it is clear that $\mathbf b^\alpha_x \in {}^{\alpha}\kappa$.

Next, we shall define a function $f_\alpha : \theta \to {}^\alpha\kappa$, by considering several cases:

\begin{description}
\item[$\blacktriangleright \alpha \leq \theta$]
By property~(\ref{simple-theta-ascent-injection})(a) of the induction hypothesis,
each $f_\beta$ for $\beta < \alpha$ is a constant function,
and by property~(\ref{ascent-on-acc-theta-strong})(a) the sequence consisting of their constant values,
$\langle f_\beta(0) \mid \beta < \alpha \rangle$, is increasing and cofinal in $(T \restriction \alpha, \subseteq)$.
Thus, we can define the constant function $f_\alpha : \theta \to {}^\alpha \kappa$ by setting,
for all $\iota < \theta$,
\[
f_\alpha(\iota) = \bigcup \{ f_\beta(0) \mid \beta < \alpha \}.
\]

\item[$\blacktriangleright \alpha > \theta$]
Fix $\iota < \theta$, and we must prescribe a function value $f_\alpha (\iota) \in {}^\alpha \kappa$.
Let
\[
\alpha_0 = \sup \left( \acc(C_\alpha) \cup \{ \min \left( C_\alpha \setminus (\theta+1) \right) \} \right).
\]
It is clear from the definition that $\theta < \alpha_0 \leq \alpha$,
and that $\alpha_0 = \sup(\acc(C_\alpha))$ iff $\acc(C_\alpha) \setminus (\theta+1) \neq\emptyset$.
Again, the definition of $f_\alpha(\iota)$ splits into two possibilities:

\begin{description}
\item [$\blacktriangleright \alpha_0 = \alpha$]
By replacing $n$ with $\iota$ in the proof of Claim~\ref{full-domination-acc},
we see that the sequence $\langle f_\beta(\iota) \mid \beta \in \acc(C_\alpha) \rangle$ is
increasing. As $\sup(\acc(C_\alpha))=\alpha$, it is also cofinal in $( T \restriction \alpha, \subseteq )$.
Therefore, as in the construction of Theorem~\ref{ctbl-ascent-thm}, let
\[
f_\alpha (\iota) = \bigcup\{ f_\beta (\iota)\mid \beta \in \acc (C_\alpha)\}.
\]

\item [$\blacktriangleright \theta < \alpha_0 < \alpha$]
Enumerate $C_\alpha  \setminus \alpha_0$ as an increasing sequence
$\langle \alpha_m \mid m < \omega \rangle$ cofinal in $\alpha$.

This time, to define $f_{\alpha}(\iota)$,
we consider the following possibilities:

\begin{itemize}
\item
If there exists some $m<\omega$ such that
$\psi (\alpha_k) =\iota$ whenever $m<k<\omega$,
then let
\[
f_{\alpha}(\iota) = \mathbf{b}^\alpha_{f_{\alpha_0}(\iota)}.
\]
Of course, $\alpha_0 \in C_\alpha$,
thus $f_{\alpha_0}(\iota) \in T_{\alpha_0} \subseteq T \restriction C_\alpha$,
and hence $f_\alpha(\iota) \in {}^\alpha \kappa$ is well-defined.

\item
Otherwise, consider the ordinal
\[
m_\iota=\sup \left\{ m<\omega \mid \langle f_{\alpha_n}(\iota) \mid n\le m \rangle\text{ is $<_T$-increasing} \right\},
\]
and let
\[
f_{\alpha}(\iota)=
    \begin{cases}
        \mathbf{b}^\alpha_{f_{\alpha_{m_\iota}}(\iota)},    &\text{if } m_\iota<\omega; \\
        \bigcup \{ f_{\alpha_n}(\iota) \mid n<\omega \},        &\text{if } m_\iota=\omega.
    \end{cases}
\]
If $m_\iota < \omega$, then $\alpha_{m_\iota} \in C_\alpha$, thus
$f_{\alpha_{m_\iota}}(\iota) \in T_{\alpha_{m_\iota}} \subseteq T \restriction C_\alpha$,
and $f_\alpha(\iota) \in {}^\alpha \kappa$ is well-defined.
If $m_\iota = \omega$, then
$f_{\alpha} (\iota) \in {}^\alpha \kappa$,
as it is the limit of a cofinal branch through $(T \restriction \alpha,\s)$.
\end{itemize}

\end{description}

Having constructed $f_\alpha(\iota)$, we now have the following variant of Claim~\ref{a0-goes-right},
proven in the same way as Claim~\ref{a0-goes-right-theta-many-tree}:

\begin{claim} \label{a0-goes-right-theta}
If $\alpha_0 < \alpha$ then
$f_{\alpha_0} (\iota) <_T f_\alpha(\iota)$.
\end{claim}

\end{description}

Finally, as promised, we set
\[
T_\alpha = \{ \mathbf b^\alpha_x \mid x \in T \restriction C_\alpha \} \cup \{ f_\alpha (\iota) \mid \iota < \theta \}.
\]

To verify some of the required properties:
\begin{itemize}
\item[(\ref{normal})]
Just as in Theorem~\ref{ctbl-ascent-thm}.

\item[(\ref{ascent-on-acc-theta-strong})]
\begin{itemize}
\item[(a)]
By construction of $f_\alpha$ in the case $\alpha \leq \theta$.
\item[(b)]
Assuming $\alpha > \theta$:
Just as in Theorem~\ref{ctbl-ascent-thm}, replacing $\omega$ with $\theta$ and $n$ with $\iota$.
\end{itemize}

\item[(\ref{simple-theta-ascent-injection})]
\begin{itemize}
\item[(a)]
If $\alpha \leq \theta$, then $f_\alpha$ was constructed to be a constant function;
\item[(b)]
Just as in Theorem~\ref{ctbl-ascent-thm}, replacing $n_1 < n_2 < \omega$ with $\iota_1 < \iota_2 < \theta$,
and making sure (in case $\alpha_0 = \alpha$) to choose $\beta > \theta$.
\end{itemize}

\item[(\ref{cofinite-ascent-theta})]
If $\alpha \leq \theta$, then this is covered by property~(\ref{ascent-on-acc-theta-strong})(a).
So we assume $\alpha > \theta$,
and then the proof is just as in case $\alpha \in \Gamma$ of Theorem~\ref{NEW-theta-ascent-thm}.

\item [(\ref{slim})]
Just as in Theorem~\ref{ctbl-ascent-thm},
we have $\left|T \restriction C_\alpha \right| \leq \left|\alpha\right|$.
We show that $\left|\{ f_\alpha (\iota) \mid \iota < \theta \}\right|\leq \left|\alpha\right|$,
by considering two cases:

\begin{description}
\item[$\blacktriangleright \alpha \leq \theta$]
In this case,
property~(\ref{tree-successor-ascent-injection-theta})(a) tells us that $f_{\alpha}$ is a constant function,
so that
\[
\left|\range(f_{\alpha})\right|  = 1 < \left|\alpha\right|.
\]

\item[$\blacktriangleright \alpha > \theta$]
In this case, we have
\[
\left|\range(f_{\alpha})\right| \leq \left|\dom(f_{\alpha})\right| = \theta \leq \left|\alpha\right|.
\]
\end{description}

In both cases, we then have
\[
\left|T_\alpha \right| \leq
    \left|T \restriction C_\alpha \right| + \left|\range(f_{\alpha})\right|
    \leq \left| \alpha \right| + \left| \alpha \right|
    = \left|\alpha\right|,
\]
as required.

\item [(\ref{order2-nacc-theta})]
In this case,
$\alpha_0 < \alpha$, and
there exists some $m<\omega$ such that
$\psi (\alpha_k) =\iota^*$ whenever $m<k<\omega$,
so that $f_{\alpha}(\iota^*)$ was defined to be equal to $\mathbf b^\alpha_{f_{\alpha_0}(\iota^*)}$.

\end{itemize}

\end{description}

Now let $T=\bigcup_{\alpha<\kappa}T_\alpha$.
Having built the tree, we now investigate its properties.

\begin{claim}\label{claim612}
The tree $(T,{\subset})$ is $(\chi, \theta^+)$-free.
\end{claim}

\begin{proof}
Fix any nonzero ordinal
$\tau < \chi$, any level $\delta < \kappa$, and
consider any sequence of distinct nodes
\[
\langle w_\xi \mid \xi < \tau \rangle \in {}^\tau T_\delta.
\]
Let
\[
\hat T = \bigotimes_{\xi <\tau}  \cone{w_\xi}
\subseteq T^\tau
\]
be the derived tree determined by this sequence.

Each level of $\hat T$ consists of $\tau$-sequences of elements from the corresponding level of $T$.
The number of such sequences is
\[
\left|T_\alpha \right| ^{\left|\tau\right|} \leq \left|T_\alpha\right|^{<\chi} < \kappa,
\]
using $\tau < \chi $ and the arithmetic hypothesis in the statement of the theorem.
Thus each level of $\hat T$ has size $< \kappa$.

Consider any $A \subseteq \hat T$ with $\left|A\right| = \kappa$.
We need to find $\vec y, \vec z \in A$ such that
$\left| \{ i<\tau\mid \neg( \vec y(i)<_T\vec  z(i))\} \right| < \theta^+$.

An application of $\diamondsuit(H_\kappa)$, nearly identical to the proof of Subclaim \ref{sc621}, entails that
for each $i<\kappa$,
\[
A_i = \{ \beta \in R_i \mid
        A \cap {}^\tau (T \restriction \beta) = S_\beta
        \text{ is a maximal antichain in }\hat T \cap {}^\tau (T \restriction \beta)
\}
\]
is a stationary set.
Thus we can apply the last part of the proxy principle to the sequence
$\langle A_i \mid i < \kappa \rangle$
to obtain a stationary set
\[
W = \{\alpha \in E^\kappa_{\geq \chi} \mid
    \forall i < \alpha \left[\sup \{ \beta \in C_\alpha \mid
                    \suc_\omega (C_\alpha \setminus \beta) \subseteq A_i \}
                = \alpha \right] \}.
\]
As $Z = \{\beta < \kappa \mid     {}^{<\chi} ( T \restriction \beta ) \subseteq \phi[\beta] \}$
 is a club relative to $E^\kappa_{\geq \chi}$, let us fix an ordinal $\alpha \in W \cap Z$ with $\alpha > \delta$.

Since $\left|A\right| = \kappa$ and each level of $\hat T$ has size $< \kappa$,
we can clearly choose some
$\vec v = \langle v_\xi \mid \xi < \tau \rangle \in A \setminus {}^\tau (T \restriction \alpha)$.

For each $\xi<\tau$,
we have $\height(v_\xi) \geq \alpha$, so we can let
$v'_\xi = v_\xi \restriction \alpha \in T_\alpha$,
so that $v'_\xi \leq_T v_\xi$.
Define
\[
I = \{ \xi < \tau \mid v'_\xi = \mathbf b^\alpha_{x_\xi} \text{ for some } x_\xi \in T \restriction C_\alpha \}.
\]

By $\alpha\in E^\kappa_{\ge\chi}$ and the construction of $T_\alpha$ (in particular, property~(\ref{limit-theta-ascent})), we know that
\[
\{ v'_\xi \mid \xi \in \tau \setminus I \} \subseteq \{ f_\alpha (\iota) \mid \iota < \theta \},
\]
so that
$\left| \tau \setminus I \right| \leq \theta < \theta^+$.

\begin{subclaim}\label{sc6121}
There is some $\vec y \in A \cap {}^\tau (T \restriction \alpha)$ such that
$\vec y (\xi) <_T v'_\xi$ for all $\xi \in I$.
\end{subclaim}

\begin{proof}
For every $\xi \in I$,
fix $x_\xi \in T \restriction C_\alpha$ such that $v'_\xi = \mathbf b^\alpha_{x_\xi}$.
Then, for all $\xi \in I$ we must have
\[
v'_\xi = \mathbf{b}^\alpha_{x_\xi}=\bigcup  \{ b^\alpha_{x_\xi}(\beta) \mid \beta \in \dom(b^\alpha_{x_\xi}) \}.
\]
For $\xi \in \tau \setminus I$, simply let $x_\xi = \emptyset$.

By $\cf(\alpha) \geq \chi > \tau$ and $\sup (C_\alpha) = \alpha$,
fix a large enough $\gamma \in C_\alpha$ such that
$\{ x_\xi \mid \xi<\tau \}\s T \restriction \gamma$.

Note also that since $\vec v \in \hat T$, we have that $v'_\xi$ is compatible with $w_\xi$ for all $\xi<\tau$,
so it follows that $x_\xi$ is compatible with $w_\xi$ for all $\xi < \tau$.
Let $\vec x = \langle x_\xi \mid \xi < \tau \rangle$.
Evidently,
$\vec x \in \hat T\cap{}^\tau (T \restriction ((C_\alpha \cup \{0\}) \cap \gamma))$.
Put
\[
i = \phi^{-1} (\vec x).
\]
By $\alpha \in Z$, we have $i < \alpha$.

Since $i<\alpha$, and $\alpha\in W$, we have
$\sup (\nacc(C_\alpha) \cap A_i) = \alpha$.
So let us fix $\beta \in \nacc(C_\alpha) \cap A_i$ with
$\max\{\gamma,\delta\} \leq \sup(C_\alpha\cap\beta)<\beta < \alpha$.
Since $\beta \in A_i$,
we know that $S_\beta = A \cap {}^\tau (T \restriction \beta)$
is a maximal antichain in $\hat T \cap {}^\tau (T \restriction \beta)$.

Write $\beta^-=\sup(C_\alpha\cap\beta)$ and
$\bar T=T\restriction(\beta+1)$.
Define $\vec b$  by setting, for every $\xi < \tau$,
\[
\vec b(\xi) =
    \begin{cases}
        b^\alpha_{x_\xi}(\beta^-)   &\text{if } \xi \in I;  \\
        \emptyset               &\text{otherwise.}
    \end{cases}
\]
Notice that $\vec b(\xi) = \emptyset$ iff $x_\xi = \emptyset$ iff $\xi \notin I$.

By $\beta\in A_i \subseteq R_i$, we have
\[
\psi(\beta) = \phi(i) = \vec x
\in\hat T\cap {}^\tau(T \restriction ((C_\alpha \cup \{0\}) \cap \gamma)) \s
{}^{<\kappa}(T\restriction ((C_\alpha \cup \{0\}) \cap \beta^-))\setminus\{\emptyset\}.
\]
Consequently, $\vec{b}=\sigma_\beta^\alpha$, and for all $x\in T\restriction(C_\alpha\cap\beta)$, we have
\[
b_x^\alpha(\beta) = \free( b_x^\alpha(\beta^-), T\restriction(\beta+1), \vec{b}).
\]

Therefore, consider the set
\[
Q= \left\{ \vec z\in {}^\tau(\bar T\cap{}^{\beta(\bar T)}\kappa) \mid
    \exists \vec y\in S_{\beta(\bar T)} \cap {}^\tau \bar T
    \left(   \forall \xi<\tau\  \vec b(\xi)\cup \vec y(\xi)\s \vec z(\xi)
    \right) \right\}.
\]
Clearly, $Q$ is equal to the set
\[
\left\{ \vec z\in {}^\tau(T_\beta) \mid
    \exists \vec y\in A\cap{}^\tau(T\restriction\beta)\ \forall \xi<\tau\left(\vec b (\xi) \cup \vec y(\xi)\s \vec z(\xi)\right) \right\}.
\]

Since $A\cap{}^\tau(T\restriction\beta)$ is a maximal antichain in $\hat T \cap {}^\tau (T \restriction \beta)$,
and $\vec b\in\hat T$, we get from the normality of $T\restriction (\beta+1)$ that the set $Q$ must be non-empty.
Let $\vec z=\min(Q,<_\phi)$.

For every $\xi \in \tau \setminus I$,  $\vec b(\xi) = \emptyset$.
For every $\xi \in I$,
$\vec b(\xi) = b^\alpha_{x_\xi}(\beta^-) <_T \mathbf b^\alpha_{x_\xi} = v'_\xi$,
and by $\delta\le\beta^-$, we also have  $w_\xi \le_T x_\xi \le_T b^\alpha_{x_\xi}(\beta^-)$.
Since $\langle w_\xi\mid \xi<\tau\rangle$ is a sequence of distinct nodes of $T_\delta$,
we get that for all $\xi \in I$, $\{\xi'<\tau\mid \vec b (\xi) = \vec b (\xi') \}$ is equal to the singleton $\{\xi\}$.
Consequently, for all $\xi \in I$:
\[
b_{x_\xi}^\alpha(\beta)=\vec z(\xi)
\]

Let $\vec y \in A \cap {}^\tau (T \restriction \beta)$ be a witness to the choice of $\vec z$.
Altogether, for all $\xi  \in I$ we have
\[
\vec y (\xi) <_T \vec z(\xi) =  b^\alpha_{x_\xi} (\beta) <_T \mathbf b^\alpha_{x_\xi} = v'_\xi,
\]
as required.
\end{proof}
So, $\vec{y},\vec{v}\in A$ and $|\{ \xi<\tau\mid \neg( \vec y(\xi)<_T\vec  v(\xi))\}|\le|\tau\setminus I|<\theta^+$, as sought.
\end{proof}

\begin{claim}
The tree $(T,{\subset})$ is a $\kappa$-Souslin tree.
\end{claim}
\begin{proof} We briefly go over the proof of Claim~\ref{Souslin-claim-ctbl-ascent}, verifying that all arguments go through.

Let $A\s T$ be a given maximal antichain.
Consequently, for all $\iota<\theta$, the following set is stationary:\[
A^\iota = \{ \beta<\kappa \mid \psi(\beta) = \iota \text{ and }
    A\cap(T\restriction\beta)=S_\beta\text{ is a maximal antichain in }T\restriction\beta
\}.
\]

Hence, we may fix a large enough limit ordinal $\alpha<\kappa$ such that for every $\iota < \theta$,
\[
\sup \{ \beta \in C_\alpha  \mid \suc_\omega (C_\alpha \setminus \beta) \subseteq A^\iota \} = \alpha.
\]

To see that $A \subseteq T \restriction \alpha$,
consider any $v' \in T_\alpha$, and we will find some $y \in A\cap(T\restriction\alpha)$ compatible with $v'$.

Just like in Subclaim \ref{get-b}, an application of property~(\ref{order2-nacc-theta})
entails  $\iota < \theta$,
$\bar\alpha\in \acc(C_\alpha) \cup \{ \alpha \}$, and $x\in T \restriction C_{\bar\alpha}$ such that
\[
\mathbf b^{{\bar\alpha}}_{x} \leq_T v' \text{ and }
\sup \left(\nacc (C_{\bar\alpha}) \cap A^\iota \right) = \bar\alpha.
\]

\begin{subclaim}
There is some $y \in A$ such that $y <_T \mathbf b^{{\bar\alpha}}_{x}$.

In particular, there exists some $y\in A$ such that $y<_T v'$.
\end{subclaim}

\begin{proof}
Fix $\beta \in \nacc(C_{\bar\alpha}) \cap A^\iota$ with $\height(x) < \beta < \bar\alpha$.
Of course, $\beta \in \dom (b^{{\bar\alpha}}_{x})$, and by the construction of $\mathbf b^{{\bar\alpha}}_{x}$,
we know that $b^{{\bar\alpha}}_{x}(\beta) <_T \mathbf b^{{\bar\alpha}}_{x}$.
Since $\beta \in A^\iota$,  we have:
\begin{itemize}
\item $S_\beta = A \cap (T \restriction \beta)$ is a maximal antichain in $T \restriction \beta$;
\item  $\psi(\beta)=\iota$. In particular,
$\psi(\beta)$ is not an element of ${}^{<\kappa}(T\restriction ((C_\alpha \cup \{0\}))\setminus\{\emptyset\}$.
\end{itemize}
Referring back to the construction of $b^{{\bar\alpha}}_{x}(\beta)$,
by $\beta \in \nacc( {C_{\bar\alpha}}) \setminus (\height(x) +1)$,
we have
\[
b_{x}^{{\bar\alpha}}(\beta)=\sealantichain ( b_{x}^{{\bar\alpha}}(\beta^-), T\restriction (\beta+1) ).
\]
So a verification identical to that of the proof of Claim \ref{point-in-antichain} entails some $y\in S_\beta=A \cap (T \restriction \beta)$, such that
\[
y <_T b^{{\bar\alpha}}_{x}(\beta) <_T \mathbf b^{{\bar\alpha}}_{x}.
\]

As $\mathbf b^{{\bar\alpha}}_{x} \leq_T v'$, we have, in particular, $y<_T v'$.
\end{proof}

Thus we have shown that
every $v' \in T_\alpha$ extends some element $y$ of the antichain $A$.
That is, $A \subseteq T \restriction \alpha$. As $A$ was an arbitrary maximal antichain, the proof is complete.
\end{proof}

Properties~(\ref{simple-theta-ascent-injection})(b)
and~(\ref{cofinite-ascent-theta})
guarantee that
$\langle f_{\alpha} \mid \alpha < \kappa \rangle$ is an
injective
$\mathcal F^{\fin}_\theta$-ascent path through $(T,{\subset})$.
\end{proof}
\begin{theorem}\label{theta-ascent-free-thm2}
Suppose that $\kappa$ is any regular uncountable cardinal, $\chi<\kappa$,
$\lambda^{<\chi}<\kappa$ for all $\lambda<\kappa$, and $\p_{14}(\kappa,  {\sqsubseteq}, \kappa, \{E^\kappa_{\ge\chi}\})$ holds.

Then there exists  a prolific slim $\chi$-free $\kappa$-Souslin tree.
\end{theorem}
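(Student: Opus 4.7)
The plan is to adapt the construction of Theorem~\ref{theta-ascent-free-thm1} by dropping the ascent-path component entirely, which will automatically upgrade the freeness from $(\chi,\theta^+)$ to the sharper $\chi$-freeness (equivalently, $(\chi,1)$-freeness by Lemma~\ref{lemma25}). After invoking $\diamondsuit(H_\kappa)$ to fix the machinery from Section~\ref{section:basicsetup} and choosing a witness $\langle C_\alpha\mid\alpha<\kappa\rangle$ to $\p^-_{14}(\kappa,{\sqsubseteq},\kappa,\{E^\kappa_{\ge\chi}\})$, I recursively construct a downward-closed $T\s{}^{<\kappa}\kappa$ by prescribing, at successors, $T_{\beta+1}=\{s^\smallfrown\langle i\rangle\mid s\in T_\beta,\ i<\max\{\omega,\beta+1\}\}$ (which gives slimness and prolificness), and, at limits $\alpha$, first constructing $\mathbf b^\alpha_x$ for each $x\in T\restriction C_\alpha$ exactly as in Theorem~\ref{theta-ascent-free-thm1}, then simply setting $T_\alpha=\{\mathbf b^\alpha_x\mid x\in T\restriction C_\alpha\}$. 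In particular, the definition of $b^\alpha_x(\beta)$ at $\beta\in\nacc(C_\alpha)$ with predecessor $\beta^-$ branches on whether $\psi(\beta)\in{}^{<\kappa}(T\restriction((C_\alpha\cup\{0\})\cap\beta^-))\setminus\{\emptyset\}$: if so, invoke $\free$ with parameter $\sigma^\alpha_\beta$ as defined on page~\pageref{61sigmabetaalpha}; otherwise invoke $\sealantichain$.

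The coherence claim (mirroring Claim~\ref{coherence-free}) transfers verbatim, ensuring that at accumulation points of $C_\alpha$ the unions of already-constructed branch-segments lie in already-constructed levels. Slimness follows from $|T_\alpha|\le|T\restriction C_\alpha|\le|\alpha|$ since $\alpha$ is a limit ordinal; normality is evident because every $x\in T\restriction C_\alpha$ yields a $\mathbf b^\alpha_x\in T_\alpha$ extending it. The verification that $(T,{\subset})$ is $\kappa$-Souslin proceeds exactly as in Claim~\ref{Souslin-claim-ctbl-ascent}, exploiting the $\sealantichain$-branch of the construction: given a maximal antichain $A$, apply the proxy principle to the stationary set of $\beta$'s for which $\psi(\beta)$ codes no relevant sequence and $A\cap(T\restriction\beta)=S_\beta$ is maximal in $T\restriction\beta$, obtaining $\alpha$ at which every node of $T_\alpha$ extends some element of $A$.

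The core of the argument is $\chi$-freeness. Fixing a nonzero $\tau<\chi$, a level $\delta<\kappa$, and distinct $\langle w_\xi\mid\xi<\tau\rangle$ in $T_\delta$, and given any $A\s\hat T:=\bigotimes_{\xi<\tau}\cone{w_\xi}$ of size $\kappa$, I use $\diamondsuit(H_\kappa)$ (as in the proof of Claim~\ref{claim612}) to see that for each $i<\kappa$ the set of $\beta$'s where $\psi(\beta)=\phi(i)$ and $A\cap{}^\tau(T\restriction\beta)=S_\beta$ is a maximal antichain in $\hat T\cap{}^\tau(T\restriction\beta)$ is stationary. Applying the proxy principle and intersecting with the club $Z=\{\beta<\kappa\mid{}^{<\chi}(T\restriction\beta)\s\phi[\beta]\}$, I pick $\alpha\in E^\kappa_{\ge\chi}\cap Z$ capturing all these sets cofinally. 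Picking any $\vec v\in A\setminus{}^\tau(T\restriction\alpha)$ and setting $v'_\xi=\vec v(\xi)\restriction\alpha$, the crucial observation is that now every $v'_\xi$ is of the form $\mathbf b^\alpha_{x_\xi}$ for some $x_\xi\in T\restriction C_\alpha$ (since $T_\alpha$ contains \emph{only} such nodes), so the set $I$ from Claim~\ref{claim612} equals all of $\tau$. Running the argument of Subclaim~\ref{sc6121} verbatim then produces $\vec y\in A$ with $\vec y(\xi)<_T v'_\xi$ for \emph{every} $\xi<\tau$, not merely for $\xi\in I$, so $|\{\xi<\tau\mid\neg(\vec y(\xi)<_T\vec v(\xi))\}|=0<1$, establishing $(\chi,1)$-freeness.

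The main obstacle, already resolved within the framework of Theorem~\ref{theta-ascent-free-thm1}, is that the sealing step must use a single $\vec z=\min(Q,<_\phi)$ coherently across all coordinates $\xi<\tau$ simultaneously; this works because the oracle $\psi(\beta)$ is chosen (via diamond) to code the full vector $\vec x=\langle x_\xi\mid\xi<\tau\rangle$, and the parameter $\sigma^\alpha_\beta=\vec b$ records the current branch-segments $b^\alpha_{x_\xi}(\beta^-)$ for all relevant $\xi$ at once, so that the $\free$-function seals antichains in the derived tree in a single invocation. Once this simultaneous sealing is in place, $(\chi,1)$-freeness and thus $\chi$-freeness follow.
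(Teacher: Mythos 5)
Your proposal matches the paper's proof exactly: carry out the construction of Theorem~\ref{theta-ascent-free-thm1} with the ascent-path functions $\langle f_\alpha\mid\alpha<\kappa\rangle$ omitted, so that $T_\alpha=\{\mathbf b^\alpha_x\mid x\in T\restriction C_\alpha\}$ at every limit $\alpha$, whence the set $I$ in the proof of Claim~\ref{claim612} equals all of $\tau$ and the same argument yields $(\chi,1)$-freeness. Your spelling out of the slimness, normality, and Souslin verifications fills in details the paper leaves implicit, but the plan is the same.
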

\begin{proof} Compared to the proof of Theorem \ref{theta-ascent-free-thm1},
we carry the very same construction
of $\langle\langle \mathbf b^\alpha_x \mid x \in T \restriction C_\alpha \rangle\mid\alpha \in \acc(\kappa)\rangle$,
and we \emph{do not} define the functions $\langle f_\alpha\mid\alpha<\kappa\rangle$ at all.
Therefore, $T_\alpha=\{{\mathbf b}^\alpha_x\mid x \in T \restriction C_\alpha \}$
for every limit $\alpha<\kappa$.
Consequently, in the proof of Claim \ref{claim612}, the set $I$ which is defined shortly before Subclaim \ref{sc6121} will be nothing but $\tau$.
So $|\tau\setminus I|=0$, and hence the proof of Claim \ref{claim612} establishes that the outcome slim tree is $(\chi,1)$-free.
That is, $(T,{\subset})$ is a slim $\chi$-free $\kappa$-Souslin tree.
\end{proof}

When reading the statement of the next theorem, the reader should keep in mind that
$\p_{14}(\kappa,  {\sqsubseteq}_{\nu}, \ldots)$ entails
$\p_{14}(\kappa,  {\sqsubseteq}_{\mu}, \ldots)$ for all $\nu\le\mu<\kappa$.

\begin{theorem}\label{free-with-u-ascent}
Suppose that $\nu<\kappa$ are any regular  cardinals,  $\eta$ is a cardinal satisfying $\lambda^{<\eta} < \kappa$ for all $\lambda < \kappa$,
and $U\s{}^{<\kappa}\kappa$ is a given
$\p_{14}^-(\kappa,  {\sqsubseteq}_{\nu}, \kappa, \{E^\kappa_{\ge\eta}\})$-respecting $\kappa$-tree.

Let $\chi = \min \{ \eta, \nu\}$.
If $\diamondsuit(\kappa)$ holds,
then for every infinite $\theta<\kappa$,
there exists a prolific $\chi$-free $\eta$-complete $\kappa$-Souslin tree with an injective $(\mathcal F^{\nu}_\theta,U)$-ascent path.
\end{theorem}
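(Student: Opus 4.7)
The plan is to synthesize the constructions of Theorem \ref{tree-theta-ascent-complete-thm} (which produces $\eta$-complete $\kappa$-Souslin trees with an injective $(\mathcal F^\nu_\theta, U)$-ascent path for downward-closed $U$), Theorem \ref{theta-ascent-free-thm1} (which introduces the simultaneous $\free$-based sealing of antichains in the derived tree), and Theorem \ref{theta-ascent-free-thm2} (which achieves clean $\chi$-freeness by suppressing independent ascent-path witnesses). The respecting hypothesis on $U$ is the linchpin: it constrains the ascent-path values for $u\in U_\alpha$ to arise from the coherent mappings $\mathbf{b}^\alpha:(U\restriction C_\alpha)\to{}^\alpha\kappa\cup\{\emptyset\}$ of Definition \ref{respecting}, so these nodes cease to be ``free parameters'' whose presence would otherwise degrade the freeness degree from $\chi$ to $(\chi,\theta^+)$, as happens in Theorem \ref{theta-ascent-free-thm1}.

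Fix the $\diamondsuit(H_\kappa)$-apparatus of Section \ref{section:basicsetup}, a witness $\langle C_\alpha\mid\alpha<\kappa\rangle$ to $\p_{14}^-(\kappa,{\sq_\nu},\kappa,\{\S\cap E^\kappa_{\ge\eta}\})$, and the mappings $\langle\mathbf{b}^\alpha\mid\alpha<\kappa\rangle$ from the respecting hypothesis. Let
\[
\Gamma=\S\cap\{\alpha\in\acc(\kappa)\setminus(\theta+1):\forall\beta\in\acc(C_\alpha)\,C_\beta\sqsubseteq C_\alpha\},
\]
which by the argument of Claim \ref{Gamma-includes} covers the stationary set $\S\cap E^\kappa_{\ge\max\{\eta,\nu\}}\setminus(\theta+1)$ and is closed under $\bar\alpha\in\acc(C_\alpha)$. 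Build $\langle T_\alpha\mid\alpha<\kappa\rangle$, ascent-path functions $f_{x,u}$, and nodes $\mathbf{b}^\alpha_x$ (for $\alpha\in\Gamma$, $x\in T\restriction C_\alpha$) by recursion: at successor levels, extend in one coordinate as in Theorem \ref{NEW-theta-ascent-thm}; at $\alpha\in\Gamma$, construct $\{b^\alpha_x:x\in T\restriction C_\alpha\}$ simultaneously via the $\free$-recipe of Theorem \ref{theta-ascent-free-thm1} and prescribe each $f_{x,u}(\iota)$ along the $\nacc$-tail of $C_\alpha$ where $\psi[C_\alpha\setminus\beta]=\{\iota\}$ to equal $\mathbf{b}^\alpha_{f_{x,u\restriction\alpha_x}(\iota)}$; at limit $\alpha\notin\Gamma$ with $\cf(\alpha)\ge\eta$, glue coordinate-wise via $\mathcal F^\nu_\theta$ as in Theorem \ref{tree-theta-ascent-complete-thm}; at $\alpha\in E^\kappa_{<\eta}$, insert the limits of \emph{all} branches through $T\restriction\alpha$ to secure $\eta$-completeness, with $\lambda^{<\eta}<\kappa$ keeping $|T_\alpha|<\kappa$.

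The Coherence Claim combines that of Claim \ref{coherence-free} (for the $b^\alpha_x$-sequences) with Clause (\ref{respectcohere}) of Definition \ref{respecting} (for alignment of $f_{x,u}$ with the respecting structure of $U$): whenever $\bar\alpha<\alpha$ both lie in $\Gamma$ with $C_{\bar\alpha}\sqsubseteq C_\alpha$, the respecting property supplies $\mathbf{b}^{\bar\alpha}(u)=\mathbf{b}^\alpha(u)\restriction\bar\alpha$ for $u\in U\restriction C_{\bar\alpha}$, so $f_{x,u}\restriction\bar\alpha$ is already committed at stage $\bar\alpha$ of the recursion and extends consistently afterwards. The $\kappa$-Souslin verification then follows the template of Claim \ref{Souslin-claim-theta-ascent} with single-node $\diamondsuit$-guesses, and the injectivity of the resulting $(\mathcal F^\nu_\theta,U)$-ascent path is obtained as in Theorem \ref{NEW-theta-ascent-thm}.

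The main obstacle is proving genuine $\chi$-freeness, i.e., $(\chi,1)$-freeness in the sense of Lemma \ref{lemma25}. Given $\tau<\chi$, distinct $\langle w_\xi\mid\xi<\tau\rangle\in{}^\tau T_\delta$, and a $\kappa$-sized $A\s\bigotimes_{\xi<\tau}\cone{w_\xi}$, one imitates Claim \ref{claim612}: use $\diamondsuit(H_\kappa)$ to anticipate, for some $\vec v\in A\setminus{}^\tau(T\restriction\alpha)$, the $\tau$-tuple $\vec x\in{}^\tau(T\restriction C_\alpha)$ of initial branches leading to $\vec v'=\vec v\restriction\alpha$, and apply the proxy principle to land $\alpha\in\Gamma\cap E^\kappa_{\ge\eta}$ at which $\phi^{-1}(\vec x)$ is guessed cofinally often along $\nacc(C_\alpha)$. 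Since $\cf(\alpha)\ge\eta>\tau$, a single $\gamma\in C_\alpha$ absorbs every $x_\xi$; since $\otp(C_\alpha)\ge\nu>\tau$, the $\nu$-coherence of the $b^\alpha_{x_\xi}$-sequences is preserved across all $\tau$ coordinates, allowing one simultaneous invocation of $\free$. The decisive step, and what distinguishes $\chi$-freeness from the degraded $(\chi,\theta^+)$-freeness of Theorem \ref{theta-ascent-free-thm1}, is that the respecting structure of $U$ forces the set $I=\{\xi<\tau:v'_\xi=\mathbf{b}^\alpha_{y_\xi}\text{ for some }y_\xi\}$ to equal $\tau$: each $f_{x,u}(\iota)$-coordinate is coherent with $\mathbf{b}^\alpha(u)$ and can therefore be rewritten as some $\mathbf{b}^\alpha_{y}$ at the same tree-level. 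Hence $|\tau\setminus I|=0$, and $\free$ produces $\vec y\in A\cap{}^\tau(T\restriction\beta)$ with $\vec y(\xi)<_T v'_\xi$ simultaneously for all $\xi<\tau$.
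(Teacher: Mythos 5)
Your strategic outline correctly identifies the blend of Theorem~\ref{tree-theta-ascent-complete-thm}'s completeness machinery with Theorem~\ref{theta-ascent-free-thm1}'s $\free$-based sealing, and you are right that the respecting hypothesis is what lets one escalate to full $\chi$-freeness. However, the mechanism you propose for the decisive step is incorrect. You claim that the respecting structure forces $I=\{\xi<\tau:v'_\xi=\mathbf{b}^\alpha_{y_\xi}\text{ for some }y_\xi\}$ to equal $\tau$, since ``each $f_{x,u}(\iota)$-coordinate \ldots can therefore be rewritten as some $\mathbf{b}^\alpha_y$ at the same tree-level.'' That is false: at $\alpha\in E^\kappa_{\ge\chi}\cap\Gamma$, the level $T_\alpha$ is the union of $\{\mathbf{b}^\alpha_x:x\in T\restriction C_\alpha\}$ and $\{f_{x,u}(\iota):\ldots\}$, two families produced by distinct recipes (branch-completion via $\free/\defaultaction$ versus coordinate-wise ascent-path gluing), and the $f$-nodes are not generically $\mathbf{b}^\alpha$-nodes. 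If they were, $\varphi_\alpha$ and the $\free$-sealing would be superfluous and the whole argument would collapse to Theorem~\ref{theta-ascent-free-thm2}. Definition~\ref{respecting} furnishes \emph{inter-level} coherence of the mappings $\mathbf{b}^\alpha$, not intra-level representability. Relatedly, your $\diamondsuit$-anticipation of a tuple $\vec x\in{}^\tau(T\restriction C_\alpha)$ cannot even name the ascent-path coordinates: a node $v'_\xi=f_{x_\xi,u_\xi}(\iota_\xi)$ depends on $u_\xi\in U_\alpha$, which lies neither in $T$ nor in any bounded initial segment visible at the guessing level $\beta<\alpha$.

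What closes the gap in the paper's proof (and is absent from yours) is a two-stage translation: the $\diamondsuit$-guess $\psi(\beta)$ codes a vector of triples $(\iota,y,z)$ with $z\in U\restriction(C_\alpha\cap\beta^-)$; one then uses $\alpha\in\S$ and Clause~(\ref{respectingonto}) of Definition~\ref{respecting}, i.e.\ $U_\alpha\s\range(\mathbf{b}^\alpha)$, to regard $z$ as naming $u=\mathbf{b}^\alpha(z)\in U_\alpha$, and the function $\varphi_\alpha(\beta)$ translates $(\iota,y,z)$ into the concrete tree node $f_{y,\mathbf{b}^\alpha(z)\restriction\Phi^\alpha_\beta(y)}(\iota)$. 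The antichain is then sealed by a $\free$-call at $\beta$ using $\varsigma^\alpha_\beta$. Finally, the modified property~(\ref{order2-nacc-tree-ascent-free}), applied at $\bar\alpha=\sup\left(\suc_\omega(C_\alpha\setminus\epsilon)\right)\in\acc(C_\alpha)$ rather than at $\alpha$, is what forces $f_{x_\xi,u_\xi\restriction\bar\alpha}(\iota_\xi)$ to be a $\mathbf{b}^{\bar\alpha}$-node, sitting strictly below $v'_\xi$. A further slip: you intersected $\Gamma$ with $\S$ at the definition stage. Since $\bar\alpha\in\acc(C_\alpha)$ with $\alpha\in\Gamma\cap\S$ need not lie in $\S$, the Coherence Claim would fail because $\mathbf{b}^{\bar\alpha}_x$ would not have been constructed where the continuity step of the branch recursion requires it; keep $\Gamma$ defined purely from $\overrightarrow C$ as in Claim~\ref{Gamma-includes}, and invoke $\S$ only when choosing the witnessing stationary set $W$ in the verification.
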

\begin{proof}
By $\diamondsuit(\kappa)$ and Fact \ref{fact32},
let us fix the
functions $\phi:\kappa\leftrightarrow H_\kappa$, $\psi:\kappa\rightarrow H_\kappa$,
sequences $\langle S_\beta \mid \beta<\kappa\rangle$, $\langle R_i  \mid i < \kappa \rangle$,
well-ordering $<_\phi$,
notation $\beta(T)$,
and the functions $\defaultaction:H_\kappa\times H_\kappa\rightarrow H_\kappa$ and
$\free: H_\kappa \times H_\kappa\times H_\kappa\rightarrow H_\kappa$
as described in Section~\ref{section:basicsetup}.

By passing to an isomorphic tree if necessary,
we may assume that for all $u \in U$ and all $\beta < \dom(u)$,
$u(\beta) < \left| U_{\beta+1} \right|$.

Let $\S\s\kappa$ and $\langle \mathbf b^\alpha : (U \restriction C_\alpha) \to {}^\alpha\kappa\cup \{\emptyset\} \mid \alpha<\kappa \rangle$
witness the fact that $U$ is $\p_{14}^-(\kappa,  {\sqsubseteq}_{\nu}, \kappa, \{E^\kappa_{\ge\eta}\})$-respecting.
In particular, $\overrightarrow C=\langle C_\alpha \mid \alpha < \kappa \rangle$ is a witness to
$\p_{14}^-(\kappa,  {\sqsubseteq_{\nu}}, \kappa, \{\S\cap E^\kappa_{\ge\eta}\})$.
Without loss of generality,
we may assume that  $C_\alpha=C_\alpha\setminus(\theta+1)$ whenever $\theta<\alpha<\kappa$.

\begin{claim} \label{Gamma-includes2}\label{Gamma-works2}
Define $\Gamma = \{\alpha \in \acc(\kappa) \setminus(\theta+1)\mid \left( \forall \beta \in \acc(C_\alpha) \right) C_\beta \sqsubseteq C_\alpha\}$. Then:
\begin{enumerate}
\item If $\alpha\in\Gamma$ and $\bar\alpha\in\acc(C_\alpha)$, then ${\bar\alpha}\in\Gamma$;
\item $\Gamma \supseteq \{ \alpha\in \acc(\kappa) \setminus(\theta+1)\mid \otp(C_\alpha) \geq \nu \text{ or $\nacc(C_\alpha)$ contains a limit ordinal} \}$.
\end{enumerate}

In particular, $\Gamma$ covers the stationary set $E^\kappa_{\ge\nu}\setminus (\theta +1)$.
\end{claim}
\begin{proof} Just like the proof of Claim \ref{Gamma-includes}.
\end{proof}

For every $\alpha \in \acc(\kappa)$, $\beta\le\alpha$,
and $x \in {}^{<\kappa}\kappa$ with $\dom(x) < \sup(C_\alpha \cap \beta)$,
denote
\[
\Phi^\alpha_\beta(x) = \sup (\acc(C_\alpha \cap \beta) \cup \{ \min((C_\alpha \cap \beta) \setminus (\dom(x)+1))\}).
\]
Notice that:
\begin{itemize}
\item $\Phi^\alpha_\alpha(x)$ coincides with $\alpha_x$ that was defined right after Claim \ref{full-domination-acc-theta-tree} (provided $\dom(x)=\height(x)$, that is, provided $x$ is placed into the tree $T$);
\item $\dom(x) < \Phi^\alpha_\beta(x) \leq \sup(C_\alpha \cap \beta)$.
\item $\Phi^\alpha_\beta(x)\in C_\alpha \cup \{\alpha\}$;
\item $\Phi^\alpha_\beta(x) = \sup(\acc(C_\alpha \cap \beta))$ iff $\dom(x) < \sup(\acc(C_\alpha \cap \beta))$.
\end{itemize}

As always, the tree $T$ will be a downward-closed subset of $^{<\kappa} \kappa$,
so that each level $T_\alpha$ will be a subset of $^\alpha \kappa$,
and the tree relation $\leq_T$ will simply be extension of sequences.
We will construct,
simultaneously by recursion over $\alpha < \kappa$,
the levels $\langle T_\alpha \mid \alpha < \kappa \rangle$ of the tree $T$
as well as the functions $\langle\langle f_{x, u} \mid x\in T\restriction\alpha, u \in U_\alpha \rangle\mid \alpha<\kappa \rangle$
and the nodes $\langle\langle \mathbf b^\alpha_x \mid x \in T \restriction C_\alpha \rangle\mid \alpha \in \Gamma \rangle$
so that after each stage $\alpha$,
properties~(1)--(\ref{full-small-ascent})
of the construction in Theorem~\ref{tree-theta-ascent-complete-thm}
are satisfied, with the following changes to clauses (\ref{b-theta-ascent}) and (\ref{order2-nacc-theta-many-tree}):

\begin{enumerate}
\setcounterref{enumi}{b-theta-ascent}
\addtocounter{enumi}{-1}
\item \label{b-tree-ascent-free}
If $\alpha \in \Gamma$, then for every $x \in T \restriction C_\alpha$,
$\mathbf b^\alpha_x \in T_\alpha$ is the limit of the
increasing, continuous, cofinal sequence $b^\alpha_x$
in $( T \restriction \alpha, \subseteq )$,
satisfying the following properties:
\begin{enumerate}
\item
$\dom (b^\alpha_x) = C_\alpha \setminus \height(x)$;
\item
$b^\alpha_x (\height(x)) =x$;
\item
For all $\beta \in \dom(b^\alpha_x)$, $b^\alpha_x (\beta) \in T_\beta$;
\item
If $\beta^- < \beta$ are two consecutive points in $\dom(b^\alpha_x)$,
then
\[
b_x^\alpha(\beta)=\begin{cases}
    \free ( b_x^\alpha(\beta^-), T\restriction(\beta+1),
        \varsigma^\alpha_{\beta}),
                            &\psi(\beta)\in{}^{<\kappa}(\kappa\times (T\restriction \beta^-) \times (U\restriction \beta^-))\setminus\{\emptyset\};  \\
    \defaultaction \left( b_x^\alpha(\beta^-), T\restriction(\beta+1) \right),    &\text{otherwise},
\end{cases}
\]
where $\varsigma_{\beta}^\alpha:\dom(\psi(\beta))\rightarrow T\restriction \{0,\beta^-\}$ is defined by stipulating:\label{63sigmabetaalpha}
\[
\varsigma_{\beta}^\alpha(\xi)=\begin{cases}
b^\alpha_{\varphi_\alpha(\beta)(\xi)}(\beta^-),&\text{if } \varphi_\alpha(\beta)(\xi)\neq\emptyset;\\
\emptyset,&\text{otherwise,}
\end{cases}
\]
and\label{63varphibetaalpha}
\[
\varphi_\alpha(\beta)(\xi)=\begin{cases}
y,  &\psi(\beta)(\xi) = (\iota,y,z) \in
        (\kappa\setminus\theta) \times (T\restriction(C_\alpha\cap\beta^-)) \times \{\emptyset\};    \\
{f_{y, \mathbf{b}^\alpha(z)\restriction\Phi^\alpha_\beta(y)} (\iota)},
    &\psi(\beta)(\xi)=(\iota,y,z)\in \theta\times (T\restriction \beta^-)\times (U\restriction(C_\alpha\cap\beta^-))
        \ \&\ \mathbf{b}^\alpha(z)\restriction\Phi^\alpha_\beta(y)\in U_{\Phi^\alpha_\beta(y)} ;\\
\emptyset,   &\text{otherwise.}
\end{cases}
\]

\end{enumerate}

\setcounterref{enumi}{order2-nacc-theta-many-tree}
\addtocounter{enumi}{-1}
\item \label{order2-nacc-tree-ascent-free}
If $\alpha \in \Gamma$ satisfies $\sup (\acc(C_\alpha)) < \alpha$,
and if there is some $\beta < \alpha$ and $g\in{}^{<\chi} (H_\kappa \times H_\kappa \times H_\kappa)$
such that $\psi[C_\alpha \setminus \beta] = \{ g \}$,
then for every $\iota < \theta$ such that $(\iota, h, h') \in \range(g)$ for some $h, h'$,
every $x \in T \restriction (\sup(\acc(C_\alpha)))$, and every $u \in U_\alpha$, we have
\[
f_{x, u} (\iota) = \mathbf b^\alpha_{f_{x, u \restriction (\sup(\acc(C_\alpha)))} (\iota)}.
\]
\end{enumerate}

The following instance of the Coherence Claim Template from page \pageref{coherence-template}
utilizes the successful interaction of the tree $(U,\subset)$ with the sequence $\overrightarrow C$.

\begin{claim}\label{coherence-Gamma-free}
Fix ordinals ${\bar\alpha} < \alpha$, both in $\Gamma$, such that $C_{\bar\alpha} = C_\alpha \cap {\bar\alpha}$.
Suppose $T \restriction \alpha$ has been constructed to satisfy the above properties.
Suppose also that, for every $y \in T \restriction C_{\bar\alpha}$,
$b^\alpha_y \restriction (C_{\bar\alpha} \setminus \height(y))$ has already been constructed.
Then for every $x \in T \restriction C_{\bar\alpha}$,
\[
b^{\bar\alpha}_x = b^\alpha_x \restriction \left( C_{\bar\alpha} \setminus \height(x) \right).
\]
\end{claim}

\begin{proof}
Following the proof of Claim~\ref{coherence-free},
the differences arise only when $\beta \in \nacc(C_{\bar\alpha})$,
so we focus on that case.

Since $C_{\bar\alpha}$ is a club, $\beta$ must have an immediate predecessor in $C_{\bar\alpha} \setminus \height(x)$,
which we call $\beta^-$.
Then $\beta^-$ and $\beta$ are two consecutive points in $\dom (b^{\bar\alpha}_x)$, and also in $\dom (b^\alpha_x)$.

\begin{description}
\item [$\blacktriangleright \blacktriangleright$ Suppose
    $\psi(\beta) \in {}^{<\kappa}(\kappa\times (T\restriction \beta^-) \times (U \restriction \beta^-))\setminus\{\emptyset\}$]
Recall that $C_{\bar\alpha} \cap \beta = C_\alpha \cap \beta$ and $C_{\bar\alpha} \cap \beta^- = C_\alpha \cap \beta^-$.
Notice that the definition of $\Phi^\alpha_\beta(x)$ depends only on $\dom(x)$ and $C_\alpha \cap \beta$.
In particular, for any $y \in T \restriction \beta^-$,
\[
\Phi^\alpha_\beta(y) = \Phi^{\bar\alpha}_\beta(y)\leq \sup (C_{\bar\alpha} \cap \beta) = \beta^-
    < \beta < {\bar\alpha} < \alpha,
\]
and then by Clause~(\ref{respectcohere}) of Definition~\ref{respecting}
we have for any $z \in U \restriction C_{\bar\alpha}$:
\[
\mathbf b^{\bar\alpha}(z) \restriction \Phi^{\bar\alpha}_\beta(y)
    = (\mathbf b^\alpha(z) \restriction {\bar\alpha}) \restriction \Phi^{\bar\alpha}_\beta(y)
    = \mathbf b^\alpha(z) \restriction \Phi^\alpha_\beta(y)
\]
and hence $\varphi_{\bar\alpha}(\beta)$ coincides with $\varphi_\alpha(\beta)$.
So, by the induction hypothesis, $\varsigma^{\bar\alpha}_\beta=\varsigma^\alpha_\beta$,
and then for every $x \in T \restriction (C_{\bar\alpha} \cap \beta)$, we have,
using property~(\ref{b-tree-ascent-free})(d) and the induction hypothesis,
\begin{align*}
b^{\bar\alpha}_x (\beta)
    &= \free ( b_x^{\bar\alpha}(\beta^-), T\restriction(\beta+1),\sigma^{\bar\alpha}_\beta)\\
    &= \free ( b_x^\alpha(\beta^-), T\restriction(\beta+1),\sigma^\alpha_\beta)
        = b^\alpha_x (\beta).
\end{align*}

\item [$\blacktriangleright \blacktriangleright$ Otherwise]
For every $x \in T \restriction (C_{\bar\alpha} \cap \beta)$, we have,
again using property~(\ref{b-tree-ascent-free})(d) and the induction hypothesis,
\[
b^{\bar\alpha}_x (\beta) = \defaultaction ( b^{\bar\alpha}_x (\beta^-), T \restriction (\beta+1))
                = \defaultaction ( b^\alpha_x (\beta^-), T \restriction (\beta+1) )
                = b^\alpha_x (\beta).
\qedhere
\]
\end{description}

\end{proof}

The recursive construction proceeds as follows:

\begin{description}
\item[Base case, successor ordinal, and limit $\alpha \notin \Gamma$]
These steps of the construction are identical to the corresponding steps in Theorem~\ref{tree-theta-ascent-complete-thm}.

\item[Limit level, $\alpha \in\Gamma$]
We begin by constructing $\mathbf b^\alpha_x$ for each $x \in T \restriction C_\alpha$.

For each $x \in T \restriction C_\alpha$,
we will use $C_\alpha$ to determine a cofinal branch through $(T \restriction \alpha,\s)$, containing $x$,
by defining an increasing, continuous sequence $b^\alpha_x$ of nodes.
The domain of the sequence $b^\alpha_x$ will be
${C_\alpha} \setminus \height(x)$.

We define the values $b^\alpha_x (\beta)$ of the sequences by recursion over $\beta \in {C_\alpha}$,
\emph{simultaneously} for all $x \in T \restriction {C_\alpha}$,
where for every $\beta \in \dom(b^\alpha_x)$ we will have $b^\alpha_x(\beta) \in T_\beta$.
So we fix $\beta \in {C_\alpha}$, and we assume that for every $x \in T \restriction {C_\alpha}$ we have already defined
$b^\alpha_x \restriction (\dom(b^\alpha_x) \cap \beta)$.
We must define the value of $b^\alpha_x(\beta)$ for all $x$ such that $\beta \in \dom(b^\alpha_x)$,
that is, for all $x \in T \restriction ({C_\alpha} \cap (\beta+1))$:

\begin{description}
\item[$\blacktriangleright x \in T_\beta$]
We take care of these nodes separately, because for these nodes we have $\dom(b^\alpha_x) \cap \beta = \emptyset$,
so that the sequence is just starting here.
Let $b^\alpha_x(\beta) = x$.

It remains to define $b^\alpha_x(\beta)$ for all $x \in T \restriction ({C_\alpha} \cap \beta)$.
For all such $x$ we have $\dom(b^\alpha_x) \cap \beta \neq \emptyset$.

\item[$\blacktriangleright \beta \in \nacc({C_\alpha})$]
Let $\beta^-$ denote the predecessor of $\beta$ in ${{C_\alpha}}$.
For every $x \in T \restriction ({C_\alpha} \cap \beta)$, we have $ \beta^- \in \dom(b^\alpha_x) \cap \beta$,
so that $b^\alpha_x (\beta^-)$ has already been defined.

The definition of $b_x^\alpha(\beta)$ splits into two possibilities:

\begin{description}
\item [$\blacktriangleright \blacktriangleright
    \psi(\beta) \in {}^{<\kappa}(\kappa\times (T\restriction \beta^-) \times (U \restriction \beta^-))\setminus\{\emptyset\}$]
Define $\varsigma_\beta^\alpha$ as in page \pageref{63sigmabetaalpha}, and then
for every $x \in T \restriction ({C_\alpha} \cap \beta)$, let
\[
b_x^\alpha(\beta)=\free( b_x^\alpha(\beta^-), T\restriction(\beta+1),\varsigma^\alpha_\beta).
\]

Since $b_x^\alpha (\beta^-)$ belongs to the normal tree $T\restriction (\beta+1)$,
we get from the Extension Lemma (page \pageref{extendfact}) that
$b_x^\alpha (\beta)$ is an element of $T_\beta$ extending $b_x^\alpha (\beta^-)$.

\item [$\blacktriangleright \blacktriangleright$ Otherwise]
For every $x \in T \restriction ({C_\alpha} \cap \beta)$, let
\[
b_x^\alpha(\beta)=\defaultaction( b_x^\alpha(\beta^-), T\restriction(\beta+1)).
\]
In this case, we get from the Extension Lemma that
$b_x^\alpha(\beta)$ is an element of $T_\beta$ extending $b_x^\alpha(\beta^-)$.

\end{description}

\item[$\blacktriangleright \beta \in \acc({C_\alpha})$]
In this case, for every $x \in T \restriction ({C_\alpha} \cap \beta)$
we define (just as in previous theorems)
\[
b^\alpha_x(\beta) = \bigcup \{ b^\alpha_x(\gamma) \mid \gamma \in \dom(b^\alpha_x) \cap \beta \}.
\]

It is clear that $b^\alpha_x(\beta) \in {}^\beta \kappa$,
and in fact we have $b^\alpha_x (\beta) \in T_\beta$
using Claims~\ref{Gamma-works2} and \ref{coherence-Gamma-free}.
\end{description}

We then let
\[
\mathbf b^\alpha_x = \bigcup \{ b^\alpha_x (\beta) \mid \beta \in \dom(b^\alpha_x) \},
\]
and it is clear that $\mathbf b^\alpha_x \in {}^{\alpha}\kappa$.

Next, we fix $x \in T \restriction \alpha$ and $u \in U_\alpha$, and
we must construct a function $f_{x, u}: \theta\rightarrow{}^\alpha \kappa$.
Fix $\iota < \theta$, and let us prescribe a function value $f_{x, u}(\iota) \in {}^\alpha \kappa$.

For notational simplicity, let us write $\alpha_x$ for $\Phi^\alpha_\alpha(x)$.
Just as in the proof of Theorem~\ref{NEW-theta-ascent-thm},
the definition of $f_{x, u} (\iota)$ splits into two possibilities:

\begin{description}
\item [$\blacktriangleright \alpha_x = \alpha$]

In particular, $\sup(\acc(C_\alpha)) = \alpha$.
The same proof of  Claim~\ref{full-domination-acc-theta-tree} entails that
the sequence $\langle f_{x, u \restriction \beta}(\iota) \mid \height(x)\in \beta \in \acc(C_\alpha)  \rangle$ is
increasing, and by $\sup(\acc(C_\alpha)) = \alpha$, it is cofinal in $( T \restriction \alpha, \subseteq )$,
so we let
\[
f_{x, u} (\iota) = \bigcup \{ f_{x, u \restriction \beta} (\iota) \mid \height(x)\in \beta \in \acc (C_\alpha)  \}.
\]

Clearly, $f_{x, u} (\iota) \in {}^\alpha \kappa$.

\item [$\blacktriangleright \alpha_x < \alpha$] In this case,
$C_\alpha \setminus\alpha_x$ is an $\omega$-type cofinal subset of $\alpha$.
Let $\langle \alpha^m_x \mid m<\omega \rangle$ denote the
increasing enumeration of $C_\alpha \setminus\alpha_x$,
so that $\alpha_x = \alpha^0_x$.
Let us  define $f_{x, u}(\iota)$
by considering several possibilities:

\begin{itemize}
\item\label{gchi}
If there exist some $m<\omega$,
a function $g\in{}^{<\chi} (H_\kappa \times H_\kappa \times H_\kappa)$, and $h, h' \in H_\kappa$ such that
$(\iota,h, h') \in \range(g)$,
and $\psi (\alpha^k_x) =g$ whenever $m<k<\omega$,
then let
\[
f_{x, u}(\iota)=\mathbf{b}^\alpha_{f_{x, u\restriction \alpha_x} (\iota)}.
\]

\item
Otherwise, consider the ordinal
\[
m_\iota=\sup \left\{ m<\omega \mid \langle f_{x, u \restriction \alpha^n_x}(\iota) \mid n\le m \rangle\text{ is $<_T$-increasing} \right\},
\]
and let
\[
f_{x, u}(\iota)=
    \begin{cases}
        \mathbf{b}^\alpha_{f_{x, u \restriction \alpha_x^{m_\iota}}(\iota)},    &\text{if } m_\iota<\omega; \\
        \bigcup \{ f_{x, u \restriction \alpha^n_x}(\iota) \mid n<\omega \},        &\text{if } m_\iota=\omega.
    \end{cases}
\]

\end{itemize}

In all cases, it is clear that $f_{x, u} (\iota) \in {}^\alpha \kappa$,
as it is the limit of a cofinal branch through $(T \restriction \alpha,\s)$.

\end{description}

\begin{claim} \label{a0-goes-right-theta-free-tree}
If $\alpha_x < \alpha$ then
$f_{x, u \restriction \alpha_x} (\iota) <_T f_{x, u} (\iota)$.
\end{claim}
\begin{proof} As in the proof Claim~\ref{a0-goes-right-theta-many-tree}.\end{proof}

Having constructed $\mathbf b^\alpha_x \in {}^\alpha\kappa$ for every $x \in T \restriction C_\alpha$
and the function $f_{x, u} : \theta \to {}^\alpha\kappa$ for every $x \in T \restriction \alpha$ and every $u \in U_\alpha$,
the decision as to which elements of $^\alpha \kappa$ are included in $T_\alpha$
is exactly as in Theorem~\ref{tree-theta-ascent-complete-thm}:

$$T_\alpha=\begin{cases}
\{ f_{x, u} (\iota) \mid x \in T \restriction \alpha, u \in U_\alpha, \iota < \theta \}\cup\{ \mathbf b^\alpha_x \mid x \in T \restriction C_\alpha \},&\text{if }\alpha\in E^\kappa_{\ge\chi}\cap\Gamma;\\
\{ f_{x, u} (\iota) \mid x \in T \restriction \alpha, u \in U_\alpha, \iota < \theta \},&\text{if }\alpha \in E^\kappa_{\geq\chi} \setminus \Gamma;\\
\{\bigcup b\mid b\text{ is a cofinal branch through }(T\restriction\alpha,\s)\},&\text{if }\alpha \in E^\kappa_{<\chi}.
\end{cases}$$

The required properties are verified just as in previous theorems,
with the exception of:

\begin{itemize}

\item[(\ref{tree-cobounded-ascent-theta})]
Fix $\beta < \alpha$, $x \in T \restriction \beta$, and $u \in U_\alpha$.
Referring back to the construction of $f_{x, u}$, there are two cases to check:

\begin{description}
\item[$\blacktriangleright \alpha_x = \alpha$]
Just as in the proof of property~(\ref{cofinite-ascent}) of Theorem~\ref{ctbl-ascent-thm}.

\item[$\blacktriangleright  \alpha_x < \alpha$]

Define $F \in \mathcal F^{\nu}_\theta$ as in
the proof of the corresponding property in Theorem~\ref{NEW-theta-ascent-thm} (with the obvious substitutions).
This time,
if there exist
some $m<\omega$ and
a function $g\in{}^{<\chi} (H_\kappa \times H_\kappa \times H_\kappa)$  such that
$\psi (\alpha^k_x) =g$ whenever $m<k<\omega$, then replace $F$ with
\[
F \setminus \{ \iota < \theta \mid (\iota, h, h') \in \range(g) \text{ for some } h, h' \}.
\]
The subtracted set has size $\left| \range(g) \right| \leq \dom(g) < \chi \leq \nu$.
Thus we still have $F \in \mathcal F^{\nu}_\theta$.

Now, fix any $\iota \in F$, and we must show that $f_{x, u \restriction \beta} (\iota) <_T f_{x, u} (\iota)$:
First, by construction of $F$, it is not the case that there exist some $m<\omega$,
a function $g\in{}^{<\chi} (H_\kappa \times H_\kappa \times H_\kappa)$, and $h, h' \in H_\kappa$ such that
$\psi (\alpha^n_x) =g$ whenever $m<n<\omega$,
and $(\iota,h, h')\in\range(g)$.
We then proceed as in the proof of property~(\ref{cofinite-ascent-theta}) of Theorem~\ref{NEW-theta-ascent-thm}
to show that $f_{x, u \restriction \beta} (\iota) <_T f_{x, u} (\iota)$,
as required.
\end{description}

\item[(\ref{order2-nacc-tree-ascent-free})]
Suppose that
$\alpha \in \Gamma$ satisfies $\sup (\acc(C_\alpha)) < \alpha$,
and we can fix some $\beta < \alpha$ and $g\in{}^{<\chi} (H_\kappa \times H_\kappa \times H_\kappa)$
such that $\psi[C_\alpha \setminus \beta] = \{ g \}$.
Consider any $\iota < \theta$ such that $(\iota, h, h') \in \range(g)$ for some $h, h'$,
any $x \in T \restriction (\sup(\acc(C_\alpha)))$, and any $u \in U_\alpha$.
Since $\height(x) < \sup(\acc(C_\alpha))$, we have
$\alpha_x = \sup(\acc(C_\alpha))$,
so that we follow the case $\alpha_x < \alpha$ in the definition of $f_{x, u}(\iota)$.
In that case, under the given assumptions,
we have defined $f_{x, u} (\iota)$ to be equal to
$\mathbf{b}^\alpha_{f_{x, u\restriction \alpha^0_x} (\iota)}$,
where $\alpha^0_x = \alpha_x$, so that the result follows.
\end{itemize}
\end{description}

Now, let $T=\bigcup_{\alpha<\kappa}T_\alpha$.

\begin{claim}\label{claim364}
The tree $(T,{\subset})$ is a $\chi$-free $\kappa$-Souslin tree.
\end{claim}
\begin{proof}
Fix any nonzero ordinal
$\tau < \chi$, any level $\delta < \kappa$, and
consider any sequence of distinct nodes
\[
\langle w_\xi \mid \xi < \tau \rangle \in {}^\tau T_\delta.
\]
Let
\[
\hat T = \bigotimes_{\xi < \tau}  \cone{w_\xi} \subseteq T^\tau
\]
be the derived tree determined by this sequence.
We need to show that $(\hat T,<_{\hat T})$ is a $\kappa$-Souslin tree.

The tree $(T,{\subset})$ is clearly a $\kappa$-tree,
and by arithmetic considerations we already met, so is $(\hat T,<_{\hat T})$.
Let $A \subseteq \hat T$ be a maximal antichain.
We will show that $A \subseteq \hat T \restriction \alpha$ for some $\alpha < \kappa$.

Now, for each $i<\kappa$, the following set is stationary:
\[
A_i = \{ \beta \in R_i \mid
        A \cap {}^\tau (T \restriction \beta) = S_\beta
        \text{ is a maximal antichain in }\hat T \cap {}^\tau (T \restriction \beta)
\}.
\]
Thus we can apply the last part of the proxy principle to the sequence
$\langle A_i \cap \acc(\kappa) \mid i < \kappa \rangle$
to obtain a stationary set
\[
W = \{\alpha \in E^\kappa_{\geq \eta}\cap\S \mid
    \forall i < \alpha \left[\sup \{ \beta \in C_\alpha \mid
                    \suc_\omega (C_\alpha \setminus \beta) \subseteq A_i \cap \acc(\kappa) \}
                = \alpha \right] \}.
\]
Since this set is a stationary subset of $E^\kappa_{\ge\eta}$, it must intersect the (relative) club subset of $E^\kappa_{\ge\eta}$:
\[
Z = \{\beta < \kappa \mid     {}^{<\chi} \left( (\theta+1) \times(T \restriction \beta) \times(U \restriction \beta) \right) \subseteq \phi[\beta] \}
\]
in a stationary set.
Thus we can fix an ordinal $\alpha \in W \cap Z$ with $\alpha > \max \{ \delta, \theta \}$.

\begin{subclaim}\label{get-acc}
$\alpha\in\Gamma\cap\S$ and $\sup (\acc (C_\alpha)) = \alpha$.
\end{subclaim}

\begin{proof}
We have $\alpha \in \Gamma$ just as in the proof of Subclaim~\ref{alpha-in-Gamma}. By $\alpha\in W$, we also have $\alpha\in\S$.

To see that $\sup (\acc (C_\alpha)) = \alpha$:
Suppose not.
Let $\alpha_0 = \sup (\acc (C_\alpha))$, and we are assuming $\alpha_0 < \alpha$.
Thus $C_\alpha \setminus \alpha_0$ is a cofinal subset of $\alpha$ of order-type $\omega$.
Then we must be able to fix $\beta_1, \beta_2 \in C_\alpha \setminus \alpha_0$ such that
$\suc_\omega (C_\alpha \setminus \beta_1) \subseteq A_1$ and
$\suc_\omega (C_\alpha \setminus \beta_2) \subseteq A_2$.
But $\suc_\omega (C_\alpha \setminus \beta_1)$ and $\suc_\omega (C_\alpha \setminus \beta_2)$
share a common tail,
while $A_1 \cap A_2 = \emptyset$.  This is a contradiction.
\end{proof}

To see that the maximal antichain  $A$ is a subset of $\hat T \restriction \alpha$,
consider any $\langle v_\xi' \mid \xi < \tau \rangle \in \hat T \cap(T_\alpha)^\tau$.

Since $\alpha \in \Gamma \cap E^\kappa_{\geq\eta}$,
property~(\ref{tree-Gamma-and-large}) tells us that
every node of $T_\alpha$ was constructed in one of two ways:
either it is $\mathbf b^\alpha_x$ for some $x \in T \restriction C_\alpha$,
or it is $f_{x, u} (\iota)$ for some $x \in T \restriction \alpha$, some $u \in U_\alpha$, and some $\iota < \theta$.
In particular, each $v'_\xi$ is of one of these forms.
We now define a vector
$g \in {}^\tau ((\theta+1) \times (T\restriction \alpha) \times (U\restriction C_\alpha) )$
to capture the construction of the components $v'_\xi$.
For each $\xi<\tau$,
we define $g(\xi)$ as follows:
\begin{itemize}
\item If there exists some $x_\xi\in T\restriction C_\alpha$ such that $v_\xi'=\mathbf {b}^\alpha_{x_\xi}$,
then we let $g(\xi)=(\theta,x_\xi, \emptyset)$, where $x_\xi$ is the $<_\phi$-least such element;
\item Otherwise,
there exist some $\iota_\xi <\theta$, $x_\xi\in T\restriction\alpha$ and $u_\xi \in U_\alpha$
such that $v_\xi'=f_{x_\xi, u_\xi}(\iota_\xi)$.
Since $\alpha \in \S$, Clause~(\ref{respectingonto}) of Definition~\ref{respecting}
tells us that we can find some $z_\xi \in U \restriction C_\alpha$ such that $\mathbf b^\alpha(z_\xi) = u_\xi$.
In this case, we let $g(\xi)=(\iota_\xi,x_\xi, z_\xi)$ be the $<_\phi$-least such triple.
\end{itemize}

By $\cf(\alpha) \geq \eta \ge\chi > \tau$ and $\sup (\acc (C_\alpha)) = \alpha$,
fix a large enough $\gamma \in \acc(C_\alpha)$ such that
$\{ x_\xi \mid \xi<\tau \}\s T\restriction\gamma$ and
$\{ z_\xi \mid \xi<\tau \}\s U\restriction\gamma$.
Thus in fact $g \in {}^\tau ((\theta+1) \times (T\restriction \gamma) \times (U\restriction (C_\alpha \cap \gamma)))$.

Put $i=\phi^{-1}(g)$, and it follows from $\alpha \in Z$ that $i < \alpha$.
So, by $\alpha \in W$, there exists some
$\epsilon\in C_\alpha\setminus\max\{\gamma,\delta\}$ such that
$\suc_\omega(C_\alpha\setminus\epsilon)\s A_i$.
Fix such an $\epsilon$, and write ${\bar\alpha}=\sup(\suc_\omega(C_\alpha\setminus\epsilon))$.
By Subclaim~\ref{get-acc} we cannot have ${\bar\alpha} = \alpha$,
and hence ${\bar\alpha}\in\acc(C_\alpha)$.
Since $\alpha \in \Gamma$, we have
${\bar\alpha}\in\Gamma$ and $C_{{\bar\alpha}} = C_\alpha \cap {\bar\alpha}$, so that
\[
C_{{\bar\alpha}} \setminus (\epsilon +1) = (C_\alpha \cap {\bar\alpha}) \setminus (\epsilon+1)
    = \suc_\omega (C_\alpha \setminus \epsilon) \subseteq A_i \subseteq R_i,
\]
and also (using $\gamma \in \acc(C_\alpha)$)
\[
\gamma \leq \sup(\acc(C_{{\bar\alpha}})) = \sup(\acc(C_\alpha \cap {\bar\alpha})) \leq \epsilon < {\bar\alpha} < \alpha.
\]

For every $\beta \in A_i \subseteq R_i$ we have $\psi(\beta) = \phi(i) = g$.
Thus
\[
\psi[C_{{\bar\alpha}} \setminus (\epsilon +1)]
    = \psi [\suc_\omega (C_\alpha \setminus \epsilon)] = \psi[A_i] = \psi[R_i] = \{g\}.
\]

Fix some $\beta \in C_{{\bar\alpha}} \setminus (\epsilon+1) \subseteq \nacc (C_{{\bar\alpha}})$,
and define $\beta^- = \max (C_{{\bar\alpha}} \cap \beta)$.
In particular, $\psi(\beta) = g$, and
we  also have
\[
\gamma \leq \sup(\acc(C_{{\bar\alpha}})) = \sup(\acc(C_\alpha \cap \beta))
    \leq \epsilon \leq {\beta^-} < {\beta} < {\bar\alpha},
\]
and it follows that for every $x \in T \restriction \gamma$,
\[
\Phi^\alpha_\beta(x) = \sup(\acc(C_\alpha \cap \beta)) = \sup(\acc(C_{\bar\alpha})).
\]

\begin{subclaim}\label{complicated}
For every $\xi < \tau$,
we have
\[
\mathbf b^{{{\bar\alpha}}}_{\varphi_{{\bar\alpha}}({{\beta}}) (\xi)} <_T v'_\xi.\footnote{The definition of $\varphi_{{\bar\alpha}}(\beta)$ may be found on page \pageref{63varphibetaalpha}.}
\]
\end{subclaim}
\begin{proof}
Consider any $\xi < \tau$.
We have $\height(x_\xi) < \gamma$, so that $\Phi^\alpha_\beta(x_\xi)=\sup(\acc(C_{{\bar\alpha}}))$.
Now, recalling that $\psi(\beta) = g$,
we consider two possibilities, based on the two parts in the definition of $g(\xi)$:

\begin{itemize}
\item
If
$g(\xi)=(\theta,x_\xi, \emptyset)$, then we must have $x_\xi \in T \restriction (C_\alpha \cap \gamma)$,
and by definition of the function $\varphi_{{\bar\alpha}}({\beta})$ in this case we have
$\varphi_{{\bar\alpha}}({{\beta}}) (\xi) = x_\xi$, and by the Coherence Claim~\ref{coherence-Gamma-free} it follows that
\[
\mathbf b^{{{\bar\alpha}}}_{\varphi_{{\bar\alpha}}({{\beta}}) (\xi)} =
\mathbf b^{{{\bar\alpha}}}_{x_\xi} = b^\alpha_{x_\xi} ({\bar\alpha}) <_T \mathbf b^\alpha_{x_\xi} = v'_\xi.
\]

\item
If
$g(\xi)=(\iota_\xi,x_\xi, z_\xi)$ for $\iota_\xi<\theta$, then
we must have $x_\xi \in T \restriction \gamma$ and $z_\xi \in U \restriction (C_\alpha \cap \gamma)$,
as well as $u_\xi = \mathbf b^\alpha(z_\xi) \in U_\alpha$, let alone $u_\xi\restriction\sup(\acc(C_{\bar\alpha}))\in U_{\sup(\acc(C_{\bar\alpha}))}$.
By definition of the function $\varphi_{{\bar\alpha}}({\beta})$ in this case we have
\[
\varphi_{{\bar\alpha}}({{\beta}}) (\xi) = f_{x_\xi, u_\xi \restriction \sup(\acc(C_{{\bar\alpha}}))}(\iota_\xi).
\]
Since ${\bar\alpha} \in \Gamma$,  $\sup(\acc(C_{{\bar\alpha}})) < {\bar\alpha}$,
$\psi[C_{{\bar\alpha}} \setminus (\epsilon +1)] = \{g\}$,
$(\iota_\xi, x_\xi, z_\xi) \in \range(g)$,
$x \in T \restriction \gamma \subseteq T \restriction (\sup(\acc(C_\alpha)))$,
and $u_\xi \restriction {\bar\alpha} \in U_{{\bar\alpha}}$,
we apply property~(\ref{order2-nacc-tree-ascent-free}) to ${\bar\alpha}$,
obtaining
\[
f_{x_\xi, u_\xi \restriction {\bar\alpha}} (\iota_\xi) =
\mathbf b^{{{\bar\alpha}}}_{f_{x_\xi, u_\xi \restriction (\sup(\acc(C_{{\bar\alpha}})))} (\iota_\xi)}.
\]
Furthermore, since $\alpha \in \Gamma$ and ${\bar\alpha} \in \acc(C_\alpha)$,
property~(\ref{tree-ascent-on-acc-theta}) gives
$f_{x_\xi, u_\xi \restriction {\bar\alpha}}(\iota_\xi) <_T f_{x_\xi, u_\xi}(\iota_\xi)$.
Altogether, it follows that
\[
\mathbf b^{{{\bar\alpha}}}_{\varphi_{{\bar\alpha}}({{\beta}}) (\xi)} =
\mathbf b^{{{\bar\alpha}}}_{f_{x_\xi, u_\xi \restriction (\sup(\acc(C_{{\bar\alpha}})))}(\iota_\xi)} =
    f_{x_\xi, u_\xi \restriction {\bar\alpha}} (\iota_\xi) <_T f_{x_\xi, u_\xi} (\iota_\xi) = v'_\xi.
\]
\end{itemize}
Thus in both cases the subclaim is proven.
\end{proof}

Write $\bar T=T\restriction({\beta}+1)$, and $$\vec b= \langle b^{{{\bar\alpha}}}_{\varphi_{{\bar\alpha}}({\beta})(\xi)}({\beta^-}) \mid \xi<\tau \rangle.$$
By Subclaim~\ref{complicated}, for all $\xi<\tau$, we have
\[
\vec b(\xi) <_T \mathbf b^{{{\bar\alpha}}}_{\varphi_{\bar\alpha}({\beta})(\xi)} <_T v'_\xi.
\]

By $\psi(\beta)=i=\phi^{-1}(g)$, we have $\vec{b}=\varsigma^{{\bar\alpha}}_\beta$,
and so
for all $x\in T\restriction(C_\alpha \cap{\beta})$, we have
\[
b_x^{{{\bar\alpha}}} ({\beta}) = \free ( b_x^{{{\bar\alpha}}} ({\beta^-}), T\restriction({\beta}+1), \vec{b}).
\]

Therefore, consider the set
\[
Q= \left\{ \vec z\in {}^\tau(\bar T\cap{}^{{\beta}(\bar T)}\kappa) \mid
    \exists \vec y\in S_{{\beta}(\bar T)} \cap {}^\tau \bar T
    \left(
        \forall \xi<\tau\  \vec b(\xi)\cup \vec y(\xi)\s \vec z(\xi)
    \right) \right\}.
\]
Since ${\beta} \in A_i$,
$S_{\beta} = A \cap {}^\tau (T \restriction {\beta})$ is a maximal antichain in $\hat T \cap {}^\tau (T \restriction {\beta})$.
So, $Q$ is equal to the set
\[
\left\{ \vec z\in {}^\tau(T_{\beta}) \mid
    \exists \vec y\in A\cap{}^\tau(T\restriction{\beta})\ \forall \xi<\tau\left(\vec b (\xi) \cup \vec y(\xi)\s \vec z(\xi)\right) \right\}.
\]

As $\langle v_\xi' \mid \xi<\tau\rangle\in \hat T$ and ${\beta^-} \geq \epsilon \geq \delta$,
we get that $w_\xi<_T \vec b(\xi)$ for all $\xi<\tau$,
so that $\vec b \in \hat T$.

Since $A\cap(T\restriction{\beta})^\tau$ is a maximal antichain in $\hat T \cap (T \restriction {\beta})^\tau$,
and $\vec b\in\hat T$, we get from the normality of $T\restriction ({\beta}+1)$ that the set $Q$ must be non-empty.
Let $\vec z=\min(Q,<_\phi)$. Since $w_\xi \le_T \vec b(\xi)$ for all $\xi<\tau$,
and $\langle w_\xi\mid \xi<\tau\rangle$ is a sequence of distinct nodes of $T_\delta$,
we get that for all $\xi<\tau$, $\{\xi'<\tau\mid \vec b (\xi) = \vec b (\xi') \}$ is equal to the singleton $\{\xi\}$.

Altogether, for all $\xi<\tau$:
\[
b_{\varphi_{{\bar\alpha}}({\beta})(\xi)}^{{{\bar\alpha}}}({\beta})
    = \free ( b_{\varphi_{{\bar\alpha}}({\beta})(\xi)}^{{{\bar\alpha}}} ({\beta^-}), T\restriction({\beta}+1), \vec b )
    = \vec z(\xi).
\]

Let $\vec y \in A \cap {}^\tau (T \restriction {\beta})$ be a witness to the choice of $\vec z$.
Then for all $\xi  < \tau$ we have
\[
\vec y (\xi) <_T \vec z(\xi) =  b_{\varphi_{{\bar\alpha}}({\beta})(\xi)}^{{{\bar\alpha}}}({\beta})
    <_T \mathbf b_{\varphi_{{\bar\alpha}}({\beta})(\xi)}^{{{\bar\alpha}}} <_T v'_\xi,
\]
as sought.

As $A$ was an arbitrary maximal antichain in $(\hat T,<_{\hat T})$,
we have established that it is a $\kappa$-Souslin tree.
But $(\hat T,<_{\hat T})$ was an arbitrary derived tree of $(T,{\subset})$, so we are done.
\end{proof}

The fact that $(T,{\subset})$ is $\eta$-complete is exactly what is provided by property~(\ref{full-small-ascent}) of the recursion.

Define $f_{\emptyset, \emptyset} : \theta \to \{\emptyset\}$ to be the constant function.
Then properties~(\ref{tree-successor-ascent-injection-theta})(b),
(\ref{tree-cobounded-ascent-theta}) and~(\ref{theta-distinct-ascent}) guarantee that
$\langle f_{\emptyset, u} \mid u \in U \rangle$ is an injective
$(\mathcal F^{\nu}_\theta, U)$-ascent path through $(T,{\subset})$.
\end{proof}

\begin{theorem}\label{thm65}
Suppose that $\cf(\nu)=\nu<\theta^+<\chi<\cf(\kappa)=\kappa$ are infinite cardinals,
$\lambda^{<\chi}<\kappa$ for all $\lambda<\kappa$, and $\p_{14}(\kappa,  {\sqsubseteq}, \kappa, \{E^\kappa_{\ge\chi}\})$ holds.

Then there exists a prolific  $\nu$-free, $(\chi,\theta^+)$-free, $\chi$-complete $\kappa$-Souslin tree with an injective $\mathcal F^{\nu}_\theta$-ascent path.
\end{theorem}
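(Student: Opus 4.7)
The plan is to fuse the recursive construction of Theorem \ref{theta-ascent-free-thm1} (responsible for the injective $\mathcal F^\nu_\theta$-ascent path and the $(\chi,\theta^+)$-freeness) with that of Theorem \ref{free-with-u-ascent} (responsible for $\chi$-completeness and genuine $\nu$-freeness), specialised to the trivial ascent-path tree $U=\bigcup_{\alpha<\kappa}{}^\alpha 1\cong(\kappa,\in)$, which is automatically $\p_{14}^-$-respecting. Since the second parameter of $\p_{14}$ in our hypothesis is the strongest relation $\sqsubseteq$, the set $\Gamma=\{\alpha\in\acc(\kappa)\setminus(\theta+1)\mid\forall\beta\in\acc(C_\alpha)(C_\beta\sqsubseteq C_\alpha)\}$ comprises every limit ordinal $>\theta$, so the auxiliary branches $\mathbf{b}^\alpha_x$ can be produced at every relevant level. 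After fixing a witness $\langle C_\alpha\mid\alpha<\kappa\rangle$ to $\p_{14}^-(\kappa,\sqsubseteq,\kappa,\{E^\kappa_{\ge\chi}\})$ and the standard $\diamondsuit(H_\kappa)$-scaffolding ($\phi,\psi,<_\phi,\langle S_\beta\rangle,\langle R_i\rangle,\defaultaction,\sealantichain,\free$ as in Section \ref{section:basicsetup}), I would recursively build $T_\alpha$, $f_\alpha:\theta\to T_\alpha$, and the nodes $\langle\mathbf{b}^\alpha_x\mid x\in T\restriction C_\alpha\rangle$ for every limit $\alpha>\theta$.

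At successor stages I would follow the prolific, injective-$f_\alpha$ recipe of Theorem \ref{NEW-theta-ascent-thm}, with $f_\alpha$ constant for $\alpha\le\theta$ and injective for $\alpha>\theta$. At a limit $\alpha>\theta$ I would build $b^\alpha_x(\beta)$ by recursion over $\beta\in C_\alpha$, mimicking the scheme of Theorem \ref{free-with-u-ascent}: at $\beta\in\nacc(C_\alpha)$ with predecessor $\beta^-$, if $\psi(\beta)$ codes a triple-vector in ${}^{<\chi}(\kappa\times(T\restriction\beta^-)\times(C_\alpha\cap\beta^-))\setminus\{\emptyset\}$ then apply $\free$ with the parameter $\varsigma^\alpha_\beta$ derived from $\varphi_\alpha(\beta)$ (as on page \pageref{63varphibetaalpha}, reinterpreting the $U$-coordinate as an ordinal of $C_\alpha$), and otherwise apply $\defaultaction$; at $\beta\in\acc(C_\alpha)$ take unions, coherence being ensured as in Claim \ref{coherence-Gamma-free}. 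Define $f_\alpha(\iota)$ by the standard $\alpha_0$-dichotomy, arranging that whenever $\alpha_0<\alpha$ and $\psi$ stabilises on a vector $g$ with $(\iota,h,h')\in\range(g)$, we have $f_\alpha(\iota)=\mathbf{b}^\alpha_{f_{\alpha_0}(\iota)}$, thereby factoring ascent-path values through $\mathbf{b}^\alpha_\bullet$-nodes at hitting levels. Finally, for $\alpha\in\Gamma\cap E^\kappa_{\ge\chi}$ set $T_\alpha=\{\mathbf{b}^\alpha_x\mid x\in T\restriction C_\alpha\}\cup\{f_\alpha(\iota)\mid\iota<\theta\}$, and for $\alpha\in E^\kappa_{<\chi}$ let $T_\alpha$ contain the limit of every cofinal branch of $T\restriction\alpha$. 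The bound $|T_\alpha|<\kappa$ at $\alpha\in E^\kappa_{<\chi}$ and the $\chi$-completeness of $T$ both follow from $\lambda^{<\chi}<\kappa$.

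Prolificity, normality, $\chi$-completeness, $\kappa$-Souslinness and the injective $\mathcal F^\nu_\theta$-ascent path will then be verified along the lines established in Sections \ref{sectioncountablewidth}, \ref{sectionwiderwidth} and \ref{section:free}; the $\nu$-completeness of $\mathcal F^\nu_\theta$ used at successor steps is delivered exactly as in Theorem \ref{NEW-theta-ascent-thm}. The $(\chi,\theta^+)$-freeness follows the template of Claim \ref{claim612}: at a diamond-anticipating $\alpha\in E^\kappa_{\ge\chi}$, every coordinate of a target $\vec v'\in\hat T\cap T_\alpha^\tau$ is either a $\mathbf{b}^\alpha_{x_\xi}$-node (sealed by a $\free$-step at some $\beta\in\nacc(C_\alpha)$) or one of the at-most-$\theta$ ascent-path nodes $f_\alpha(\iota)$, giving $|\tau\setminus I|\le\theta<\theta^+$. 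The main obstacle is securing the stronger $\nu$-freeness, i.e.\ $(\nu,1)$-freeness, where $\tau<\nu$ and \emph{no} coordinate may be left unsealed. The crucial point, borrowed from Subclaim \ref{complicated}, is that $\varphi_\alpha(\beta)$ must be engineered to translate \emph{both} $\mathbf{b}^\alpha_{x_\xi}$-coordinates (via triples $(\theta,x_\xi,\emptyset)$) \emph{and} $f_\alpha(\iota_\xi)$-coordinates (via triples $(\iota_\xi,x_\xi,z_\xi)$ with $z_\xi\in C_\alpha$) into elements $\varphi_{\bar\alpha}(\beta)(\xi)$ satisfying $\mathbf{b}^{\bar\alpha}_{\varphi_{\bar\alpha}(\beta)(\xi)}<_T v'_\xi$, so that every coordinate of $\vec v'$ strictly extends an element of the maximal antichain $A$. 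The coding goes through because $\tau<\nu<\chi\le\cf(\alpha)$ together with $\lambda^{<\chi}<\kappa$ puts the requisite $\phi$-preimage below $\alpha$, while simultaneous realisation of the hitting property for each $\iota_\xi$ is arranged by applying the last clause of the proxy principle to the $\kappa$-many stationary sets coding derived-tree antichains via $\diamondsuit$, just as in Subclaim \ref{get-acc}.
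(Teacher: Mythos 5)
Your overall architecture matches the paper's proof: work with a witness to $\p_{14}^-(\kappa,\sqsubseteq,\kappa,\{E^\kappa_{\ge\chi}\})$, note that $\sqsubseteq$ makes $\Gamma=\acc(\kappa)\setminus(\theta+1)$ so all $\mathbf b^\alpha_x$ exist, specialize Theorem~\ref{free-with-u-ascent}'s $U$ to $\bigcup_{\alpha<\kappa}{}^\alpha1$ with the trivially respecting $\mathbf b^\alpha(z)={}^\alpha 1$, add all cofinal branches at levels in $E^\kappa_{<\chi}$, keep $T_\alpha$ to the $\mathbf b^\alpha_x$'s plus the $\theta$-many $f_\alpha(\iota)$'s at levels in $E^\kappa_{\ge\chi}$, and run the Claim~\ref{claim612} argument (with $\tau<\chi$, sealing only the $\mathbf b$-coordinates) for $(\chi,\theta^+)$-freeness and the Claim~\ref{claim364} argument (with $\tau<\nu$, sealing everything via the triple encoding) for $\nu$-freeness. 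This is precisely the fusion the paper carries out.

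However, you omit the single modification that the paper explicitly flags as the crux -- item~(c) in its proof -- and that omission is a genuine gap. The ``hitting'' clause that routes $f_\alpha(\iota)$ through a $\mathbf b^\alpha$-node (property~(\ref{order2-nacc-tree-ascent-free}) and the matching case in the limit-level definition of $f_\alpha(\iota)$) must be restricted to stabilizing vectors $g\in{}^{<\nu}(H_\kappa\times H_\kappa\times H_\kappa)$, \emph{not} ${}^{<\chi}$. Your proposal takes this condition from the $\free$-trigger, which you state as ${}^{<\chi}(\cdot)$, and you never mention tightening it. The problem is in the verification of the ascent-path coherence property (clause~(\ref{tree-cobounded-ascent-theta}), i.e.\ that $\{\iota<\theta\mid f_\beta(\iota)<_T f_\alpha(\iota)\}\in\mathcal F^\nu_\theta$): when the $\psi$-sequence stabilizes on $g$, one must delete from the filter set $F$ all coordinates $\iota$ with $(\iota,h,h')\in\range(g)$, because for those $\iota$ the value $f_\alpha(\iota)$ is forced to equal $\mathbf b^\alpha_{f_{\alpha_0}(\iota)}$ rather than extending $f_{\alpha_x^m}(\iota)$. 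The deleted set has size at most $\dom(g)$; for $F$ minus this set to remain in the merely $\nu$-complete filter $\mathcal F^\nu_\theta$, one needs $\dom(g)<\nu$. With $\dom(g)<\chi$ and $\nu<\chi$, this inequality can fail, and the ascent-path property is lost. (In Theorem~\ref{free-with-u-ascent} this is invisible because there $\chi=\min\{\eta,\nu\}\le\nu$.) The $\free$-trigger itself can be left at ${}^{<\kappa}$ or tightened to ${}^{<\chi}$ -- that is harmless, since you only ever encode vectors of length $<\chi$ there -- but the hitting clause genuinely requires ${}^{<\nu}$, and since you are proving $\nu$-freeness the derived-tree arguments only ever code vectors of length $\tau<\nu$, so the tighter bound costs nothing.

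A smaller imprecision: the ``simultaneous realisation of the hitting property for each $\iota_\xi$'' in your last paragraph is not achieved by invoking the proxy principle separately for each $\iota_\xi$; it comes from a single stabilizing value $\psi(\beta)=g$ (for the single index $i=\phi^{-1}(g)$), which encodes all the triples $(\iota_\xi,x_\xi,z_\xi)$ at once, so that one application of the hitting clause seals every coordinate simultaneously. The $\kappa$-many stationary sets $A_i$ exist so that $\diamondsuit$ may anticipate any one such $g$.
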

\begin{proof} Let $\langle C_\alpha\mid\alpha<\kappa\rangle$ be a witness to $\p_{14}(\kappa,  {\sqsubseteq}, \kappa, \{E^\kappa_{\ge\chi}\})$.
Without loss of generality, $C_\alpha=C_\alpha\setminus\{0\}$ for all $\alpha<\kappa$.
By recursion over $\alpha < \kappa$,
construct
the levels $\langle T_\alpha \mid \alpha < \kappa \rangle$ of the tree $T$,
as well as the functions $\langle f_{\alpha} \mid \alpha<\kappa \rangle$
and the nodes
$\langle\langle \mathbf b^\alpha_x \mid x \in T \restriction C_\alpha \rangle\mid \alpha \in \acc(\kappa)\rangle$,
in a way that is almost identical to the construction of Theorem \ref{free-with-u-ascent}, but more relaxed in the following senses:
\begin{itemize}
\item[(a)] There, the second parameter of the proxy principle was $\sqsubseteq_\nu$,
and so, for the sake of normality, we constructed an ascent path $\langle f_{x,u}\mid u\in U\rangle$ for each $x\in T$. Here, we work with $\sq$,
and we obtain normality by defining $\mathbf{b}^\alpha_x$ for all $\alpha\in \acc(\kappa)=\Gamma$. Consequently, it suffices to have $\langle f_{x,u}\mid u\in U\rangle$ only for $x=\emptyset$.
\item[(b)] There, we constructed an $(\mathcal F^\nu_\theta,U)$-ascent path, whereas, here, we merely want an $\mathcal F^\nu_\theta$-ascent path.
\end{itemize}

Altogether, we shall only construct $\langle f_u\mid u\in U\rangle$,
where $U=\bigcup_{\alpha<\kappa}{}^\alpha1$, with $\mathbf{b}^\alpha(z)={}^\alpha1$ for all $z$.
Consequently, for every $\alpha\in E^\kappa_{\ge\chi}$, we shall have
$$T_\alpha = \{ \mathbf b^\alpha_x \mid x \in T \restriction C_\alpha \} \cup \{ f_{{}^\alpha1} (\iota) \mid \iota < \theta \},$$
and hence the proof of Claim \ref{claim612} goes through, establishing that $(T,{\subset})$ is $(\chi,\theta^+)$-free.

\begin{itemize}
\item[(c)] In the hypothesis of Theorem \ref{free-with-u-ascent}, $\chi$ was $\le\nu$. Here, we make them equal, and use $\chi$ for a different purpose.
By ``make them equal'', we mean that on property (\ref{order2-nacc-tree-ascent-free}) from page \pageref{order2-nacc-tree-ascent-free}, we restrict our attention to functions $g\in{}^{<\nu} (H_\kappa \times H_\kappa \times H_\kappa)$.
\end{itemize}
Consequently, the proof of Claim \ref{claim364} establishes that $(T,{\subset})$ is $\nu$-free.

\begin{itemize}
\item[(d)] There, the fourth parameter of the proxy principle was $\{E^\kappa_{\ge\eta}\}$ for some $\eta\ge\chi$,
and the outcome tree was $\eta$-complete. Here, we have $\eta=\chi$.
\end{itemize}
Consequently, the outcome tree is $\chi$-complete.
\end{proof}

As made clear in Sections \ref{sectioncountablewidth} and \ref{sectionwiderwidth}, to any construction of a slim tree,
there is a counterpart construction of a complete tree, and vice versa.
In particular, we have the following variations of the preceding theorems.

\begin{theorem}\label{free-with-u-ascent-slim}
Suppose that $\nu<\kappa$ are any regular  cardinals,  $\chi\le\nu$ is a cardinal satisfying $\lambda^{<\chi} < \kappa$ for all $\lambda < \kappa$,
and $U\s{}^{<\kappa}\kappa$ is a given slim
$\p_{14}^-(\kappa,  {\sqsubseteq}_{\nu}, \kappa, \{E^\kappa_{\ge\chi}\})$-respecting $\kappa$-tree.

If $\diamondsuit(\kappa)$ holds,
then for every infinite $\theta<\kappa$,
there exists a prolific slim $\chi$-free $\kappa$-Souslin tree with an injective $(\mathcal F^{\nu}_\theta,U)$-ascent path.
\end{theorem}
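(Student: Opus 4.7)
My plan is to mirror, almost verbatim, the construction of Theorem~\ref{free-with-u-ascent}, replacing only the rule that governs which sequences from ${}^\alpha\kappa$ are placed into $T_\alpha$ at limit levels $\alpha \in E^\kappa_{<\chi}$. There, the limit of \emph{every} cofinal branch through $T \restriction \alpha$ was placed into $T_\alpha$ (to achieve $\chi$-completeness, using $\lambda^{<\chi}<\kappa$). Here, by analogy with Theorem~\ref{NEW-theta-ascent-thm}, I would include only the ``controlled'' sequences produced by the construction, thereby exchanging completeness for slimness.

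Concretely, I would carry out the same recursion, constructing $\langle T_\alpha\mid \alpha<\kappa\rangle$ together with $\langle\langle \mathbf b^\alpha_x\mid x\in T\restriction C_\alpha\rangle\mid \alpha\in\Gamma\rangle$ and $\langle\langle f_{x,u}\mid x\in T\restriction\alpha, u\in U_\alpha\rangle\mid \alpha<\kappa\rangle$, preserving all properties from Theorem~\ref{free-with-u-ascent} except property~(\ref{full-small-ascent}), which I would replace by stipulating that at every limit level $\alpha$,
\[
T_\alpha=\begin{cases}
\{\mathbf b^\alpha_x\mid x\in T\restriction C_\alpha\}\cup\{f_{x,u}(\iota)\mid x\in T\restriction\alpha, u\in U_\alpha,\iota<\theta\},&\alpha\in\Gamma;\\
\{f_{x,u}(\iota)\mid x\in T\restriction\alpha, u\in U_\alpha,\iota<\theta\},&\alpha\in\acc(\kappa)\setminus\Gamma.
\end{cases}
\]
At small successor ordinals $\alpha\le\theta$ I would, as in Theorem~\ref{NEW-theta-ascent-thm}, insist that each $f_{x,u}$ be a constant function (glued to $\langle u(\beta)\rangle$), and at large successor ordinals I would make it injective in the usual way. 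The definitions of $\varsigma^\alpha_\beta$ and $\varphi_\alpha(\beta)$, the Coherence Claim, the construction of $\mathbf b^\alpha_x$, and the bifurcation into $\alpha_x=\alpha$ vs. $\alpha_x<\alpha$ for $f_{x,u}(\iota)$ in the limit case $\alpha\in\Gamma$ would all be taken over from Theorem~\ref{free-with-u-ascent} unchanged.

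The bookkeeping for slimness is routine: at a limit level $\alpha$ one has $|T\restriction C_\alpha|\le|T\restriction\alpha|\le|\alpha|$ inductively; since $(U,\subset)$ is slim, $|U_\alpha|\le\max\{|\alpha|,\aleph_0\}$; and each $\range(f_{x,u})$ has size $\le\theta\le|\alpha|$ when $\alpha>\theta$ and size $1$ when $\alpha\le\theta$, so altogether $|T_\alpha|\le|\alpha|$. The verifications of normality, the ascent-path injectivity and cobounded-domination properties, and the distinctness property~(\ref{theta-distinct-ascent}) are identical to those in Theorems~\ref{NEW-theta-ascent-thm} and~\ref{free-with-u-ascent}. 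The key arithmetic point to reconfirm is the analogue of Claim~\ref{F-in-filter}: for $\alpha\in\acc(\kappa)\setminus\Gamma$ one has $\otp(C_\alpha)<\nu$ by Claim~\ref{Gamma-includes2}(2), so the $\nu$-completeness of $\mathcal F^\nu_\theta$ keeps $F^*_{x,u}$ in the filter, and the definition of $f_{x,u}(\iota)$ at such $\alpha$ proceeds as in the $\alpha\notin\Gamma$ branch of Theorem~\ref{NEW-theta-ascent-thm}.

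The main obstacle — and also where the slim version might initially look suspect — is ensuring that the Souslin and $\chi$-freeness arguments of Theorem~\ref{free-with-u-ascent} still apply after we have removed the ``all branches'' safety net at $E^\kappa_{<\chi}$. The point is that both arguments pick the critical level $\alpha$ from $W\subseteq E^\kappa_{\ge\chi}\cap\S$ (or from a stationary set of analogous form), at which $\alpha\in\Gamma$ automatically (via Claims~\ref{Gamma-includes2} and~\ref{get-acc}) and at which $T_\alpha$ is formed exactly as in the complete construction. Consequently, the analysis of an arbitrary $v'\in T_\alpha$ as either $\mathbf b^\alpha_x$ or $f_{x,u}(\iota)$ proceeds verbatim, and the freeness argument from Claim~\ref{claim364} together with the Souslin argument go through unchanged. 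Finally, setting $f_{\emptyset,\emptyset}:\theta\to\{\emptyset\}$ constant and extracting $\langle f_{\emptyset,u}\mid u\in U\rangle$ yields the desired injective $(\mathcal F^\nu_\theta,U)$-ascent path.
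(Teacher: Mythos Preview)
Your proposal is correct and matches the paper's own approach: the paper does not give an explicit proof of this theorem, merely stating that ``to any construction of a slim tree, there is a counterpart construction of a complete tree, and vice versa,'' and listing this result as the slim variation of Theorem~\ref{free-with-u-ascent}. You have accurately spelled out what that variation entails---dropping property~(\ref{full-small-ascent}), restricting $T_\alpha$ at every limit level to the controlled nodes as in Theorem~\ref{NEW-theta-ascent-thm}, reinstating the small/large successor split for slimness, and observing that the Souslin and $\chi$-freeness arguments of Claim~\ref{claim364} are unaffected because they operate at levels in $E^\kappa_{\ge\chi}\cap\Gamma$ where the level construction is unchanged.
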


\begin{theorem}
Suppose that $\cf(\nu)=\nu<\theta^+<\chi<\cf(\kappa)=\kappa$ are infinite cardinals,
$\lambda^{<\chi}<\kappa$ for all $\lambda<\kappa$, and $\p_{14}(\kappa,  {\sqsubseteq}, \kappa, \{E^\kappa_{\ge\chi}\})$ holds.

Then there exists a prolific slim $\nu$-free, $(\chi,\theta^+)$-free, $\kappa$-Souslin tree with an injective $\mathcal F^{\nu}_\theta$-ascent path.
\end{theorem}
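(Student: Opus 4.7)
The plan is to carry out the construction of Theorem \ref{thm65} essentially unchanged, except to replace the $\chi$-completeness step at limit levels of low cofinality with the slim recipe used in Theorems \ref{ctbl-ascent-thm}, \ref{NEW-theta-ascent-thm} and \ref{theta-ascent-free-thm1}. Fix a witness $\langle C_\alpha \mid \alpha < \kappa \rangle$ to $\p_{14}^-(\kappa, {\sq}, \kappa, \{E^\kappa_{\ge\chi}\})$ (with $C_\alpha = C_\alpha \setminus \{0\}$) and the $\diamondsuit(H_\kappa)$ gadgets $\phi, \psi, \langle S_\beta\rangle, \langle R_i\rangle, <_\phi, \defaultaction, \sealantichain, \free$ of Section \ref{section:basicsetup}. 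By recursion on $\alpha < \kappa$, construct the levels $T_\alpha \subseteq {}^\alpha \kappa$, a single ascent-path sequence $\langle f_\alpha : \theta \to T_\alpha \mid \alpha < \kappa \rangle$, and, for each limit $\alpha$, the branches $\langle \mathbf{b}^\alpha_x \mid x \in T \restriction C_\alpha \rangle$, maintaining the slim analogues of the bookkeeping properties of Theorem \ref{theta-ascent-free-thm1} together with the $\mathcal F^\nu_\theta$-variants of properties (\ref{simple-theta-ascent-injection})--(\ref{order2-nacc-theta}) adapted as in Theorem \ref{thm65}.

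At a successor $\alpha = \beta+1$ take $T_\alpha = \{s^\smallfrown \langle i \rangle \mid s \in T_\beta,\ i < \max\{\omega, \alpha\}\}$ and define $f_\alpha$ exactly as in Theorem \ref{theta-ascent-free-thm1} (constant for $\alpha \le \theta$, injective for $\theta < \alpha$). At a limit $\alpha$, build the sequences $b^\alpha_x$ simultaneously for all $x \in T \restriction C_\alpha$ as in Theorem \ref{free-with-u-ascent}: invoke $\free(b^\alpha_x(\beta^-), T \restriction (\beta+1), \varsigma^\alpha_\beta)$ at successor points $\beta$ of $C_\alpha$ whenever $\psi(\beta) \in {}^{<\nu}(\kappa \times T \restriction \beta^- \times \{\emptyset\}) \setminus \{\emptyset\}$ (the $^{<\nu}$-restriction coming from (c) of the proof of Theorem \ref{thm65}), and $\defaultaction$ otherwise; then set $\mathbf{b}^\alpha_x$ to be the limit of $b^\alpha_x$. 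Construct $f_\alpha$ by the case-analysis of Theorem \ref{thm65}, branching according to whether the tail $\psi[C_\alpha \setminus \epsilon]$ is a constant $g \in {}^{<\nu}(H_\kappa \times H_\kappa \times H_\kappa)$ with $(\iota,h,h') \in \range(g)$ for some $h,h'$. The crucial choice is to set
\[
T_\alpha = \{\mathbf{b}^\alpha_x \mid x \in T \restriction C_\alpha\} \cup \{f_\alpha(\iota) \mid \iota < \theta\}
\]
at \emph{every} limit level, rather than adding limits of all cofinal branches on $E^\kappa_{<\chi}$; this is exactly what yields slimness instead of $\chi$-completeness.

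Slimness is then immediate, since $|T \restriction C_\alpha| \le |\alpha|$ and $|\range(f_\alpha)| \le \max\{1,\theta\} \le |\alpha|$. The Coherence Claim is just the straightforward adaptation of Claim \ref{coherence-Gamma-free} (simplified since $U = \bigcup_{\alpha<\kappa}{}^\alpha 1$ makes $\mathbf{b}^\alpha(z)$ trivial), and the proof that $(T,\subset)$ is $\kappa$-Souslin repeats the argument at the end of the proof of Theorem \ref{theta-ascent-free-thm1}, the hitting point $\alpha$ being selected in $E^\kappa_{\ge\chi}$ as permitted by the fourth parameter. The $(\chi,\theta^+)$-freeness follows from Claim \ref{claim612} verbatim, and $\nu$-freeness from the adaptation of Claim \ref{claim364} as described in (c) of the proof of Theorem \ref{thm65}, with the vector $g$ now lying in ${}^\tau((\theta+1) \times T \restriction \gamma \times \{\emptyset\})$ for $\tau < \nu$. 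The only point that warrants genuine care is the verification that choosing $T_\alpha$ as a slim set at limit $\alpha \in E^\kappa_{<\chi}$ does not impair the freeness proofs: since both hitting levels in those proofs are selected inside $E^\kappa_{\ge\chi}$ via the proxy principle applied to the derived-tree antichain sets $A_i$, the witnessing nodes $\mathbf{b}^{\bar\alpha}_{x_\xi}$ exist at the correct level $\bar\alpha \in \acc(C_\alpha)$ regardless of whether intermediate levels were completed, and the size bound $|T_\alpha| < \kappa$ needed to ensure $(T,\subset)$ is a $\kappa$-tree is preserved at every limit level by this slim recipe.
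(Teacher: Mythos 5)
Your overall plan is the right one and matches what the paper intends: keep the construction of Theorem~\ref{thm65} and simply replace the completeness clause at levels in $E^\kappa_{<\chi}$ with the slim recipe, so that $T_\alpha = \{\mathbf b^\alpha_x \mid x \in T\restriction C_\alpha\}\cup\{f_\alpha(\iota)\mid\iota<\theta\}$ at \emph{every} limit $\alpha$. But there is a genuine error in where you apply the ${}^{<\nu}$-restriction from item~(c) of the proof of Theorem~\ref{thm65}.

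Item~(c) restricts \emph{only} property~(\ref{order2-nacc-tree-ascent-free}), i.e.\ the tail-constancy condition under which $f_\alpha(\iota)$ is forced to equal $\mathbf b^\alpha_{f_{\alpha_x}(\iota)}$, replacing $g\in{}^{<\chi}(H_\kappa\times H_\kappa\times H_\kappa)$ by $g\in{}^{<\nu}(H_\kappa\times H_\kappa\times H_\kappa)$. It does \emph{not} touch the test in property~(\ref{b-tree-ascent-free})(d) deciding whether $b^\alpha_x(\beta)$ is built via $\free$ or via $\defaultaction$: that test stays at $\psi(\beta)\in{}^{<\kappa}(\kappa\times T\restriction\beta^-\times U\restriction\beta^-)$, exactly as in Theorem~\ref{free-with-u-ascent}. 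You have moved the ${}^{<\nu}$ into that $\free$-vs-$\defaultaction$ test, writing $\psi(\beta)\in{}^{<\nu}(\kappa\times T\restriction\beta^-\times\{\emptyset\})\setminus\{\emptyset\}$, and this breaks the $(\chi,\theta^+)$-freeness proof. The reason is that $\nu$-freeness and $(\chi,\theta^+)$-freeness are proven for derived trees of \emph{different} widths. The $\nu$-freeness argument (the adaptation of Claim~\ref{claim364}, item~(c)) works at width $\tau<\nu$, so the tail vector $g$ has length $<\nu$ and property~(\ref{order2-nacc-tree-ascent-free}) with the ${}^{<\nu}$-bound gives $\mathbf b^{\bar\alpha}_{\varphi_{\bar\alpha}(\beta)(\xi)}<_T v'_\xi$ for \emph{every} $\xi<\tau$. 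The $(\chi,\theta^+)$-freeness argument (the adaptation of Claim~\ref{claim612}) works at width $\tau<\chi$, which --- since $\nu<\theta^+<\chi$ --- includes $\tau\geq\nu$; there one only seals the positions $\xi\in I$ where $v'_\xi=\mathbf b^\alpha_{x_\xi}$, charges the $|\tau\setminus I|\le\theta$ positions where $v'_\xi=f_\alpha(\iota_\xi)$, and never invokes property~(\ref{order2-nacc-tree-ascent-free}). But the key identity $b^\alpha_{x_\xi}(\beta)=\free(b^\alpha_{x_\xi}(\beta^-),T\restriction(\beta+1),\vec b)$ at the hitting ordinal $\beta$ does require $\psi(\beta)$ (a sequence of length $\tau<\chi$) to pass the $\free$-test. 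With your ${}^{<\nu}$ version, the test fails whenever $\nu\le\tau<\chi$, the construction silently falls back to $\defaultaction$, and the antichain in the $\tau$-dimensional derived tree is never sealed. Keep the test in property~(\ref{b-tree-ascent-free})(d) at ${}^{<\kappa}$ (or at most ${}^{<\chi}$, since $\alpha\in W\cap Z$ is chosen in $E^\kappa_{\geq\chi}$), and dial only property~(\ref{order2-nacc-tree-ascent-free}) down to ${}^{<\nu}$, and the rest of your argument goes through.

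A secondary, less serious point: you say the Souslinness ``repeats the argument at the end of the proof of Theorem~\ref{theta-ascent-free-thm1}''; that argument relies on $\sealantichain$ being the fallback, whereas the present construction (following Theorem~\ref{free-with-u-ascent}) uses $\defaultaction$. The correct source of Souslinness here is the $\nu$-freeness argument itself, applied at width $\tau=1$ --- that is precisely how Claim~\ref{claim364} delivers Souslinness in Theorems~\ref{free-with-u-ascent} and~\ref{thm65}.
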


\section{Appendix:  Producing a Binary Tree Similar to Any Given Tree}

In this short section, we analyze a natural process that, for any given $\kappa$-tree $(X,<_X)$,
produces a downward-closed subtree $T$ of ${}^{<\kappa}2$ of great resemblance to the original one.
Note that we do not assume that the tree $(X,<_X)$ is Hausdorff.

\subsection{The process}

Suppose that $\kappa$ is a regular uncountable cardinal.
Let $(X,<_X)$ be a $\kappa$-tree. As $|X_\alpha|<\kappa$ for all $\alpha<\kappa$,
we may recursively find a sequence of injections $\langle \pi_\alpha:X_\alpha\rightarrow\kappa\mid \alpha<\kappa\rangle$
such that for all $\alpha<\beta<\kappa$,
$\sup(\range(\pi_\alpha))<\min(\range(\pi_\beta))$.
Let $\pi=\bigcup_{\alpha<\kappa}\pi_\alpha$.
Note that if $y,z\in X$ and $y<_X z$ in $X$, then $\pi(y)<\pi(z)$.
So for all $\delta<\kappa$ and $x\in X_\delta$, the set of ordinals $[x]=\{ \pi(y)\mid y\in X, y<_X x\}$ has order-type $\delta$.
Fix a bijection $\psi:\kappa\rightarrow\kappa\times\kappa$.  For all $\delta<\kappa$ and $x\in X_\delta$, let

\begin{itemize}
\item $u_x:\delta\rightarrow [x]$ be the order-preserving isomorphism;
\item $t_x:\delta\rightarrow 2$, where $t_x(\beta)=1$ iff there exists $(\alpha,\gamma)\in\delta\times\delta$ such that $\psi(\beta)=(\alpha,\gamma)$ and $u_x(\alpha)=\gamma$.
\end{itemize}

Consider the club $E=\{\delta<\kappa\mid \pi[X\restriction\delta]\s\delta\ \&\ \psi[\delta]=\delta\times\delta\}$,
and let $$T=\{ t_x\restriction\beta\mid \beta\le\delta, \delta\in E, x\in X_\delta\}.$$

Then $T$ is a downward-closed subtree of ${}^{<\kappa}2$. Next, we shall compare $(T,{\subset})$ with $(X,<_X)$.

\subsection{The analysis}

\begin{lemma}\label{bin-pres-extend}
Suppose $x,x'\in X$, and  $x\le_X x'$. Then $u_x\s u_{x'}$.
If moreover, $x,x'\in X\restriction E$, then $t_x\s t_{x'}$.

In particular, $T_\delta=\{ t_x\mid x\in X_\delta\}$ for all $\delta\in E$.
\end{lemma}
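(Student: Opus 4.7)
The first assertion is essentially an unpacking of the definitions, and the key observation is that $[x]$ is an \emph{initial segment} of $[x']$, not merely a subset. Indeed, assume $x\le_X x'$ and set $\delta=\height(x)$, $\delta'=\height(x')$. Containment $[x]\s[x']$ is immediate from transitivity of $<_X$. For the initial-segment property, I will pick $\alpha=\pi(z)\in[x']$ and $\beta=\pi(y)\in[x]$ with $\alpha<\beta$, and show $\alpha\in[x]$: from the construction of $\pi$ (ranges of the $\pi_\mu$'s are pairwise disjoint and increasing with $\mu$), $\pi(z)<\pi(y)$ forces $\height(z)\le\height(y)<\delta$; since $z<_X x'$ and the downward cone $\pred{x'}$ is linearly ordered of order-type $\delta'$, the unique element of $\pred{x'}$ at height $\height(z)$ must coincide with the unique element of $\pred{x}\s\pred{x'}$ at that height, so $z<_X x$ and $\alpha\in[x]$. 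Since $u_x$ and $u_{x'}$ are order isomorphisms from $\delta$ and $\delta'$ onto $[x]$ and $[x']$ respectively, the initial-segment property delivers $u_x=u_{x'}\restriction\delta$, in particular $u_x\s u_{x'}$.

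Next, assume in addition $x,x'\in X\restriction E$. For $\delta\in E$ the defining clause of $E$ gives $\pi[X\restriction\delta]\s\delta$, so $[x]\s\delta$, and hence $u_x:\delta\to\delta$; similarly $u_{x'}:\delta'\to\delta'$. Now fix $\beta<\delta$. Since $\psi[\delta]=\delta\times\delta$, we have $\psi(\beta)=(\alpha,\gamma)\in\delta\times\delta$, so in particular $(\alpha,\gamma)\in\delta'\times\delta'$ as well. Using $u_x=u_{x'}\restriction\delta$ and $\alpha<\delta$, the equivalence $u_x(\alpha)=\gamma\iff u_{x'}(\alpha)=\gamma$ yields $t_x(\beta)=t_{x'}(\beta)$. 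As $\beta<\delta$ was arbitrary, $t_x=t_{x'}\restriction\delta$, so $t_x\s t_{x'}$.

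For the ``in particular'' statement, fix $\delta\in E$. The inclusion $\{t_x\mid x\in X_\delta\}\s T_\delta$ is immediate from the definition of $T$. For the converse, take any $t\in T_\delta$, so by definition $t=t_{x'}\restriction\delta$ for some $\delta'\in E$ with $\delta'\ge\delta$ and some $x'\in X_{\delta'}$. If $\delta'=\delta$, then $t=t_{x'}$ with $x'\in X_\delta$ and we are done. Otherwise $\delta'>\delta$, and since $\pred{x'}$ is well-ordered of order-type $\delta'$, it contains a (unique) element $x\in X_\delta$ with $x<_X x'$; applying the already-established second assertion to the pair $x,x'\in X\restriction E$ gives $t_x=t_{x'}\restriction\delta=t$, as desired.

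The argument is essentially definition-chasing; the only subtlety — and the step I expect to spend the most care on — is the initial-segment argument in the first paragraph, which relies on the level-respecting nature of $\pi$ together with the fact that trees have linearly ordered downward cones. Everything else follows routinely from the observation that $\psi$ maps $\delta$ bijectively onto $\delta\times\delta$ for $\delta\in E$.
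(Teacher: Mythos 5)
Your proof is correct and follows the same route as the paper's proof, which disposes of the first assertion by simply noting that, by the choice of $\pi$, $[x']$ end-extends $[x]$, hence $u_{x'}$ end-extends $u_x$, and reads off the $t$-inclusion and the level identification from that. You have unpacked the ``end-extension'' step (the level-respecting property of $\pi$ combined with the linearity of $\pred{x'}$) and the $t_x\subseteq t_{x'}$ step (using $\psi[\delta]=\delta\times\delta$ for $\delta\in E$) in more detail, but the structure and the key observations are the same as in the paper.
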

\begin{proof}
By transitivity of $<_X$, we have $[x']\supseteq [x]$. Then, by the choice of $\pi$, $[x']$ is an end-extension of $[x]$.
Consequently, $u_{x'}$ is an end-extension of $u_x$. It follows that if $x,x'\in X\restriction E$, then $t_x\subseteq t_{x'}$.

Now, if $\delta\in E$ and $t\in T_\delta$, then there exists some $x'\in X\restriction( E\setminus\delta)$ such that $t=t_{x'}\restriction\delta$. But then, if $x$ is the unique element of $X_\delta$ with $x\le_T x'$,
then $t_x\s t_{x'}$ and $\dom(t_x)=\delta$. That is, $t_x=t$.
\end{proof}

\begin{lemma}
$(T,{\subset})$ is a $\kappa$-tree.
\end{lemma}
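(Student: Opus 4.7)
The plan is to verify the two defining properties of a $\kappa$-tree for $(T,{\subset})$: (i) every level $T_\alpha$ for $\alpha<\kappa$ is non-empty, and (ii) every level $T_\alpha$ has size $<\kappa$. The underlying tree structure is immediate, since $T$ was defined as a downward-closed subset of ${}^{<\kappa}2$, which is a tree under $\subset$, and the level of $t\in T$ in this structure is simply $\dom(t)$.

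For (i), I would use the fact that $E$ is a club in $\kappa$. Given any $\alpha<\kappa$, pick some $\delta\in E$ with $\delta\ge\alpha$; since $X$ is a $\kappa$-tree, $X_\delta\neq\emptyset$, so pick some $x\in X_\delta$. Then $\dom(t_x\restriction\alpha)=\alpha$, so $t_x\restriction\alpha$ is a member of $T_\alpha$, witnessing that $T_\alpha\neq\emptyset$.

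For (ii), fix $\alpha<\kappa$ and set $\delta^{*}=\min(E\setminus\alpha)$, which exists since $E$ is cofinal in $\kappa$. It suffices to show $T_\alpha\subseteq\{t_x\restriction\alpha\mid x\in X_{\delta^{*}}\}$, since then $|T_\alpha|\le|X_{\delta^{*}}|<\kappa$. So consider any $t\in T_\alpha$. Unpacking the definition of $T$, there exist $\delta\in E$ with $\alpha\le\delta$ and $x\in X_\delta$ such that $t=t_x\restriction\alpha$. From $\delta\in E\setminus\alpha$ and the minimality of $\delta^{*}$, we have $\delta\ge\delta^{*}$. If $\delta=\delta^{*}$, we are done. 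Otherwise $\delta>\delta^{*}$; since $X$ is a tree, there is a unique $y\in X_{\delta^{*}}$ with $y<_X x$, and both $y,x$ lie in $X\restriction E$, so Lemma~\ref{bin-pres-extend} yields $t_y\subset t_x$. Since $\dom(t_y)=\delta^{*}\ge\alpha$, restricting to $\alpha$ gives $t=t_x\restriction\alpha=t_y\restriction\alpha$, as desired.

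The argument is essentially bookkeeping, so I do not anticipate any real obstacle. The only subtlety worth flagging is the step for levels $\alpha\notin E$: one must move from an arbitrary representative $x\in X_\delta$ with $\delta>\delta^{*}$ down to the canonical level $X_{\delta^{*}}$, and here the coherence provided by Lemma~\ref{bin-pres-extend} (which relies crucially on both heights belonging to $E$, and on the choice of $\pi$ being order-preserving along branches) is indispensable.
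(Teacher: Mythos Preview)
Your proof is correct and follows essentially the same approach as the paper: both arguments reduce the bound on $|T_\alpha|$ to $|X_{\delta}|$ where $\delta=\min(E\setminus\alpha)$, using Lemma~\ref{bin-pres-extend} to pass to the canonical level in $E$. You have simply spelled out in detail what the paper compresses into the single sentence ``By construction of $T$, each node in $T_\beta$ is extended by some node in $T_\delta$.''
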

\begin{proof}
It is clear that $\{ \alpha \mid T_\alpha \neq \emptyset \} = \sup(E) = \kappa$.
Fix $\beta < \kappa$, and we must show that $\left| T_\beta \right| < \kappa$.
Let $\delta = \min (E \setminus \beta)$.
By construction of $T$, each node in $T_\beta$ is extended by some node in $T_\delta$.
By the previous claim, $|T_\delta|\le |X_\delta|$.
Altogether, $\left| T_\beta \right| \leq \left| T_\delta \right|\le |X_\delta|<\kappa$.
\end{proof}

\begin{lemma}\hfill
\begin{enumerate}
\item If $(X,<_X)$ is normal, then so is $(T,{\subset})$;
\item If $(X,<_X)$ is normal and splitting, then $(T\restriction E,\subset)$ is also splitting.
\end{enumerate}
\end{lemma}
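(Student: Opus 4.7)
For part (1), I will reduce normality of $(T,{\subset})$ directly to normality of $(X,<_X)$. Given $s\in T_\alpha$ and $\beta$ with $\alpha<\beta<\kappa$ and $T_\beta\ne\emptyset$, the definition of $T$ produces some $\delta\in E$ with $\delta\ge\alpha$ and $x\in X_\delta$ such that $s=t_x\restriction\alpha$. If $\delta\ge\beta$, then $t_x\restriction\beta$ already lies in $T_\beta$ and extends $s$. Otherwise, I will pick $\delta'\in E$ with $\delta'\ge\beta$ (using that $E$ is club), and since $X$ is normal with $X_{\delta'}\ne\emptyset$, I will extend $x$ to some $x'\in X_{\delta'}$. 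Then Lemma~\ref{bin-pres-extend} gives $t_x\s t_{x'}$, and so $t_{x'}\restriction\beta\in T_\beta$ is the required extension of $s$.

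For part (2), I first replace $E$ by $E\cap\acc(\kappa)$, a club that preserves the earlier development and guarantees that for every $\alpha\in E$, the next element $\alpha^+:=\min(E\setminus(\alpha+1))$ satisfies $\alpha+1<\alpha^+$. Given $s\in T_\alpha$ with $\alpha\in E$, Lemma~\ref{bin-pres-extend} supplies $x\in X_\alpha$ with $s=t_x$. Splitting of $X$ delivers distinct immediate successors $y_0,y_1\in X_{\alpha+1}$ of $x$, and normality of $X$ permits extending each $y_i$ to some $x_i\in X_{\alpha^+}$. Lemma~\ref{bin-pres-extend} then places $t_{x_0},t_{x_1}$ in $T_{\alpha^+}$ as extensions of $s$; since no element of $E$ lies strictly between $\alpha$ and $\alpha^+$, both will qualify as immediate successors of $s$ in $(T\restriction E,{\subset})$, provided I can show them distinct.

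The crux is verifying $t_{x_0}\ne t_{x_1}$. By the tree property, the unique $<_X$-predecessor of $x_i$ at height $\alpha+1$ is $y_i$, yielding $u_{x_0}(\alpha+1)=\pi(y_0)\ne\pi(y_1)=u_{x_1}(\alpha+1)$. Setting $\beta=\psi^{-1}((\alpha+1,\pi(y_0)))$, the membership $\alpha^+\in E$ gives $\psi[\alpha^+]=\alpha^+\times\alpha^+$, while $y_0\in X_{\alpha+1}\s X\restriction\alpha^+$ together with $\pi[X\restriction\alpha^+]\s\alpha^+$ forces $\pi(y_0)<\alpha^+$; hence $\beta<\alpha^+$, and thus $t_{x_0}(\beta)=1\ne 0=t_{x_1}(\beta)$. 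The main obstacle is precisely this bookkeeping: the distinguishing coordinate $\beta$ must land in $[\alpha,\alpha^+)$, not in $[0,\alpha)$ where $t_{x_0}$ and $t_{x_1}$ are forced to agree with $s$; this is where the strict inequality $\alpha+1<\alpha^+$ (and hence the preliminary refinement of $E$ to limit ordinals) and the closure of $E$ under both $\pi$ and $\psi$ must cooperate.
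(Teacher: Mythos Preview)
Your argument is correct and matches the paper's almost step for step: in part~(1) you lift through normality to a high $E$-level and restrict, and in part~(2) you split in $X$, extend to the next $E$-level $\alpha^+$, and separate the resulting $t_{x_i}$ via the coordinate $\psi^{-1}(\alpha+1,\pi(y_0))$. The paper does not refine $E$ but simply asserts $\alpha+1<\delta$; this already holds for $\alpha\ge1$ because $\alpha\in E$ gives $\psi[\alpha]=\alpha\times\alpha$, and the single new value $\psi(\alpha)$ cannot exhaust $(\alpha+1)^2\setminus\alpha^2$, so your passage to $E\cap\acc(\kappa)$ is a harmless precaution rather than a genuinely different idea.
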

\begin{proof}\hfill
\begin{enumerate}
\item
Consider any $t \in T$ and ordinal $\alpha$ such that $\height_T(t) < \alpha < \kappa$.
We must find some element of $T_\alpha$ extending $t$.
Let $\eta = \height_T(t)$.
By Claim \ref{bin-pres-extend}, there exists $x\in X_{\min(E\setminus\eta)}$ such that $t\s t_x$.
Since $E$ is unbounded in $\kappa$, we choose some $\delta \in E \setminus \max \{\alpha, \height_X(x)\}$.
By normality of $X$, we find some $x' \in X_\delta$ extending $x$.
Then Claim~\ref{bin-pres-extend} gives $t_x \subseteq t_{x'}$.
It follows that $t_{x'} \restriction \alpha \in T_\alpha$, and
$$t = t_x \restriction \eta = t_{x'} \restriction \eta \subseteq t_{x'} \restriction \alpha,$$
as required.

\item
Suppose $t\in T\restriction E$. Write $\alpha=\height_T(t)$, and let $\delta=\min(E\setminus(\alpha+1))$.
By Claim \ref{bin-pres-extend}, there exists $x\in X_{\alpha}$ such that $t=t_x$. Since $(X,<_X)$ is splitting,
let $x_0,x_1\in T_{\alpha+1}$ be two distinct extensions of $x$. Since $(X,<_X)$ is normal, for all $i<2$,
we may pick $y_i\in X_\delta$ that extends $x_i$. Then $t\s t_{y_i}\in T_\delta$ for all $i<2$. Now, to see that $t_{y_0}\neq t_{y_1}$,
write  $\gamma_i=\pi(x_i)$. Since $\alpha+1<\delta\in E$, we have $\gamma_i=\pi_{\alpha+1}(x_i)<\delta$.
Since $\pi$ is injective, $\gamma_0\neq\gamma_1$.
Let $\beta$ be such that $\psi(\beta)=(\alpha+1,\gamma_1)$.
Then $\beta<\delta$ and $t_{y_1}(\beta)=1$, while $t_{y_0}(\beta)=0$.
\qedhere
\end{enumerate}
\end{proof}

\begin{lemma}
If $( X,<_X)$ has no chains of cardinality $\kappa$, then neither does $(T,{\subset})$.
\end{lemma}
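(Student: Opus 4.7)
The plan is to prove the contrapositive: given a chain $B\s T$ of cardinality $\kappa$, I will produce a chain in $(X,<_X)$ of cardinality $\kappa$. Since $(T,\subset)$ has already been shown to be a $\kappa$-tree, $|T\restriction\alpha|<\kappa$ for every $\alpha<\kappa$, so $B$ must meet arbitrarily large heights; consequently $b:=\bigcup B$ is a well-defined function $\kappa\to 2$, and $\{b\restriction\beta\mid\beta<\kappa\}$ forms a cofinal branch through $(T,\subset)$. For each $\delta\in E$, the previous lemma supplies $T_\delta=\{t_x\mid x\in X_\delta\}$, so I may fix some $x_\delta\in X_\delta$ satisfying $t_{x_\delta}=b\restriction\delta$.

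The heart of the argument is to verify that $\pred{x_\delta}\s \pred{x_{\delta'}}$ whenever $\delta<\delta'$ are both in $E$. Since $t_{x_\delta}=b\restriction\delta=t_{x_{\delta'}}\restriction\delta$, I unpack the encoding: for any $\alpha<\delta$, let $\gamma:=u_{x_\delta}(\alpha)$, which lies in $\delta$ because $\delta\in E$ forces $[x_\delta]\s \pi[X\restriction\delta]\s\delta$. As $\delta\in E$ also gives $\psi[\delta]=\delta\times\delta$, the ordinal $\beta:=\psi^{-1}(\alpha,\gamma)$ lies below $\delta$, so the equality of $t_{x_\delta}(\beta)$ and $t_{x_{\delta'}}(\beta)$ yields $u_{x_{\delta'}}(\alpha)=\gamma=u_{x_\delta}(\alpha)$. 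Recalling that $u_x(\alpha)$ is $\pi$ applied to the unique predecessor of $x$ at level $\alpha$, and using that $\pi$ is injective, it follows that $x_\delta$ and $x_{\delta'}$ share exactly the same predecessor at each level below $\delta$, whence $\pred{x_\delta}\s \pred{x_{\delta'}}$.

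With this coherence in hand, set $P:=\bigcup_{\delta\in E}\pred{x_\delta}$. Each $\pred{x_\delta}$ is linearly ordered by $<_X$ by the tree property, and the family is $\s$-increasing by the previous paragraph, so $P$ itself is linearly ordered by $<_X$. Since $|\pred{x_\delta}|=\delta$ for each $\delta\in E$ and $\sup E=\kappa$, we obtain $|P|=\kappa$, contradicting the hypothesis on $(X,<_X)$.

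The step I expect to be the real obstacle is precisely the one above: the tree $(X,<_X)$ need not be Hausdorff, so distinct candidates at the same level could share identical predecessor sets without being $<_X$-comparable to any common extension higher up. Consequently, attempting to build the desired chain from the nodes $x_\delta$ directly is doomed — they need not form a chain in $X$ at all. The remedy is to work one floor below, with the downward cones $\pred{x_\delta}$, which are honest chains and whose coherence under inclusion follows cleanly from the encoding of $t_x$ by the fixed bijection $\psi$ on members of the club $E$.
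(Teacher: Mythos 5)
Your proof is correct and is essentially the same argument as the paper's: both decode the $X$-predecessors out of the binary branch via the $\psi$-encoding and produce the very same cofinal chain (the paper's set $D=\{\,y\in X\mid \exists(\alpha,\beta)\in\kappa\times f^{-1}\{1\},\ \psi(\beta)=(\alpha,\pi(y))\,\}$ is precisely your $P=\bigcup_{\delta\in E}\pred{x_\delta}$). The only difference is organizational — you isolate the coherence $\pred{x_\delta}\subseteq\pred{x_{\delta'}}$ and take a union, whereas the paper declares $D$ outright and then verifies directly that any two members lie below a common $x\in X\restriction E$ and that $f^{-1}\{1\}$ is unbounded — and, for what it's worth, your closing remark correctly identifies the non-Hausdorff pitfall that makes working with the downsets rather than the $x_\delta$'s the right move.
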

\begin{proof}
Suppose $C$ is a chain in $( T,\subset)$ of cardinality $\kappa$.
Let $f = \bigcup C$, so that $f \in {}^\kappa 2$, and consider the set $B=f^{-1}\{1\}$. Then, put $$D=\{ y\in X\mid \exists(\alpha,\beta)\in\kappa\times B[(\alpha,\pi(y))=\psi(\beta)]\}.$$

We first show that $D$ is a chain in $(X,<_X)$. For this, let  $y_0,y_1$ be arbitrary elements of $D$.
For each $i<2$, pick $(\alpha_i,\beta_i)\in\kappa\times B$ such that $(\alpha_i,\pi(y_i))=\psi(\beta_i)$.

Pick $c\in C$ with $\dom(c)>\max\{\alpha_0,\alpha_1\}$. Write $\epsilon=\dom(c)$. By definition of $T$,
there exists $x\in X\restriction (E\setminus\epsilon)$ such that $c\s t_x$.
Recalling the definition of $f$, we have
$f\restriction\epsilon=c=t_x\restriction\epsilon$.
In particular,  $t_x(\beta_i)=1$ for all $i<2$.
By definition of $t_x$, then, we have $u_x(\alpha_i)=\pi(y_i)$ for all $i<2$. In particular, $\pi(y_i)\in[x]$ for all $i<2$.
So $y_i<_X x$ for all $i<2$. As $(X,<_X)$ is a tree,  $y_0$ and $y_1$ are comparable.

Thus, to get a contradiction it suffices to show that the chain $D$ has cardinality $\kappa$.
As $\pi$ and $\psi$ are injective and $\kappa$ is regular, this reduces to showing that $B$ is unbounded in $\kappa$.
Let $\alpha$ be an arbitrary element of $E$. We shall find some $\beta>\alpha$ and $c\in C$ with $c(\beta)=1$.

Let $\delta=\min(E\setminus (\alpha+1))$. Pick $c\in C$ with $\dom(c)\ge\delta$, and then pick $x\in X\restriction E$ such that $c\s t_x$.
Let $y$ denote the unique predecessor of $x$ in level $\alpha$. Then $u_x(\alpha)=\pi(y)$, say it is $\gamma$. Let $\beta$ be such that $\psi(\beta)=(\alpha,\gamma)$.
By $\alpha\in E$, we have $\psi[\alpha]=\alpha\times\alpha$, and hence $\beta>\alpha$.
By $\delta\in E\setminus(\alpha+1)$, we have $\pi[T_\alpha]\s\delta$,
and so from $y\in T_\alpha$, we infer that $\gamma=\pi(y)<\delta$. By $\alpha<\delta$ and $\pi[\delta]=\delta\times\delta$,
we altogether infer that $\alpha<\beta<\delta$. Finally, by $c=t_x\restriction\delta$, we have $c(\beta)=t_x(\beta)=1$,
as indeed $\psi(\beta)=(\alpha,\gamma)$ and $u_x(\alpha)=\gamma$.
\end{proof}

\begin{lemma} Suppose that $S\s\kappa$ is such that
for every antichain $A\s X$, $\{\height_X(x)\mid x\in A\}\cap S$ is nonstationary.

Then, for every antichain $B\s T$,  $\{\height_T(t)\mid t\in B\}\cap S$ is nonstationary.
\end{lemma}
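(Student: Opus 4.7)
The plan is to lift an arbitrary antichain $B\s T$ to an antichain $A\s X$ in a height-preserving way, so that applying the hypothesis to $A$ immediately controls the heights of $B$ modulo the nonstationary set $\kappa\setminus E$.

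First I would observe that since $E$ is a club in $\kappa$, it suffices to show that $\{\height_T(t)\mid t\in B\}\cap S\cap E$ is nonstationary. To that end, invoking Lemma~\ref{bin-pres-extend}---which yields $T_\delta=\{t_x\mid x\in X_\delta\}$ for every $\delta\in E$---for each $t\in B$ with $\delta:=\height_T(t)\in E$ I would fix some $x_t\in X_\delta$ with $t_{x_t}=t$, and set
\[
A=\{x_t\mid t\in B\text{ and }\height_T(t)\in E\}\s X.
\]
By construction, $\{\height_X(x)\mid x\in A\}=\{\height_T(t)\mid t\in B\}\cap E$, so an application of the hypothesis to $A$ would immediately yield the desired conclusion.

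The one step that requires verification is that $A$ is indeed an antichain in $(X,<_X)$. If $x_t<_X x_{t'}$ were to hold for some distinct $t,t'\in B$ with heights in $E$, then by the extension clause of Lemma~\ref{bin-pres-extend} one would conclude $t=t_{x_t}\subsetneq t_{x_{t'}}=t'$, contradicting the assumption that $B$ is an antichain in $(T,{\subset})$; so $A$ is an antichain as required.

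The only subtlety to keep in mind---rather than a genuine obstacle---is that $(X,<_X)$ is not assumed to be Hausdorff, so the assignment $x\mapsto t_x$ may identify distinct nodes at the same level sharing the same downward cone, and the choice of $x_t$ is not canonical. Since $X$-comparability nevertheless forces $\s$-comparability of the associated binary sequences via Lemma~\ref{bin-pres-extend}, any choice of representatives suffices for the argument above.
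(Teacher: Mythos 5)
Your proof is correct and uses essentially the same idea as the paper's: lift $B$ to $X$ via the level-preserving surjection $x \mapsto t_x$ on levels in $E$ (Lemma~\ref{bin-pres-extend}), then apply the hypothesis. The paper presents it contrapositively (pick one representative $x_\alpha$ per $\alpha\in S'\cap E$, conclude they cannot be an antichain since their heights form a stationary subset of $S$, hence $B$ is not an antichain), whereas you argue directly (lift all of $B\restriction E$ to an antichain $A$ and apply the hypothesis), but the mathematical content is the same.
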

\begin{proof}
Suppose that $B\s T$, and $S'=\{\height_T(t)\mid t\in B\}\cap S$ is stationary.
By Lemma \ref{bin-pres-extend}, for all $\alpha\in S'\cap E$,
we may pick $x_\alpha\in X_\alpha$ such that $\{ t_{x_\alpha}\mid \alpha\in S'\cap E\}\s B$.
As $\{ \height_X(x_\alpha)\mid \alpha\in S'\cap E\}$ is a stationary subset of $S$, the hypothesis
entails  $\alpha<\beta$ in $S'\cap E$ such that $x_\alpha <_X x_\beta$.
Then, by Lemma \ref{bin-pres-extend}, $t_{x_\alpha}\s t_{x_\beta}$. In particular, $B$ is not an antichain in $(T,{\subset})$.
\end{proof}

\begin{lemma}\label{claim-antichain-04}
If $(X,<_X)$ has no antichains of cardinality $\kappa$, then neither does $(T,{\subset})$.
\end{lemma}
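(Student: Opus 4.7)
The plan is to argue by contrapositive: assume $B \subseteq T$ is an antichain with $|B| = \kappa$, and produce from it an antichain of the same cardinality inside $(X,<_X)$. The natural idea is to associate to each $t \in B$ a single node of $X$, but with two caveats. First, $t$ might live at a level $\alpha_t := \dom(t)$ that is not in the club $E$, so $t$ need not itself be of the form $t_x$; and second, if $(X,<_X)$ is not Hausdorff, the assignment $x \mapsto t_x$ may collapse distinct nodes of $X_\delta$ to a common $t_x$. Both issues are handled by lifting $t$ to the \emph{next} club level.

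Concretely, for each $t \in B$ set $\delta_t := \min(E \setminus \alpha_t)$. By the definition of $T$, fix any witness $y \in X_{\delta'}$ with $\delta' \in E$, $\delta' \ge \alpha_t$, and $t = t_y \restriction \alpha_t$, and let $x_t \in X_{\delta_t}$ be the unique $<_X$-predecessor of $y$ at level $\delta_t$. Lemma~\ref{bin-pres-extend} yields $t_{x_t} \subseteq t_y$, and hence $t_{x_t} \restriction \alpha_t = t$. (Any choice of $y$, and hence $x_t$, is permitted; we invoke choice.)

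It remains to verify two things: (i) the map $t \mapsto x_t$ is injective, and (ii) its image $A := \{x_t \mid t \in B\}$ is an antichain in $(X,<_X)$. For (i), if $x_{t_1} = x_{t_2} =: x$ with, say, $\alpha_{t_1} \le \alpha_{t_2}$, then $t_1 = t_x \restriction \alpha_{t_1} \subseteq t_x \restriction \alpha_{t_2} = t_2$, forcing $t_1 = t_2$ by incomparability in $B$. For (ii), suppose toward contradiction that $x_{t_1} <_X x_{t_2}$; then $\delta_{t_1} < \delta_{t_2}$. The key observation is that this forces $\alpha_{t_1} \le \alpha_{t_2}$: otherwise $\delta_{t_1} \ge \alpha_{t_1} > \alpha_{t_2}$ and $\delta_{t_1} \in E$, so $\delta_{t_2} = \min(E \setminus \alpha_{t_2}) \le \delta_{t_1}$, contradicting $\delta_{t_1} < \delta_{t_2}$. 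Applying Lemma~\ref{bin-pres-extend} again gives $t_{x_{t_1}} \subseteq t_{x_{t_2}}$, whence $t_1 = t_{x_{t_1}} \restriction \alpha_{t_1} \subseteq t_{x_{t_2}} \restriction \alpha_{t_2} = t_2$, contradicting that $B$ is an antichain. Hence $A$ is an antichain in $X$ of cardinality $\kappa$, contradicting the hypothesis.

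The main obstacle to anticipate is precisely the coordination of heights: in $T$, heights range over all of $\kappa$, whereas $t_x$ is only defined for $x$ whose height lies in $E$, and different $t$'s in $B$ may live at radically different levels. The choice $\delta_t = \min(E \setminus \alpha_t)$ is monotone in $\alpha_t$, and it is exactly this monotonicity that was used above to derive $\alpha_{t_1} \le \alpha_{t_2}$ from $\delta_{t_1} < \delta_{t_2}$; everything else is bookkeeping on top of Lemma~\ref{bin-pres-extend}.
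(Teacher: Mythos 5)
Your proof is correct and takes essentially the same route as the paper's: pull each $t \in B$ back to a node $x_t \in X\restriction E$ with $t = t_{x_t}\restriction\height(t)$, and then transfer incomparability through Lemma~\ref{bin-pres-extend}. The only cosmetic difference is that you pin the witness to the minimal club level $\delta_t = \min(E\setminus\height(t))$ and exploit the resulting monotonicity in $\height(t)$, whereas the paper allows the witness at an arbitrary club level above $\height(t)$ and instead observes (implicitly, via a short case split on the two heights) that incomparability of $t^i, t^j$ already forces incomparability of $t_{x_i}, t_{x_j}$ without any coordination of levels.
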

\begin{proof}
Suppose $A \subseteq T$ is an antichain of cardinality $\kappa$.
We enumerate $A$ as $\{ t^i \mid i < \kappa \}$.
For each $i<\kappa$,
we must have $t^i = t_{x_i} \restriction \beta_i$ for some $x_i \in U \restriction (E \setminus \beta_i)$,
where $\beta_i = \height_T(t^i)$.
For $i < j < \kappa$, $t^i$ and $t^j$ are incomparable elements of the antichain $A$,
and hence  $t_{x_i}$ and $t_{x_j}$ are incomparable. In particular,  $t^i\mapsto x_i$ is one-to-one,
and  $\{ x_i \mid i<\kappa\}$ has size $\kappa$. As $(X,<_X)$ has no antichain of size $\kappa$,
we may pick $i<j<\kappa$ such that $x_i$ and $x_j$ are comparable.
But then, by Claim~\ref{bin-pres-extend},  $t_{x_i}$ and $t_{x_j}$ are comparable. This is a contradiction.
\end{proof}

\begin{lemma}\label{lemma85} Suppose $(X,<_X)$ is a $(\chi,\eta)$-free $\kappa$-Souslin tree (e.g., $\eta=1)$.

If  $\lambda^{<\eta}<\kappa$ for all $\lambda<\kappa$, then:
\begin{enumerate}
\item $\lambda^{<\chi}<\kappa$ for all $\lambda<\kappa$;
\item  $(T,{\subset})$ is a $(\chi,\eta)$-free $\kappa$-Souslin tree.
\end{enumerate}
\end{lemma}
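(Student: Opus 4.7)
The plan is to prove the two parts in order, with part~(2) drawing on the arithmetic established in~(1); I expect the main obstacle to be part~(1), since the derived-tree level bound controls only products over $\vec w$ with pairwise distinct entries, whereas the quantity $|X_\alpha|^\tau$ decomposes as a sum over \emph{all} $\vec w\in X_\beta^\tau$.

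For~(1), the key observation is that $(\chi,\eta)$-freeness forces every derived tree $\hat X=\bigotimes_{i<\tau}\cone{w_i}$ (for any $\tau\in[\eta,\chi)$ and pairwise distinct $w_i$'s at a common level of $X$) to have each level of size $<\kappa$: two elements $\vec y,\vec z$ of the same level of $\hat X$ satisfy $\neg(\vec y(i)<_X\vec z(i))$ for all $i<\tau$, so a level of size $\kappa$ would contradict $(\chi,\eta)$-freeness (using $\tau\ge\eta$; the range $\tau<\eta$ is already handled by the arithmetic hypothesis). After passing to a normal splitting subtree via Lemma~\ref{normalandsplitting}, I would establish $|X_\alpha|^\tau<\kappa$ by induction on $\alpha<\kappa$, using the decomposition
\[
|X_\alpha|^\tau=\sum_{\vec w\in X_\beta^\tau}\prod_{i<\tau}|\{y\in X_\alpha:y>w_i\}|
\]
for a suitably large $\beta<\alpha$. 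The products associated with $\vec w$ of pairwise distinct entries are bounded by $\kappa$ via the derived-tree level bound, and their count by the inductive hypothesis $|X_\beta|^\tau<\kappa$; the contribution from $\vec w$ with repeated entries is controlled by pigeonholing on the image $W\subseteq X_\beta$ of size $\tau'<\tau$, applying $(\chi,\eta)$-freeness with $\tau'$ distinct nodes, and absorbing multiplicities via the standing hypothesis $\lambda^{<\eta}<\kappa$.

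For~(2), fix $\tau<\chi$, $\delta<\kappa$, distinct $\vec t=\langle t^i\mid i<\tau\rangle\in T_\delta^\tau$, and $A\subseteq\bigotimes_{i<\tau}\cone{t^i}$ with $|A|=\kappa$. Set $\delta'=\min(E\setminus\delta)$. Discarding a non-cofinal portion of $A$, we may assume every $\vec s\in A$ satisfies $\dom(\vec s(0))=\cdots=\dom(\vec s(\tau-1))=\alpha_{\vec s}\in E\setminus\delta'$. For each such $\vec s$, Lemma~\ref{bin-pres-extend} produces $\vec X(\vec s)\in X_{\alpha_{\vec s}}^\tau$ with $t_{\vec X(\vec s)(i)}=\vec s(i)$; let $y^i_{\vec s}\in X_{\delta'}$ denote the unique level-$\delta'$ predecessor of $\vec X(\vec s)(i)$. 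Since $t_{y^i_{\vec s}}\supseteq t^i$ and the $t^i$ are distinct, the entries of $\langle y^i_{\vec s}\mid i<\tau\rangle$ are pairwise distinct nodes of $X_{\delta'}$. The set $L^i=\{y\in X_{\delta'}:t_y\supseteq t^i\}$ has size $<\kappa$, so part~(1) gives $\prod_{i<\tau}|L^i|<\kappa$, allowing us to pigeonhole $A$ to some $A'$ of size $\kappa$ on which $\langle y^i_{\vec s}\mid i<\tau\rangle$ equals a fixed $\langle y^i\mid i<\tau\rangle$ of pairwise distinct nodes. The map $\vec s\mapsto\vec X(\vec s)$ is injective (different $\vec s$'s differ in some coordinate $i$, yielding distinct $t_{\vec X(\vec s)(i)}=\vec s(i)$ and hence distinct $\vec X(\vec s)(i)$), so $\{\vec X(\vec s)\mid\vec s\in A'\}$ is a $\kappa$-sized subset of $\bigotimes_{i<\tau}\cone{y^i}$. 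Applying $(\chi,\eta)$-freeness of $X$ yields $\vec s,\vec s'\in A'$ with $|\{i<\tau:\neg(\vec X(\vec s)(i)<_X\vec X(\vec s')(i))\}|<\eta$, and Lemma~\ref{bin-pres-extend} translates this directly into $|\{i<\tau:\neg(\vec s(i)\subsetneq\vec s'(i))\}|<\eta$, as required.
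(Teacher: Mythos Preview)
Your argument for part~(2) is correct and matches the paper's approach: pull back to $X$ via the $x\mapsto t_x$ correspondence, pigeonhole on the level-$\delta'$ predecessors using the arithmetic from part~(1), and apply $(\chi,\eta)$-freeness of $X$ to the resulting derived tree.

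Part~(1), however, has a genuine circularity. Consider the term in your decomposition corresponding to a \emph{constant} $\vec w$, say $w_i=w$ for all $i<\tau$: it equals $|\cone{w}\cap X_\alpha|^\tau$. After passing to a normal splitting subtree, $|\cone{w}\cap X_\alpha|$ can be made to exceed any prescribed $\lambda<\kappa$ by taking $\alpha$ large, so bounding this single term below $\kappa$ is equivalent to the very statement $\lambda^\tau<\kappa$ you are proving. Your proposed remedy does not apply here: the image $W$ has size $\tau'=1$ and the sole multiplicity is $\tau\ge\eta$, so neither the derived-tree level bound (which requires several distinct $w_i$'s) nor the hypothesis $\lambda^{<\eta}<\kappa$ gives any leverage.

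The paper argues instead by direct construction of a witness. Take $(\lambda,\tau)$ minimal with $\lambda^\tau\ge\kappa$; then $\tau$ is an infinite cardinal (hence a limit ordinal), $\tau\ge\eta$, and $\mu:=\lambda^{<\tau}<\kappa$. Using normality and splitting, one finds $\tau$ distinct nodes $\langle y_\xi\mid\xi<\tau\rangle$ at a common level and a higher level $\gamma$ with $|\cone{y_\xi}\cap X_\gamma|\ge\mu$ for each $\xi$; fix injections $\psi_\xi:{}^\xi\lambda\to\cone{y_\xi}\cap X_\gamma$. For $h\in{}^\tau\lambda$ set $\vec x_h(\xi)=\psi_\xi(h\restriction\xi)$. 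Because $\tau$ is a limit ordinal, $h\neq h'$ implies $h\restriction\xi\neq h'\restriction\xi$ for some $\xi<\tau$, so $h\mapsto\vec x_h$ is injective, yielding $\lambda^\tau\ge\kappa$ distinct elements at the single level $\gamma$ of $\bigotimes_{\xi<\tau}\cone{y_\xi}$. Any two such elements lie at the same height, so $\{\xi:\neg(\vec x_h(\xi)<_X\vec x_{h'}(\xi))\}=\tau\ge\eta$ for every pair, contradicting $(\chi,\eta)$-freeness.
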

\begin{proof}
(1) By Lemma \ref{normalandsplitting}, let $D\s E$ be a club such that $(X\restriction D,<_X)$ is normal and splitting.
\begin{claim}\label{claim861} For every $y\in X\restriction D$ and cardinal $\mu<\kappa$,
there exists some $\beta<\kappa$ such that $|X_\gamma\cap\cone{y}|\ge\mu$ whenever $\beta\le\gamma<\kappa$.
\end{claim}
\begin{proof} Let $y$ and $\mu$ be as in the hypothesis.
Since $D$ is a club in $\kappa$, we may choose some $\beta\in D$ such that $\otp(D\cap\beta\setminus\height_X(y))>\mu$.
Pick $z\in X_\beta$ with $y<_X z$.
Since $z_\downarrow$ is linearly ordered by $<_X$,
we may find $\{ y_i\mid i<\mu\}\s X\restriction D$ with $y_0=y$ that is $<_X$-increasing below $z$.
Since $(X\restriction D,<_X)$ is splitting, for all $i<\mu$, let us pick $x_i\in X_{\height(y_{i+1})}$ that extends $y_i$ and is distinct from  $y_{i+1}$.
Then $\{ x_i\mid i<\mu\}\s X\restriction (D\cap\beta)$ is an antichain above $y$. Finally, given $\gamma<\kappa$ with $\gamma\ge\beta$, by normality of $(X\restriction D,<_X)$, we may pick $\{ z_i\mid i<\mu\}\s X_{\gamma}$
such that $x_i<_X z_i$ for all $i<\mu$. In particular, $|X_\gamma\cap y^\uparrow|\ge|\{z_i\mid i<\mu\}|=\mu$.
\end{proof}
Towards a contradiction, suppose that there exist $\lambda<\kappa$ and $\tau<\chi$ such that $\lambda^\tau\ge\kappa$.
Let $\lambda<\kappa$ be the least cardinal for which there exists $\tau<\chi$ satisfying $\lambda^\tau\ge\kappa$.
Then, let $\tau<\chi$ be the least cardinal such that $\lambda^\tau\ge\kappa$. In particular,  $\mu=\lambda^{<\tau}$ is $<\kappa$, and $\eta\le\tau<\chi$.
By Claim \ref{claim861},
let us find an ordinal $\beta\in D$ such that $|X_{\beta}|\ge\tau$, and then pick a sequence $\langle y_\xi\mid \xi<\tau\rangle$ of distinct elements from $X_{\beta}$.
Next, by Claim \ref{claim861}, find a large enough $\gamma<\kappa$ such that $|X_{\gamma}\cap\cone{y_\xi}|\ge\mu$ for all $\chi<\tau$.
For any $\xi<\tau$, pick an injection $\psi_\xi:\lambda^\xi\rightarrow X_{\gamma}\cap\cone{y_\xi}$.

Consider the derived tree $\hat X=\bigotimes_{\xi < \tau}\cone{y_\xi}$.
For every function $h\in{}^\tau\lambda$, define $\vec{x}_h:\tau\rightarrow X_\gamma$ by stipulating
$$\vec{x}_h(\xi)=\psi_\xi(h\restriction\xi).$$

Then $\{ \vec{x}_h\mid h\in{}^\tau\lambda\}$ is a collection of $\lambda^\tau$ many nodes of the $\gamma^{\text{th}}$ level of $\hat X$
with the property for any two distinct $h,h'\in{}^\tau\lambda$, the set $\{ \xi<\tau\mid \vec{x}_h(\xi)= \vec{x}_{h'}(\xi)\}$ has size $<\tau$.
Recalling that $\lambda^\tau\ge\kappa$ and $\tau\ge\eta$, we have obtained a contradiction to the hypothesis that $(X,<_X)$ is $(\chi,\eta)$-free.

(2)
Suppose that $\langle w_\xi \mid \xi<\tau\rangle$ is a sequence of $<\chi$ many distinct elements from $T_\alpha$, for some $\alpha<\kappa$,
and that $\langle \vec t^i\mid i<\kappa\rangle$ is an antichain in the derived tree $\hat T=\bigotimes_{\xi < \tau}  \cone{w_\xi}$.
Let $\delta=\min(E\setminus\alpha)$.
By discarding an initial segment, we may assume that  $\height_{\hat T}(\vec t^i)>\delta$ for all $i<\kappa$.

As in the proof of Claim \ref{claim-antichain-04}, for all $i<\kappa$ and $\xi<\tau$, we may find $x_{i,\xi}\in X\restriction E$ such that $\vec t^i(\xi)\s t_{x_{i,\xi}}$,
and then $\langle \langle x_{i,\xi}\mid \xi<\tau\rangle\mid i<\kappa\rangle$ forms an antichain in the product tree $\bigotimes_{\xi < \tau}  X$.
But this does not yet contradict anything, so we continue.

For all $i<\kappa$ and $\xi<\tau$, let $v_{i,\xi}$ denote the unique element of $X_\delta$ that is $\le_X x_{i,\xi}$.
Note that if $\xi<\zeta<\tau$, then $v_{i,\xi}\neq v_{i,\zeta}$. Indeed, otherwise, we would get $t_{x_{i,\xi}}\restriction\delta=t_{v_{i,\xi}}=t_{v_{i,\zeta}}=t_{x_{i,\zeta}}\restriction\delta$,
contradicting the fact that $w_\xi\s t_{x_{i,\xi}}\restriction\delta$ and $w_{\xi'}\s t_{x_{i,\xi'}}\restriction\delta$.
By $|X_\delta|<\kappa$ and Clause (1), there must exist $\langle v_\xi\mid \xi<\tau\rangle$ and some $I\in[\kappa]^\kappa$
such that $v_{i,\xi}=v_\xi$ for all $\xi<\tau$ and $i\in I$.
So $\langle \langle x_{i,\xi}\mid \xi<\tau\rangle\mid i\in I\rangle$ forms an antichain in the derived tree $\bigotimes_{\xi < \tau}  \cone{v_\xi}$,
contradicting the hypothesis that $(X,<_X)$ is $\chi$-free.
\end{proof}

\begin{lemma}\label{lemma87}
Suppose that $\mathcal F$ is a collection of sets over a cardinal $\theta$.
\begin{enumerate}
\item If $\mathcal F$ is upward-closed, then whenever $(X,<_X)$ admits an $\mathcal F$-ascent path, so does $(T,{\subset})$;
\item If $\mathcal F$ is a filter, then whenever $(X,<_X)$ admits an injective $\mathcal F$-ascent path, so does $(T,{\subset})$.
\end{enumerate}
\end{lemma}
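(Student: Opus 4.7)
The plan is to transfer an $\mathcal F$-ascent path $\vec f=\langle f_\alpha\mid\alpha<\kappa\rangle$ through $(X,<_X)$ to a sequence $\vec g=\langle g_\alpha\mid\alpha<\kappa\rangle$ through $(T,\subset)$ by composing with the level-preserving map $x\mapsto t_x$. Since Lemma \ref{bin-pres-extend} identifies $T_\delta=\{t_x\mid x\in X_\delta\}$ only when $\delta\in E$, for each $\alpha<\kappa$ I first set $\delta_\alpha=\min(E\setminus\alpha)$ and then define $g_\alpha(i)=t_{f_{\delta_\alpha}(i)}\restriction\alpha$ for every $i\in\bigcup\mathcal F$. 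The value $g_\alpha(i)$ lies in $T_\alpha$ because $t_{f_{\delta_\alpha}(i)}\in T_{\delta_\alpha}$ and $T$ is downward-closed.

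To check the comparability clause of the definition of an ascent path for arbitrary $\alpha<\beta<\kappa$, I split into two cases. When $\delta_\alpha=\delta_\beta$, both $g_\alpha(i)$ and $g_\beta(i)$ are restrictions of the same binary sequence $t_{f_{\delta_\alpha}(i)}$, so $g_\alpha(i)\subsetneq g_\beta(i)$ for every $i\in\bigcup\mathcal F$, and upward-closure of $\mathcal F$ places the whole index set in $\mathcal F$. When $\delta_\alpha<\delta_\beta$, the set $F=\{i\mid f_{\delta_\alpha}(i)<_X f_{\delta_\beta}(i)\}$ belongs to $\mathcal F$ by hypothesis; and since both $f_{\delta_\alpha}(i)$ and $f_{\delta_\beta}(i)$ lie in $X\restriction E$, Lemma~\ref{bin-pres-extend} yields $t_{f_{\delta_\alpha}(i)}\subseteq t_{f_{\delta_\beta}(i)}$ for each $i\in F$, which gives $g_\alpha(i)\subsetneq g_\beta(i)$, and upward-closure closes the argument. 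The distinctness clause is vacuous for ordinary ($\kappa$-indexed) $\mathcal F$-ascent paths, so part~(1) is complete.

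For part~(2), suppose now that $\mathcal F$ is a filter and that $f_\alpha\restriction I$ is injective for some $\alpha<\kappa$ and $I\in\mathcal F$. Set $\beta=\min(E\setminus(\alpha+1))$ and $F=I\cap\{i\mid f_\alpha(i)<_X f_\beta(i)\}\in\mathcal F$. For any distinct $i,j\in F$, the nodes $f_\alpha(i)$ and $f_\alpha(j)$ are distinct predecessors of $f_\beta(i)$ and $f_\beta(j)$ respectively at level $\alpha$; since $f_\alpha(j)$ is the unique predecessor of $f_\beta(j)$ at level $\alpha$ and $\pi$ is injective on $X$, one obtains $\pi(f_\alpha(i))\in[f_\beta(i)]\setminus[f_\beta(j)]$. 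Hence $u_{f_\beta(i)}$ and $u_{f_\beta(j)}$ have different ranges, so they disagree pointwise at some $\alpha^*<\beta$; using $\beta\in E$ (so that $\psi\restriction\beta$ enumerates $\beta\times\beta$), picking $\beta^*=\psi^{-1}(\alpha^*,u_{f_\beta(i)}(\alpha^*))<\beta$ produces a bit where $t_{f_\beta(i)}(\beta^*)=1\neq 0=t_{f_\beta(j)}(\beta^*)$. Since $g_\beta(i)=t_{f_\beta(i)}$ for each $i$, this shows $g_\beta\restriction F$ is injective.

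The main obstacle is that $(X,<_X)$ is not assumed Hausdorff in this appendix, so the map $x\mapsto t_x$ can identify two distinct nodes sharing an identical downward cone, breaking single-level injectivity. The remedy, carried out in the previous paragraph, is to pass from $\alpha$ to the next element $\beta$ of $E$ and restrict to indices where the ascent path strictly climbs; this inserts the fresh ordinal $\pi(f_\alpha(i))$ into $[f_\beta(i)]$, where it cannot already appear in $[f_\beta(j)]$, thereby separating the two images in $T$.
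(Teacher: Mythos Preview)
Your proof is correct and follows essentially the same approach as the paper: define $g_\alpha(i)=t_{f_{\delta_\alpha}(i)}\restriction\alpha$ with $\delta_\alpha=\min(E\setminus\alpha)$, and for injectivity pass to the next element of $E$ above $\alpha$ and intersect with the set where the path climbs. The only cosmetic difference is that for part~(2) the paper observes directly that $u_{f_\beta(i)}(\alpha)=\pi(f_\alpha(i))$ (the $\alpha$th element of $[f_\beta(i)]$ is exactly $\pi$ of the level-$\alpha$ predecessor), so the disagreement occurs precisely at coordinate $\alpha$, whereas you argue more abstractly that the ranges of $u_{f_\beta(i)}$ and $u_{f_\beta(j)}$ differ and hence the functions disagree at \emph{some} $\alpha^*$; both routes are valid.
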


\begin{proof}\hfill
\begin{enumerate}
\item
Suppose $\vec f = \langle f_\alpha \mid \alpha < \kappa \rangle$ is an $\mathcal F$-ascent path through $(X,<_X)$.
We need to construct an $\mathcal F$-ascent path $\vec h=\langle h_\alpha \mid \alpha < \kappa \rangle$ through $(T, \subset)$.
Note that since $\mathcal F$ is an upwards-closed family over $\theta$, for all $\alpha<\kappa$, we have $\dom(f_\alpha)=\bigcup\mathcal F=\theta$.
Thus, for every $\alpha < \kappa$, we construct the function $h_\alpha : \theta\to T_\alpha$ as follows:
First, let $\alpha^+ = \min (E \setminus \alpha)$.
Then, for every $i<\theta$, let $h_\alpha(i) = t_{f_{\alpha^+}(i)} \restriction \alpha$.
Since $f_{\alpha^+}(i) \in X_{\alpha^+}$ where $\alpha^+ \in E$, and $\alpha \leq \alpha^+$,
it is clear that $h_\alpha(i) \in T_\alpha$ for every $i<\theta$.

Let us check that $\vec h$ really is an $\mathcal F$-ascent path through $(T, \subset)$.
Consider $\alpha <\beta < \kappa$.
Let $\alpha^+ = \min (E \setminus \alpha)$ and $\beta^+ = \min (E \setminus \beta)$,
so that $\alpha^+ \leq \beta^+$.
Consider the set
\[
A_{\alpha,\beta} = \left\{i <\theta \mid f_{\alpha^+} (i) \leq_X f_{\beta^+}(i) \right\}.
\]

Since $\vec f$ is an $\mathcal F$-ascent path through $(X,<_X)$, we must have
$A_{\alpha,\beta} \in \mathcal F$. Then:
\begin{itemize}
\item
$h_{\alpha}(i) \le_T t_{f_{\alpha^+}(i)} \le_T t_{f_{\beta^+}(i)}$ for all $i\in A_{\alpha,\beta}$,
recalling Claim~\ref{bin-pres-extend};
\item $h_\beta(i) \le_T t_{f_{\beta^+}(i)}$ for all $i<\theta$.
\end{itemize}

So for all $i\in A_{\alpha,\beta}$, $h_\alpha(i)$ and $h_\beta(i)$ are both $\le_T t_{f_{\beta^+}(i)}$,
and hence they are compatible. By $\alpha\le\beta$, then:
\[
A_{\alpha,\beta}\s \left\{i<\theta \mid h_{\alpha} (i) \leq_T h_{\beta}(i) \right\}.
\]

This shows that if $\mathcal F$ is upward-closed,
then $\vec h$ is an $\mathcal F$-ascent path through $(T, \subset)$.

\item
Suppose $\vec f = \langle f_\alpha \mid \alpha < \kappa \rangle$ is an injective $\mathcal F$-ascent path through $(X,<_X)$.
We will show that $\vec h$ as constructed in part~(1) really is an injective
$\mathcal F$-ascent path through $(T, \subset)$.

Choose some $\alpha < \kappa$ and $B_1 \in \mathcal F$ such that $f_\alpha \restriction B_1$ is injective.
Consider any $\delta \in E \setminus (\alpha+1)$,
and set:
\[
B_2 =\left\{i <\theta \mid f_\alpha(i) <_X f_{\delta}(i) \right\}.
\]
Then $B_2 \in \mathcal F$, and we will show that $h_\delta \restriction (B_1 \cap B_2)$ is injective.

Consider any distinct $i,j\in B_1 \cap B_2$.
By $\alpha<\delta$, it follows that
\[
u_{f_\delta(i)}(\alpha)=\pi(f_\alpha(i))\neq \pi(f_\alpha(j))=u_{f_\delta(j)} (\alpha).
\]
Letting $\beta = \psi^{-1}(\alpha, \pi(f_\alpha(i))$, we have $\beta < \delta$ (since $\delta \in E$),
and since $f_\delta(i)\neq f_\delta(j)$, we have
\[
h_\delta(i)(\beta) = t_{f_\delta(i)}(\beta) = 1 \neq 0 = t_{f_\delta(j)}(\beta) = h_\delta(j)(\beta),
\]
and it follows that $h_\delta(i) \neq h_\delta(j)$.

Consequently, the map $h_\delta \restriction (B_1 \cap B_2)$ is injective.
Thus, if $\mathcal F$ is a filter, then $h_\delta$ is injective on a set from $\mathcal F$,
showing that $\vec h$ is an injective $\mathcal F$-ascent path through $(T, \subset)$.
\qedhere
\end{enumerate}
\end{proof}

\section*{Acknowledgements}
This work was partially supported by German-Israeli Foundation for Scientific Research and Development,
Grant No.~I-2354-304.6/2014. Some of the results of this paper were announced by
the second author at the \emph{Forcing and its applications: Retrospective Workshop}, Toronto, April 2015,
and by the first author at the \emph{22nd Boise Extravaganza in Set Theory} conference, San Francisco, June 2015.
The first author thanks Luke Serafin for motivating the definition of $\p_{14}$, by suggesting the introduction of a simplified notation
for the principle $\p(\ldots)$ in cases where some of the eight parameters are fixed.
Both authors thank the organizers of the corresponding workshops for providing a joyful and stimulating environment.

\bibliographystyle{plain}

\end{document}